\definecolor{reference}{rgb}{0.20,0.36,0.74}
\definecolor{citation}{rgb}{0,.40,.80}
\crefname{section}{\S \!\!}{\S\S \!\!}
\crefname{equation}{}{}
\crefname{enumi}{}{}
\crefname{appendix}{\S \!\!}{\S\S \!\!}  % fix appendix section names
\theoremstyle{plain}
\newtheorem{theorem}{Theorem}[subsection]
\newtheorem{proposition}[theorem]{Proposition}
\newtheorem{lemma}[theorem]{Lemma}
\newtheorem{corollary}[theorem]{Corollary}
\newtheorem*{corollary*}{Corollary}
\newtheorem*{conjecture*}{Conjecture}
\theoremstyle{definition}
\newtheorem{definition}[theorem]{Definition}
\newtheorem{notation}[theorem]{Notation}
\newtheorem{remark}[theorem]{Remark}
\newtheorem{example}[theorem]{Example}
\newtheorem*{example*}{Example}
\newtheorem*{remark*}{Remark}
\def\cA{\mathcal A}\def\cB{\mathcal B}\def\cC{\mathcal C}\def\cD{\mathcal D}
\def\cF{\mathcal F}\def\cH{\mathcal H}
\def\cI{\mathcal I}\def\cJ{\mathcal J}\def\cL{\mathcal L}
\def\cM{\mathcal M}\def\cN{\mathcal N}\def\cO{\mathcal O}\def\cP{\mathcal P}
\def\cT{\mathcal T}
\def\cU{\mathcal U}\def\cV{\mathcal V}\def\cW{\mathcal W}\def\cX{\mathcal X}
\newcommand{\fX}{\mathfrak{X}}
\newcommand{\fg}{\mathfrak{g}}
\newcommand{\fgv}{\mathfrak{g}^\vee}
\newcommand{\frf}{\mathfrak{f}}
\newcommand{\ffv}{\mathfrak{f}^\vee}
\newcommand{\fd}{\mathfrak{d}}
\newcommand{\fdv}{\mathfrak{d}^\vee}
\providecommand{\leftsquigarrow}{%
  \mathrel{\mathpalette\reflect@squig\relax}%
}
\newcommand{\reflect@squig}[2]{%
  \reflectbox{$\m@th#1\rightsquigarrow$}%
}
\newcommand{\Shv}{{\sf SHV}}
\newcommand{\adj}{\dashv}
\newcommand{\Fun}{{\sf Fun}}
\newcommand{\Hom}{{\sf Hom}}
\newcommand{\Cat}{{\sf Cat}}
\newcommand{\bCat}{\widehat{\sf Cat}}
\renewcommand{\Pr}{{\sf Pr}}
\newcommand{\PrL}{\Pr^{\sf L}}
\newcommand{\PrLRomega}{\Pr^{\sf LR}_{\omega}}
\newcommand{\PrRomega}{\Pr^{R,\omega}}
\newcommand{\Spec}{{\sf Spec}}
\newcommand{\Mod}{{\sf Mod}}
\newcommand{\Modfd}{{\sf Mod}^{fd}}
\newcommand{\Perf}{{\sf Perf}}
\newcommand{\Prop}{{\sf Prop}}
\newcommand{\LPerf}{{\sf LPerf}}
\newcommand{\RPerf}{{\sf RPerf}}
\newcommand{\Qcoh}{{\sf QCoh}}
\newcommand{\LMod}{{\sf LMod}}
\newcommand{\RMod}{{\sf RMod}}
\renewcommand{\lim}{{\sf lim}}
\newcommand{\id}{{\sf id}}
\newcommand{\op}{{\sf op}}
\newcommand{\mon}{\sf{-mon}}
\renewcommand{\hom}{{\sf hom}}
\newcommand{\colim}{{\sf colim}}
\newcommand{\CC}{\mathbb{C}}
\newcommand{\EE}{\mathbb{E}}
\newcommand{\HH}{\mathbb{H}}
\newcommand{\LL}{\mathbb{L}}
\newcommand{\PP}{\mathbb{P}}
\newcommand{\RR}{\mathbb{R}}
\renewcommand{\SS}{\mathbb{S}}
\newcommand{\ZZ}{\mathbb{Z}}
\newcommand{\kk}{\Bbbk}
\newcommand{\Aut}{\mathsf{aut}}
\newcommand{\End}{\mathsf{end}}
\newcommand{\Athree}{{\cA_{\fX}}}
\newcommand{\Bthree}{{\cB_{\fX^\vee}}}
\newcommand{\QCoh}{\mathsf{QCoh}}
\newcommand{\IndCoh}{\mathsf{IndCoh}}
\newcommand{\Perv}{\mathsf{Perv}}
\newcommand{\Sh}{\mathsf{Sh}}
\newcommand{\sh}{\Sh}
\newcommand{\Cone}{\mathsf{Cone}}
\newcommand{\oA}{\overline{A}}
\newcommand{\ul}{\underline}
\newcommand{\muSh}{\mu\mathsf{Sh}}
\newcommand{\Loc}{\mathsf{Loc}}
\newcommand{\redu}{/\!\!/}
\newcommand{\Vect}{{\sf Vect}}
\newcommand{\heart}{\heartsuit}
\renewcommand{\Im}{\operatorname{Im}}
\newcommand{\FI}{t}
\newcommand{\mass}{m}
\newcommand{\dR}{{\sf dR}}
\newcommand{\Dmod}{{\sf Dmod}}
\newcommand{\Oskel}{\mathbb{L}_G(\FI,\mass)}
\newcommand{\Lstab}{\mathbb{L}_G(\FI,-)}
\newcommand{\Stab}{\Lstab}
\newcommand{\Bet}{{\sf Bet}}
\newcommand{\Betti}{{\sf Bet}}
\newcommand{\htvar}{\mathfrak{X}}
\newcommand{\htbar}{\overline{\mathfrak{X}}}
\newcommand{\bP}{\mathbf{P}}
\newcommand{\faces}{\mathbf{P}(\FI)}
\newcommand{\sProp}{{\sf Prop\,}}
\newcommand{\sPerf}{{\sf Perf\,}}
\newcommand{\sMod}{{\sf Mod\,}}
\newcommand{\Endo}{{\sf End}}
\newcommand{\fin}{{\sf fin}}
\newcommand{\univ}{{\sf univ}}
\newcommand{\FT}{{\sf FT}}
\newcommand{\RH}{RH}
\newcommand{\frF}{{\mathfrak{F}}}
\newcommand{\oC}{\overline{C}}
\newcommand{\tilG}{{\widetilde{G}}}
\newcommand{\tilU}{{\widetilde{U}}}
\newcommand{\tilLambda}{{\widetilde{\Lambda}}}
\renewcommand{\Re}{\operatorname{Re}}
\renewcommand{\Im}{\operatorname{Im}}
\newcommand{\pol}{\tau}
\newcommand*{\isoarrow}[1]{\arrow[#1,"\rotatebox{90}{\(\sim\)}"]}
\newcommand{\hpspace}{{\ffv_\RR(\FI)}}
\newcommand{\htpol}{\pol_G}
\newcommand{\mush}{\muSh}
\newcommand{\HP}{{\sf HP}}
\newcommand{\Core}{S_{{\sf skel}}}
\newcommand{\Cocore}{P_{{\sf skel}}}
\newcommand{\ocW}{\overline{\cW}}
\newcommand{\im}{\operatorname{im}}
\newcommand{\open}{\cL}
\renewcommand{\cong}{\simeq}
\newcommand{\eu}{\mathbf{eu}}
\title{Hypertoric Fukaya categories and categories $\mathcal{O}$}
\author{Laurent C\^ot\'e, Benjamin Gammage, and Justin Hilburn}
\begin{document}

\maketitle

\begin{abstract}
To a conical symplectic resolution with Hamiltonian torus action, Braden--Proudfoot--Licata--Webster associate a category $\cO,$ defined using deformation quantization (DQ) modules. 
%These categories have a rich algebraic structure and play a major role in 3d mirror symmetry and holomorphic symplectic geometry. 
It has long been expected, though not stated precisely in the literature, that category $\cO$ also admits a ``Betti-type'' realization as the Fukaya--Seidel category of a Lefschetz fibration.

In this paper, we confirm that the category $\cO$ associated to a toric hyperk\"ahler manifold is equivalent to the partially wrapped Fukaya category of a Liouville manifold stopped by the fiber of a $J$-holomorphic moment map.
The proof involves
%we provide such a Betti realization in the case of toric hyperk\"ahler manifolds, by 
relating earlier DQ-module computations to a new computation of microlocal perverse sheaves. 
%Our results provide foundations for the program of homological 3d mirror symmetry developed by the second and third authors. In addition, 
Leveraging known results on (de Rham) hypertoric category $\cO,$ we deduce several Floer-theoretic consequences, including formality of simple objects and Koszul duality for the (fully) wrapped Fukaya category;
conversely, by applying results about microlocal sheaves, we produce a relative Calabi-Yau structure on category $\cO.$
%of our identification of category $\cO$ as a Fukaya category.
\end{abstract}

\iffalse
\begin{abstract}
To a symplectic resolution equipped with a Hamiltonian torus action and satisfying some technical conditions, one can associate two abelian categories. The first is the \emph{de Rham category $\cO$}, whose objects are certain $\cW$-modules supported on a distinguished conical Lagrangian subvariety. The second is the \emph{Betti category $\cO$}, whose objects are microlocal perverse sheaves supported on the same Lagrangian.
% (equivalently, the Betti category $\cO$ is the heart of the perverse t-structure on the corresponding partially wrapped Fukaya category). 
%which is built using the theory of $\cW$-modules and has already been thoroughly investigated in the literature. The second is the \emph{Betti category $\cO$}, 
%which is defined as the heart of the microlocal perverse t-structure on the category of microlocal sheaves supported on a distinguished non-compact conical Lagrangian (or equivalently, a certain partially wrapped Fukaya category). 

This paper establishes that the de Rham and Betti categories $\cO$ are equivalent for hyperkähler toric manifolds. Moreover, their derived category recovers the full category of microlocal sheaves supported on the distinguished Lagrangian. We explain how to translate these results in the framework of partially wrapped Fukaya categories which may be more compelling to geometrically minded readers. %may equivalently interpret these statements in terms of partially wrapped Fukaya categories. %This has structural consequences which may equivalently be interpreted in the framework of partially wrapped Fukaya categories by Floer-theoretically minded readers.

 %which 
 
 %This has structural implications for these microsheaf categories, which These statements can equivalently be formulated in the language of partially wrapped Fukaya categories, Via the equivalence between microsheaf categories and partially wrapped Fukaya categories, Floer theoretically minded readersThese statements can equivalently be translated  (equivalently, the corresponding partially wrapped Fukaya category). %The main technical input is a Kirwan surjectivity result for (perverse) microlocal sheaves. 
%which is built us
%may equivalently interpret these statements i
%using the equivalence between partially microsheaves and partially wrapped Fukaya categories; geometrically minded readers may equivalently interpret these staments in terms of partially wrapped Fukaya categories. 
%Floer-theoretically minded reader may equivalently replace microlocal sheaves with partially wrapped Fukaya categories.  replace all occurences of microsheaf categories with the corresponding partially wrapped Fukaya categories.
%the second is the Betti category $\cO$, which is defined using microlocal sheaf
%the objects of the former are 
%whose objects are $\cW$-modules supported on distinguished Lagrangian subvariety ; the later 
%whose objects are microlocal perverse sheaves supported on the same lagrangian (equiva
%called the de Rham and Betti Category O. 
%the first is defined 
%To an exact holomorphic symplectic manifolds arising from hyperkähler reduction along with a Hamiltonian torus action, one can associate an abelian category celled the \emph{Betti category $\cO$}. It is defined as the heart of the microlocal perverse $t$-strcture on a certain partially wrapped Fukaya category (or equivalently, a certain category of microlocal sheaves).  The main result of this paper establishes an equivalence of abelian categories between the Betti Category $\cO$ associated to a hyperkähler toric varieties and the \emph{de Rham Category $\cO$}, which is built using the theory of $\cW$-modules and has already thoroughly investigated in the literature. The main technical input is a Kirwan surjectivity result for (perverse) microlocal sheaves. %Our method yields explicit computations of certain partially wrapped Fukaya categories of hyperkähler toric varieties. As a purely symplectic application, we prove that any closed exact Lagrangian brane in a hyperkähler toric variety of complex dimension at most $2$ is homologically essential. 
\end{abstract}
\fi

\setcounter{tocdepth}{1}
\tableofcontents

\section{Introduction}\label{sec:intro}

%\color{red}LC: concrete comments about the abstract
%\begin{itemize}
%    \item I like the last sentence, and I agree that this point was missing from the previous version

%    \item as far as I know (and I guess you folks know more than I do), the generalized category O wasn't defined just by blpw (e.g. Kashiwara-Rouquier essentially had it, and people like bellamy, mcgerty, nevins, etc). Also, many journals don't like to have citations in abstracts
%    \item the definition of Betti-O is not the hard part, and it not something we should claim credit for 
%    \item rather, in my opinion: our main theorems are (i) proving that betti and dR O agree (ii) proving that the derived betti-O recovers $Fuk$. 
%    \item I would suggest to avoid the words "Fukaya-Seidel" in the abstract, because people may complain that we didn't set up the foundations of FS categories
%    \item i don't mind highlighting 3d mirror symmetry in the intro, but I think putting it abstract is too much (especially the sentence about "foundations for the program...") It also makes it more likely the paper will be ignored my symplectic people.
%\end{itemize}
%\color{black}

\subsection{Category $\mathcal{O}$}

Let $\mathfrak{g}$ be a semisimple Lie algebra over $\mathbb{C}$. The classical {BGG category $\cO$} \cite{BGG} is a certain abelian subcategory of the category of all $\mathfrak{g}$-representations which is of great interest in representation theory. It is big enough to contain the highest weight modules and Verma modules but it is also small enough to enjoy many wonderful categorical properties \cite{humphreys2008representations}.  

Category $\cO$ also admits a purely geometric description, from which can be derived many of its useful properties.
%Many useful properties of category $\cO$ arise from its geometric description. 
Namely, by Beilinson-Bernstein localization \cite{BB-localization}, category $\cO$ arises as the category of $\cD$-modules on the flag variety $G/B$ constructible with respect to the Schubert stratification.
%: this is a consequence of the so-called Beilinson--Bernstein localization theorem \cite[Sec.\ 11]{hotta2007d}.
By the Riemann--Hilbert correspondence, category $\cO$ can equivalently be described as the category of Schubert-constructible perverse sheaves on $G/B$. This perspective opens category $\cO$ to the use of powerful tools from mixed Hodge theory, and it has been understood since \cite{BGS} that mixed geometry explains many properties of the Koszul algebras governing categories $\cO$.
%constructible with respect to the Schubert stratification.

%In fact, the Category $\cO$ can also be described as the category of perverse sheaves on $G/B$ constructible with respect to the same Schubert stratification. Note that the first description is analytical while the second is topological: they are related by the celebrated Riemann--Hilbert correspondence proved independently by Kashiwara \cite{} and Mebkhout \cite{}.  %an abelian category introduced by Bernstein--Gelfand--Gelfand which The BGG Category $\cO$ is an abelian c

Flag varieties (or rather their cotangent bundles) are the paradigmatic example of a somewhat loosely defined class of holomorphic symplectic manifolds called \emph{symplectic resolutions}. An important development in geometric representation theory over the past twenty years has been to generalize the tight relationship between the representation theory of category $\cO$ and the geometry of flag varieties to other symplectic resolutions. An early generalization of category $\cO$ appeared for Cherednik algebras in \cites{DO03,BEG03,GGER03}; the hypertoric category $\cO$, of interest in this paper, appeared in \cites{BLPW10} with geometric interpretation given in \cite{BLPW12}, followed by a general geometric construction of categories $\cO$ for conical symplectic resolutions in \cites{BLPW16}, with explicit computations appearing in \cite{Web-gencat-O}.

%An important development in geometric representation theory over the past twenty years has been to generalize the classical BGG Category $\cO$ to \emph{symplectic resolutions}. Symplectic resolutions are a rather vaguely defined class of holomorphic symplectic manifolds, of which cotangent bundles to flag varieties of complex reductive Lie groups are the paradigmatic example. For the purpose of this introduction, let us simply consider a holomorphic symplectic manifold $(X, I, \omega)$ with a pair of commuting $\mathbb{C}^*$ actions: the first action $\varphi$ is $I$-holomorphic and scales the symplectic with weight $k \geq 1$ and second action $\psi$ is Hamiltonian.  We can then form the \emph{Category $\cO$ skeleton} $\LL:= \{ p \in X \mid \lim_{t \in \mathbb{C}^*} t \cdot_{\psi} z \text{ exists}\} \subset X$.

For the purpose of this introduction, we shall only consider symplectic resolutions which arise from hyperkähler reduction. Let $G$ be a reductive complex Lie group acting on a vector space $E$. The induced action on $T^*E$ is Hamiltonian and comes with a complex moment map $\mu^\mathbb{C}: T^*E \to \mathfrak{g}^\vee$. As reviewed in \Cref{subsection:hypertorics}, one can form the hyperkähler reduction $ \htvar_G(\FI) := (\mu^{\mathbb{C}})^{-1}(0)\redu_\FI G$ at a generic parameter $\FI \in \mathfrak{g}^\vee_\mathbb{R}$. As described in \cite{kashiwara2008microlocalization}*{\S 2}, the reduction carries a sheaf $\cW$ of $\mathbb{C}((\hbar))$-algebras, which can be viewed as a microlocalization of the sheaf of differential operators. 
% \cite[\S 2.3]{BPW} \cite[\S 2]{Web-gencat-O}.

To define category $\cO$, one considers two $\mathbb{C}^\times$ actions on $ \htvar_G(\FI)$. First, consider the action on $T^*E=E\times E^\vee$ given by scalar multiplication.
Second, fix a Hamiltonian $\mathbb{C}^\times$-action $\mass$ which factors through $\operatorname{Aut}_G(E)/Z(G)$. To avoid confusing these actions, we follow the convention of \cite{BPW} and denote the first copy of $\mathbb{C}^\times$ acting holomorphically by $\mathbb{S}$;
we will often refer to the second as $\CC^\times_m.$

We now form the \emph{category $\cO$ skeleton} 
\begin{equation}\label{eq:oskel-1}
     \Oskel:= \{ p \in \htvar_G(\FI) \mid \lim_{s\to 0} s \cdot_\mass p \text{ exists}\} \subset  \htvar_G(\FI).
 \end{equation}
  Thanks to the calculations in \Cref{sec:Lefschetz}, this Lagrangian may be understood as the relative skeleton associated to the Lefschetz fibration given by the $J$-holomorphic moment map for the $\CC^\times_m$ action.

%\item  $\LL_G(-,-)$ or just $\LL_G$ denotes $\LL_n/G\subset T^*(\CC^n/G).$
%        \item $\Lstab$ is the $\FI$-stable locus in $\LL_G.$
%        \item $\Oskel$ is the category $\cO$ skeleton (i.e., the $m$-bounded locus inside of $\LL_G(\FI,-)$).

% To define Category $\cO$, we furtheremore need to fix a Hamiltonian $\mathbb{C}^*$-action. Setting $Z= Z(G) \subset \operatorname{Aut}_G(E)$,  we let $\mass$ denote a Hamiltonian $\mathbb{C}^*$ action factoring through $\operatorname{Aut}_G(E)/Z$. We can then form the \emph{Category $\cO$ skeleton} 
% \begin{equation}
%     \LL_G(\FI,\mass):= \{ p \in X \mid \lim_{t \in \mathbb{C}^*} t \cdot_{\mass} z \text{ exists}\} \subset X
% \end{equation}

%The \emph{de Rham Category $\cO$} is defined, morally, as the category of ``microlocal $\cD$-modules'' supported on $\LL_G(\FI,\mass)$. More precisely, there is a sheaf $\cW$ of $\mathbb{C}((\hbar))$-algebras on $X$ which is equivariant with respect to $\varphi$. 
\begin{definition}[\cite{BLPW16}] \label{definition:de-rham-O}
    The {\em (de Rham) category $\cO$} is the category of coherent, $\mathbb{S}$-equivariant $\cW$-modules which are supported in $\LL_G(\FI,\mass)$ and admit a good filtration.%
    %This category shall be called the \emph{de Rham category $\cO$}.
    \footnote{In \cite{BLPW16}, this category is called the ``geometric category $\cO$'' and denoted by $\mathcal{O}_g$. There is also an ``algebraic category $\cO$'' considered there which will not be discussed in this paper. 
    For the notion of a good filtration, see \cite[\S 2.5]{BPW}.}
    We denote it by $\cO^{\dR}_G(\FI,\mass).$
\end{definition}
 Work of many authors including \cite{kashiwara2008microlocalization,BLPW16} proves that this abelian category enjoys many of the same wonderful properties as the classical BGG category $\cO$. It also recovers the BGG category $\cO$ in the case where $\fX_G(\FI)$ is the cotangent bundle of a flag variety (in which case the category $\cO$ skeleton is the union of conormals to Schubert strata).

%\subsubsection{The Betti Category $\cO$}
Absent so far from this theory, outside the setting of $T^*(G/B),$ has been a Betti counterpart to the de Rham category $\cO$. (See \cite{Jin-bigtilting} for an early Fukaya-categorical perspective on the classical category $\cO.$) One reason for this absence is that there did not exist a general theory of ``microlocal perverse sheaves'' on which to base this definition. This missing ingredient has now been supplied by \cite{perverse-microsheaves}, which allows us to consider the following:
\begin{definition}\label{definition:betti-O}
    The {\em Betti category $\cO$} is the category $\cO^{\Bet}_G(\FI, \mass):=(\mush_{\Oskel}(\Oskel)^c)^{\heartsuit}$ of microlocal perverse sheaves on $\htvar_G(\FI)$ microsupported in $\Oskel$.\footnote{The superscript $(-)^c$ denotes passage to compact objects. We refer to \Cref{sec:geometric-categories} for a discussion of (microlocal perverse) sheaves. Throughout the introduction, we also suppress in our notation the polarization data used to define $\mush_{\Oskel},$ which we describe in \Cref{def:Oskel-polarization}.}% This category shall be called the \emph{Betti category $\cO$}.
\end{definition}

%The Betti Category $\cO$ is by definition an object of symplectic topology. Moreover, via the celebrated work of Ganatra--Pardon--Shende \cite{GPS3}, $\cO_{\Betti}(G, \FI, \mass)$ can equivalently be viewed as the heart of the perverse t-structure on the partially wrapped Fukaya Category $\cW(\htvar_{\FI, \mass}, \partial_\infty \LL_G(\FI,\mass))^c$. 
%Thus the Betti Category $\cO$ is an object of symplectic topology. %\footnote{From that perspective, the fact that the BGG Category $\cO$ can be described in terms of pure topology is a reflection of the fact that the symplectic topology of cotangent bundles reduces to the ordinary topology of the base.}

% \begin{itemize}
%     \item discuss classical category $\mathcal{O}$
%     \item discuss the Kashiwara--Rouquier/ BLPW category $\mathcal{O}$
% \end{itemize}

% The Betti category $\mathcal{O}$:
% \begin{itemize}
%     \item mention antecedents of $\mathcal{O}$ in the symplectic literature; explain that CKNS allows one to formulate this
% \end{itemize}

\subsection{Hyperkähler toric manifolds}
We shall prove that the Betti and de Rham category $\cO$ are equivalent for an important class of symplectic resolutions called \emph{hyperkähler toric manifolds} (or hypertoric manifolds). %As evidence of the viability of \Cref{definition:betti-O}, w

Hyperkähler toric manifolds are defined by performing hyperkähler reduction on a torus action on $T^*\mathbb{C}^n$. More precisely, consider an exact sequence of complex tori
\begin{equation}\label{equation:exact-sequence-intro}
    \begin{tikzcd}
    1 \ar[r]& G \ar[r]& D\ar[r]&  F \ar[r]& 1,
    \end{tikzcd}
\end{equation}
and fix an identification $D \simeq (\mathbb{C}^\times)^n$. The standard linear action of $D$ on $\mathbb{C}^n$ induces a linear action of $G$. Now consider the induced Hamiltonian action on $T^*\mathbb{C}^n$ and form the hyperkähler reduction $\htvar_G(\FI)$ with respect to a stability parameter $\FI= (\FI, 0,0) \in \mathfrak{d}_\RR^\vee\otimes \RR^3$, where $\mathfrak{d}$ is the Lie algebra of $D$. Observe that  $\htvar_G(\FI)$ admits a residual Hamiltonian $F$ action. We finally fix a subtorus $\mathbb{C}^\times \subset F$, determined by a cocharacter $m$ of $F,$ which allows us to define the category $\cO$ skeleton $\Oskel\subset \fX_G(\FI)$.

The de Rham category $\cO$ in this setting was studied in \cites{BLPW10,BLPW12}, and it was shown \cite{BLPW12}*{Theorems 4.7, 4.8} that $\cO^{\dR}_G(\FI,\mass)$ was equivalent to the category of modules of a certain (discrete, i.e., non-derived) $\CC$-algebra, which we will denote henceforth by $A_G^{\dR}(\FI,\mass)$ and which may be understood as the endomorphism algebra of the projective objects. The main result of this paper is an identification of their calculation with our Betti category $\cO.$

\begin{theorem}\label{theorem:main-theorem}
  With respect to the above data for a hyperkähler toric manifold, there is an equivalence of stable categories \begin{equation}\label{equation:main-theorem-equivalence}
      \mush_{\Oskel}(\Oskel)  \simeq \Mod_{A^{\dR}_G(\FI, \mass)}.
 \end{equation}
This equivalence intertwines the (microlocal) perverse t-structure on the left hand side with the natural t-structure on the right hand side.
\end{theorem}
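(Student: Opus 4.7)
The plan is to compute $\mush_{\Oskel}(\Oskel)$ by first identifying its heart under the perverse t-structure as a module category over an explicit quiver-with-relations, then matching this presentation against the presentation of $A^{\dR}_G(\FI,\mass)$ produced in \cite{BLPW12}, and finally lifting the identification to the stable level.

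The first step is to stratify the skeleton. The hyperkähler toric manifold $\fX_G(\FI)$ carries a natural combinatorial structure coming from the hyperplane arrangement $\cH$ associated to $(G\hookrightarrow D,\FI,\mass)$, and $\Oskel$ decomposes as a union of smooth Lagrangian components $L_\alpha$ indexed by feasible sign vectors $\alpha$, with the bounded feasible sign vectors corresponding to compact components. These components meet cleanly along isotropic strata indexed by coarser sign vectors, and this combinatorics matches the Gale dual picture used in \cite{BLPW12}.

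The second step is the local computation. Using the (co)sheaf property of $\mush_{\Oskel}$ along the stratification above, the abelian category of microlocal perverse sheaves on $\Oskel$ is determined by its behavior on codimension-zero, one, and two pieces. Each local model is a standard toric or Legendrian crossing configuration whose microlocal perverse sheaves can be computed directly, and assembling these computations produces a compact projective generator $\cP_\alpha$ for each bounded feasible sign vector $\alpha$. The endomorphism algebra $\bigoplus_{\alpha,\beta}\Hom(\cP_\alpha,\cP_\beta)$ should then be a path algebra on the adjacency quiver of $\cH$, modulo relations coming from codimension-two crossings, and I would check on the nose that these relations agree with BLPW's commuting-square and loop relations for $A^{\dR}_G(\FI,\mass)$. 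Since the latter algebra is finite-dimensional and the $\cP_\alpha$ compactly generate, the abelian equivalence bootstraps to the claimed equivalence of presentable stable categories intertwining t-structures.

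The main obstacle will be executing the local computation and matching signs, gradings, and generators precisely to BLPW's conventions. This requires carefully tracking the polarization data underlying $\mush_{\Oskel}$, and handling the degeneracies that occur when $\cH$ is not simple or when the $\mass$-cocharacter is non-generic in the Gale dual sense. A secondary difficulty is the interpretation of the loop relations $p(\alpha,\beta)p(\beta,\alpha)$ as Euler classes of normal slices, which must be read off from the local microlocal model rather than imported from BLPW's algebraic presentation.
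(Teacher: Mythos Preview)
Your strategy has a genuine gap: the local microlocal computation yields the \emph{Betti} algebra, not the de Rham one. When you compute $\Hom(\cP_\alpha,\cP_\beta)$ from local toric crossing models, the loop relations $p(\alpha,\beta)p(\beta,\alpha)$ come out in terms of monodromy operators $(1-m_i)$ rather than the linear variables $d_i$ appearing in BLPW's presentation of $A_G^{\dR}(\FI,\mass)$. The paper handles this by introducing a separate algebra $A_G^{\Bet}(\FI,\mass)$ with the same quiver but Betti relations, and then proving $A_G^{\Bet}(\FI,\mass)\simeq A_G^{\dR}(\FI,\mass)$ via an algebraic Riemann--Hilbert correspondence: one completes at the monodromic ideal, applies Malgrange's map $m\mapsto\exp(2\pi i d)$, and checks that the isomorphism descends to the uncompleted quotients, which in turn requires first showing that $A_G^{\Bet}(\FI,\mass)$ is automatically $F$-monodromic by a direct filtration argument. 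Your plan to ``check on the nose that these relations agree with BLPW's'' would therefore fail without this additional step.

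The overall architecture also differs from the paper's. Rather than working directly on $\Oskel$, the paper computes on the larger extended core $\Lstab$, realized as the $\FI$-stable locus in the stack $T^*(\CC^n/G)$, which reduces the calculation to $G$-equivariant microsheaves on $T^*\CC^n$ with answer $\Mod_{A_G^{\Bet}}$. The descent over the face poset of $\cH(\FI)$---your ``codimension $\leq 2$'' step---is nontrivial and is handled in a separate joint appendix with McBreen--Webster. Passing from $\Lstab$ to $\Oskel$ is then stop removal, which on the algebra side is a \emph{derived} quotient $A/^L AeA$; the paper shows this coincides with the naive quotient by invoking Koszulity of the de Rham algebras from \cite{BLPW12}, transported back through the Riemann--Hilbert isomorphism. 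Finally, your bootstrap from the heart to the stable category is precisely the issue flagged in the Remark following the theorem: that $\mush_{\Oskel}(\Oskel)$ is the derived category of its heart does not follow from general principles, and the paper avoids the issue by working at the stable level throughout.
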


Passing to compact objects and taking hearts, one obtains:
\begin{corollary}\label{corollary-main-abelian}
  $\mathcal{O}^\Bet_G(\FI, \mass)  \simeq \mathcal{O}^{\dR}_G(\FI, \mass)$.
\end{corollary}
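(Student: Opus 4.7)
My plan is to derive the corollary directly from Theorem \ref{theorem:main-theorem} by passing to compact objects and then to hearts, exactly as suggested by the phrasing between the two statements. Any equivalence of stable $\infty$-categories automatically preserves compact objects (compactness being an intrinsic categorical property), and since Theorem \ref{theorem:main-theorem} asserts that its equivalence is t-exact with respect to the perverse t-structure on the left and the natural t-structure on the right, it further descends to an equivalence of hearts. Combining these two observations yields
$$
\mathcal{O}^\Bet_G(\FI, \mass) \;=\; (\mush_{\Oskel}(\Oskel)^c)^{\heartsuit} \;\simeq\; \bigl(\Mod_{A^{\dR}_G(\FI, \mass)}^c\bigr)^{\heartsuit} \;=\; \Perf_{A^{\dR}_G(\FI, \mass)}^{\heartsuit},
$$
where the first equality is Definition \ref{definition:betti-O} and the last is the standard identification of the compact objects in the derived $\infty$-category of a discrete $\CC$-algebra with its perfect complexes.

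The only remaining step is to match $\Perf_A^{\heartsuit}$ with the abelian category $\mathcal{O}^{\dR}_G(\FI,\mass)$, writing $A := A^{\dR}_G(\FI,\mass)$ for brevity. The BLPW computation \cite{BLPW12}*{Theorems 4.7, 4.8} recalled in the text already identifies $\mathcal{O}^{\dR}_G(\FI,\mass)$ with the category $\Mod_A^{fd}$ of finite-dimensional modules over this finite-dimensional $\CC$-algebra. Because the hypertoric category $\cO$ is quasi-hereditary, $A$ has finite global dimension; hence every finite-dimensional $A$-module admits a finite resolution by finitely generated projectives and so is perfect when viewed as a complex in degree zero. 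Conversely, an object of $\Perf_A^{\heartsuit}$ is a finitely presented $A$-module, which, since $A$ is finite-dimensional, is the same as a finite-dimensional module. This gives $\Perf_A^{\heartsuit} = \Mod_A^{fd} \simeq \mathcal{O}^{\dR}_G(\FI,\mass)$ and closes the argument.

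There is essentially no obstacle here, as the substantive content—the comparison between microlocal perverse sheaves on $\Oskel$ and modules over $A^{\dR}_G(\FI,\mass)$—has already been performed in Theorem \ref{theorem:main-theorem}, and the corollary amounts to categorical bookkeeping. The one point to be mindful of is that without finite global dimension of $A$ the heart of $\Perf_A$ would only recover the subcategory of $\Mod_A^{fd}$ consisting of modules of finite projective dimension; the quasi-hereditary structure of hypertoric category $\cO$ ensures that no such restriction appears.
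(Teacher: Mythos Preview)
Your proof is correct and follows exactly the approach the paper takes: the paper's entire argument is the one-line phrase ``Passing to compact objects and taking hearts, one obtains'' immediately preceding the corollary. You have simply unpacked this sentence, including the verification that $\Perf_A^{\heartsuit} \simeq \Mod_A^{fd} \simeq \mathcal{O}^{\dR}_G(\FI,\mass)$ using finite global dimension of $A$, which the paper leaves implicit.
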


\begin{remark}
    In the above results, in order to compare with the de Rham category, we have implicitly taken $\CC$ as the coefficient ring for our Betti category $\cO.$ However, unlike in the de Rham setting, this choice is not forced upon us: our definition of Betti category $\cO$ as a category of microlocal sheaves allows us to work with integral coefficients. This is a major source of added richness in the Betti theory, and it hints at the presence of a categorification, constructed in \cite{GH-2O}. The equivalence with the de Rham theory is recovered after base change to $\CC,$ by a map which is a priori complex-analytic but actually, due to strong finiteness properties of category $\cO,$ turns out to be algebraic.
\end{remark}

\begin{remark}%\color{red}LC: let us discuss this paragraph; i'd rather remove it \color{black}
    %Concretely, the equivalences of \Cref{theorem:main-theorem} and \Cref{corollary-main-abelian} should be understood as induced by a Riemann--Hilbert correspondence. 
    The presence of a t-structure on the left-hand category in \Cref{equation:main-theorem-equivalence} follows from \cite{perverse-microsheaves}, but \emph{it does not follow from general principles} that this category is the derived category of its heart. As a result, one cannot expect to deduce \Cref{theorem:main-theorem} directly from \cite{BLPW12} and a Riemann--Hilbert correspondence such as \cite{cote2024microlocal}.
\end{remark}

\subsection{Symplectic duality/3d mirror symmetry}\label{ssec:3dhms}
If we replace the exact sequence \Cref{equation:exact-sequence-intro} with its dual
\[
\begin{tikzcd}
    1&\ar[l] G^\vee&\ar[l]D^\vee&\ar[l] F^\vee&\ar[l]1,
\end{tikzcd}
\]
then the parameters $(\FI,\mass)\in \fgv_\RR\times \frf_\RR$ switch their roles; this is an incarnation of the combinatorial phenomenon of {\em Gale duality}.
Let $S^\dR$ be the direct sum of simple objects in $\cO^{\dR}_G(\FI,\mass)$ and $P^\dR$ the direct sum of their projective covers, so that there is an isomorphism of algebras $A_G^{\dR}(\FI,\mass)\cong \End(P^\dR)$; dually, let $(S^\vee)^{\dR}$ and $(P^\vee)^\dR$ be the same for $\cO^{\dR}_{F^\vee}(\mass,\FI).$
Then the main results of \cite{BLPW10} are:
\begin{enumerate}
    \item The algebras $\End(P^\dR)$ and $\End(S^\dR)$ are Koszul dual.
    \item The algebras $\End(P^\dR)$ and $\End((S^\vee)^\dR)$ are equivalent (up to a grading shear).
\end{enumerate}
%a Koszul duality between the algebras 
%$\End(P)$ and $\End(S),$
%$A^{\dR}_G(\FI,\mass)$ and $A^{\dR}_{F^\vee}(\mass,\FI)$,
%governing (de Rham) categories $\cO$ for Gale dual hypertoric varieties, 
%and an (ungraded) equivalence between 
%$A^{\dR}_G(\FI,\mass)$ and the Ext algebra of the simple $A^{\dR}_{F^\vee}(\mass,\FI)$-modules. 
%the algebras $\End(P)$ and $\End(S^\vee).$
%The quest to understand such Koszul dualities has been a driving force in this subject from its inception \cite{BGS}.

We now have a clearer understanding of such dualities, thanks to recent developments \cite{BDGH} situating the above in the theory of supersymmetric 3d gauge theories. We can now refine the second equivalence from \cite{BLPW10};
%Koszul duality statement of \cite{BLPW10}; we claim the most natural statement of Koszul duality relates a de Rham category $\cO$ to the Gale dual Betti category $\cO$.
%relates $A^{\dR}_G(\FI,\mass)$ 
we claim the most natural statement relates a de Rham category $\cO$ to the Gale dual Betti category $\cO.$
%should involve not the algebra $A^{\dR}_{F^\vee}(\mass,\FI)$ but rather $A^{\Betti}_{F^\vee}(\mass, \FI),$ where the latter is the algebra governing the Gale dual {\em Betti} category $\cO$. 
The following is a combination of our main theorem with \cite{BLPW10}*{Theorems A,B}:
%now understand that the correct context for this duality is between de Rham and Betti. (Explain a little about equivlence of 2-categories and decategorification, and refer to \cite{GH-2O} for details.)
%
%\Cref{corollary-main-abelian} allows us to restate the main result of \cite{BLPW10} as: 
\begin{corollary}\label{corollary:3d-mirror-intro}
    %If $(G, \FI, \mass)$ and $(G', \FI', \mass')$ are Gale dual \cite{BLPW10}, then $\cO^\Bet_G(\FI,\mass)$ and $\cO^\dR_{G'}(\FI',\mass')$ are Koszul dual.
    %The algebra $A^{\dR}_G(\FI,\mass)$ is Koszul dual to the endomorphism algebra of the simple objects in $\cO^\Betti_{F^\vee}(\mass,\FI).$
    Let $P^\Betti$ be the direct sum of projective objects in 
    $\cO^\Betti_G(\FI,\mass)$
    and $(S^\vee)^\dR$ the direct sum of simple objects in 
    $\cO^\dR_{F^\vee}(\mass,\FI).$ 
    Then the algebras $\End_{\cO^\Betti_G(\FI,\mass)}(P^\Betti)$ and $\End_{\cO^\dR_{F^\vee}(\mass,\FI)}((S^\vee)^{\dR})$ are equivalent, up to a grading shear.
\end{corollary}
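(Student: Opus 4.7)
The plan is to chain together \Cref{corollary-main-abelian} with the symplectic duality statement from \cite{BLPW10} recorded as item (2) of \Cref{ssec:3dhms}; modulo the main theorem, the argument is essentially formal.

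First, I would apply \Cref{corollary-main-abelian} to obtain an equivalence of abelian categories $\cO^\Betti_G(\FI,\mass) \simeq \cO^\dR_G(\FI,\mass)$. Any equivalence of abelian categories carries indecomposable projective objects to indecomposable projective objects, so the projective generator $P^\Betti$ is sent to $P^\dR$. This yields an isomorphism of endomorphism algebras
\[
\End_{\cO^\Betti_G(\FI,\mass)}(P^\Betti) \;\cong\; \End_{\cO^\dR_G(\FI,\mass)}(P^\dR) \;=\; A^\dR_G(\FI,\mass).
\]
Next, I would invoke Theorem B of \cite{BLPW10}, which asserts that $A^\dR_G(\FI,\mass)$ is equivalent, up to a grading shear, to $\End_{\cO^\dR_{F^\vee}(\mass,\FI)}((S^\vee)^\dR)$. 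Composing the two identifications produces the stated equivalence.

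The delicate point is keeping track of gradings. The grading on $A^\dR_G(\FI,\mass)$ originates from the $\mathbb{S}$-equivariant structure built into \Cref{definition:de-rham-O}, and the ``grading shear'' of \cite{BLPW10} is an explicit regrading operation on algebras so graded. To promote the algebra isomorphism above to one of graded algebras, I would need the equivalence of \Cref{corollary-main-abelian} to intertwine $\mathbb{S}$-equivariant structures on the two sides; on the Betti side such an action should arise from the conic structure on $\Oskel$ inherited from the fiber-scaling action on $\fX_G(\FI)$, with respect to which microlocal perverse projectives and simples are naturally equivariant. Checking that the equivalence underlying \Cref{theorem:main-theorem} is compatible with these two conic structures is the main obstacle that must be overcome; once this compatibility is in hand, the corollary follows by transport of structure.
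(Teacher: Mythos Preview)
Your approach matches the paper's exactly: the paper states the corollary as ``a combination of our main theorem with \cite{BLPW10}*{Theorems A,B}'' and gives no further argument. Your steps (apply \Cref{corollary-main-abelian} to identify $\End(P^\Betti)\cong A^\dR_G(\FI,\mass)$, then invoke item (2) of \Cref{ssec:3dhms}) are precisely what is intended.

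Your final paragraph is overcautious. The phrase ``up to a grading shear'' is inherited verbatim from the BLPW10 result and does not require you to produce an intrinsic $\SS$-equivariant (mixed) grading on the Betti side and then match it with the de Rham one. The endomorphism algebra $\End(P^\Betti)$ is an ordinary algebra concentrated in degree zero; the ungraded isomorphism $\End(P^\Betti)\cong A^\dR_G(\FI,\mass)$ already lets you transport the path-length grading from the de Rham side, and the grading-shear statement from \cite{BLPW10} then applies without further work. The paper does not discuss $\SS$-compatibility of the Riemann--Hilbert map here because the corollary, as stated, does not need it.
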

Our insistence on the statement of \Cref{corollary:3d-mirror-intro} %as the origin of Koszul duality for categories $\cO$ 
comes from the theory of 2-categorical invariants of hyperk\"ahler manifolds. The homological 3d mirror symmetry program \cite{GMH,GH-2O} predicts that if $\fX,\fX^\vee$ are a dual pair \cite{BLPW16} of symplectic resolutions, then there is an equivalence
\begin{equation}\label{eq:3dhms}
\Athree\simeq\Bthree
\end{equation}
between a pair of 2-categories with the following properties:
\begin{itemize}
    \item We can reduce \Cref{eq:3dhms} to an equivalence of 1-categories by applying the periodic cyclic homology functor $\HP(-)$. The resulting category $\HP(\Athree)$ will be a category of microlocal (perverse) sheaves on $\cX$, while $\HP(\Bthree)$ will be a category of holonomic DQ-modules on $\fX^\vee$.
    \item Under the equivalence \Cref{eq:3dhms}, the objects of $\cA_\fX$ which become projective objects in the microsheaf category $\HP(\Athree)$ are sent to objects which become simple objects in the DQ-module category $\HP(\Bthree).$
\end{itemize}
We conclude that it is \Cref{corollary:3d-mirror-intro} (or at least a $\ZZ/2$-graded version) which naturally follows from the extended 3d mirror equivalence \Cref{eq:3dhms} and the above facts. We refer to \cite{GH-2O} for more details about this proposal, and a proof of the above facts in the hypertoric setting.

\subsection{Fukaya categories}\label{subsection:intro-fukaya}
The celebrated work of Ganatra--Pardon--Shende \cite{GPS3} furnishes an (anti)equivalence of categories
\begin{equation}\label{equation:gps-equivalence-intro}
    \mush_{\Oskel}(\Oskel)^c \simeq \cW(\htvar_G(\FI), \partial_\infty \LL_G(\FI,\mass))^{op},
\end{equation}
where, for $X$ a Weinstein manifold and $\Lambda\subset \partial_\infty X$ a Legendrian, we write $\cW(X,\Lambda)$ for the idempotent-completed, pretriangulated closure of the Fukaya category of $X$ stopped at $\Lambda,$ as defined in \cite{GPS1}.
%The right hand side denotes the wrapped Fukaya category of $\htvar_G(\FI)$ stopped at $\partial_\infty \Oskel)^c$.) 
Combining \eqref{equation:gps-equivalence-intro} with \Cref{theorem:main-theorem}, we may immediately read off structural properties of Fukaya categories from 
%known statements in representation theory (due mainly to Braden--Proudfoot--Licata--Webster \cite{BLPW10, BLPW12,BLPW16}).
known statements about hypertoric category $\cO$ established in \cite{BLPW12}.
%the algebra $A^{\dR}_G(\FI,\mass)$.% established in \cite{BLPW10}.
\begin{corollary}\label{corollary:main-fukaya-intro}
%[\Cref{theorem:main-theorem}+\eqref{equation:gps-equivalence-intro}+\cite{BLPW10}]\label{corollary:main-fukaya-intro} %\text{ }
        There is an equivalence of stable categories $\cW(\htvar_G(\FI), \partial_\infty \Oskel)^{op} \simeq \Perf_{A^{\dR}_G(\FI, \mass)}$ which intertwines the perverse t-structure with the standard t-structure. As a result, we deduce the following facts from \cite{BLPW12}:
    \begin{enumerate}
        \item $\cW(\htvar_G(\FI), \partial_\infty \Oskel)$ admits a canonical full exceptional collection (corresponding heuristically to the thimbles of a Lefschetz fibration).
       % \item $(\cW(\htvar_{\FI, \mass}, \partial_\infty \LL_G(\FI,\mass))^c)^{\heartsuit}$ is heighest weight and Koszul
       \item The complexified Grothendieck group of $\cW(\htvar_G(\FI), \partial_\infty \Oskel)$ is isomorphic to $H^{2n}_{\CC^\times_m}(\htvar_G(\FI)).$
        \item There are two natural collections of commuting auto-equivalences on $\cW(\htvar_G(\FI), \partial_\infty \Oskel)$ called \emph{twisting} and \emph{shuffling}, which involve moving in the respective (complexified) moduli spaces for regular parameters $\FI$ and $\mass.$
        %These define commuting actions of the complement of the certain hyperplane arrangements determined by $(G, \FI, \mass)$. 
    \end{enumerate}
\end{corollary}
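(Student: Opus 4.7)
The plan is to establish the stated equivalence by composing the anti-equivalence \eqref{equation:gps-equivalence-intro} of \cite{GPS3} with Theorem \ref{theorem:main-theorem}. Passing to compact objects in Theorem \ref{theorem:main-theorem}, the left-hand side becomes $\mush_{\Oskel}(\Oskel)^c$ and the right-hand side becomes $\Perf_{A^{\dR}_G(\FI,\mass)}$, since the compact objects in $\Mod_A$ for a finite-dimensional $\CC$-algebra $A$ coincide with the perfect modules. Splicing with the GPS anti-equivalence yields $\cW(\htvar_G(\FI),\partial_\infty\Oskel)^{\op}\simeq\Perf_{A^{\dR}_G(\FI,\mass)}$. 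The t-structure compatibility is inherited from Theorem \ref{theorem:main-theorem}, together with the observation that passage to compact objects preserves the perverse heart.

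Once this equivalence is in hand, each of the three structural consequences transports verbatim from the corresponding property of hypertoric category $\cO$ proved in \cite{BLPW12}. For (1), I would appeal to the theorem that $\cO^{\dR}_G(\FI,\mass)$ is a highest weight (in particular quasi-hereditary) category; the standard/Verma objects, linearly ordered by the polarization and indexed by the components of the $\CC^\times_m$-fixed locus, yield a full exceptional collection of $D^b(\cO^\dR)\simeq\Perf_{A^{\dR}_G(\FI,\mass)}$. The geometric interpretation of these objects as Lefschetz thimbles follows from the description of $\Oskel$ as the relative skeleton of the $J$-holomorphic moment-map fibration indicated in \Cref{sec:Lefschetz}, which matches fixed-point components with thimbles over a distinguished configuration of vanishing paths.

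For (2), the class map computed in \cite{BLPW12} identifies $K_0(\cO^\dR)\otimes\CC$ with the top equivariant cohomology $H^{2n}_{\CC^\times_m}(\htvar_G(\FI))$ by sending simples to fixed-point classes; this identification transports directly across the equivalence. For (3), the twisting and shuffling functors of \cite{BLPW12} arise as monodromy operators for the local systems of categories $\cO^\dR$ over the complexified regular loci of $\FI$ and $\mass$ respectively, and become two commuting collections of auto-equivalences of $\cW(\htvar_G(\FI),\partial_\infty\Oskel)$ under the equivalence.

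The main obstacle, and essentially the only non-formal step, is the first one: verifying that the composition of the GPS anti-equivalence with Theorem \ref{theorem:main-theorem} respects the perverse t-structures on both sides, and confirming that compactness on the module side matches perfectness in the sense required to pair with GPS. Once this compatibility is settled, the remaining three statements follow immediately by invoking the cited structural results for de Rham hypertoric category $\cO$ and transporting them across the equivalence.
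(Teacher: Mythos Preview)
Your proposal is correct and follows essentially the same approach as the paper: the corollary is deduced by composing the Ganatra--Pardon--Shende equivalence \eqref{equation:gps-equivalence-intro} (restated as \Cref{theorem:gps-equivalence}) with \Cref{theorem:main-theorem} and passing to compact objects, after which items (1)--(3) are imported directly from \cite{BLPW12}. The paper handles the t-structure compatibility you flag as the main obstacle simply by noting that both constituent equivalences are individually t-exact (the GPS equivalence is stated as compatible with the perverse t-structure in \Cref{theorem:gps-equivalence}, and \Cref{theorem:main-theorem} already asserts t-exactness), so there is no further work to do there.
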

As $A^{\dR}_G(\FI, \mass)$ is known to be finite dimensional over $\mathbb{C}$ and of finite global dimension, \Cref{corollary:main-fukaya-intro} is a very strong finiteness condition on the Fukaya category. %We also learn from \Cref{corollary:main-fukaya-intro}(1) that $(\operatorname{Perf}_{\cW(\htvar_{\FI, \mass}, \partial_\infty \LL_G(\FI,\mass))^{op}})^{\heartsuit}$ is equivalent to $ \mathcal{O}_{\dR}(G, \FI, \mass)$

%By combining \Cref{theorem:main-theorem}, \eqref{equation:gps-equivalence-intro} and the main result of \cite{BLPW10}, we also have the following manifestation of 3d mirror symmetry: 

On the other hand, we can also leverage our Fukaya-categorical perspective for new insights about category $\cO$; for instance, by applying the main theorem of \cite{ST-CY}, we may deduce the following fact, which does not seem to have been noted before in the literature (possibly because it involves the Fukaya category of a real symplectic manifold). Let $\frF$ be a fiber of the Lefschetz fibration $\htvar_G(\FI)\to\CC$ descibed in \Cref{sec:Lefschetz}.
\begin{corollary}
    The category $\cO_G^{\Betti}(\FI,\mass)$ (hence also $\cO_G^{\dR}(\FI,\mass)$) has a restriction morphism to the wrapped Fukaya category %of a fiber of the Lefschetz fibration $\htvar_G(\FI)\to \CC$ described in \Cref{sec:Lefschetz}, 
    $\cW(\frF),$
    and this functor has a relative Calabi-Yau structure.
    By \cite{Brav-Dyckerhoff-2}, we conclude that the moduli space of objects in $\cO_G^\Bet(\FI,\mass)$ admits a Lagrangian morphism to the moduli space of objects in $\cW(\frF).$
\end{corollary}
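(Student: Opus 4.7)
The plan is to leverage the Fukaya-categorical reformulation of the Betti category $\cO$ provided by \Cref{theorem:main-theorem} and the GPS equivalence \eqref{equation:gps-equivalence-intro}, and then to invoke a general result on relative Calabi--Yau structures for Lefschetz fibrations. First, I would recall from \Cref{sec:Lefschetz} that the $J$-holomorphic moment map for the $\CC^\times_m$-action gives a Lefschetz fibration $\pi\colon \htvar_G(\FI)\to \CC$ whose relative skeleton is $\Oskel$ and whose smooth fiber is $\frF$. Combined with \Cref{theorem:main-theorem}, this realizes $\cO^\Betti_G(\FI,\mass)$ (after passing to compact objects) as $\cW(\htvar_G(\FI),\partial_\infty \Oskel)^{\op}$.

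Next, for any Lefschetz fibration, the smooth fiber at infinity is a Weinstein hypersurface inside the contact boundary, so there is a natural ``cap'' or restriction functor from the partially wrapped Fukaya category of the total space stopped at the relative skeleton to the wrapped Fukaya category of the fiber. This can be constructed either geometrically (by intersecting a Lagrangian thimble or cocore with a regular fiber) or categorically as a composition of stop removal and Viterbo restriction in the sense of \cite{GPS1}. The main theorem of \cite{ST-CY} then provides a canonical relative Calabi--Yau structure on this restriction functor of the appropriate dimension, namely one more than the dimension of $\cW(\frF)$.

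Third, I would transport this relative CY structure through our main equivalence. Since \Cref{theorem:main-theorem} is an equivalence of stable categories, the relative CY structure on $\cW(\htvar_G(\FI),\partial_\infty \Oskel)\to \cW(\frF)$ descends to a relative CY structure on a functor $\cO^\Betti_G(\FI,\mass)\to \cW(\frF),$ which one should identify with the geometric restriction to the fiber. Applying \cite{Brav-Dyckerhoff-2} to this relative CY morphism then yields a Lagrangian structure on the induced map of moduli spaces of objects, concluding the proof.

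The main obstacle will be the bookkeeping required to verify that the geometric setup of hypertoric manifolds with their $J$-holomorphic moment map fits precisely into the framework of \cite{ST-CY}: namely, one must confirm that the Liouville structure is compatible with the Lefschetz fibration in the sense demanded there, that the fiber at infinity is a genuine Weinstein stop, and that the relevant grading, orientation, and $c_1$-vanishing data are present so that the CY dimensions line up. These verifications should ultimately reduce to the explicit computations in \Cref{sec:Lefschetz}, but some care is also needed to track directions (opposite categories) and to pass correctly between the compact, perfect, and ind-completed versions of the various Fukaya and microsheaf categories appearing on the two sides of \Cref{theorem:main-theorem}.
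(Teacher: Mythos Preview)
Your proposal is correct and follows the same approach as the paper, which does not give a separate proof but simply states the corollary as a direct application of the main theorem of \cite{ST-CY} (together with \cite{Brav-Dyckerhoff-2} for the moduli statement). One minor simplification: the result of \cite{ST-CY} applies to any Weinstein pair, so you do not need to verify compatibility with a Lefschetz fibration per se---only that $\frF$ is a Weinstein hypersurface in $\partial_\infty\htvar_G(\FI)$ serving as the stop, which is what \Cref{lemma:lefschetz-fiber-skeleton} provides.
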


We also note a Fukaya-categorical consequence of the 3d mirror duality discussed above. 
Let $\ocW(-)$ denote the ``2-periodicized'' Fukaya category, obtained from $\cW(-)$ by base-change along $\ZZ\to\ZZ[u^\pm],$ where $u$ has cohomological degree 2.
\begin{corollary}\label{corollary:3d-intro} 
%If $(G, \FI, \mass)$ and $(G', \FI', \mass')$ are Gale dual \cite{BLPW10}, then 
%\begin{enumerate}
%    \item $\cW(\htvar_G(\FI))^{\heartsuit}$ and $\cW(\htvar_{G'}(\FI'))^{\heartsuit}$ are Koszul dual 
%    \item $\cW(\htvar_G(\FI), \partial_\infty \LL_G(\FI,\mass))$ and $\cW(\htvar_{G'}(\FI'), \partial_\infty \LL_{G'}(\FI', \mass'))$ are derived equivalent.
%\end{enumerate} 
There is an equivalence
\begin{equation}\label{eq:3dhms-fukaya-intro}
\ocW(\htvar_G(\FI),\partial_\infty \LL_G(\FI,\mass)) \simeq \ocW(\htvar_{F^\vee}(\mass), \partial_\infty \LL_{F^\vee}(\mass, \FI))
\end{equation}
between the (2-periodicized) partially-wrapped Fukaya categories associated to Gale dual hypertoric varieties.
\end{corollary}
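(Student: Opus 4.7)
The plan is to use \Cref{corollary:main-fukaya-intro} on both Gale-dual sides in order to reduce \Cref{eq:3dhms-fukaya-intro} to an equivalence of 2-periodic module categories, and then verify the latter using Koszul duality together with the 3d mirror comparison \Cref{corollary:3d-mirror-intro}.

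First, I would invoke \Cref{corollary:main-fukaya-intro}---which itself rests on \Cref{theorem:main-theorem} and the GPS equivalence \Cref{equation:gps-equivalence-intro}---applied to both Gale dual hypertoric varieties. This gives equivalences
\[
\cW(\htvar_G(\FI), \partial_\infty \LL_G(\FI,\mass))^{op} \simeq \Perf_{A^\dR_G(\FI,\mass)},
\]
and analogously on the Gale dual side. After base change along $\ZZ\to\ZZ[u^\pm]$, \Cref{eq:3dhms-fukaya-intro} thus reduces to producing an equivalence
\[
\Perf_{A^\dR_G(\FI,\mass)} \otimes_\ZZ \ZZ[u^\pm] \simeq \Perf_{A^\dR_{F^\vee}(\mass,\FI)} \otimes_\ZZ \ZZ[u^\pm].
\]

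Next, I would identify the two algebras as Koszul dual (up to a grading shear) by chaining three known identifications. By \Cref{corollary-main-abelian} we have $A^\dR_G(\FI,\mass) \simeq \End_{\cO^\Bet_G(\FI,\mass)}(P^\Bet)$; by \Cref{corollary:3d-mirror-intro} the latter is equivalent, up to grading shear, to $\End_{\cO^\dR_{F^\vee}(\mass,\FI)}((S^\vee)^\dR)$; and by the Koszul duality result (1) of \cite{BLPW10} applied on the Gale dual side, $\End((S^\vee)^\dR)$ is Koszul dual to $\End((P^\vee)^\dR) = A^\dR_{F^\vee}(\mass,\FI)$. Standard derived Koszul duality then gives an equivalence of graded derived categories which, upon 2-periodicizing to collapse the internal Adams grading, descends to the desired equivalence of 2-periodic perfect module categories.

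The main obstacle will be carefully tracking the grading shear through the chain of identifications and verifying that 2-periodicization is precisely the operation needed to absorb the internal Adams grading arising from Koszul duality. A subsidiary technical point is to check that derived Koszul equivalence restricts to compact/perfect objects on both sides, which follows from the finite-dimensionality and finite global dimension of $A^\dR_G(\FI,\mass)$ established in \cite{BLPW12}.
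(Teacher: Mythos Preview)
Your approach is correct and close in spirit to the paper's, but takes a slightly more circuitous route. The paper (see the remark immediately following the corollary) chooses asymmetric generators on the two sides: $P^\Bet$ on the $G$-side and the \emph{simples} $(S^\vee)^\Bet$ on the $F^\vee$-side. Since $A^\dR_{F^\vee}(\mass,\FI)$ has finite global dimension, $(S^\vee)^\Bet$ already generates $\cW(\htvar_{F^\vee}(\mass),\partial_\infty\LL_{F^\vee}(\mass,\FI))$, and \Cref{corollary:3d-mirror-intro} then compares the two endomorphism algebras directly, up to the grading shear that 2-periodicization is meant to absorb.

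You instead pass to projectives on both sides, obtaining $\Perf_{A^\dR_{F^\vee}(\mass,\FI)}$ on the dual side, and then invoke Koszul duality on the $F^\vee$ side to get back to $\End((S^\vee)^\dR)$. That Koszul-duality step, together with your ``subsidiary technical point'' that the equivalence restricts to perfect objects, is precisely the assertion that the simples generate---which is what the paper uses directly. So your argument is valid but contains a redundant loop: you go from simples to projectives via \Cref{corollary:main-fukaya-intro} and then back again via Koszul duality. Dropping that detour recovers the paper's argument.
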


\begin{example}
    If $G=\CC^\times\xrightarrow{\Delta}(\CC^\times)^{n+1} = D$ is the diagonal torus, then 
    there are natural identifications
    $
    \fX_G(\FI) = T^*\PP^n,$ $\htvar_{F^\vee}(\mass) = \widetilde{\CC^2/(\ZZ/n)}.
    $
    These varieties are not even of the same dimension, and their compact skeleta are very different: the former is just $\PP^n$, whereas the latter is a nodal chain of $n$ $\PP^1$'s. On the other hand, after equipping each manifold with a natural Lefschetz fibration, the former skeleton acquires $n$ more noncompact components, namely conormals to the Schubert varieties $\{0\}\subset \PP^1\subset \cdots \subset \PP^{n-1}\subset \PP^n,$ and the latter acquires a single noncompact component. 3d mirror symmetry explains the miraculous fact that the Ext algebra of one collection of these simple Lagrangians is equivalent, after a grading shear, to the endomorphism algebra of cocores in the Gale dual category.
\end{example}

\begin{remark}
    The equivalence \Cref{eq:3dhms-fukaya-intro} comes from the equivalence between endomorphism algebras of respective generators $P^\Bet$ and $(S^\vee)^\Betti$ (in the notation of \Cref{ssec:3dhms}). The need for 2-periodicization arises because these algebras are isomorphic only after a shear of grading.
    (The existence of such a grading shear, putting the whole Ext algebra of simples into degree 0, is rather remarkable and heavily constrains the structure of the algebra, as we see in the following corollary.)
    %The existence of such a shear is equivalent to the remarkable fact that the Ext algebra of simples
    %This suggests the existence of an exotic grading datum for a hypertoric variety, for which the Ext algebra of simple components of the skeleton is concentrated in degree 0.
\end{remark}

%We refer to \Cref{sec:fukaya} for more discussion of the above example and the Lefschetz fibration perspective. 
In \Cref{subsec:formality}, we also appeal to the Koszulity established in \cite{BLPW12} to deduce the formality of the Floer--Fukaya algebra for components of the skeleton of a hypertoric variety. 
%A different proof of this result is also to appear in \cite{LLM}.
%, which provides evidence for the conjecture that Floer--Fukaya endomorphism algebras of holomorphic Lagrangians in a hyperk\"ahler manifold are formal:

%{\color{blue}(BG: Can we reserve some of the following formality discussion for \Cref{sec:fukaya}?)}
%\color{red} LC: in principle that's fine with me, but what precisely do you suggest? \color{black}
%
%In symplectic geometry, there are many results establishing formality for various Fukaya categories. This goes back (at least) to the celebrated work of Seidel--Thomas \cite{seidel-thomas}.  One of the outstanding questions in this area is the following folklore conjecture.
%
%As we shall explain in \Cref{sec:hyperplane}, the irreducible components of the skeleton of $\htvar_G(\FI)$ are $I$-holomorphic and $(\omega_J,\omega_K)$-Lagrangian. According to the \emph{hyperk\"ahler formality conjecture} in symplectic topology, one expects the Floer--Fukaya $A_\infty$ algebra on these Lagrangians to be formal.
% (In the case of a single Lagrangian, this statement is proven by \cite{SV19}.) 
%In our situation, we prove a slightly stronger result, which also incorporates the non-compact Lagrangian components of the category $\cO$ skeleton.

\begin{corollary}%[Formality for the compact skeleton]
\label{corollary:formality-intro}
% Fix a datum $(G, \FI, \mass)$ for category $\cO$. 
%Let $S:= \oplus_\alpha \LL_\alpha$ 
Let $S$ be the direct sum of the irreducible components of $\Oskel$. 
%{\color{blue}(BG: Probably should harmonize this with other notation for simples.)} 
Then $CF^*(S,S)$ is formal in characteristic zero. 
    %Fix a datum $(G, \FI)$ and let $\cK \subset \cF$ be the compact feasible chambers. The the endomorphism ring of $\oplus_{\alpha \in \cK} \LL_\alpha$ in $\cW(\htvar_{\FI})^c$ formal, and isomorphic to the singular cohomology ring of $\htvar_{\FI}$. More generally the endomorphism algebra of $\oplus_{\alpha \in \cF \cap \cB} \LL_\alpha$ in $\cW(\htvar_{\FI, \mass}, \partial_\infty \LL_G(\FI,\mass))^c$ is formal.
\end{corollary}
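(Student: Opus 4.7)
The plan is to combine \Cref{theorem:main-theorem} with the Koszulity of hypertoric category $\cO$ established in \cite{BLPW12}, via the general principle that the Yoneda $A_\infty$-algebra of the simples over a Koszul algebra is formal.

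First, chaining the equivalence of \Cref{theorem:main-theorem} with the Ganatra--Pardon--Shende equivalence \Cref{equation:gps-equivalence-intro} (which is an equivalence of stable, hence $A_\infty$, categories) gives
\[
\cW(\htvar_G(\FI),\partial_\infty\Oskel)^{\op} \simeq \Perf_{A^{\dR}_G(\FI,\mass)}.
\]
Since \Cref{theorem:main-theorem} intertwines the perverse t-structure with the standard t-structure, the irreducible components of $\Oskel$, viewed as simple microlocal perverse sheaves, correspond to the simple $A^{\dR}_G(\FI,\mass)$-modules. Writing $S^{\dR}$ for the direct sum of these simples, one therefore obtains an $A_\infty$ quasi-isomorphism
\[
CF^*(S,S) \simeq \operatorname{RHom}_{A^{\dR}_G(\FI,\mass)}(S^{\dR}, S^{\dR})^{\op}.
\]

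By \cite{BLPW12}, the algebra $A^{\dR}_G(\FI,\mass)$ is Koszul; in particular, $\operatorname{Ext}^*_{A^{\dR}}(S^{\dR}, S^{\dR})$ is concentrated on the diagonal with respect to the Koszul bigrading (cohomological degree equals internal weight). The higher multiplications $m_n$ on the minimal $A_\infty$-model of $\operatorname{RHom}_{A^{\dR}}(S^{\dR}, S^{\dR})$ preserve internal weight while shifting cohomological degree by $2-n$, so for $n \geq 3$ they would necessarily land in off-diagonal bidegrees, which vanish. Hence all higher multiplications vanish and $CF^*(S,S)$ is formal.

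The main obstacle is ensuring that the internal Koszul grading on the algebra side is visible on the Fukaya side, so that the degree argument applies. One approach is to exploit $\CC^\times_m$-equivariance of the GPS equivalence and of our main theorem to produce a compatible internal weight grading on $CF^*(S,S)$. Alternatively, once we have an $A_\infty$-equivalence $CF^*(S,S) \simeq \operatorname{RHom}_{A^{\dR}}(S^{\dR}, S^{\dR})^{\op}$, the formality of the right-hand side is a purely algebraic consequence of Koszulity of $A^{\dR}_G(\FI,\mass)$, so no further geometric input is required. The characteristic-zero hypothesis enters only through the comparison with the de Rham side in \Cref{theorem:main-theorem}, which requires $\CC$-coefficients.
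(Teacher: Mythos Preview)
Your proposal is correct and follows essentially the same approach as the paper: both pass to the de Rham side via \Cref{theorem:main-theorem} and the GPS equivalence, identify the irreducible components of $\Oskel$ with the simple $A^{\dR}_G(\FI,\mass)$-modules, and then invoke Koszulity of $A^{\dR}_G(\FI,\mass)$ from \cite{BLPW12} to conclude formality of the Ext-algebra of simples. The only difference is that you spell out the standard bigrading argument for ``Koszul implies formal,'' while the paper simply cites this fact.

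One remark: your ``main obstacle'' is not actually an obstacle, and you resolve it yourself in the ``Alternatively'' clause. Formality is a quasi-isomorphism invariant of $A_\infty$-algebras, so once you have the $A_\infty$-equivalence $CF^*(S,S)\simeq \operatorname{RHom}_{A^{\dR}}(S^{\dR},S^{\dR})^{\op}$, formality of the right-hand side (established purely algebraically from Koszulity) transfers immediately to the left-hand side; there is no need to transport the internal grading to the Fukaya side.
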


Finally, in \Cref{ssec:Koszul}, we prove a new Koszul duality statement about the wrapped Fukaya category of a hypertoric variety $\htvar_G(\FI).$ As previously mentioned, the Koszul duality statements of \cites{BLPW10,BLPW12} are between the simple and projective objects of the category $\cO^{\dR}_G(\FI,\mass)$; using our main theorem, we may reinterpret this as a Koszul duality in the partially wrapped Fukaya category $\cW(\htvar_G(\FI),\partial_\infty\Oskel)$ between the components of $\LL_G(\FI,\mass)$ and their cocores. However, by an analysis in \Cref{ssec:koszul-abstract} of how Koszul duality interacts with idempotents, we may deduce from this a Koszul duality within the fully wrapped Fukaya category $\cW(\htvar_G(\FI))$:
\begin{theorem}\label{thm:koszul-intro}
    Let $\Core$ be the direct sum in $\cW(\htvar_G(\FI))$ of components of the skeleton of $\htvar_G(\FI),$ and $\Cocore$ the direct sum of the corresponding cocores. Then the algebras $\End_{\cW_G(\FI)}(\Core)$ and $\End_{\cW_G(\FI)}(\Cocore)$ are Koszul bidual in the sense of \Cref{def:koszul-dual}.
\end{theorem}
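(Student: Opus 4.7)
The plan is to reduce \Cref{thm:koszul-intro} to the partially wrapped analogue and then apply the abstract Koszul-duality-with-idempotents framework developed in \Cref{ssec:koszul-abstract}.

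First, combining \Cref{theorem:main-theorem} with the equivalence \Cref{equation:gps-equivalence-intro} of \cite{GPS3} and the Koszul duality of \cite{BLPW10,BLPW12}, one obtains a Koszul duality in $\cW(\htvar_G(\FI),\partial_\infty\Oskel)$ between the endomorphism algebra of the direct sum of all components of $\Oskel$ (matching the simples of $\cO^\dR_G(\FI,\mass)$) and the endomorphism algebra of the direct sum of all cocores (matching the indecomposable projectives). This is the partially wrapped analogue of the desired statement.

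Next, I would invoke the stop-removal theorem of \cite{GPS1} to identify $\cW(\htvar_G(\FI))$ as the Verdier quotient of $\cW(\htvar_G(\FI),\partial_\infty\Oskel)$ obtained by inverting the linking disks of the \emph{noncompact} components of $\Oskel$. Under the equivalence with $\Perf_{(A^\dR_G(\FI,\mass))^{op}}$, this quotient corresponds to an explicit idempotent operation on $A^\dR_G(\FI,\mass)$ --- namely, killing the summand associated with the noncompact cocores. The simple Lagrangians that survive as nonzero objects after stop removal are precisely the compact components of $\Oskel$, which sum to $\Core$, and the surviving cocores sum to $\Cocore$.

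The final step is to apply the abstract framework of \Cref{ssec:koszul-abstract}, which asserts that Koszul duality between algebras $A$ and $B$ is preserved by passing to subquotients governed by a pair of dual idempotents --- subquotienting the projective generators on one side matches selecting a subset of simples on the other. Applied to the input from Step 1 together with the idempotent identified in Step 2, this directly yields the claimed Koszul biduality between $\End_{\cW_G(\FI)}(\Core)$ and $\End_{\cW_G(\FI)}(\Cocore)$.

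The hard part, and the content of \Cref{ssec:koszul-abstract}, lies in Step 3: one must formulate the Koszul-duality-with-idempotents statement correctly at the dg or $A_\infty$ level (handling augmentations, gradings, and idempotent completions), and verify that the ``parallel idempotent'' on the Koszul dual side corresponds geometrically to forgetting the noncompact components of $\Oskel$ and their associated cocores. Once this bookkeeping is in place, the theorem follows formally by plugging the input from Steps 1 and 2 into the abstract framework.
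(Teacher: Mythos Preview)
Your proposal is correct and matches the paper's proof essentially step for step. The paper sets $A=A^\Betti_G(\FI,\mass)$ (endomorphisms of projectives in the partially wrapped category), identifies $\End_{\cW(\htvar_G(\FI))}(\Cocore)$ with the derived quotient $A/^L(Ae_\cN A)$ via stop removal and $\End_{\cW(\htvar_G(\FI))}(\Core)$ with the corner $e_{\cN^c}A^!e_{\cN^c}$, and then invokes \Cref{cor:kdid}. The only point you leave implicit is that applying \Cref{cor:kdid} requires the derived quotient to be connective, locally finite, with nilpotent augmentation ideal in $H^0$; these follow from $A$ being a finite-dimensional quasi-hereditary algebra, and the paper does not spell them out either.
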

When $\htvar_G(\FI)$ is the resolution of an $A_n$-singularity, \Cref{thm:koszul-intro} recovers a result of \cite{etgu2017koszul} in type $A_n.$ (Note that \cite{etgu2017koszul} also establishes Koszul duality in type $D_n$, and the calculates the Fukaya category for general tree plumbings.)   One virtue of this theorem is that while the endomorphism algebra of cocores, which involves wrapping, may be difficult to compute, the endomorphism algebra of $\Core$ is a finite-dimensional and formal dg-algebra: if we write $\Core = \bigoplus_\alpha S_\alpha$ for the components of the skeleton, then this algebra may be written as a matrix algebra with its entries given by cohomology of intersections,
\begin{equation}\label{eq:simples-algebra-intro}
\End_{\cW_G(\FI)}(\Core) \simeq \bigoplus_{\alpha,\alpha'} H^*(S_\alpha\cap S_\alpha'),
\end{equation}
where multiplication is given by a convolution using triple intersections. Analogously to \cite{etgu2017koszul}, we learn from \Cref{thm:koszul-intro} that the symplectic cohomology of the manifold $\htvar_G(\FI)$ may be computed in terms of the algebra \Cref{eq:simples-algebra-intro}:
\begin{corollary}
    There is an equivalence
    \[
    SH^{*+\dim(\htvar_G(\FI))}(\htvar_G(\FI))\simeq HH_{-*}(\End_{\cW_G(\FI)}(\Core))^\vee
    \]
    between the symplectic cohomology of $\htvar_G(\FI)$ and the linear dual of the Hochschild homology of the algebra \Cref{eq:simples-algebra-intro}.
\end{corollary}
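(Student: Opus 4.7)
The plan is to combine the Koszul duality of \Cref{thm:koszul-intro} with Ganatra's open-closed isomorphism and the structural properties of the wrapped Fukaya category as a smooth Calabi--Yau category.

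First I would invoke Ganatra's open-closed isomorphism, which for a Weinstein manifold $M$ of complex dimension $n$ gives an equivalence
\[
SH^{*+n}(M) \simeq HH_*(\cW(M)),
\]
provided $\cW(M)$ is non-degenerate in the sense that the identity bimodule lies in the thick closure of Yoneda bimodules of the generators. In our setting, non-degeneracy is a consequence of the generation of $\cW(\htvar_G(\FI))$ by cocores, which is one of the main results of \cite{GPS1} (see also Chantraine--Dimitroglou Rizell--Ghiggini--Golovko). By this generation, $\cW(\htvar_G(\FI))\simeq \Perf(B)$ for $B := \End_{\cW_G(\FI)}(\Cocore)$, and hence
\[
HH_*(\cW(\htvar_G(\FI))) \simeq HH_*(B).
\]

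Next I would apply \Cref{thm:koszul-intro}, which asserts that $A := \End_{\cW_G(\FI)}(\Core)$ and $B$ are Koszul bidual. Observe that $A$ is proper: by \Cref{eq:simples-algebra-intro}, it is computed as a sum of cohomologies of compact intersections, hence finite-dimensional in each degree. On the other hand, $B$ is smooth, since $\Perf(B)\simeq \cW(\htvar_G(\FI))$ is the wrapped Fukaya category of a Weinstein manifold, which is smooth by \cite{GPS1} (or again by cocore generation, which exhibits the diagonal bimodule as a perfect object).

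The final step is to invoke the standard fact that Koszul duality between a proper dg (or $A_\infty$) algebra $A$ and its smooth dual $B$ induces an equivalence
\[
HH_*(B) \simeq HH_{-*}(A)^{\vee},
\]
compatible with the natural gradings; this follows by interpreting both sides via the bar construction and using the Koszul equivalence of bimodule categories. Chaining the three equivalences yields the claimed identification
\[
SH^{*+n}(\htvar_G(\FI)) \simeq HH_{-*}(A)^{\vee}.
\]
The principal obstacle will be Step 4: rigorously verifying that the Koszul duality of \Cref{thm:koszul-intro}, as a statement about $A_\infty$-algebras arising from Fukaya categories, induces the desired duality on Hochschild chains. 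This requires checking that the Koszul equivalence is realized by an explicit bimodule (the diagonal Koszul bimodule) with the correct finiteness properties, and keeping careful track of the grading shifts between $HH^*$ and $HH_*$ mediated by the smooth Calabi--Yau structure on $\cW(\htvar_G(\FI))$; the remaining steps are essentially applications of results already cited in the paper.
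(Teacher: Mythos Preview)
Your proposal is correct and follows essentially the same route as the paper: open-closed isomorphism, then Koszul duality between $\End(\Cocore)$ and $\End(\Core)$, then the duality $HH_*(B)\simeq HH_{-*}(A)^\vee$ for a Koszul pair with $A$ proper. The paper justifies the last step by citing \cite{Campbell-Koszul}*{Proposition 4.9, Theorem 4.16} (and quietly uses that $\cW(\htvar_G(\FI))$ is self-opposite), whereas you call it a standard fact and sketch a bar-construction argument; your own closing caveat correctly flags this as the step requiring the most care.
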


We note that the phenomenon of Koszul duality in symplectic geometry has been studied earlier in e.g.\ \cites{etgu2017koszul,li2019exact,li2019koszul}; guided by those investigations, we outline in \Cref{sec:fukaya} several further consequences of \Cref{thm:koszul-intro}. One such consequence which may appeal to geometrically-minded readers is the following:
\begin{corollary}\label{corollary:exact-lags}
    $\htvar_G(\FI)$ does not contain an exact Lagrangian $K(\pi, 1)$.
\end{corollary}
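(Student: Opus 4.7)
The argument follows the template of Etg\"u--Lekili \cite{etgu2017koszul}: leverage the Koszul duality of \Cref{thm:koszul-intro} to constrain the cohomology of any closed exact Lagrangian, then derive a contradiction with being a $K(\pi,1)$ for $\pi\neq 1$.

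By the generation theorem of Ganatra--Pardon--Shende, the cocores $\Cocore$ split-generate $\cW(\htvar_G(\FI))$, so any closed exact Lagrangian $L \subset \htvar_G(\FI)$ is a compact object of this wrapped Fukaya category. Under the Koszul duality of \Cref{thm:koszul-intro}, compact objects of $\cW(\htvar_G(\FI))$ correspond to proper modules over the finite-dimensional formal algebra $A := \End_{\cW_G(\FI)}(\Core)$. The formula \Cref{eq:simples-algebra-intro} expresses $A$ as $\bigoplus_{\alpha,\alpha'} H^*(S_\alpha \cap S_{\alpha'})$; since each skeleton component $S_\alpha$ is a smooth projective toric variety and pairwise intersections remain toric, the cohomology involved is concentrated in even degrees, presenting $A$ as a finite-dimensional positively graded Koszul algebra. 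Under appropriate spin-structure and grading conventions, the proper module $M_L$ corresponding to $L$ satisfies $H^*(L;\CC) \simeq HF^*(L,L) \simeq \operatorname{Ext}^*_A(M_L,M_L)$.

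Suppose for contradiction that $L = K(\pi,1)$ with $\pi\neq 1$. Then $L$ is a closed aspherical manifold, so $\pi$ is torsion-free of cohomological dimension $n = \dim L$, and $H^*(\pi;\CC) \simeq \operatorname{Ext}^*_A(M_L,M_L)$. The Koszul structure of $A$, combined with Poincar\'e duality on $L$, tightly restricts both the Hilbert series (determined by the Koszul numerics and the class of $M_L$ in $K_0(A)$) and the ring structure of this Yoneda algebra. The essential content of the proof, and the main obstacle, is to show that these constraints are never simultaneously satisfied when $\pi \neq 1$: the basic building blocks of $\cW(\htvar_G(\FI))$---namely the cocores and the skeleton components $S_\alpha$---are all simply connected, and one argues along the lines of \cite{etgu2017koszul} that iterated extensions of such simply connected Koszul pieces, together with the precise numerical constraints from Koszul duality, prevent the appearance of a nontrivial fundamental group in any closed exact Lagrangian.
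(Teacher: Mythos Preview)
Your proposal has a genuine gap at the crucial final step. You correctly identify that any closed exact Lagrangian $L$ gives a proper module $M_L$ over the formal algebra $A=\End(\Core)$, with $H^*(L;\CC)\simeq\operatorname{Ext}^*_A(M_L,M_L)$. But the assertion that ``iterated extensions of simply connected Koszul pieces \dots\ prevent the appearance of a nontrivial fundamental group'' is not an argument: you have not explained any mechanism by which the Koszul or even-degree structure of $A$ obstructs $H^1(L;\CC)\neq 0$ (let alone nontrivial $\pi_1$ with trivial $H^1$), and there is no such argument in \cite{etgu2017koszul} to invoke. Making this direct module-theoretic route rigorous would require substantial new work.

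The paper's proof is quite different and is in fact the actual template from \cite{etgu2017koszul} and \cite{seidel2012symplectic}. One first shows that $SH^*(\htvar_G(\FI))$ admits a \emph{dilation}: the algebra $B_G(\FI)=\End(\Core)$ is formal and carries a proper Calabi--Yau structure, so its Euler derivation $\eu_{B_G(\FI)}\in HH^1$ satisfies $\Delta(\tfrac{1}{n}\eu_{B_G(\FI)})=1$ for the Tradler BV operator; Koszul duality together with the closed--open isomorphism transports this to a BV-algebra isomorphism $SH^*(\htvar_G(\FI))\simeq HH^*(B_G(\FI))$ and hence to a dilation on $SH^*$. The conclusion then follows from the standard result of Seidel--Solomon \cite{seidel2012symplectic} that a Liouville manifold whose symplectic cohomology admits a dilation contains no closed exact Lagrangian $K(\pi,1)$. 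Your setup (Koszul duality, formality of $\Core$) is exactly what is needed to produce the dilation, but you stopped short of that step and instead attempted an unfinished direct argument.
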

This is a standard consequence of the existence of a dilation on $SH^*(\htvar_G(\FI))$, which we will deduce from \Cref{thm:koszul-intro}. In dimension $4$, \Cref{corollary:exact-lags} recovers a result of Ritter \cite{ritter2010deformations}, who proved more generally that the only exact Lagrangians in ALE spaces are spheres. 

We also refer to \Cref{sec:fukaya} for more discussion of the Lefschetz fibration perspective on category $\cO.$

\begin{remark}
    Forthcoming work of Lee, Li, Mak \cite{LLM} will provide an alternative approach to studying Fukaya categories of hypertoric varieties which does not go through \cite{GPS3}. 
    Applications of their work include a different proof of \Cref{corollary:formality-intro}, a computation of the endomorphism algebra of the simples in $\cW(\htvar_G(\FI), \partial_\infty \LL_G(\FI,\mass))$, and a proof that they generate the infinitesimally wrapped Fukaya category.
\end{remark}

\begin{remark}
    Another approach to the Fukaya--Seidel category of a hyperk\"ahler manifold equipped with a J-holomorphic moment map is described in \cite{Khan-momentmap}. 
    %Unfortunately, due to the fact that the ``algebra of the infrared'' 
    However, that approach, based on the ``algebra of the infrared'' formulated in \cite{GMW},
    requires that the critical values of the moment map are in general position,
    and thus it cannot be directly applied to category $\cO$, for which all critical values lie on a line. 
    %Eliminating this restriction would be extremely interesting.
    This restriction can be eliminated using the formulation of the Fukaya-Seidel category in \cite{Haydys}.
\end{remark}

\subsection{Outline of the proof} 
%\color{blue}this subsection needs work \color{black}
%Inside the stack $T^*(\mathbb{C}^n/G)$ lie three distinguished Lagrangian subvarieties $\Oskel \subset \Lstab \subset \LL_G(-,-)$. Correspondingly, we have three algebras $A^{\dR}_G(\FI, \mass) \subset A^{\dR}_G(\FI, -) \subset A^{\dR}_G(-, -)$. We shall also consider three more algebras $A^{\Bet}_G(\FI, \mass) \subset A^{\Bet}_G(\FI, -) \subset A^{\Bet}_G(-, -)$.
%
%The algebras $A^{\dR}_G(-, -)$ and $A^{\Bet}_G(-, -)$ are both defined from $(G, \FI, \mass)$ by an essentially combinatial prescription. They come equipped with a collection of $2^n$ idempotents which ones should think of as being indexed by the chambers of of the standard coordinate hyperplane arrangement in $\mathbb{R}^n$. The subalgebras $A^{-}_G(\FI,-)$, resp.\ $A^{-}_G(\FI,\mass)$, are generated by the idempotents corresponding to \emph{feasible} chambers, resp.\ \emph{bounded and feasible} chambers. (See \Cref{subsection:hyperplane-arrangements} for this terminology). 
%
%The proof of \Cref{theorem:main-theorem} has multiple steps. 
%
%We will show that $\Mod_{A^\Bet_G(\FI, \mass)} \simeq \Mod_{A^\dR_G(\FI, \mass)}$ by a map which secretly realizes the Riemann--Hilbert correspondence. 

%\color{red}LC I am no longer convinced that it is to our advantage to include this subsection. The paper is not that long, so the reader should be able to figure out the structure pretty quickly without referring to this subsection. \color{black}
Our calculation of the Betti category $\cO$ proceeds in several steps. First, we recall the classical calculation (phrased in the language of sheaves rather than microsheaves) of microsheaves on the Lagrangian $\LL\subset T^*\CC^n$ given by the union of conormals to toric strata. The $G$-action on this category is also well understood, so that we may present the equivariant microsheaf category
\[
\mush^G_\LL(\LL)\simeq \Mod_{A_G^\Betti}
\]
as the category of modules over an algebra $A_G^\Betti$ with $2^n$ commuting idempotents, corresponding to the components of the Lagrangian $\LL_G:=\LL/G.$

Next, we pass from the stacky Lagrangian $\LL_G$ to an open non-stacky subset $\LL_G(\FI,-)$ of points which are stable with respect to the reduction parameter $\FI\in \fgv_\RR.$ Writing $e_\cF$ for the sum of idempotents in $A_G^\Betti$ corresponding to components of $\LL_G(\FI,-)\subset \LL_G,$ we obtain a new algebra $A_G^\Betti(\FI,-):=e_\cF A_G^\Betti e_\cF,$ and
%To prove \Cref{theorem:main-theorem}, we shall consider another algebra $A^{\Bet}_G(\FI, \mass)$ which is also defined by an essentially combinatorial procedure. As the terminology suggest, $A^{\Bet}_G(\FI, \mass)$ and $A^{\dR}_G(\FI, \mass)$ are isomorphic, by a map which secretly realizes the Riemann--Hilbert correspondence. It is therefore sufficient to prove that 
%\begin{equation}
%    \mush_{\Oskel}(\Oskel)  \simeq \Mod_{A^{\dR}_G(\FI, \mass)}.
%\end{equation}
%This is the content of \Cref{thm:betti-category-O}, whose proof occupies most of this paper.
we exhibit an equivalence 
\begin{equation}\label{equation:intro-global-sec}
    \mush_{\LL_G}(\Lstab)\simeq \Mod_{A_G^\Bet(\FI,-)}.
\end{equation}
We establish \eqref{equation:intro-global-sec} by exhibiting both sides as the global sections of equivalent sheaves of categories, partly imitating a similar argument from \cite[Sec.\ 4.3]{gammage2019homological}. %By definition $\mush_{\Lstab}(-)$ is a sheaf.  Meanwhile, the restriction $ \Mod_{A_G^\Betti(\FI,-)} \to \Mod_{e_FA_G^\Betti(\FI,-)e_F}$ defines a presheaf. 
The key technical step is to prove that the presheaf of categories $\Mod_{e_\alpha A_G^\Betti(\FI,-) e_\alpha}$ satisfies descent. This statement is established in a joint work of the authors with Michael McBreen and Ben Webster, located in \cite[Appendix C]{gammage2019homological}.  
%
%The second main step is to deduce \Cref{theorem:main-theorem} from \eqref{equation:intro-global-sec}.  We do this by proving that both sides of \eqref{equation:main-theorem-equivalence} fit into a recollement diagram, of which two out of three terms are equivalent. One such diagram is ``stop removal'' (in general this is not a recollement but merely a quotient sequence). The other recollement is \Cref{thm:Betti-recollement}, whose proof is the focus of \Cref{section:betti-and-derham}.

%{\color{blue}[BG: Do we want to mention Kirwan surjectivity here? Or only mention it in the body of the paper?]}

Finally, we pass from $\LL_G(\FI,-)$ to the closed subset $\LL_G(\FI,\mass)$ defined by \Cref{eq:oskel-1}: by stop removal, the category $\mush_{\LL_G}(\LL_G(\FI,\mass))$ may be obtained from $\mush_{\LL_G}(\LL_G(\FI,-))\simeq \Mod_{A_G^\Betti(\FI,-)}$ as the categorical quotient by the objects corepresenting microstalks at the components of the complement $\LL_G(\FI,\mass)\setminus \LL_G(\FI,-).$ Writing $e_{\cF\cap cU}\in A_G^\Betti(\FI,-)$ for the idempotent corresponding to these components, we define 
\begin{equation}\label{eq:underived-quotient-intro}
    A_G^\Betti(\FI,\mass):=
\frac{A_G^\Betti(\FI,-)}{A_G^\Betti(\FI,-)e_{\cF\cap \cU}A_G^\Betti(\FI,-)}
\end{equation}
to be the quotient algebra.

This definition does not yet guarantee us an equivalence $\mush_{\LL_G}(\Oskel)\simeq \Mod_{A_G^\Betti(\FI,\mass)},$ since \Cref{eq:underived-quotient-intro} is defined as a naive (underived) quotient of algebras, whereas the categorical quotient category $\mush_{\LL_G}(\Oskel)$ is controlled by a derived quotient described in \Cref{defn:derived-quotient}. We must therefore establish that the derived and underived quotients agree in this case. 

To understand the quotient \Cref{eq:underived-quotient-intro},
we write down a Riemann--Hilbert-type map, defined on a certain completion of $A_G^\Betti,$ to identify it with the completion of another algebra $A_G^{\dR}$ related to the category of $G$-equivariant DQ-modules on $\LL.$
%{\color{blue}Add sentence about Riemann--Hilbert.}
The completion is necessary because the Riemann--Hilbert correspondence is controlled by a map which is not algebraic but only complex-analytic. Luckily, this completion still contains enough information to recover the construction of $A_G^\Betti(\FI,\mass)$ and
(by following the Riemann--Hilbert isomorphism) to produce an isomorphism
\begin{equation}\label{eq:intro-proof-agbetti-is-derham}
A_G^\Betti(\FI,\mass)\simeq A_G^{\dR}(\FI,\mass)
\end{equation}
with the de Rham category $\cO$ algebra studied in \cites{BLPW10,BLPW12}.
We can then leverage many of the properties of de Rham category established in \cite{BLPW12}, deducing in particular that the quotient \Cref{eq:underived-quotient-intro} is in fact a derived quotient. Miraculously, although the original Riemann--Hilbert map we wrote down was not algebraic, its restriction to \Cref{eq:intro-proof-agbetti-is-derham} does in fact define an algebaic isomorphism of $\CC$-algebras, due to the strong finiteness properties of category $\cO.$

\subsection{Acknowledgements}
We benefited from useful discussions and suggestions from many mathematicians, including  Roman Bezuravnikov, Benjamin Duenzinger, Georgios Dimitroglou Rizell, Christopher Kuo, Maxence Mayrand, David Nadler, Maxime Ramzi, Lisa Sauermann, Vivek Shende, Germán Stefanich, Minh-Tâm Trinh and Filip Živanović. We are particularly grateful to Michael McBreen and Ben Webster for collaboration on \cite{gammage2019homological}*{Appendix C}, which was developed during the preparation of this paper.

Part of this work was conducted while LC and BG were supported by NSF grant DMS-2305257, and while LC was a member of the Max Planck Institute for Mathematics, which he warmly thanks for its hospitality. 
%During the writing of this paper, first Laurent C\^ot\'e and subsequently Benjamin Gammage were supported by NSF grant DMS-2305257.
%[INSERT NSF AND OTHER SUPPORT] 
%[Thank Ben and Michael for collaboration]

This research was supported in part by Perimeter Institute for Theoretical Physics. Research at Perimeter Institute is supported by the Government of Canada through the Department of Innovation, Science and Economic Development Canada and by the Province of Ontario through the Ministry of Research, Innovation and Science.
%Part of this work was conducted while LC was a member of the Max Planck Institute for Mathematics, which he warmly thanks for its hospitality. 

%We are grateful to Georgios Dimitroglou Rizell for pointing out how to deduce \Cref{theorem:hom-essential} from our description of the Fukaya category. We are grateful to Lisa Sauermann for many crucial explanations about hyperplane combinatorics. We also wish to thank Kaif Hilman, Christopher Kuo, Maxence Mayrand and Vivek Shende for helpful discussions and suggestions. [INSERT NSF AND OTHER SUPPORT]

\section{Hyperk\"ahler toric geometry}\label{sec:hyperplane}

\subsection{Hyperplane arrangements} \label{subsection:hyperplane-arrangements} We review hyperplane arrangements for the purpose of setting our notation. We mostly follow \cite[Sec.\ 2]{BLPW10}, to which we refer the reader for details and motivation.  

\begin{definition}
    Let $n\geq 1.$ A {\em sign vector} is an element of $2^{[n]}\cong \{+,-\}^n\cong \cP([n]),$ where the latter isomorphism takes $\alpha=(\alpha_1,\ldots,\alpha_n)\in \{+,-\}^n$ to the subset $\{k\in [n]\mid \alpha_k = +\}.$
\end{definition}

%For $n \geq 1$, let $2^{[n]}$ denote the set of words of length $n$ on the letters $\{+, -\}$. Elements of this set shall be called \emph{sign vectors}. The reader may prefer to view $2^{[n]}$ as the set of all subsets of $[n]=\{1,\dots, n\}$ by assigning to $S \subset [n]$ the word $ i_1\dots i_n$ where $i_j= -$ iff $i_j \notin I$.  %More generally, if $V \subset \mathbb{R}^n$ is a nice affine linear subspace, then the chambers of the induced hyperplane arrangement are in bijection with some subset of $2^{[n]}$

\begin{definition}
A \emph{hyperplane arrangement} is an $n$-dimensional, oriented, real vector space $V$ and a collection $\{ H_1, \dots, H_k\}$ of $k \geq 0$ oriented affine-linear hyperplanes (empty if $k=0$). 
\begin{itemize}
    \item  The connected components of $V- \bigcup_i H_i$ are called \emph{chambers}. 
    \item Given $S \subset [k]= \{1,\dots,k\}$, the intersection $H_S:= \bigcap_{i \in S} H_i$ is called the \emph{flat spanned by $S$}.
    \item A hyperplane arrangement is \emph{simple} if $H_S$ has codimension $|S|$ for all $S \subset [k]$. 
\end{itemize}
\end{definition}

\begin{example}
    The coordinate hyperplanes
$H_i := \{(x_1,\ldots,x_n)\in \RR^n\mid x_i =0\}$ form a hyperplane arrangement in $\RR^n$. This arrangement is simple, and the chambers are naturally indexed by $2^{[n]}$. We will write $\Delta_\alpha$ for the chamber corresponding to $\alpha\in 2^{[n]}.$
\end{example}

\begin{definition}
    A {\em polarized} hyperplane arrangement (indexed by $[n]$) is a triple $(V,\FI,\mass),$ where $V\subset \RR^n$ is a linear subspace, $\FI\in \RR^n/V, \mass\in V^\vee.$ Throughout this paper, we will make the assumption that the inclusion $V\hookrightarrow \RR^n$ is induced (by $\otimes_\ZZ\RR$) from an inclusion of lattices $V_\ZZ\hookrightarrow \ZZ^n.$
\end{definition}
The underlying hyperplane arrangement corresponding to the above data $(V,\FI,\mass)$ lives in the affine space $V(\FI):=V+\FI\subset \RR^n,$ and it is given by the $n$ affine-linear hyperplanes $H_i\subset V(\FI)$ defined by $H_i=\{v=(v_1,\ldots,v_n)\in V(\FI)\subset \RR^n\mid v_i=0\}.$
%More generally, given a linear subspace $V \subset \mathbb{R}^n$ and a vector $\FI \in \mathbb{R}^n/ V$, one obtains a hyperplane arrangement on $V$ by intersecting the coordinate hyperplanes with the the affine-linear translate $V(\FI):= \{ x \in \mathbb{R}^n \mid x-\FI \in V\}$. 

\begin{definition}
Let $(V,\FI,\mass)$ be a polarized hyperplane arrangement.
    \begin{itemize}
        \item We say that $\FI$ is regular if the corresponding hyperplane arrangement is simple, and we say that $\mass$ is regular if it is not constant on any 1-dimensional flat. We call the polarized arrangement $(V,\FI,\mass)$ {\em regular} if both $\FI$ and $\mass$ are regular.
        \item  A sign vector is said to be ($\FI$-)\emph{feasible} if the corresponding chamber in $\mathbb{R}^n$ has non-empty intersection with $V_\FI$; otherwise it is called infeasible. We write $\cF\subset 2^{[n]}$ for the collection of feasible sign vectors.
        \item A sign vector is said to be ($\mass$-)\emph{bounded} if the restriction of $\mass$ to the corresponding chamber in $V$ is bounded; otherwise we say it is unbounded. We write $\cB\subset 2^{[n]}$ for the collection of bounded sign vectors. 
        \item The hyperplane arrangement is {\em unimodular} if for all $I\subset [n],$ the image of $V_\ZZ\hookrightarrow \ZZ^n \twoheadrightarrow \ZZ^I$ is a direct summand of $\ZZ^I.$
    \end{itemize}
    \end{definition}

%Given $(V, \FI)$, let us also fix some $\mass \in V^*$. 

%We say that $V \subset \mathbb{R}^n$ is \emph{rational} if it occurs as the zero locus of a set of linear equations with integral coefficients. We say that $(V,\FI, \mass)$ are rational if $V$ is and $\FI, \mass$ lift to rational vectors in $\mathbb{R}^n$ (resp.\ $(\mathbb{R}^n)^*$). 

\subsection{Quaternionic vector spaces and reduction} \label{subsection:data-hypertoric} \label{subsection:hypertorics}
%Toric hyperkähler manifolds (also called hypertoric varieties) depend on certain combinatorial data which we now introduce.
%\subsubsection{Quaternionic vector spaces and reductions}
The spaces we study will be obtained from hyperk\"ahler reductions of quaternionic vector spaces.
%The quaternions $\mathbb{H}$ are the (non-commutative) $\mathbb{R}$-algebra of linear combinations $x_0 +i x_1+ j x_2+k x_3$, where $(x_0, \dots, x_3) \in \mathbb{R}^4$ and $i,j,k$ are formal variables satisfying the relation $i^2=j^2=k^2=ijk=-1$.
We write 
$
\HH = \{x_0+Ix_1+Jx_2+Kx_3\mid x_i\in \RR\}
$
for the quaternions. The coordinates $x_i$ give an isomorphism
$
\HH\cong \RR^4.
$
By distinguishing the complex structure $I,$ we can identify $\HH\cong \CC^2$ by declaring that $(z,w):=(x_0+ix_1, x_2+ix_3)$ are holomorphic coordinates. These constructions naturally extend to $\HH^n\cong \HH\otimes_\RR \RR^n.$

%We naturally identify $\mathbb{H}^n$ with $\mathbb{R}^{4n}$ with coordinates $(x_i^j)$ where $i=0, \dots 4$ and $j=1,\dots, n$. We also identify $\mathbb{H}^n$ with $\mathbb{C}^{2n}$ by setting $z^{2j-1}= x_0^j+i_1 x_1^j$ and $z^{2j} = x_2^j+ i_1 x_3^j$.

%The linear space $\mathbb{H}^n$ is the simplest example of a hyperk\"ahler manifold. Via the identification with $\mathbb{R}^{4n}$ we endow it with the euclidean metric; we have complex structures $I, J, K$ induced by left multiplication by $i, j, k$. We also have symplectic structures $\omega_I(-,-):= g(I-, -),$ $\omega_J(-,-):= g(J-,-),$ and $\omega_K:=g(K-,-)$. In complex coordinates, one can check that $\omega_I= \sum dz_{2j-1} \wedge dz_{2j}$, $\omega_J= Re( \sum dz_j \wedge d\overline{z}_j)$ and $\omega_K= Im( \sum dz_j \wedge d\overline{z}_j)$.
We write $g(-,-)$ for the metric on $\HH^n$ induced by the Euclidean metric on $\RR^{4n}.$ This is a hyperk\"ahler metric; for $\cJ\in \{I,J,K\}$ we write
$
\omega_\cJ:=g(\cJ-,-)
$
for the corresponding symplectic form, which is the standard K\"ahler form on the $\cJ$-complex K\"ahler manifold $\CC^{2n}.$
We write $\Omega_I:=\omega_J+i\omega_K$ for the $I$-holomorphic symplectic form.
% \sum_j (dz_j \wedge d\overline{z}_j)= \omega_J+ i \omega_K$. 
When we consider real symplectic geometry in this paper, it will be with respect to the real symplectic form $\omega_J:=\Re(\Omega_I).$
\begin{remark}
    Although the complex structure $I$ has a privileged status in our constructions, the form $\omega_J$ is not actually distinguished, and could be replaced throughout this paper by any $\omega_{J_\theta}:=\Re(e^{i\theta}\Omega_I)$. Indeed, the richest content of holomorphic Floer theory \cite{KS-holomorphic-floer} is only visible when considering this whole $S^1$ family of symplectic forms. 
\end{remark}

It will be convenient to identify $T^*\mathbb{C}^n \simeq \mathbb{C}^{2n}$ via the map 
\begin{equation}\label{equation:id-tc-h}
(q_1,\dots, q_n, p_1, \dots, p_n) \mapsto (q_1, -p_1, \dots, q_n, -p_n).
\end{equation}
Note that per our conventions, the symplectic form on $T^*Q$ is $d\lambda_{can}$ (if we had used instead $-d\lambda_{can}$, then there would be no minus signs in \eqref{equation:id-tc-h}).

%\subsubsection{Hyperkähler reduction}

Let $G$ be a complex reductive group acting linearly on $\mathbb{C}^n$, with maximal compact subgroup $G_c$. The following is well-known: 
%{\color{blue}(Reminder to state somewhere our convention that $G_c$ is the compact subgroup of $G$.)}
\begin{lemma}
    The action of $G_c$ on $T^*\CC^n$ is  trihamiltonian, in the sense that the action is Hamiltonian with respect to each of the symplectic forms ${\omega_I,\omega_K,\omega_J}.$
\end{lemma}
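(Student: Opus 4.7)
The plan is to verify directly, in the $\HH^n$-model, the two requirements of a trihamiltonian action: $G_c$-invariance of each symplectic form $\omega_\cJ$ ($\cJ \in \{I,J,K\}$), and existence of an equivariant moment map for each. Both are standard for linear actions, and the content of the lemma is really an identification of conventions.

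For invariance, I would note that the $G$-action on $\CC^n$ is $\CC$-linear, and its cotangent lift to $T^*\CC^n \cong \CC^n \oplus (\CC^n)^\vee$ is $g \cdot (q,p) = (gq,(g^\ast)^{-1}p)$. Restricting to $G_c \subset U(n)$, we have $(g^\ast)^{-1} = g$, so under the identification \eqref{equation:id-tc-h} the lifted action becomes the diagonal action of $G_c \subset U(n)$ on $\HH^n \cong \HH \otimes_\RR \RR^n$, where $\HH$ is regarded as a left $\CC$-module via $I$. Since this diagonal action commutes with right multiplication by $I,J,K$, the three complex structures are preserved; combined with obvious invariance of the Euclidean metric $g$, this gives $G_c$-invariance of each $\omega_\cJ = g(\cJ-,-)$.

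For the moment maps, I would appeal to the general formula for a linear Hamiltonian action on a symplectic vector space. The fundamental vector field $X_\xi$ attached to $\xi \in \fg_c$ is linear, given by the infinitesimal matrix action on $\HH^n$, and one may take
\[
\langle \mu^\cJ(v),\xi \rangle := \tfrac{1}{2}\omega_\cJ(X_\xi\, v,\, v).
\]
A short calculation, using bilinearity of $\omega_\cJ$ and the fact that $X_\xi$ is infinitesimally symplectic (which follows from the invariance established in the previous step), yields $d\mu^\cJ_\xi = -\iota_{X_\xi}\omega_\cJ$ and equivariance $\mu^\cJ(g\cdot v) = \mathrm{Ad}^\ast(g)\mu^\cJ(v)$. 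This gives the three real moment maps; combining $\mu^J$ and $\mu^K$ in the standard way produces the holomorphic moment map $\mu^\CC$ referenced in \Cref{subsection:hypertorics}.

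The only potential obstacle is bookkeeping: one must pin down conventions carefully enough to confirm that the cotangent-lifted $G_c$-action commutes with $J$ (and hence $K$), and not only with the distinguished complex structure $I$, which is automatic from $\CC$-linearity. Once this check is carried out, everything else is the standard formalism for linear Hamiltonian actions on symplectic vector spaces, and the trihamiltonian structure is simply the assembly of three such actions with respect to the three K\"ahler forms.
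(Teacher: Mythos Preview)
The paper gives no proof; the lemma is prefaced by ``the following is well-known'' and left at that. Your argument is the standard verification and is correct in structure: invariance of the hyperk\"ahler triple under $G_c$ (via $\HH$-linearity of the action) plus the quadratic moment-map formula for linear symplectic actions.

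One small bookkeeping correction, since you flagged conventions as the only obstacle: the cotangent lift acts on fibers by $(g^T)^{-1}$, which for unitary $g$ is $\bar g$ rather than $g$, so on $\HH^n\cong\CC^n\oplus\CC^n$ the action is $(z,w)\mapsto(gz,\bar g w)$, not the literal diagonal $(gz,gw)$. This does not affect your conclusion---one checks directly that $(z,w)\mapsto(gz,\bar g w)$ commutes with $J:(z,w)\mapsto(-\bar w,\bar z)$ and hence with $K$---and in the hypertoric case $G_c\subset (S^1)^n$ actually used downstream, the action is coordinatewise and the issue does not arise.
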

Using the $I$-complex structure, we may split the resulting triple of moment maps
\begin{equation}\label{eq:basic-tri-moment}
\mu:\HH^n\to \mathfrak{d}_\RR^\vee\otimes \RR^3
\end{equation}
into a real and complex part,
\[
\mu^\RR:\HH^n\to \mathfrak{d}_\RR^\vee
\qquad\text{and} \qquad 
\mu^\CC:\HH^n\to \mathfrak{d}^\vee.
\]
%We will be interested in the case where $G$ is a subtorus of the $n$-dimensional torus $D=(\CC^\times)^n,$ equipped 
%is the $n$-dimensional torus 
\begin{example}\label{ex:toric-moment-maps}
    Let $G=(\CC^\times)^n,$ with its standard action on $\CC^n.$ 
with its standard action on $\CC^n.$ 
The real and complex moment maps for $G$ may be written in coordinates as
\[
(x_1,\ldots,x_n,y_1,\ldots,y_n)\mapsto (|x_1|^2-|y_1|^2,\ldots,|x_n|^2-|y_n|^2),\qquad
(x_1,\ldots,x_n,y_1,\ldots,y_n)\mapsto (x_1y_1,\ldots,x_ny_n),
\]
respectively.
%respectively, and the moment maps for $G\subset D$ can be obtained by composition by $\mathfrak{d}^\vee\to\mathfrak{g}^\vee.$
\end{example}
\begin{theorem}[\cite{HKLR}*{Theorem 3.1}]\label{thm:hklr-reduction}
    Suppose that $\xi \in \mathfrak{d}^\vee \otimes \mathbb{R}^3$ is fixed by the coajoint action of $G$. If $G$ acts acts freely on $\mu^{-1}(\xi)$, then there is a canonical hyperkähler structure $(\overline{\omega}_I, \overline{\omega}_J, \overline{\omega}_K)$ on $\mu^{-1}(\xi)/G_c$. It is uniquely determined by the property that so that $\pi^* \overline{\omega}_I= \iota^* \omega_I$ (resp.\ for $J, K)$), where $\iota: \mu^{-1}(0) \hookrightarrow \mathbb{H}^n$ is the inclusion and $\pi: \mu^{-1}(0) \to \mu^{-1}(0)/G_c$ is the projection.
\end{theorem}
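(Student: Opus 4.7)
The plan is to carry out Marsden--Weinstein symplectic reduction three times in parallel, once for each of $\omega_I, \omega_J, \omega_K$, and then to verify that the three resulting forms on the quotient assemble into a hyperk\"ahler structure. Since $\xi$ is $G$-coadjoint-fixed and the moment map $\mu$ is $G_c$-equivariant, the level set $\mu^{-1}(\xi)$ is $G_c$-invariant; combined with the freeness hypothesis and compactness of $G_c$, this produces a smooth manifold $M := \mu^{-1}(\xi)/G_c$. Standard Marsden--Weinstein then produces, for each $\cJ \in \{I, J, K\}$, a closed $2$-form $\bar\omega_\cJ$ on $M$ uniquely characterized by $\pi^*\bar\omega_\cJ = \iota^*\omega_\cJ$; nondegeneracy follows because at every $x \in \mu^{-1}(\xi)$ the $\omega_\cJ$-symplectic orthogonal of the vertical space $\fg_\RR\cdot x$ is exactly $T_x\mu_\cJ^{-1}(\xi_\cJ)$.

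The key geometric input needed to upgrade this triple of forms to a hyperk\"ahler structure is the choice of a canonical horizontal complement. Using $\omega_\cJ(v,w) = g(\cJ v, w)$, the simultaneous kernel of $d\mu$ at $x$ equals
\[
T_x \mu^{-1}(\xi) = \bigl(I(\fg_\RR \cdot x) \oplus J(\fg_\RR \cdot x) \oplus K(\fg_\RR \cdot x)\bigr)^{\perp_g},
\]
so the horizontal subspace
\[
H_x := \bigl(\fg_\RR \cdot x \oplus I(\fg_\RR \cdot x) \oplus J(\fg_\RR \cdot x) \oplus K(\fg_\RR \cdot x)\bigr)^{\perp_g}
\]
is manifestly preserved by each of $I, J, K$ and provides a common complement in $T_x \mu^{-1}(\xi)$ to the vertical directions. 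Transporting $(g, I, J, K)|_{H_x}$ along the isomorphism $d\pi \colon H_x \xrightarrow{\sim} T_{[x]} M$ produces a metric $\bar g$ and almost complex structures $\bar I, \bar J, \bar K$ on $M$; the $G_c$-equivariance of the ambient structures (isometric because $G_c$ is the maximal compact of a group acting $\CC$-linearly, hence $I$-holomorphically, and similarly for the other complex structures when one notes that $G_c$ preserves all three moment map components) makes this descent well-defined.

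To finish, I would verify that $(\bar g, \bar I, \bar J, \bar K)$ satisfies the hyperk\"ahler axioms. The quaternionic relations $\bar I \bar J = \bar K$ (etc.) and the compatibility $\bar\omega_\cJ(-,-) = \bar g(\bar \cJ -, -)$ hold pointwise on $H_x$ because they hold on $\HH^n$; combined with the fact that each $\bar\omega_\cJ$ is closed, this makes each triple $(\bar g, \bar\cJ, \bar\omega_\cJ)$ K\"ahler, so each $\bar\cJ$ is parallel for the Levi-Civita connection of $\bar g$ and in particular integrable. The uniqueness assertion is then automatic, since $\bar g$ and each $\bar\cJ$ are recoverable from the triple of $\bar\omega$'s via $\bar\cJ = \bar g^{-1}\bar\omega_\cJ$. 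I expect the point requiring the most care to be the verification that $H$ is invariant under each of $I,J,K$ simultaneously and descends coherently to $M$; this is where the hypothesis that $\xi$ is coadjoint-fixed (so that the full triple moment map is genuinely $G_c$-equivariant, and all three level set conditions are cut out compatibly) enters in an essential way.
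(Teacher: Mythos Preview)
The paper does not supply its own proof of this statement; it simply cites \cite{HKLR}*{Theorem 3.1}. Your proposal is essentially the standard HKLR argument and is correct: perform Marsden--Weinstein reduction for each $\omega_\cJ$, observe that the $g$-orthogonal complement $H_x$ of the quaternionic span $\HH\cdot(\fg_\RR\cdot x)$ is simultaneously $I,J,K$-stable and complements the vertical space inside $T_x\mu^{-1}(\xi)$, and descend the metric and complex structures along $d\pi|_{H_x}$. One small point worth tightening: the directness of the sum $\fg_\RR\cdot x \oplus I(\fg_\RR\cdot x)\oplus J(\fg_\RR\cdot x)\oplus K(\fg_\RR\cdot x)$ is not a hypothesis but a consequence, obtained by noting that $\fg_\RR\cdot x \subset T_x\mu^{-1}(\xi)$ is $g$-orthogonal to $I(\fg_\RR\cdot x)+J(\fg_\RR\cdot x)+K(\fg_\RR\cdot x)$ (the latter being $(T_x\mu^{-1}(\xi))^{\perp_g}$), and that this last sum has dimension exactly $3\dim G_c$ since the freeness hypothesis makes $\mu$ a submersion. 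Also, the reason $G_c$ preserves $I,J,K$ is not that it preserves the moment map components, but rather that the trihamiltonian hypothesis already gives $G_c$-invariance of each $\omega_\cJ$, and combined with invariance of $g$ (from $G_c$ acting unitarily on $\CC^n$ and hence isometrically on $\HH^n$) this yields invariance of each $\cJ = g^{-1}\omega_\cJ$.
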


By splitting the moment maps up using the $I$-complex structure as above, we recover an $I$-complex algebraic perspective on the hyperk\"ahler reduction described in \Cref{thm:hklr-reduction}.
\begin{proposition}
    As an $I$-complex algebraic variety, the hyperk\"ahler reduction $\mu^{-1}(\xi)/G_c$ is isomorphic to the GIT quotient $(\mu^{\CC})^{-1}(\xi_2+i\xi_3)\redu_{\xi_1}G_c.$
\end{proposition}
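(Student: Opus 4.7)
The plan is to apply a standard Kempf--Ness-type comparison identifying hyperk\"ahler (symplectic) reductions with GIT quotients, for linear actions on affine varieties. The key input is that after fixing the $I$-complex structure, the complex moment map $\mu^{\CC}:\CC^{2n}\to\fg^\vee$ is a $G$-equivariant polynomial map (this is visible for tori from \Cref{ex:toric-moment-maps}, and for general reductive $G$ acting linearly on $\CC^n$ it follows from the explicit quadratic formula $\mu^{\CC}(q,p)\cdot \zeta = p(\zeta\cdot q)$). In particular, $Z:=(\mu^{\CC})^{-1}(\xi_2+i\xi_3)\subset \CC^{2n}$ is a closed $G$-invariant affine algebraic subvariety with a well-defined GIT quotient.

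First I would check that on the $I$-complex smooth locus of $Z,$ the restriction of the K\"ahler form $\omega_I$ remains K\"ahler, and that the restriction of the real moment map $\mu^{\RR}|_Z$ is a moment map for the residual $G_c$-action with respect to this form. This is straightforward from the definition of $\omega_I$ and the fact that $\mu^{\CC}$ is $I$-holomorphic. Under the freeness hypothesis of \Cref{thm:hklr-reduction}, $G_c$ acts freely on $(\mu^{\RR}|_Z)^{-1}(\xi_1)=\mu^{-1}(\xi),$ so $Z$ is smooth along this locus.

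Next, I would apply the Kempf--Ness theorem to the affine $G$-variety $Z,$ linearized by the character of $G$ determined by $\xi_1\in \fg^\vee_\RR$ (assumed rational, or else approximated by rationals). The content of the theorem is that every $G$-semistable $G$-orbit in $Z$ meets the level set $(\mu^{\RR}|_Z)^{-1}(\xi_1)$ in a single $G_c$-orbit; under our freeness hypothesis, semistability coincides with stability and every $G$-orbit meeting this level set is closed and free. This yields a continuous bijection
\[
\mu^{-1}(\xi)/G_c \;\xrightarrow{\sim}\; Z\redu_{\xi_1}G
\]
which, being a bijective holomorphic map between smooth complex analytic varieties of the same dimension, is a biholomorphism. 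Since the target is a complex algebraic variety by construction and the source inherits an algebraic structure via the Kempf--Ness diffeomorphism, this is in fact an isomorphism of algebraic varieties.

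The main obstacle is the Kempf--Ness step, where one must translate the linear-algebraic data $\xi_1\in \fg^\vee_\RR$ into genuine GIT linearization data on the affine variety $Z.$ For general reductive $G$ this requires $\xi_1$ to lie in a rational cone of characters (after a suitable identification); in the hypertoric setting where $G$ is a torus this causes no difficulty, since rational characters are dense. The rest of the argument is bookkeeping, tracking how the decomposition of the trihamiltonian moment map \eqref{eq:basic-tri-moment} into real and complex parts matches the Kempf--Ness data $(\text{level set},\text{linearization}).$
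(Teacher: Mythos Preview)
The paper does not actually give a proof of this proposition; it is stated as a standard fact, preceded only by the remark that ``by splitting the moment maps up using the $I$-complex structure as above, we recover an $I$-complex algebraic perspective on the hyperk\"ahler reduction.'' Your Kempf--Ness argument is exactly the standard justification the paper has in mind: one writes $\mu^{-1}(\xi)=(\mu^\RR)^{-1}(\xi_1)\cap(\mu^\CC)^{-1}(\xi_2+i\xi_3)$ and then applies Kempf--Ness to the K\"ahler quotient of the affine variety $(\mu^\CC)^{-1}(\xi_2+i\xi_3)$ by $G_c$ at level $\xi_1$. Your sketch is correct and supplies considerably more detail than the paper itself.
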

As a result, we have a very useful embedding of the hyperk\"ahler reduction $\mu^{-1}(\xi_1,0,0)$ into an Artin stack:
\begin{corollary}\label{cor:substack-embedding}
    The $I$-holomorphic symplectic manifold $\mu^{-1}(\xi_1,0,0)/G_c$ admits an open symplectic embedding
    \[
    \mu^{-1}(\xi_1,0,0)/G_c\hookrightarrow T^*(\CC^n/G) = (\mu^\CC)^{-1}(0)/G
    \]
    as the $\xi_1$-semistable locus inside the stack $(\mu^\CC)^{-1}(0)/G.$
\end{corollary}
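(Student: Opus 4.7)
The plan is to build this corollary from two standard ingredients: the preceding proposition, which algebraically identifies the hyperk\"ahler reduction with a GIT quotient; and the description of the cotangent stack of a linear quotient stack as a Hamiltonian reduction stack.

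First, I would pin down the equality $T^*(\CC^n/G) = (\mu^\CC)^{-1}(0)/G.$ Since $G$ acts linearly on $\CC^n,$ the cotangent lift to $T^*\CC^n$ preserves the Liouville $1$-form and is Hamiltonian with moment map $\mu^\CC$ (as recorded in \Cref{ex:toric-moment-maps} for the torus). A standard computation (either classically, on any open locus where $G$ acts with finite stabilizers, or with derived algebraic geometry globally) shows that the symplectic reduction stack $(\mu^\CC)^{-1}(0)/G$ is canonically the cotangent stack of $\CC^n/G,$ equipped with its canonical holomorphic symplectic form inherited from $T^*\CC^n$ by the usual reduction procedure.

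Second, I would apply the preceding proposition with $\xi=(\xi_1,0,0)$ to obtain
\[
\mu^{-1}(\xi_1,0,0)/G_c \;\cong\; (\mu^\CC)^{-1}(0) \redu_{\xi_1} G_c,
\]
and then reinterpret the right-hand side as the quotient stack of the $\xi_1$-semistable locus by the complexification $G.$ Because the character $\xi_1$ distinguishes a Zariski-open semistable locus in $(\mu^\CC)^{-1}(0),$ its quotient stack is an open substack of $(\mu^\CC)^{-1}(0)/G,$ which by the previous step is $T^*(\CC^n/G).$ This produces the open embedding. Compatibility of the symplectic structures is automatic: both are obtained by restriction from the canonical holomorphic symplectic form on $T^*\CC^n,$ and the defining property $\pi^*\overline{\omega}_I = \iota^*\omega_I$ in \Cref{thm:hklr-reduction} matches exactly the reduction recipe used to put a symplectic form on the GIT quotient.

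The only real subtlety is justifying the identification of $T^*(\CC^n/G)$ with $(\mu^\CC)^{-1}(0)/G$ at points where $G$ does not act freely, since the quotient stack has nontrivial isotropy there. This is precisely where one must invoke either the derived cotangent stack formalism or work only on the free locus. For our application the embedding lands in the $\xi_1$-semistable locus, which by hypothesis is where $G$ acts freely (this is what makes the hyperk\"ahler reduction a manifold), so the classical and derived pictures coincide on the image and the subtlety is sidestepped. The remaining checks (openness of semistable loci, naturality of the symplectic form under restriction) are routine.
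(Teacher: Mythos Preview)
Your proposal is correct and is exactly the argument the paper has in mind: the corollary is stated there without proof, as an immediate consequence of the preceding proposition identifying the hyperk\"ahler quotient with the GIT quotient $(\mu^\CC)^{-1}(0)\redu_{\xi_1}G_c$, together with the standard identification of $T^*(\CC^n/G)$ with the reduction stack $(\mu^\CC)^{-1}(0)/G$. Your unpacking of these two steps, and your remark that freeness of the $G$-action on the semistable locus makes the GIT quotient agree with the open substack, is precisely the content being left implicit.
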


%\begin{remark}
%    \color{red} equivalent perspectives on the reduction
%    \color{black}
%\end{remark}

%\subsubsection{Toric hyperkähler manifolds}

\subsection{Toric hyperk\"ahler manifolds}
We now specialize to the case of interest in this paper. 
%{\color{blue} (I'm currently editing this.)}
%\begin{definition}
%    {\em Hypertoric category $\cO$ data} is a triple
%    \begin{enumerate}
%        \item An exact sequence of tori
%\item A generic 
%    \end{enumerate}
%\end{definition}
Fix an exact sequence of tori
\begin{equation}\label{eq:basic-exact-sequence}
\begin{tikzcd}
1 \arrow[r] & G \arrow[r,"i"]& D \arrow[r,"p"] & F \arrow[r] & 1,
\end{tikzcd}
\end{equation}
together with an identification $D\cong (\CC^\times)^n.$
The exact sequence \Cref{eq:basic-exact-sequence} induces exact sequences
\[
\begin{tikzcd}
0 \arrow[r]& \mathfrak{g}_{\ZZ} \arrow[r,"i_{\ZZ}"]& \mathfrak{d}_{\ZZ} \arrow[r,"p_{\ZZ}"]& \mathfrak{f}_{\ZZ} \arrow[r]& 0,
\end{tikzcd}\qquad
%\]
%and
%\[
\begin{tikzcd}
0 &\arrow[l] \mathfrak{g}^{\vee}_{\ZZ}& \arrow[l,"i^\vee_{\ZZ}"'] \mathfrak{d}^{\vee}_{\ZZ} &\arrow[l,"p^\vee_{\ZZ}"'] \mathfrak{f}^{\vee}_{\ZZ} &\arrow[l] 0
\end{tikzcd}
\]
of cocharacter and character lattices, respectively. The identification $D \cong (\CC^\times)^n$ determines identifications $\mathfrak{d}_{\ZZ} \cong \oplus_{i=1}^n \ZZ e_i$ and $\mathfrak{d}^{\vee}_{\ZZ} \cong \oplus_{i=1}^n \ZZ e^i$. In the rest of this subsection we will abbreviate tensor products $(-) \otimes_{\ZZ} \RR$ by changing the subscript $\ZZ$ to $\RR$.

\begin{definition}
A \emph{stability parameter} is an element $\FI \in \mathfrak{g}^{\vee}_{\RR}$. A {\em mass parameter} is an element $\mass\in \mathfrak{f}_\RR.$
\end{definition}

The exact sequence \Cref{eq:basic-exact-sequence} determines the embedding $\ffv_\RR\xrightarrow{p_\RR^\vee}\RR^n$; together with the parameters $\FI,\mass,$ it therefore determines a polarized hyperplane arrangement $(\ffv_\RR,\FI,\mass).$

\begin{definition}\label{defn:H(t)}
    We write $\ffv_\RR(\FI):=\ffv_\RR+\FI$ for the affine-linear subspace of $\RR^n$ determined by the parameter $\FI,$ and $\cH(\FI)$ for the hyperplane arrangement in $\ffv_\RR(\FI)$ induced by the cooridnate hyperplane arrangement on $\mathfrak{d}^\vee_\RR\cong\RR^n.$
\end{definition}

%A choice of stability parameter specifies the $\mathfrak{f}^\vee_\RR$-torsor $\hpspace:=(i^{\vee}_{\RR})^{-1}(\FI)\subset \mathfrak{d}^\vee_\RR$. 
%A mass parameter determines a linear functional on this space.
%\begin{definition}\label{defn:H(t)}
%We write $\cH(\FI)$ for the hyperplane arrangement 
%on $\mathfrak{d}^{\vee}_{\RR} \cong \RR^n$ restricts to a hyperplane arrangement 
%on $\hpspace$ induced by restricting the coordinate hyperplane arrangement on $\mathfrak{d}^\vee_\RR\cong \RR^n.$
%\end{definition}
%The space $(i^{\vee}_{\RR})^{-1}(\FI)$ is a torsor for $\mathfrak{f}^{\vee}_{\RR}$. 

%We say that the pair (\eqref{eq:basic-exact-sequence},$\FI$) is \emph{coloop-free} if the arrangement $(i^{\vee}_{\RR})^{-1}(\FI)$ has a \emph{compact (and non-empty)} chamber with sign vector $(+, \dots, +)$.
%
%Choosing a lift $\tilde{\FI}$ of $\FI$ along $i^{\vee}_{\RR}$ determines an isomorphism
%\begin{align*}
%(i^{\vee}_{\RR})^{-1}(\FI) &\cong \mathfrak{f}^{\vee}_{\RR} \\
%x &\mapsto x-\tilde{\FI}.
%\end{align*}

%We now assume that we have been given a stability parameter $\FI\in\fgv_\RR$ such that the induced hyperplane arrangement $\cH(\FI)$ on $\ffv_\RR$ is simple. 
\begin{definition}
    A {\em (hypertoric) category $\cO$ datum} is a polarized hyperplane arrangement indexed by $[n],$ which we take to be determined by the exact sequence \Cref{eq:basic-exact-sequence} together with stability and mass parameters $(\FI,\mass)\in\fgv_\RR\times \frf_\RR,$ which is both regular and unimodular.
    %satisfying the conditions that the such that both $\FI$ and $\mass$ are generic. {\color{blue} (Also add unimodularity assumption, and make remarks about orbifolds below.)} We will abbreviate this data by $(G,\FI,\mass).$
\end{definition}
For the remainder of the paper, we fix a category $\cO$ datum $(G,\FI,\mass).$
Now
observe that the inclusion $G_c\hookrightarrow D_c$ defines a trihamiltonian action of $G_c$ on $T^*\CC^n,$ whose moment maps may be recovered from those described in \Cref{ex:toric-moment-maps} by composition with the quotient map $\mathfrak{d}_\RR^\vee\to\mathfrak{g}_\RR^\vee.$
%with moment map $\mu_G$ obtained from \Cref{eq:basic-tri-moment} by composition (after tensoring with $\RR^3$) with the quotient map $\mathfrak{d}_\RR^\vee\to \mathfrak{g}_\RR^\vee$. 
\begin{definition}\label{defn:hypertoric-variety}
    The \emph{toric hyperk\"ahler manifold} (or  hypertoric variety) associated to the exact sequence \Cref{eq:basic-exact-sequence} and the regular stability parameter $\FI$ is the hyperk\"ahler reduction
    \[
    \htvar_G(\FI):=\mu^{-1}(\FI,0,0)/G_c.
    \]
\end{definition}
\begin{theorem}[\cite{Biel-Dan}*{Theorem 3.2}]
    The space $\htvar_G(\FI)$ is a complete hyperk\"ahler manifold.
\end{theorem}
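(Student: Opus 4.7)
The plan is to verify the two conditions needed to invoke the HKLR reduction theorem (\Cref{thm:hklr-reduction})---that $G_c$ acts freely on $\mu^{-1}(\FI, 0, 0)$---and then separately to establish completeness of the resulting Riemannian metric.

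First I would verify free action. Given $(x, y) \in T^*\CC^n \cong \HH^n$, let $I = I_{(x,y)} := \{i \in [n] : x_i = y_i = 0\}$. The $D_c$-stabilizer of $(x, y)$ is precisely the coordinate subtorus $D_c^I \subset D_c$ indexed by $I$, so the $G_c$-stabilizer is the intersection $G_c \cap D_c^I$. Unimodularity of the polarized arrangement guarantees that this intersection is connected, with no torsion component: indeed, the unimodularity condition on $\ffv_\ZZ \hookrightarrow \ZZ^n \twoheadrightarrow \ZZ^I$ is equivalent (dualizing through \Cref{eq:basic-exact-sequence}) to the quotient $\fd_\ZZ / (\fg_\ZZ + \fd_\ZZ^I)$ being torsion-free. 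Then I would show that, for any $(x, y)$ in the level set $\mu^{-1}(\FI, 0, 0)$, regularity (simplicity of the arrangement $\cH(\FI)$) forces $|I|$ to be small enough that $G_c \cap D_c^I$ is zero-dimensional: a nonzero connected subtorus of $G_c \cap D_c^I$ would translate, under the real moment map equation, into a non-simple incidence at the flat $H_I$ of $\ffv_\RR(\FI)$. Combined, these two arguments yield triviality of the $G_c$-stabilizer, so \Cref{thm:hklr-reduction} applies and endows $\htvar_G(\FI)$ with its canonical hyperk\"ahler structure.

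For completeness, I would appeal to a standard quotient argument. The level set $\mu^{-1}(\FI, 0, 0)$ is closed in the complete flat hyperk\"ahler manifold $\HH^n$ (as the preimage of a point under the continuous map $\mu$), hence complete with the induced Riemannian metric. The compact group $G_c$ acts by isometries, freely and properly, with closed orbits; so the quotient $\htvar_G(\FI)$ inherits a complete Riemannian metric---a divergent geodesic in the quotient would lift to a horizontal geodesic of the same length upstairs, which converges by completeness of the level set, so its projection converges as well.

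The hardest part should be the free-action verification, which requires a coordinated use of the arithmetic condition (unimodularity), the combinatorial condition (simplicity of $\cH(\FI)$), and the moment-map equation, rather than any one of these alone. Completeness is essentially formal once freeness is established, and the hyperk\"ahler structure is then inherited with no further work from \Cref{thm:hklr-reduction}.
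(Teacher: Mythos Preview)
The paper does not supply its own proof of this statement: it is quoted directly from \cite{Biel-Dan}*{Theorem 3.2}, and the only commentary is the remark following it, which notes that unimodularity is what rules out orbifold points. So there is no in-paper argument to compare against.

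That said, your outline is a correct reconstruction of the standard proof, and it matches the content of the cited reference. The freeness argument is exactly right: for $(x,y)\in\mu^{-1}(\FI,0,0)$ with vanishing set $I$, the image of $\mu^\RR$ lands in the flat $H_I\subset\ffv_\RR(\FI)$, which is therefore nonempty; simplicity then forces $p|_{\fd^I}$ to be injective, so $\fg\cap\fd^I=0$ and the identity component of $G_c\cap D_c^I$ is trivial; unimodularity kills any residual finite stabilizer. The completeness argument (closed level set in a complete manifold, quotient by a compact group of isometries) is standard and correct. Your identification of the freeness step as the substantive one, requiring both regularity and unimodularity in tandem, is also accurate and is precisely what the paper's remark after the theorem is pointing at.
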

The varieties $\htvar_G(\FI)$ were first introduced in \cite{Goto} and subsequently studied in \cites{Konno, Biel-Dan,Haus-Sturm}.
\begin{remark}
    The condition of unimodularity in the definition of a category $\cO$ datum ensures that the absence of any strictly $\FI$-semistable points; without this condition, $\htvar_G(\FI)$ would no longer be guaranteed to be a smooth manifold, but it would still be a smooth orbifold (or DM stack) by \cite{Biel-Dan}*{Theorem 3.2}. Most of the results in this paper would still hold in this case, but we would no longer be able to make a comparison with Fukaya categories.
\end{remark}

\subsection{Distinguished Lagrangians}

Every hyperkähler toric manifold $\htvar_G(\FI)$ admits a distinguished collection of subvarieties which are $I$-holomorphic and Lagrangian with respect to the complex-symplectic form $\Omega_I$. These are the images of the conormals to coordinate strata in $\mathbb{C}^n$. 

For $\alpha \in 2^{[n]}$, let $Z_\alpha\subset \CC^n$ be the toric subvariety defined by 
        \begin{equation*}
        Z_\alpha:=\{(z_1,\ldots,z_n)\in \CC^n\mid z_{i_k}=0 \text{ if } i_k= -\}.
        \end{equation*}

\begin{definition}
We write $\LL^\alpha:=T_{Z_\alpha}^*\CC^n \subset T^*\mathbb{C}^n$ for the conormal to the toric subvariety $Z_\alpha$. 
    We will denote by $\LL_n$ (or just $\LL$ if $n$ is understood) the union of conormals to toric subvarieties:
    $
    \LL_n:=\bigcup_{\alpha\in 2^{[n]}}\LL^\alpha.
    $
\end{definition}

Consider the standard hyperplane arrangement on $\fdv_\RR \simeq \RR^n$ given by the coordinate hyperplanes, cooriented positively. We write $\Delta_\alpha$ for the chamber with sign vector $\alpha \in 2^{[n]}$.

\begin{lemma}\label{lemma:image-moment-lag}
    The chamber $\Delta_\alpha$ is the image of the Lagrangian $\LL^\alpha$ under the real moment map $\mu^\RR$.
\end{lemma}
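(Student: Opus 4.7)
The plan is to prove this by a direct coordinate computation using the explicit formulas for the moment maps given in \Cref{ex:toric-moment-maps}, combined with the definition of $\LL^\alpha$ as a conormal.

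First I would unwind the relevant identifications. Under the identification $T^*\CC^n\simeq \CC^{2n}\simeq \HH^n$ from \eqref{equation:id-tc-h}, a point of $T^*\CC^n$ with base coordinates $(q_1,\dots,q_n)$ and fibre coordinates $(p_1,\dots,p_n)$ corresponds to the point with coordinates $(x_i,y_i)=(q_i,-p_i)$ on $\HH^n$. Under this identification, the real moment map for the standard $(\CC^\times)^n$-action on $T^*\CC^n$, as given in \Cref{ex:toric-moment-maps}, reads
\[
\mu^{\RR}(q,p) \;=\; \bigl(|q_1|^2-|p_1|^2,\,\ldots,\,|q_n|^2-|p_n|^2\bigr)\in \fdv_{\RR}\cong\RR^n.
\]

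Next I would parametrize $\LL^\alpha=T^*_{Z_\alpha}\CC^n$ explicitly. By definition of $Z_\alpha$ and its conormal, a point $(q,p)\in T^*\CC^n$ lies in $\LL^\alpha$ iff for each index $i\in [n]$ the following hold: if $\alpha_i=-$ then $q_i=0$ (this is the condition $q\in Z_\alpha$), while if $\alpha_i=+$ then $p_i=0$ (this is the condition that the covector lies in the conormal direction, i.e.\ is annihilated by tangent vectors to $Z_\alpha$). Thus in each coordinate exactly one of $q_i, p_i$ is free and the other vanishes.

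From here the conclusion is immediate: the $i$-th coordinate of $\mu^\RR(q,p)$ equals $|q_i|^2\geq 0$ when $\alpha_i=+$ and equals $-|p_i|^2\leq 0$ when $\alpha_i=-$; moreover these free quantities range independently over $[0,\infty)$. Hence the image $\mu^{\RR}(\LL^\alpha)$ is exactly the closed orthant whose $i$-th sign agrees with $\alpha_i$, which is the (closed) chamber $\Delta_\alpha$ in $\fdv_\RR\cong\RR^n$ for the coordinate arrangement with positive coorientations.

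There is no serious obstacle here; the only thing to be careful about is keeping track of signs under the identification \eqref{equation:id-tc-h} and of open versus closed chambers (the image is naturally the closure, which under the convention in force here is what $\Delta_\alpha$ denotes).
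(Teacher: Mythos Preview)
Your proof is correct and is essentially the same argument as the paper's, just written out in coordinates: the paper observes that $\LL^\alpha\cong\CC^n$ carries the standard $D_c$-action with signs flipped according to $\alpha$, so its moment image is the orthant $\Delta_\alpha$, which is exactly what your coordinate computation makes explicit. Your remark about open versus closed chambers is well taken; the paper is tacitly using the closure here, as your computation shows.
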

\begin{proof}
    The restriction of the $D_c$-action, and the real K\"ahler form, to a Lagrangian $\LL_\alpha,$ is the standard torus action on the toric variety $\LL_\alpha\cong \CC^n,$ with some signs reversed as indicated in the sign vector $\alpha$, and the chamber $\Delta_\alpha$ is its moment polytope.
\end{proof}

Now we pass to the $G$ quotient.
\begin{definition}We consider the following Lagrangians obtained from $\LL=\LL_n$:
\begin{itemize}
    \item  We write $\LL_G:=\LL/G\subset (\mu^{\CC})^{-1}(0)/G = T^*(\CC^n/G)$ for the Lagrangian substack of $T^*(\CC^n/G).$
    \item We write $\LL_G(\FI,-):=(\LL\cap\mu^{-1}(\FI,0,0))/G_c\subset \fX_G(\FI).$ Equivalently, we 
    we can write this Lagrangian as the intersection
    $\LL_G\cap\fX_G(\FI),$ taken in $T^*(\CC^n/G)$ using the embedding of \Cref{cor:substack-embedding}.
    \item For $\alpha\in 2^{[n]},$ we write $\LL_G^\alpha(\FI,-):=(\LL^\alpha\cap\mu^{-1}(\FI,0,0))/G_c\subset \fX_G(\FI).$
    % which is a Lagrangian substack of the stack $T^*(\CC^n/G) = (\mu^{\CC})^{-1}(0)/G$. More generally, for $\alpha\in 2^{[n]}$, we let $\LL^\alpha_G(-,-) \subset \LL_n^\alpha/G\subset T^*(\CC^n/G)$. (We often abbreviate these as $\LL_G$ and $\LL^\alpha_G$). 
\end{itemize}
\end{definition}

%Now we pass to the hyperk\"ahler reduction.
%\begin{definition}
%    For $\alpha\in 2^{[n]}$ and $\FI \in \mathfrak{g}^{\vee}_{\RR}$, we write $\LL_G^\alpha(\FI, -):=(\LL^\alpha\cap \mu^{-1}(\FI,0,0))/G$ for the Lagrangian subvariety of $\htvar_G(\FI)$ induced from $\LL^\alpha.$
%\end{definition}
From \Cref{lemma:image-moment-lag} we can see that $\LL_G^\alpha(\FI, -)$ will be nonempty precisely when $\alpha\in \cF$ is a feasible sign vector, so that $\LL_G(\FI,-)$ may be written
\[
\LL_G(\FI,-)=\bigcup_{\alpha\in \cF}\LL_G^\alpha(\FI,-).
\]
For $\alpha\in\cF,$ we can describe the Lagrangian $\LL^\alpha_G(\FI, -)$ explicitly.
Observe that the manifold $\htvar_G(\FI)$, obtained from $\HH^n$ through hyperk\"ahler reduction by $G_c\subset D_c,$ has a residual trihamiltonian action of $F_c=D_c/G_c,$ from which it has a real moment map
\begin{equation}\label{eq:real-f-momentmap}
\mu^\RR:\htvar_G(\FI)\to \frf^\vee_\RR(\FI)\subset \fdv_\RR,
\end{equation}
whose codomain carries the hyperplane arrangement $\cH(\FI).$
\begin{proposition}\label{prop:moment-polytopes}
    For $\alpha\in \cF,$ the Lagrangian $\LL^\alpha_G(\FI, -)$ is a toric variety, and the restriction of \Cref{eq:real-f-momentmap} is a moment map for its dense $F_c$ action, with image given by the polytope $\Delta_\alpha\cap \hpspace$ in the hyperplane arrangment $\cH(\FI).$
\end{proposition}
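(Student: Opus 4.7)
The plan is to reduce the claim to a standard fact about K\"ahler reductions of toric manifolds, by exploiting that each $\LL^\alpha$ is itself a toric $D_c$-manifold on which the complex moment map vanishes automatically.

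First I would observe that $Z_\alpha\subset \CC^n$ is a coordinate subspace, hence $D$-invariant, so $\LL^\alpha = T^*_{Z_\alpha}\CC^n$ is a $D$-invariant Lagrangian in $(T^*\CC^n, \Omega_I)$. Any $D$-invariant Lagrangian is contained in $(\mu^\CC_D)^{-1}(0)$, and in particular in $(\mu^\CC_G)^{-1}(0)$. Consequently
\[
\LL^\alpha\cap\mu^{-1}(\FI,0,0) \;=\; \LL^\alpha\cap(\mu^\RR_G)^{-1}(\FI),
\]
so the hyperk\"ahler reduction of $\LL^\alpha$ at level $(\FI,0,0)$ coincides with the ordinary K\"ahler (symplectic) reduction of $\LL^\alpha$ by $G_c$ at level $\FI$.

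Next, I would identify $\LL^\alpha$ with $\CC^n$ as a toric K\"ahler $D_c$-manifold, where the $D_c$-action is the standard one with the signs in the factors indexed by $\{i:\alpha_i=-\}$ reversed. By \Cref{lemma:image-moment-lag}, the image of the $D_c$-moment map on $\LL^\alpha$ is precisely the chamber $\Delta_\alpha\subset \fdv_\RR$. Since $G_c\hookrightarrow D_c$, the $G_c$-moment map on $\LL^\alpha$ is $i^\vee\circ \mu^\RR_{D,\alpha}$, so $(\mu^\RR_G)^{-1}(\FI)\cap \LL^\alpha$ is nonempty exactly when $\FI\in i^\vee(\Delta_\alpha)$, equivalently when $\Delta_\alpha\cap \ffv_\RR(\FI)\neq \varnothing$; this is the condition $\alpha\in\cF$.

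For $\alpha\in\cF$, the quotient $\LL^\alpha_G(\FI,-)=(\LL^\alpha\cap(\mu^\RR_G)^{-1}(\FI))/G_c$ is then the usual symplectic reduction of the toric manifold $\LL^\alpha\cong \CC^n$ by the subtorus $G_c\subset D_c$ at the regular value $\FI$. By the standard theory of toric K\"ahler quotients (applied to the exact sequence $1\to G_c\to D_c\to F_c\to 1$), this quotient inherits a residual Hamiltonian $F_c$-action making it a toric variety; unimodularity of the arrangement $(\ffv_\RR,\FI,\mass)$ ensures that $G_c$ acts freely on $(\mu^\RR_G)^{-1}(\FI)\cap \LL^\alpha$, so the quotient is a smooth toric manifold. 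Its moment polytope is computed as the intersection of the $D_c$-moment polytope $\Delta_\alpha$ with the affine fiber $(i^\vee)^{-1}(\FI)=\ffv_\RR(\FI)=\hpspace$, i.e.\ $\Delta_\alpha\cap\hpspace$, which is a region of the hyperplane arrangement $\cH(\FI)$. Finally I would check compatibility of this residual $F_c$-moment map with the global $F_c$-moment map \eqref{eq:real-f-momentmap} on $\fX_G(\FI)$: this is a diagram chase using the dual exact sequence $0\to \ffv_\RR\to \fdv_\RR\to \fgv_\RR\to 0$ and the fact that both moment maps are induced by restriction of $\mu^\RR_D$.

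The main obstacle is a bookkeeping one rather than a conceptual one: keeping straight the sign twists coming from $\alpha$ when identifying $\LL^\alpha$ with $\CC^n$, and verifying that the residual $F_c$-moment map from the reduction agrees (and not merely up to translation) with the ambient one from \eqref{eq:real-f-momentmap}. Once these identifications are fixed, the proposition follows from the standard description of moment polytopes for symplectic reductions of toric manifolds by subtori.
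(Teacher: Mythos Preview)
Your proposal is correct and follows essentially the same approach as the paper: the paper's proof is the one-line observation that $\LL^\alpha_G(\FI,-)$ is the $G_c$-Hamiltonian reduction of $\LL^\alpha\cong\CC^n$ at parameter $\FI$, compatibly with the $F_c$-action, and all the stated properties follow from standard toric reduction. You have simply unpacked this observation in more detail (the vanishing of $\mu^\CC$ on $\LL^\alpha$, the sign-twisted identification with $\CC^n$, the polytope computation via the dual exact sequence), which is entirely appropriate.
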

\begin{proof}
    The assertions of the proposition all follow from the fact that $\LL^\alpha_G(\FI,-)$ may be identified with the $G_c$-Hamiltonian reduction of the variety $\LL^\alpha\cong \CC^n$ at parameter $\FI,$ compatibly with the $F_c$-action.
\end{proof}

%\begin{definition}
%    We write $\LL_G(\FI,-):=\bigcup_{\alpha\in \cF}\LL^\alpha_G\subset \htvar_G(\FI)$ for the singular Lagrangian subvariety of $\htvar_G(\FI)$ obtained as the union of the nonempty $\LL_G^\alpha(\FI, -)$. This Lagrangian is often called the {\em extended core} of $\htvar_G(\FI).$
%\end{definition}

We have not yet taken into account the mass parameter $\mass\in \frf_\RR,$ which we now use to single out certain components of the Lagrangian $\LL_G(\FI,-).$

\begin{definition}
    The {\em category $\cO$ skeleton} is the Lagrangian
    \[
    \Oskel:=\bigcup_{\alpha\in \cF\cap \cB}\LL_G^\alpha(\FI,-)\subset \htvar_G(\FI)
    \]
    given by those components of $\LL_G(\FI,-)$ corresponding to bounded sign vectors $\alpha.$
\end{definition}

%The Category $\cO$ skeleton will be a union of the $\LL_\alpha$, over sign vectors which are bounded and feasible. 
%Category $\cO$ will arise from the study of a further Lagrangian subvariety of $\LL_G(\FI,-).$

%\begin{definition}[Data for category $\mathcal{O}$]\label{defn:cato-skel}
%    {\color{blue}(This definition should get moved up to the previous subsection.)}
%    A \emph{datum $(G,\FI,\mass)$ for category $\mathcal{O}$} consists of:
%    \begin{enumerate}
%        \item an exact sequence \eqref{eq:basic-exact-sequence} 
%        \item  a generic stability parameter $\FI \in \mathfrak{g}^{\vee}_{\RR}$
%        \item a generic mass parameter $\mass\in \frf_\RR$
%    \end{enumerate}
%\end{definition}
%We impose the standing assumption that the pair (\eqref{eq:basic-exact-sequence}, $\FI$) is co-loop free. This assumption is mild in the following sense: as long as the hyperplane arrangemnt determined by (\eqref{eq:basic-exact-sequence}, $\FI$) has a compact chamber, then we can always ensure that (4) is satisfied simply by relabeling the coordinates on $\mathbb{C}^n$. 

%Note that (1) and (2) determine a hyperkähler toric manifold $\htvar_G(\FI)$ but the mass parameter (3) is needed to define category $\cO$. 
Observe that if the mass parameter is integral, it can be understood as a cocharacter $\mass:\CC^\times\to F$ of the torus $F$ which acts on $\htvar_G(\FI),$ determining an action of a 1-dimensional torus $\CC^\times_\mass$ on $\htvar_G(\FI)$; in the non-integral case, it still specifies a complex vector field on $\htvar_G(\FI).$ To simplify some statements below, we will assume the mass parameter is integral, although the generalization to the non-integral case should be clear. The action of the torus $\CC^\times_m$ on $\htvar_G(\FI)$ allows us to give a geometric meaning to the category $\cO$ skeleton:
%This assumption amounts to requiring that the core of the hyperkähler manifold determined by (\eqref{eq:basic-exact-sequence}, $\FI$) is half-dimensional. 
\begin{lemma}[\cite{Biel-Dan}*{\S 6.5}, \cite{BLPW12}*{Proposition 5.5}]\label{lem:stableset}
    The category $\cO$ skeleton $\Oskel$ is equal to the stable set
    \[\{x \in \htvar_G(\FI) \mid \lim_{z \to 0} z\cdot x \text{ exists} \}\]
    for the action of $z\in \CC^\times_m$. 
    %is equal to the union of Lagrangians $\bigcup_{\alpha\in \cF\cap \cB^c}\LL^\alpha_G$ corresponding to sign vectors which are both feasible and bounded.
\end{lemma}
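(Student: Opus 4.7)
My plan is to combine the Bialynicki--Birula decomposition of $\htvar_G(\FI)$ for the $\CC^\times_m$-action with the toric description of each $\LL_G^\alpha(\FI,-)$ provided by \Cref{prop:moment-polytopes}, and to prove the two inclusions separately.

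For the forward inclusion $\Oskel \subseteq \{x : \lim_{z\to 0} z\cdot x \text{ exists}\}$, I would fix $\alpha\in\cF\cap\cB$ and $x\in\LL_G^\alpha(\FI,-)$. Since $\CC^\times_m$ is cut out of the residual torus $F$ by the cocharacter $\mass$, the image of the $\CC^\times_m$-orbit of $x$ under the $F_c$-moment map is a line in the $\mass$-direction inside the polytope $\Delta_\alpha\cap\hpspace$. Boundedness of $\mass$ on this polytope (i.e.\ $\alpha\in\cB$) confines the orbit to a compact subset of the toric variety $\LL_G^\alpha(\FI,-)$, which forces the $z\to 0$ limit to exist.

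For the reverse inclusion, I would start from a point $x$ with $\lim_{z\to 0}z\cdot x = x_0$, so that $x_0$ is $\CC^\times_m$-fixed. Regularity of $\mass$ forces the $\CC^\times_m$- and $F_c$-fixed loci on $\htvar_G(\FI)$ to coincide, so $x_0$ lies over a vertex of $\cH(\FI)$. At such a vertex, $\CC^\times_m$-invariance of $\Omega_I$ makes the weights on $T_{x_0}\htvar_G(\FI)$ pair into $\pm b_1,\ldots,\pm b_k$ with all $b_i\neq 0$ by regularity, so the positive-weight subspace is Lagrangian; it matches the tangent of exactly one $\LL_G^\alpha(\FI,-)$ through $x_0$, namely the one for which $x_0$ is the $\mass$-minimum vertex of $\Delta_\alpha\cap\hpspace$. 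The existence of this minimum forces $\alpha\in\cB$.

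The main obstacle I anticipate is promoting this local tangent identification to the global conclusion $x\in\LL_G^\alpha(\FI,-)$. I plan to handle this by observing that $\LL_G^\alpha(\FI,-)$ itself admits a $\CC^\times_m$-equivariant contraction onto $x_0$ (via the toric flow already used in the forward direction), which places it inside the Bialynicki--Birula stable manifold of $x_0$; since both are irreducible Lagrangian subvarieties sharing the same tangent at $x_0$, they must agree. Taking a union over all fixed points then matches the stable set with $\bigcup_{\alpha\in\cF\cap\cB}\LL_G^\alpha(\FI,-)=\Oskel$.
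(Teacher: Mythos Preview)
The paper does not prove this lemma itself, deferring instead to \cite{Biel-Dan}*{\S 6.5} and \cite{BLPW12}*{Proposition 5.5}. Your Bialynicki--Birula approach is the natural one and your forward inclusion is fine, but two steps in the reverse inclusion need repair.

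First, the claim that ``$\LL_G^\alpha(\FI,-)$ itself admits a $\CC^\times_m$-equivariant contraction onto $x_0$'' is false whenever the component has more than one vertex: already for the bounded component $\PP^1$ in the $T^*\PP^1$ example there are two fixed points, and only the open cell $\PP^1\setminus\{\infty\}$ flows to $0$ as $z\to 0$. What is true is that the \emph{big} Bialynicki--Birula cell of the toric variety $\LL_G^\alpha(\FI,-)$ at its extremal vertex $x_0$ is open dense and lies inside the ambient attracting cell $A_{x_0}\subset\htvar_G(\FI)$. Since $\LL_G^\alpha(\FI,-)$ is closed in $\htvar_G(\FI)$ and $A_{x_0}$ is irreducible of the same dimension, this forces $A_{x_0}\subset\LL_G^\alpha(\FI,-)$, which is the inclusion you actually need. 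Second, and relatedly, the principle ``two irreducible Lagrangian subvarieties sharing the same tangent at $x_0$ must agree'' is not valid in general and should be replaced by this containment-plus-dimension argument.

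A cleaner route, closer to the cited references, avoids the cell identification entirely. The residual complex moment map $\mu^\CC_F:\htvar_G(\FI)\to\frf^\vee$ is $\CC^\times_m$-invariant (since $F$ is abelian) and satisfies $(\mu^\CC_F)^{-1}(0)=\LL_G(\FI,-)$. If $\lim_{z\to 0}z\cdot x$ exists, the limit is $\CC^\times_m$-fixed, hence $F$-fixed by regularity of $m$, hence lies in $\LL_G(\FI,-)$; so $\mu^\CC_F(x)=0$ and $x\in\LL_G(\FI,-)$ already. The reverse inclusion then reduces to one-parameter limits inside each toric component $\LL_G^\alpha(\FI,-)$, where it is immediate from boundedness of $m$ on the moment polytope.
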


In \Cref{sec:Lefschetz}, we will explain how the geometry of the $\CC^\times_m$ action may be used to reinterpret $\Oskel$ as the relative skeleton for a Lefschetz fibration.

%\begin{definition}
%The \emph{category $\cO$ skeleton} is the set
%$\Oskel:=\bigcup_{\alpha\in \cF\cap \cB^c}\LL^\alpha_G$ described in \Cref{lem:stableset}.
%\end{definition}
%\begin{equation}
%    \Oskel:= \{x \in \htvar_G(\FI) \mid \lim_{t \to 0} t\cdot x \text{ exists} \}. 
%\end{equation}

%It can be checked (see \cite[Sec.\ 6.5]{Biel-Dan} \cite[Sec.\ 5.2]{BLPW12}) that $\Oskel:= \bigcup_{\alpha \in \cF \cap \cB^c} \LL_G^\alpha$. The Lagrangian $\LL_G(\FI,-):= \bigcup_{\alpha \in \cF}\LL_G^\alpha$ is typically called the \emph{extended skeleton/core}.
%\begin{lemma}\cite[Sec.\ 6.5]{Biel-Dan} \cite[Sec.\ 5.2]{BLPW12}
%    $\LL_G(\FI,\mass):= \bigcup_{\alpha \in \cF \cap \cB^c} \LL_\alpha$
%\end{lemma}

\begin{remark}
An explicit cocore to $\LL^{\alpha}_G(\FI, \mass)$ was described physically in \cite[\S\S 6.2.3, 6.4.3]{BDGH} and using the BFN construction in \cite[\S 6.6]{HKW}. This cocore is a section of the complex moment map $\mu^\CC:\htvar_G(\FI)\to \frf^\vee_\CC$ that is expected to encode the quantum cohomology of the corresponding component of the 3d mirror Lagrangian skeleton \cite{DGGH, Tel-2d}. A geometric flow relating a quantization of this cocore to a projective module in $\cO^{\dR}(\FI,\mass)$ was described in \cite{Hilburn}.
\end{remark}

For future use, we record a partial ordering which plays a major role in the study of category $\cO.$
\begin{definition}\label{defn:partial-order}
    Let $\alpha,\beta\in \cF\cap\cB\subset 2^{[n]}$ be bounded and feasible sign vectors. By feasibility, they correspond to chambers $\Delta_\alpha,\Delta_\beta$ in the hyperplane arrangement $\cH(\FI).$ By boundedness, the restriction of the functional $m$ to each chamber takes a maximal value on some vertex, which we denote $p_\alpha\in\Delta_\alpha$ and $p_\beta\in \Delta_\beta,$ respectively. 
    Then we define a partial order on $\cF\cap\cB$ by declaring that $\alpha\leq \beta$ precisely when $m(p_\alpha)\leq m(p_\beta).$
\end{definition}

\subsection{Liouville geometry}\label{subsection:liouville-geometry}
%The purpose of this subsection is to discuss the symplectic geometry of $\overline{\fX}_\FI$. We will explain that $\overline{\fX}_\FI$ is naturally a \emph{Weinstein manifold}. 

%\subsubsection{Weinstein structure}

An excellent source for the Liouville geometry of symplectic resolutions is \cite{vzivanovic2022exact}. The following is an elaboration of the Remark following Theorem 7 of \cite{shende-hitchin}. 

\begin{lemma}\label{lem:HK-Weinstein}
    Let $\fX$ be a hyperk\"ahler manifold equipped with an $I$-holomorphic action $\CC^\times\curvearrowright\fX$ of $\CC^\times,$ such that for all $x\in\fX,$ the limit $\lim_{\lambda\to 0}\lambda\cdot x$ exists.
    \begin{enumerate}
        \item Suppose the holomorphic symplectic form $\Omega_I$ has positive weight for this action. Then up to rescaling, the vector field generated by the action of $\RR_{>0}\subset \CC^\times$ is a Liouville vector field on $\fX$ (for real symplectic form $\omega_J$).
        \item Suppose moreover that the $S^1\subset \CC^\times$ action is Hamiltonian for the K\"ahler form $\omega_I$. Then the Hamiltonian function underlies a (generalized, i.e., Morse-Bott type) Weinstein structure on $(\fX,\omega_J)$.
    \end{enumerate}
\end{lemma}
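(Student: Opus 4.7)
The plan is to translate the dynamical and Hamiltonian data of the $\CC^\times$-action into Liouville and Weinstein data for $\omega_J$, leveraging the hyperk\"ahler relations $\omega_\cJ(\cdot,\cdot)=g(\cJ\cdot,\cdot)$ and the factorization $\CC^\times \simeq \RR_{>0}\times S^1.$

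For part (1), let $Z$ denote the real vector field generated by $\RR_{>0}\subset\CC^\times$ and $V$ the one generated by $S^1$. Because the action is $I$-holomorphic, the infinitesimal map $\operatorname{Lie}(\CC^\times)\cong\CC\to\operatorname{Vect}(\fX)$ is $\CC$-linear with respect to $I$, so $V = IZ$. The positive-weight hypothesis reads $\lambda^*\Omega_I = \lambda^w\Omega_I$ for some $w>0$ and all $\lambda\in\CC^\times$; specializing to $\lambda=e^t\in\RR_{>0}$ and differentiating at $t=0$ gives $\mathcal{L}_Z\Omega_I = w\Omega_I$. Since $Z$ is a real vector field, taking real parts yields $\mathcal{L}_Z\omega_J = w\omega_J$, and because $\omega_J$ is closed, Cartan's formula exhibits a global primitive $\lambda_J := w^{-1}\iota_Z\omega_J$ whose $\omega_J$-dual is $w^{-1}Z$. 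After the rescaling $Z\squigra w^{-1}Z$ mentioned in the statement, $Z$ itself is a Liouville vector field for $(\fX,\omega_J)$.

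For part (2), I would take $\phi := -H$ as the candidate Weinstein function, where $H$ is the $\omega_I$-moment map for $S^1$, normalized by $dH = \iota_V\omega_I$. Using $V=IZ$ and the identity $\omega_I(\cdot,\cdot) = g(I\cdot,\cdot)$, one computes
\[
dH \;=\; g(IV,\cdot) \;=\; g(I^2 Z,\cdot) \;=\; -g(Z,\cdot),
\]
so $Z = \nabla_g\phi$ for the hyperk\"ahler metric $g$. Consequently $Z\cdot\phi = |Z|_g^2 \geq 0$ with equality precisely on the zero set of $Z$, which coincides with the $\CC^\times$-fixed locus and with the critical set of $H$. By Frankel's theorem, the fixed set of the compact Hamiltonian $S^1$-action is a disjoint union of $\omega_I$-symplectic submanifolds, and $H$ is Morse--Bott with these as its critical submanifolds, so $(\phi,Z)$ is tautologically gradient-like. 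The hypothesis that $\lim_{\lambda\to 0}\lambda\cdot x$ exists for every $x\in\fX$ then decomposes $\fX$ into the stable manifolds of $\operatorname{Crit}(\phi)$ under the flow of $Z$ and shows $\phi$ is proper and bounded above on each, giving the generalized (Morse--Bott) Weinstein structure.

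The step I expect to be most delicate is the global verification of the Weinstein exhaustion condition: while the Liouville identity in (1) and the gradient identity $Z = \nabla_g\phi$ in (2) are essentially formal consequences of the hyperk\"ahler identities, translating the pointwise hypothesis on limits into a proper, Morse--Bott, Weinstein-compatible stratification requires some care when the fixed-point set is noncompact, as is the case for the hypertoric varieties of interest in this paper. Sign conventions for the moment map and for the assignment $V=\pm IZ$ must also be tracked carefully, as they determine whether one takes $\phi = H$ or $\phi = -H$.
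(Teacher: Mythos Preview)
Your proof is correct and follows essentially the same route as the paper: positive weight gives $\mathcal{L}_Z\Omega_I = w\Omega_I$, hence a Liouville field after rescaling and taking real parts; $I$-holomorphicity yields $V = IZ$, and the hyperk\"ahler identity $\omega_I(\cdot,\cdot)=g(I\cdot,\cdot)$ then identifies $Z$ with the gradient of (minus) the moment map, which is Morse--Bott. You are in fact more careful than the paper in tracking the sign $\phi=-H$, invoking Frankel explicitly, and addressing the exhaustion condition via the limit hypothesis; the paper's proof simply asserts these points without elaboration.
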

\begin{proof}
    The assumption that the symplectic form has positive weight for the symplectic form guarantees that the vector field for the $\RR_{>0}$ action scales it by a positive factor, so that some rescaling $V$ of this vector field satisfies the Liouville condition $\cL_V(\Omega_I) = \Omega_I.$ This establishes (1).
    
For (2), observe that holomorphicity of the $\CC^\times$ action means that the vector field for the $S^1$ action may be written as $I\cdot V,$ where $V$ is the Liouville vector field described above. Using the relation $\omega_I(-,I\cdot V) = g(-,-)$ where $g$ is the hyperk\"ahler metric, we conclude that $V$ agrees with the gradient vector field for a Hamiltonian function for $I\cdot V.$ Since this function is the moment map for a $\CC^\times$-action, it is Morse-Bott.
\end{proof}

To equip $\htvar_G(\FI)$ with a Weinstein structure, it will suffice to construct a $\CC^\times$ action satisfying the hypotheses of \Cref{lem:HK-Weinstein}. To this end: let $\SS\simeq \CC^\times$ be the 1-dimensional torus which acts on $T^*\CC^n\simeq \CC^n\times \CC^n$ by scaling each factor with weight 1. The action of $\SS$ on $T^*\CC^n$ descends to an action
on $\htvar_G(\FI).$
%on $T^*(\CC^n/G)$ and $\overline{\fX}_\FI.$

\begin{proposition}\label{prop:Weinstein-vf}
    The action $\SS\curvearrowright \htvar_G(\FI)$ satisfies the hypotheses of \Cref{lem:HK-Weinstein}.
\end{proposition}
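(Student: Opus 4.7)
The plan is to verify each of the four hypotheses of \Cref{lem:HK-Weinstein} for the action $\SS\curvearrowright\htvar_G(\FI)$: that the action is $I$-holomorphic, that every point admits a $\lambda\to 0$ limit, that $\Omega_I$ has positive weight, and that the $S^1$-subaction is Hamiltonian for $\omega_I.$

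I would first address the descent of the action. On $T^*\CC^n$, the scaling action $(q,p)\mapsto(\lambda q,\lambda p)$ is $\CC$-linear, hence $I$-holomorphic, and commutes with $G.$ A direct computation gives $\mu^\CC\mapsto\lambda^2\mu^\CC$ and $\mu^\RR\mapsto|\lambda|^2\mu^\RR,$ so the full $\SS$ does not preserve the hyperk\"ahler level set $\mu^{-1}(\FI,0,0)$ when $\FI\neq 0,$ but it does preserve $(\mu^\CC)^{-1}(0)$ and $\FI$-semistability. Hence $\SS$ descends to an $I$-holomorphic $\CC^\times$-action on $\htvar_G(\FI)$ through the GIT identification of \Cref{cor:substack-embedding}, while the compact $S^1\subset\SS$ descends through the classical K\"ahler reduction since $|\lambda|=1$ preserves the full hyperk\"ahler level set. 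The positive-weight hypothesis follows immediately from $\Omega_I = \sum_i dq_i\wedge dp_i \mapsto \lambda^2\Omega_I,$ descended to $\htvar_G(\FI).$

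For the Hamiltonian condition on the $S^1$-subaction, I would use that on $T^*\CC^n$ the function $H(q,p) = \tfrac{1}{2}(|q|^2 + |p|^2)$ is a Hamiltonian for the diagonal $S^1$-rotation with respect to $\omega_I.$ Since $H$ is $G_c$-invariant and $S^1$ preserves $\mu^{-1}(\FI,0,0),$ reduction in stages produces a descended Hamiltonian on $\htvar_G(\FI)$ for the residual $S^1$-action, confirming this hypothesis.

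The principal remaining task, and the step I expect to require the most care, is the existence of limits $\lim_{\lambda\to 0}\lambda\cdot x.$ The same function $H$ is the main tool: its sublevel sets on $\mu^{-1}(\FI,0,0)$ are intersections with compact balls in $\CC^{2n},$ hence compact, so $H$ descends to a proper, non-negative function on $\htvar_G(\FI).$ The downward gradient flow of $H$ with respect to the hyperk\"ahler metric agrees, up to sign, with the $\RR_{>0}$-flow generating the $\SS$-action, via the standard identity $\nabla H = -I\cdot X_H$ obtained from $\omega_I = g(I\cdot,\cdot)$ combined with $\iota_{X_H}\omega_I = dH.$ Because $H$ is a moment map for a circle action on a K\"ahler manifold, it is Morse-Bott with critical locus contained in the compact $\SS$-fixed locus, so its downward gradient flow converges, yielding the desired limits and completing the verification.
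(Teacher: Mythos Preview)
Your proof is correct and in fact more complete than the paper's own argument. The paper's proof only verifies the positive-weight and $S^1$-Hamiltonian conditions, asserting that both ``follow from the analogous facts about the action of $\SS$ on $T^*\CC^n$,'' and leaves the $I$-holomorphicity and limit-existence hypotheses entirely implicit.

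Your observation that the full $\SS$-action does not preserve the level set $\mu^{-1}(\FI,0,0)$ when $\FI\neq 0$, so that descent must go through the GIT model of \Cref{cor:substack-embedding} rather than the hyperk\"ahler quotient directly, is a genuine subtlety the paper glosses over. For limit existence, your Morse-theoretic argument via the proper moment map $H$ is valid; an alternative, more algebraic route (whose ingredients appear in the proof of \Cref{lem:skeleton-is-compacts}) uses the $\SS$-equivariant affinization $\nu:\htvar_G(\FI)\to\fX_G(0)$: since $\fX_G(0)$ is an affine cone contracted to its vertex $o$ by $\SS$, the map $\CC^\times\to\fX_G(0),\ \lambda\mapsto\nu(\lambda\cdot x)$ extends over $0$, and the valuative criterion for the proper map $\nu$ lifts this to a limit in $\htvar_G(\FI)$. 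Your approach is self-contained within the differential-geometric framework; the algebraic one is shorter once the affinization is available.
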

\begin{proof}
    The condition that the action of $\SS$ on $\htvar_G(\FI)$ dilates the holomorphic symplectic form, and that $S^1\subset \SS$ is Hamiltonian for the real K\"ahler form, follow from the analogous facts about the action of $\SS$ on $T^*\CC^n.$ 
\end{proof}

%By construction, it scales the holomorphic symplectic form with weight 2. We can therefore take its real part as a Liouville vector field for $\fX_G(\FI).$

%\begin{definition}
%    The {\bf skeleton} of the Liouville manifold $\overline{\fX}_\FI$ is the stable set for the Liouville vector field, i.e., the set of points $x\in\overline{\fX}_\FI$ such that the limit $\lim_{s\to \infty} \phi^s(x)$ exists, where we write $\phi^s$ for the flow of the Liouville vector field.
%\end{definition}

Recall that the \emph{skeleton} of a Liouville manifold $\fX$ is the stable set for the Liouville vector field, i.e., the set of points which do not escape to infinity under Liouville flow. If $\fX$ is moreover assumed to be Weinstein, then this is the set of points $x\in \fX$ for which the limit $\lim_{s\to\infty}\phi^s(x)$ under the Liouville flow exists.

\begin{lemma}[\cite{BLPW10}*{Proposition 5.5}]\label{lem:skeleton-is-compacts}
    The skeleton of $\htvar_G(\FI)$ is the union of compact components in
    %the $\FI$-semistable locus 
    the extended core $\LL_G(\FI,-).$
    %the $\FI$-semistable locus $\LL^{\FI-ss}$ in the Lagrangian $\LL\subset T^*(\CC^n/G).$
\end{lemma}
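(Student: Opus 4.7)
The plan is to combine \Cref{prop:Weinstein-vf} with the identification (from \Cref{lem:HK-Weinstein}) of the Liouville flow with the generator of $\RR_{>0}\subset\SS$, realizing the skeleton as the stable set
\[
\{p\in\htvar_G(\FI)\mid\lim_{\lambda\to 0}\lambda\cdot_\SS p\text{ exists in }\htvar_G(\FI)\}.
\]
The strategy then amounts to identifying this stable set with the union of compact components of $\LL_G(\FI,-)$.

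The inclusion ``compact components $\subseteq$ skeleton'' is easy: each conormal $\LL^\alpha\subset T^*\CC^n$ is $\SS$-invariant, since $\SS$ scales both factors of $T^*\CC^n$ with weight one and commutes with $G$; hence $\LL^\alpha_G(\FI,-)\subset\htvar_G(\FI)$ is $\SS$-invariant, and by \Cref{prop:moment-polytopes} is compact precisely when the polytope $\Delta_\alpha\cap\hpspace$ is bounded. In that case every $\SS$-orbit has a limit within the component, so the entire component lies in the skeleton.

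For the reverse inclusion, I would use the Hamiltonian $H\colon\htvar_G(\FI)\to\RR$ for the $S^1\subset\SS$-action, which by \Cref{lem:HK-Weinstein}(2) is a proper Morse--Bott Weinstein function; explicitly, $H$ may be realized as the norm-squared function pulled back from $\mu^{-1}(\FI,0,0)\subset\HH^n$. The skeleton then equals the union of stable manifolds of critical points of $H$, namely of $\SS$-fixed points of $\htvar_G(\FI)$. Using the GIT presentation of \Cref{cor:substack-embedding}, an $\SS$-fixed point corresponds to a $G$-orbit in the $\FI$-semistable locus that is $\SS$-invariant modulo a cocharacter $\chi\colon\SS\to G$; the hypertoric analysis of \cite{BLPW10}*{\S 5} identifies these fixed points with torus-fixed points of the compact components $\LL^\alpha_G(\FI,-)$, corresponding to the vertices of compact chambers $\Delta_\alpha\cap\hpspace$.

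Finally, since the $\SS$-action is $I$-holomorphic, the descending manifold of $H$ from any such fixed point is a complex-analytic, $\SS$-invariant, half-dimensional subvariety of $\htvar_G(\FI)$, forced by dimension and $F_c\times\SS$-invariance to lie inside a single component of $\LL_G(\FI,-)$. Applying the Bialynicki--Birula decomposition to the compact toric variety $\LL^\alpha_G(\FI,-)$ then confirms that these descending manifolds exhaust the compact component. The main obstacle I anticipate is the careful identification of $\SS$-fixed points after GIT reduction (where unimodularity is used to avoid stacky subtleties) and the verification that no descending trajectory escapes from a compact component; for the second point one can argue using the explicit formula for $H$ and its unboundedness along the $\SS$-orbits inside the non-compact components of $\LL_G(\FI,-)$.
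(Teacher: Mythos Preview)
Your approach is substantially different from the paper's, and it has genuine gaps.

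First, the characterization of the skeleton as $\{p : \lim_{\lambda\to 0}\lambda\cdot_\SS p \text{ exists}\}$ is vacuous: by \Cref{lem:HK-Weinstein}, the Liouville flow is the $\RR_{>0}\subset\SS$ action in the \emph{expanding} direction, so the skeleton is the set of points whose forward orbit (i.e.\ $\lambda\to\infty$) remains bounded. By Bialynicki--Birula, the $\lambda\to 0$ limit exists for every point of $\htvar_G(\FI)$, so your condition singles out nothing. In $T^*\PP^1$ (where $\SS$ acts with weight $2$ on cotangent fibers), your criterion gives the whole space, not the zero section.

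Second, your identification of the $\SS$-fixed locus with the vertices of compact chambers is incorrect. A point of $\LL^\alpha_G(\FI,-)$ is $\SS$-fixed exactly when the sign vector $\alpha\in\{\pm 1\}^n\subset\fd_\ZZ$ lies in the image of $\fg_\ZZ$; when it does, the \emph{entire} component is fixed. In $T^*\PP^1$ the $\SS$-fixed locus is all of $\PP^1$, not two points. So your Morse--Bott analysis would have to treat positive-dimensional critical manifolds, and the subsequent claim that descending manifolds are ``forced by dimension and $F_c\times\SS$-invariance'' into components of $\LL_G(\FI,-)$ is not justified (individual descending cells have no reason to be $F_c$-invariant).

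The paper avoids all of this by using the $\SS$-equivariant proper affinization $\nu:\htvar_G(\FI)\to\fX_G(0)$. Since $\fX_G(0)$ is an affine cone with unique $\SS$-fixed point $o$, the only points of $\htvar_G(\FI)$ with bounded forward $\SS$-orbit are those in $\nu^{-1}(o)$; Bielawski--Dancer then identify this central fiber with the compact components of the extended core. This two-line argument replaces your entire Morse-theoretic program.
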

\begin{proof}
    There is an $\SS$-equivariant map $\nu:\htvar_G(\FI)\to \fX_G(0),$ and the latter is an affine variety with a single fixed point $o$. The skeleton of $\htvar_G(\FI)$ is the preimage $\nu^{-1}(0),$ which is the union of compact components of $\LL_G(\FI,-)$ by \cite{Biel-Dan}*{Theorem 6.5}. 
\end{proof}

\subsection{A Lefschetz fibration}\label{sec:Lefschetz}
In this section, which is not used in the main results of our paper, we record an alternative perspective on the category $\cO$ skeleton $\Oskel$ as the relative skeleton for a Lefschetz fibration on the hypertoric variety $\htvar_G(\FI).$
%Let $(G, \FI, \mass)$ be a datum for category $\cO$. 
Recall that we write
$\CC^\times_m$ for the 1-dimensional torus which acts on $\fX_G(\FI)$ through the cocharacter given by the mass parameter $m:\CC^\times\to F.$
\begin{definition}
    We will write $W^\mass_J:\htvar_G(\FI)\to \CC$ for the $J$-holomorphic moment map (for the holomorphic symplectic form $\Omega_J$) for the $\CC^\times_m$-action on $\htvar_G(\FI).$
\end{definition}

\begin{proposition}
    The function $W^\mass_J$ is a Lefschetz fibration. Its critical points are the set of 0-strata in $\LL_\FI.$
\end{proposition}
\begin{proof}
    The critical points of the $\CC^\times_m$-moment map $W^\mass_J$ are the fixed points of the $\CC^\times_m$ action on $\htvar$. By regularity of $m$, these are the same as the fixed points of the torus $F,$ which are precisely the 0-strata in $\LL_\FI.$ 

    Let $p\in \LL_\FI$ be such a critical point, corresponding to a $\dim(F)$-dimensional stratum in $\LL\subset T^*\CC^n,$ of the form $((\CC^\times)^{\dim(G)}\times \{0\}^{n-\dim(G)})\subset \CC^n\subset T^*\CC^n.$ This stratum has a neighborhood $U_p=T^*(\CC^\times)^{\dim(G)}\times \CC^{n-\dim(G)})$; the image of this neighborhood under hyperk\"ahler reduction gives
    %a local coordinate chart $\overline{U}_p\subset \ofX$ for the critical point $p.$
    %In a neighborhood of each such point $p$, there is a holomorphic Darboux coordinate chart 
    %In fact, this is 
    a holomorphic Darboux chart 
    $\overline{U}_p\subset \fX_G(\FI)$ for the critical point $p,$
    $\overline{U}_p\simeq T^*V,$ where $V\simeq \CC^{\dim(F)}$ is the $F$-representation with weights $(1,\ldots,1).$ In these coordinates, the map $W^\mass_J$ becomes $(x_1,\ldots,x_{\dim(F)},y_1,\ldots,y_{\dim(F)})\mapsto \sum x_iy_i.$
\end{proof}

For the rest of this section, fix $N\in \RR$ with $N\gg 0.$

\begin{definition}
    %Let $N\in \RR$ with $N\gg 0.$ 
    We will write $\frF:=(W^\mass_J)^{-1}(N)\subset \htvar_G(\FI)$ for the fiber of this Lefschetz fibration. We will understand it as a Liouville manifold, equipped with the restriction of Liouville structure $\lambda_J.$
\end{definition}

We will want to think of $\frF$ as the fiber of $W^\mass_J$ ``at infinity.'' %To make sense of this, we will define a Liouville subdomain of $\htvar_G(\FI)$ completing to $\htvar_G(\FI),$ by specifying a contact-type hypersurface $\oC\subset \htvar_G(\FI)$ which will form the contact boundary.
Observe that the real part $\Re W_J^\mass$ of the function $W^\mass_J$ is the real moment map, with respect to symplectic form $\omega_I,$ for the $S^1_m$ action.

\begin{definition}
    Let $C\subset \htvar_G(\FI)$ be the real hypersurface $C:=(\Re W_J^m)^{-1}(N).$ 
\end{definition}
\begin{lemma}\label{lemma:lefschetz-fiber-skeleton}
    The skeleton of the fiber $\frF$ is precisely the intersection
    $\LL_{\FI,\mass}\cap C.$
\end{lemma}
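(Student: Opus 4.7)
My plan is to prove this in two steps, reducing first to a computation about the complex moment map and then to a claim about the Lefschetz fibration structure.

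First I would unpack the equality $\LL_{\FI,\mass}\cap C = \LL_{\FI,\mass}\cap \frF$. Since $\frF = \{W^\mass_J = N\} = \{\Re W^\mass_J = N\}\cap\{\Im W^\mass_J = 0\}$ and $C=\{\Re W^\mass_J = N\}$, it suffices to show $\Im W^\mass_J \equiv 0$ on $\LL_{\FI,\mass}$. The key observation is that $\Im W^\mass_J$ agrees (up to sign) with the $\omega_K$-moment map $\mu_m^K$ for the $S^1_m$-action, which is a component of the $I$-holomorphic moment map $\mu_m^\CC = \mu_m^J + i\mu_m^K$. Now the $D$-complex moment map on $T^*\CC^n$ is $\mu^\CC_D = (z_1p_1,\ldots,z_np_n)$, which vanishes identically on each conormal Lagrangian $\LL^\alpha$ (by definition, each factor $z_ip_i$ vanishes). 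Descending to the hyperk\"ahler reduction, $\mu_m^\CC$ vanishes on each $\LL_G^\alpha(\FI,-)$, and hence in particular on $\LL_{\FI,\mass}$, so $\Im W^\mass_J = 0$ there as desired.

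Next I would show that the skeleton of $\frF$ (with its induced Liouville structure from $\lambda_\SS$) equals $\LL_{\FI,\mass}\cap \frF$ by identifying $\LL_{\FI,\mass}$ with the relative skeleton of the Lefschetz fibration $W^\mass_J$. By construction, $\LL_{\FI,\mass}$ is the $s\to 0$ stable set for the $\CC^\times_m$-action, which is generated by the gradient of $\Re W^\mass_J = \mu_m^I$ (via the K\"ahler identification $\nabla f = IX_f^{\omega_I}$). This stable set decomposes as the union of the compact Weinstein skeleton (which is the compact part of $\LL_G(\FI,-)$ by \Cref{lem:skeleton-is-compacts}) together with the attracting cells $X^+_p$ from each $\CC^\times_m$-fixed point $p$; the non-compact such cells are precisely Lefschetz thimbles extending along the positive real ray to $\Re W^\mass_J = +\infty$, one for each non-compact component of $\LL_{\FI,\mass}$. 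For $N$ above all critical values, the intersection of such a thimble with $\frF$ is a Lagrangian sphere — the vanishing cycle associated to that critical point — and the union of these vanishing cycles is, by the general theory of Weinstein Lefschetz fibrations, the skeleton of $\frF$ for the induced Liouville structure $\lambda_\SS|_\frF$.

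The main technical difficulty is the final identification: that the skeleton of $\frF$ computed via the induced Liouville form $\lambda_\SS|_\frF$ really consists of these vanishing cycles and nothing more. The cleanest approach would be to analyze $\mu_\SS^I|_\frF$ as a Morse--Bott function on $\frF$ — its critical points are those where $Z_\SS = \nabla\mu_\SS^I$ lies in the $\omega_J$-normal bundle of $\frF$, equivalently, in the span of the Hamiltonian vector fields of $\Re W^\mass_J$ and $\Im W^\mass_J$ — and to check that its stable manifolds sweep out $\LL_{\FI,\mass}\cap\frF$. Local models at each critical point, combined with the compatibility of $\SS$ and $\CC^\times_m$ (they commute, and $\SS$-flow moves fibers of $W^\mass_J$ to each other in a manner intertwined with $\CC^\times_m$-scaling of the base), should reduce the claim to the standard picture of a Lefschetz fiber with its vanishing cycle skeleton. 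The complication to watch for is that $\frF$ is not $\SS$-invariant, so the ambient Liouville field $Z_\SS$ does not restrict; one must project it symplectically onto $T\frF$ and control the behavior of this projection near infinity in $\frF$.
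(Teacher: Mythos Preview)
Your Step 1 is correct and is a useful preliminary reduction: the observation that $\Im W_J^\mass$ is (up to sign) the $\omega_K$-moment map $\mu^K_m$, which vanishes on $\LL_{\FI,\mass}$ because the extended core sits in the zero level of the $I$-complex moment map for $F$, cleanly gives $\LL_{\FI,\mass}\cap C = \LL_{\FI,\mass}\cap\frF$. The paper does not make this point explicit, but it is implicitly needed.

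Your Step 2, however, takes a genuinely different route from the paper. You argue via the general theory of Weinstein Lefschetz fibrations --- identifying $\LL_{\FI,\mass}$ with the union of thimbles and then invoking the principle that the skeleton of the regular fiber is the union of vanishing cycles. As you correctly flag, the hard part of this approach is verifying that the \emph{induced} Liouville structure on $\frF$ (the $\omega_J$-symplectic-orthogonal projection of the ambient $\SS$-Liouville field, which is not tangent to $\frF$) really has this skeleton; this is not automatic and would require the local analysis you sketch.

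The paper sidesteps this entirely by a reduction trick specific to the hypertoric setting. The $S^1_m$-action on $\frF$ is Hamiltonian, and the Hamiltonian reduction $\frF\redu S^1_m$ is \emph{itself a hypertoric variety}: namely $\htvar_{\tilG}(\FI,N)$ for the enlarged torus $\tilG:=p^{-1}(\CC^\times_m)\subset D$ with stability parameter $(\FI,N)$. One then applies \Cref{lem:skeleton-is-compacts} directly to this smaller hypertoric variety: its skeleton is the union of compact components of its extended core, and tracing through the combinatorics, those are exactly the images of the noncompact $\mass$-bounded components of $\LL_G(\FI,-)$ --- i.e., the noncompact pieces of $\Oskel$ --- under the reduction. (The paper's proof is terse here too: the passage from the skeleton of the quotient $\frF\redu S^1$ back to the skeleton of $\frF$ is left implicit.)

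What your approach buys is generality: it would apply to any $J$-holomorphic moment map on a hyperk\"ahler manifold with a compatible conical structure. What the paper's approach buys is that it reduces to the already-established \Cref{lem:skeleton-is-compacts} and avoids the delicate analysis of the projected Liouville field.
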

\begin{proof}
    Observe that the cocharacter $m$ determines a Hamiltonian $S^1$-action on the symplectic manifold $\frF,$ and the Hamiltonian reduction $\frF\redu S^1$ is the hypertoric variety associated to the torus $\tilG:=p^{-1}(\CC^\times_m)\subset D,$ where $p:D\to F$ is the projection from \Cref{eq:basic-exact-sequence}, with stability parameter $(\FI,N)$.
By \Cref{lem:skeleton-is-compacts}, the skeleton of $\frF\redu S^1$ is therefore the image under Hamiltonian reduction of the components of $\LL_\FI$ which become compact and nonempty after this Hamiltonian reduction. These are precisely the noncompact $\mass$-bounded chambers in $\LL_G(\FI,-),$ or in other words the noncompact components in $\Oskel.$
\end{proof}

In \Cref{sec:heuristic}, we offer a reinterpretation of the Fukaya-categorical computations in this paper through the perspective of the above Lefschetz fibration. Also see \Cref{figure:drawing-4d} in \Cref{ssec:examples} for an illustration of the relation of the fiber $\frF$ to the category $\cO$ skeleton.

\section{Algebraic Preliminaries}
In this section we collect some algebraic facts that we will use in our proof.
\subsection{Idempotents and recollements}
Let $A$ be a differential graded $\CC$-algebra, and let $e \in A^0$ be an idempotent. The naive quotient algebra $A/AeA$ is not well-behaved from a homotopical point of view. To remedy this, Braun-Chuang-Lazarev defined in \cite{BCL}*{\S 9} a derived quotient algebra
$A/^L(AeA)$ and gave an explicit dg-model of this algebra by adapting the categorical dg-quotient construction from \cite{Drinfeld-dgquotients}. We will take this dg-model as our definition of the derived quotient:

\begin{definition}\label{defn:derived-quotient}
The {\em derived quotient} of $A$ by the ideal $A e A$ is 
the dg-algebra %obtained from $A$ by freely adjoining a variable $h$ of cohomological degree $-1$ whose differential is $e$, i.e.,
\[
A /^L AeA := (A\langle h \rangle/(eh=h=he), dh=e)
\]
obtained by adjoining a degree-$(-1)$ variable $h$, satisfying the relations $eh=h=he,$ whose differential is equal to the idempotent $e.$
\end{definition}

\begin{proposition}[\protect{\cite[Remark 9.5]{BCL}, \cite[Section 2.1]{hugel2017recollements}}]\label{proposition:ff-recollement}
There is a recollement diagram
\begin{equation}\label{equation:recollement}
\begin{tikzcd}[column sep=1.5cm]
\Mod_{(A/^L A e A)}  &  
\Mod_A  \arrow[bend right, swap]{l}{i^*}
\arrow[from=l, hook]{}{i_*=i_!}
\arrow[bend left]{l}{i^!} & 
\arrow[bend right, swap]{l}{j_!}
\arrow[from=l, "j^!=j^*"]
\arrow[bend left]{l}{j_*}  \Mod_{eAe}.
\end{tikzcd}
\end{equation}
of stable categories where 
%\begin{equation}
%\begin{tabular}{ c c c} \label{table:maps}
%$i^*:= (-) \otimes_A A/^L AeA$, & $i_*:= \hom_{A/^L AeA}(A/^L AeA,-)$, &  \\ 
%& $i_!:= (-)\otimes_{A/^L AeA} A/^L AeA$, &  $i^! = \hom_A(A/^L AeA, -)$ , \\
% $j_! := (-) \otimes_{eAe}eA$, & $j^! = \hom_A(eA,-)$  &  \\
% & $j^*:=(-)\otimes_A Ae$, & $j_*:= \hom_{eAe}(Ae,-)$.
%\end{tabular}.
%\end{equation}
\[
\begin{array}{ll}
i^*:= (-) \otimes_A A/^L AeA,&
j_! := (-) \otimes_{eAe}eA,\\
i_*:= \hom_{A/^L AeA}(A/^L AeA,-),&
j^! = \hom_A(eA,-),\\
i_!:= (-)\otimes_{A/^L AeA} A/^L AeA,&
j_*:= \hom_{eAe}(Ae,-),\\
i^! = \hom_A(A/^L AeA, -),&
j_*:= \hom_{eAe}(Ae,-).
\end{array}
\]
In other words, the following properties hold:
\begin{itemize}
    \item[(i)] $(i^*, i_*, i^!)$ and $(j_!, j^*, j_*)$ are adjoint triples,
    \item[(ii)] $i_*=i_!, j_*, j_!$ are fully faithful,
    \item[(iii)] $j^* \circ i_*=0$ (hence by adjointness $i^*j_!=0$ and $i^!j_*=0$),
    \item[(iv)] for all $X \in \Mod_A$, the unit can be completed to an exact triangle
    \begin{equation*}
    i_*i^! X \to X \to j_*j^*X \to i_*i^! X[1]
    \end{equation*}
    and the counit can be completed to an exact triangle
    \begin{equation*}
    j_!j^* X \to X \to i_*i^*X \to j_!j^* X[1].
    \end{equation*}
\end{itemize}
\end{proposition}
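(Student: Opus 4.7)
The plan is to verify properties (i)--(iv) in turn, working from the explicit dg-model of the derived quotient in \Cref{defn:derived-quotient}. Throughout, set $B := A/^L AeA$ and $C := eAe$.

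First, I would establish (i) by recognizing each functor as a (derived) tensor or Hom functor with respect to an appropriate bimodule. The object $Ae$ is naturally an $(A,C)$-bimodule, $eA$ is a $(C,A)$-bimodule, and $B$ is a dg-$A$-algebra via the canonical map $A \to B$; each adjoint triple $(i^*, i_*, i^!)$ and $(j_!, j^*, j_*)$ is then an instance of the standard tensor-Hom adjunction associated to these bimodules.

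For (ii), I would verify full faithfulness by checking the relevant unit/counit. For $j_!$ and $j_*$, the computation $e \cdot Ae = C$ gives $j^* j_!(N) = e(Ae \otimes^L_C N) \simeq C \otimes^L_C N \simeq N$, and dually $j^* j_* N \simeq N$. For $i_* = i_!$, I would show that the multiplication map $B \otimes_A^L B \to B$ is a quasi-isomorphism --- that is, $A \to B$ is a homological epimorphism of dg-algebras --- which is the abstract content of \cite{BCL}*{\S 9}.

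The vanishing statement (iii) is where the explicit dg-model of \Cref{defn:derived-quotient} becomes essential: since $e = dh$ in $B$, the element $e$ acts by a null-homotopy on every dg-$B$-module. Hence for any $N \in \Mod_B$, the complex $eN = \hom_A(Ae, N)$ is acyclic, giving $j^*(i_* N) \simeq 0$. The vanishing $i^* j_! = 0$ and $i^! j_* = 0$ then follow formally from the adjunctions.

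For (iv), the key input is the exact triangle of $A$-bimodules
\[ Ae \otimes^L_C eA \longrightarrow A \longrightarrow B \longrightarrow [1], \]
which is built into the construction: it identifies $B$ with the cofiber of the multiplication map $AeA \simeq Ae \otimes^L_C eA \to A$ in the homotopy category of dg-$A$-bimodules. Tensoring on the right with $M$ over $A$ produces the counit triangle $j_! j^* M \to M \to i_* i^* M \to [1]$, and applying $\hom_A(-, M)$ produces the unit triangle $i_* i^! M \to M \to j_* j^* M \to [1]$, after identifying each term with the appropriate composition of the listed functors.

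The main obstacle is the existence of the bimodule fiber sequence displayed above; verifying it from the explicit dg-model requires computing $Ae \otimes^L_C eA$ via a bar resolution and matching it to the homotopy fiber of $A \to B$. Once this identification is in hand --- for which I would defer to \cite{BCL}*{Remark 9.5} --- all four properties follow by formal manipulations with the adjunctions.
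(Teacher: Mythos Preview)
Your sketch is correct and in fact supplies more than the paper does: the paper states this proposition purely as a citation to \cite{BCL} and \cite{hugel2017recollements} and gives no argument of its own. The route you outline---tensor--Hom adjunctions for (i), the computation $eA \otimes^L_A Ae \simeq eAe$ for (ii) on the $j$-side, the null-homotopy $dh=e$ for (iii), and the bimodule triangle $Ae\otimes^L_{eAe}eA\to A\to B$ for (iv)---is precisely the content of those references, and your observation that the explicit dg-model of \Cref{defn:derived-quotient} is (up to shift) the bar complex computing $Ae\otimes^L_{eAe}eA$ augmented to $A$ is the key point. The homological-epimorphism step for $i_*$ also follows from this triangle together with (iii), since $eB$ is contractible.

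One notational slip to fix: in your treatment of (iv) you write ``$AeA \simeq Ae \otimes^L_C eA$'', but that isomorphism is \emph{not} true in general---its failure is exactly what distinguishes the derived quotient from the naive one, and its validity is condition (1) of \Cref{defn:ff-recollement}. The correct statement, which you then use, is simply that $B$ is the cofiber of the multiplication map $Ae \otimes^L_{eAe} eA \to A$, with no identification of the source with $AeA$. Since the entire point of the surrounding discussion is the gap between $A/^L AeA$ and $A/AeA$, you should remove that stray ``$AeA\simeq$''.
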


%\begin{remark}
%All functors in \eqref{table:maps} are fully derived; so $-\otimes -$ secretly means $-\otimes^L -$, etc.)  
%\end{remark}

When $A$ is an ordinary algebra, the recollement \eqref{equation:recollement} appeared in the work of Cline, Parshall, and Scott \cite{CPS}. Rather than using the derived quotient, they assumed an extra hypothesis which guarantees that the derived quotient agrees with the naive one.
\begin{definition}\label{defn:ff-recollement}
Let $A$ be an algebra and let $e \in A$ be an idempotent. The ideal $AeA$ is \emph{stratifying} if any of the following equivalent conditions holds:
\begin{enumerate}
\item $Ae \otimes_{eAe} eA  \cong H^0(Ae \otimes_{eAe} eA) = AeA $.
\item The functor $\tilde{i}_!: \Mod_{A/AeA} \to \Mod_A$ defined by $\tilde{i}_! := (-)\otimes_{A/ AeA} A/ AeA$ is fully faithful.
\item $A/^L AeA \cong H^0(A/^L AeA) = A/AeA$.
\end{enumerate}
When this is the case, we will not distinguish between $i_!$ and $\tilde{i}_!$.
\end{definition}

\begin{remark}
There is also a notion of a recollement diagram for abelian categories. When $A$ is an ordinary algebra, the diagram obtained from \eqref{equation:recollement} by taking hearts of the standard t-structures is always a recollement of \emph{abelian} categories, regardless of whether or not $AeA$ is stratifying.
\end{remark}

\subsection{Koszul duality}\label{ssec:koszul-abstract}
Unlike \cite{BGS, BLPW10}, which consider mixed Koszul duality between ordinary graded algebras, we will be interested in derived Koszul duality between differential graded algebras or, more generally, associative ring spectra \cite{DGI} \cite[\S 14]{Lurie-SAG}.

%in the derived setting require an augmentation $A \to \CC$ (or $A \to \SS$ for ring spectra) but nothing changes if one works with an augmentation $A \to (\CC)^{\times n}$ as in \cite{BGS} \cite[\S 8]{Booth-deformation}.

\begin{remark}
In this section we will need to distinguish between left and right modules. Our convention is that endomorphisms of a right module act on the left and vice versa. 
\end{remark}

Let $S = (\CC)^{\times n}$. An \emph{augmented differential graded $S$-ring} is a differential graded $k$-algebra $A$ equipped with $\CC$-algebra homomorphisms $S \to A$ and $A \to S$ such that the composition $S \to A \to S$ is the identity. The canonical example of an augmented dg $S$-ring is the \emph{tensor algebra} $T_S(V) = \oplus_{n \geq 0} V^{\otimes_S n}$ of a finite dimensional differential graded $S$-bi-module $V$. {\bf Note that $S$ is not required to be central in $A$.} A reference where standard results on Koszul duality for augmented dg $k$-algebras are extended to dg $S$-rings is \cite{Han-hochschild}.

Suppose that we have two augmented dg $S$-rings $A$ and $B^{\op}$ such that there exists an augmentation $A \otimes_{\CC} B^{\op} \to S$ of the tensor product algebra extending the augmentations on $A$ and $B^{\op}$. This is equivalent to giving maps
%\begin{multicols}{2}
%\noindent
\begin{align}\label{eq:BKDA}
B &\to \hom_{A}(S,S),    \\
%\end{equation}
%\begin{equation}
\label{eq:AKDB}
A &\to \hom_B(S, S),
\end{align}
%\end{multicols}
\noindent where we are considering $S$ as a right $A$-module and a left $B$-module.

\begin{definition}\label{def:koszul-dual}
We say that {\em $B$ is Koszul dual to $A$} if \Cref{eq:BKDA} is an isomorphism. We say that {\em $A$ is Koszul dual to $B$} if \Cref{eq:AKDB} is an isomorphism. When 
%$A$ is Koszul dual to $B$ and vice versa 
both of these conditions are satisfied, we will say that $A$ and $B$ are \emph{Koszul bidual}.
\end{definition}

\begin{definition}
A differential graded algebra $A$ is \emph{connective} if $H^k(A)=0$ for all $k>0$. It is \emph{locally finite} if $H^k(A)$ is finite dimensional for all $k$ and \emph{finite} if $H^*(A)$ is finite dimensional.
\end{definition}

\begin{proposition}[\protect{\cite[Theorem 4.2.8, \S 8.1]{Booth-deformation}}]\label{prop:booth-involution}
Suppose that that $A$ is connective, locally finite, and that the augmentation ideal of $H^0(A) \to S$ is nilpotent. Then $A \simeq \hom_{\hom_A(S,S)}(S,S)$. In particular, Koszul duality is an involution.
\end{proposition}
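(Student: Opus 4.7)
The plan is to deduce this as an instance of classical Koszul duality for augmented dg $S$-rings, via the bar-cobar picture. Write $B := \hom_A(S,S)$ and consider the natural biduality map $\eta_A : A \to \hom_B(S,S)$ induced by the left $A$-action on $S$; the goal is to show $\eta_A$ is a quasi-isomorphism.

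First I would compute $B$ explicitly by resolving $S$. Under the hypotheses, the left $A$-module $S$ admits a canonical bar resolution $\mathrm{Bar}_A(S) = A \otimes_S T_S(\bar A[1]) \otimes_S S \to S$, where $\bar A := \ker(A \to S)$. Nilpotence of the augmentation ideal of $H^0 A$, combined with connectivity, is precisely what guarantees that this resolution converges rather than merely existing as a pro-object. Applying $\hom_A(-, S)$ termwise and using local finiteness to commute $S$-linear duality past the tensor-algebra structure yields
\[
B \;\simeq\; \hom_S\!\bigl(T_S(\bar A[1]),\, S\bigr) \;=\; \Omega_S\!\bigl(\mathrm{Bar}_S(A)\bigr),
\]
the cobar $S$-ring on the bar $S$-coalgebra of $A$.

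Next I would iterate the construction. One checks that $B$ again satisfies the three hypotheses: it is connective (using coconnectivity of $\mathrm{Bar}_S(A)$, which follows from connectivity of $\bar A$), its cohomology is locally finite (inherited from $A$ degree by degree through $\bar A^\vee$), and its augmentation ideal is pro-nilpotent by construction of the cobar. The same computation with $A$ replaced by $B$ then gives $\hom_B(S,S) \simeq \Omega_S(\mathrm{Bar}_S(B))$, and the bar-cobar adjunction supplies a counit quasi-isomorphism $\Omega_S \mathrm{Bar}_S(A) \xrightarrow{\sim} A$ for augmented dg $S$-rings satisfying these conditions. Tracing through the identifications, this counit is precisely $\eta_A$, so $\eta_A$ is a quasi-isomorphism.

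The main obstacle is convergence of the bar-cobar adjunction: its counit is a quasi-isomorphism only under suitable conilpotence hypotheses, so the real technical work is verifying that connectivity, local finiteness, and nilpotence of the augmentation ideal are (i) preserved by the Koszul dualizing operation $A \mapsto B$ and (ii) strong enough that in each fixed cohomological degree the various completed tensor algebras collapse to honest finite-dimensional ones. Once both directions of these hypotheses are established, the involutivity statement is formal: $A$ being Koszul dual to $B$ in the sense of \eqref{eq:AKDB} is exactly the conclusion that $\eta_A$ is a quasi-isomorphism, while $B$ is Koszul dual to $A$ by its very definition as $\hom_A(S,S)$.
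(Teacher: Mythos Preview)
The paper does not prove this proposition; it is quoted from \cite{Booth-deformation} without proof, so there is no argument in the paper to compare against. That said, your bar--cobar approach is the standard route, but there is a genuine error in the iteration step.

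Your claim that $B = \hom_A(S,S)$ is again connective is false: it is \emph{coconnective}. Under the paper's conventions (connective means $H^k = 0$ for $k>0$), the augmentation ideal $\bar A$ lives in degrees $\leq 0$, so $\bar A[1]$ lives in degrees $\leq -1$, and hence the bar coalgebra $T_S(\bar A[1])$ lives in degrees $\leq 0$. Its $S$-linear dual therefore lives in degrees $\geq 0$. Concretely, for $A = \mathbb C[x]/(x^2)$ with $|x|=0$, one gets $B = \mathbb C[y]$ with $|y|=1$, which is visibly not connective. (Relatedly, your identification $B \simeq \Omega_S(\mathrm{Bar}_S(A))$ is a notational slip: $\Omega_S \circ \mathrm{Bar}_S$ is the bar--cobar composite, which models $A$ or its completion, not the Koszul dual; what you actually computed is the $S$-linear dual of $\mathrm{Bar}_S(A)$.)

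This matters for the rest of the argument. Because $B$ is coconnective with $\bar B$ in degrees $\geq 1$, the shift $\bar B[1]$ has a nonzero degree-$0$ part, so $(\bar B[1])^{\otimes n}$ contributes to degree $0$ for all $n$; the bar construction on $B$ is therefore not degreewise finite, and $\hom_B(S,S)$ is naturally a completed object (cf.\ Remark~\ref{rem:doubledual}). The proposition is precisely the assertion that $A$ is already derived complete at its augmentation ideal, and this is where the three hypotheses on $A$ are actually used: connectivity and local finiteness bound the contributions degree by degree, and nilpotence of the augmentation ideal in $H^0(A)$ ensures there is no completion to perform in degree $0$. Your proposal never engages with this point, instead trying to reduce to a symmetric situation that does not exist. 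The fix is to argue directly that the canonical map from $A$ to its derived completion is an equivalence under the stated hypotheses, rather than attempting to rerun the same convergence argument on $B$.
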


\begin{remark}
For a different finiteness condition that also ensures that Koszul duality implies Koszul biduality see \cite[\S 14.1.3]{Lurie-SAG}.   
\end{remark}

\begin{remark}\label{rem:doubledual}
In general, one expects the double Koszul dual of $A$ to be the derived completion of $A$ at the augmentation ideal, as explained in \cites{Porta-completion, Efimov-completion}. For example, the Koszul dual of $\CC[x]$ with $|x|=0$ is $\CC[\epsilon]$ with $|\epsilon|=1$, 
and taking the Koszul dual again gives $\CC[[x]]$. 
One way to avoid this difficulty is to impose the finiteness hypotheses of \Cref{prop:booth-involution}; another approach is to work in the mixed setting,
%In the mixed setting, this subtlety doesn't arise, 
since the completion of $\CC[x]$ as a graded algebra is equivalent to $\CC[x]$. By applying the degrading functor to the mixed equivalence one gets \cite[\S 2]{Chen-cyclic}.
\end{remark}

The augmentation $A \otimes_{\CC} B^{op} \to S$ gives rise to contravariant functors
\begin{equation} \label{eq:kdadj}
\begin{tikzcd}[row sep = .25cm, column sep = 3cm]
\RMod_{A} \arrow[yshift=0.9ex]{r}{\hom_A(-,S)}
\arrow[leftarrow, yshift=-0.9ex]{r}[yshift=-0.2ex]{}[swap]{\hom_{B}(-, S)} & \LMod_{B} %\\
%S \arrow[r, mapsto] & B \\
%A & S \arrow[l, mapsto]
\end{tikzcd}    
\end{equation}
which are adjoint on the right, in the sense that there are natural equivalences
\begin{equation}
\hom_A(M,\hom_B(N,S)) \cong \hom_{A \otimes_{\CC} B^{op}}(M \otimes_{\CC} N, S) \cong \hom_B(N,\hom_A(M,S)).
\end{equation}
\begin{remark}
To speak of the left or right adjoint of a contravariant functor $F: \cC \to \cD$ one needs to decide whether to view it as a covariant functor $\cC^{\op} \to \cD$ or $\cC \to \cD^{\op}$. A left adjoint from one perspective is a right adjoint from the other. On the other hand, saying that two contravariant functors are adjoint on the left or adjoint on the right is unambiguous.
\end{remark}

When $A$ is Koszul dual to $B$ or vice versa, we will see that the Koszul duality functors \Cref{eq:kdadj} restrict to equivalences between certain subcategories of $\RMod_A$ and $\LMod_B$.
 
\begin{definition}
A subcategory of a stable category $\cC$ is \emph{thick} if it is a stable subcategory that is closed under retracts. For $X$ an object of $\cC,$ we write $\cT_{\cC}(X)$ for the smallest thick subcategory of $\cC$ containing $X.$ 
\end{definition}

\begin{remark}
The category $\cT_{\RMod_A}(A)$ (resp., $\cT_{\LMod_B}(B)$) is also known as the category of perfect modules and denoted by $\RPerf_A$ (resp. $\LPerf_B$).
\end{remark}

The following proposition is proved under the assumption of Koszul biduality in \cite{Campbell-Koszul}. But by examining the proof, it is easy to see that one can split the result. Also see \cite[Section 14.6]{Lurie-SAG} for a similar result after ind-completion.

\begin{proposition}[\protect{\cite[Theorem 2.43]{Campbell-Koszul}}]
If $B$ is Koszul dual to $A$ (resp., $A$ is Koszul dual to $B$) the functors \Cref{eq:kdadj} restrict to an equivalence $\cT_{\RMod_A}(S) \simeq \LPerf_B$ (resp., $\RPerf_A \simeq \cT_{\LMod_B}(S)$).
\end{proposition}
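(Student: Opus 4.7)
The plan is to run the standard dévissage argument for an adjoint equivalence, in contravariant form; I will describe the first case, as the second is symmetric after interchanging the roles of $A$ and $B$.

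First, I would unpack how the Koszul duality functors act on the distinguished objects. By hypothesis $\hom_A(S,S)\simeq B$, while $\hom_B(B,S)=S$ holds tautologically. Hence $\hom_A(-,S)$ sends the generator $S$ of $\cT_{\RMod_A}(S)$ to the generator $B$ of $\LPerf_B = \cT_{\LMod_B}(B)$, and $\hom_B(-,S)$ sends $B$ back to $S$. Since both functors are exact (as $\hom$-functors with values in a stable category, they send cofiber sequences to fiber sequences) and preserve retracts, the essential image of $\cT_{\RMod_A}(S)$ under $\hom_A(-,S)$ lies in $\LPerf_B$, and conversely $\hom_B(-,S)$ carries $\LPerf_B$ into $\cT_{\RMod_A}(S)$.

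Next I would use the contravariant adjunction \Cref{eq:kdadj} to upgrade this containment to an equivalence. The adjunction supplies two natural ``units''
\[
\eta_M \colon M \to \hom_B(\hom_A(M,S),S), \qquad \eta'_N \colon N \to \hom_A(\hom_B(N,S),S),
\]
obtained by applying the adjunction isomorphism to the respective identity maps. The specialization $\eta_S$ is the tautological identification $S\simeq \hom_B(B,S)$, and $\eta'_B$ is precisely the Koszul duality isomorphism $B\simeq \hom_A(S,S)$. By naturality and the exactness of both Koszul duality functors, the full subcategories of objects on which $\eta$ (resp.\ $\eta'$) is an equivalence are closed under cofiber sequences and retracts; since they contain $S$ (resp.\ $B$), they contain all of $\cT_{\RMod_A}(S)$ (resp.\ $\LPerf_B$). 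This produces the desired equivalence.

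I do not anticipate any serious obstacle: once $\hom_A(S,S)\simeq B$ and $\hom_B(B,S)\simeq S$ are identified, the argument is purely formal. The only points requiring care are the bookkeeping of variance in the contravariant adjunction (tracking which composite plays the role of unit versus counit) and confirming exactness of $\hom_A(-,S)$ and $\hom_B(-,S)$ on the subcategories in play, which reduces to the stability of the ambient categories.
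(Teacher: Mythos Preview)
The paper does not supply its own proof of this proposition; it cites \cite[Theorem 2.43]{Campbell-Koszul} and merely observes that the biduality hypothesis assumed there can be split into the two separate one-sided implications as stated. Your d\'evissage argument is precisely the standard proof one finds in that reference: verify that the contravariant adjunction units are equivalences on the generators $S$ and $B$ (using $\hom_A(S,S)\simeq B$ and $\hom_B(B,S)\simeq S$), then propagate to the thick closures by exactness. This is correct and is exactly the content the paper is deferring to.
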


Now suppose that $e \in S$ is an idempotent and $e^c = 1-e$ is the complementary idempotent. Let $T = e^c S e^c \simeq S/SeS$. The derived quotient $A/^L (A e A)$ and the cornering\footnote{This terminology is taken from \cite{CIK18} by way of \cite{Booth-deformation}.}
$(e^c B e^c)^{\op}$ are both $T$-augmented. Moreover, the augmentation $A \otimes_S B^{\op} \to S$ induces an augmentation $A/^L(A e A) \otimes_T (e^c B e^c)^{\op} \to T$, so one can ask whether $A/^L (A e A)$ and $e^c B e^c$ are Koszul dual.

First, recall the recollement \Cref{equation:recollement} associated to $e \in A$. There is an analogous recollement associated to $e^c \in B$ with functors $(i^c)^* \adj (i^c)_* = (i^c)_! \adj (i^c)^!$ and $(j^c)_! \adj (j^c)^! = (j^c)^* \adj (j^c)_*$.

\begin{proposition} \label{prop:derivedKDID}
\begin{enumerate}
\item
Suppose $B$ is Koszul dual to $A$. Then $e^c B e^c$ is Koszul dual to $A/^L(AeA)$ and the following diagram commutes:
\begin{equation}\label{eq:kdidcomm1}
\begin{tikzcd}[column sep = 4cm]
\cT_{\RMod_A}(S) \arrow[r, "{\hom_A(-,S)}"] & (\LPerf_B)^\op \\
\cT_{\RMod_{A/^L(A e A)}}(T) \arrow[r, "{\hom_{A/^L(A e A)}(-,T)}"] \arrow[u, "i_*"] & (\LPerf_{e^c B e^c})^\op \arrow[u, swap, "((j^c)_!)^\op"].
\end{tikzcd}
\end{equation}
\item
Suppose $A$ is Koszul dual to $B$. Then $A/^L(AeA)$ is Koszul dual to $e^c B e^c$ if and only if the following diagram commutes:
\begin{equation}\label{eq:kdidcomm2}
\begin{tikzcd}[column sep = 4cm]
\RPerf_A \arrow[d, "i^*"] & (\cT_{\LMod_B}(S))^\op \arrow[l, swap, "{\hom_B(-,S)}"] \arrow[d, "((j^c)^!)^\op"]  \\
\RPerf_{A/^L(A e A)}  & (\cT_{\LMod_{e^c B e^c}}(T))^\op  \arrow[l, swap, "{\hom_{e^c B e^c}(-,T)}"].
\end{tikzcd}
\end{equation}
\end{enumerate}
\end{proposition}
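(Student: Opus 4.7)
The plan is to prove both parts by reducing to a computation on the generators $T \in \cT_{\RMod_{A/^L(AeA)}}(T)$ and $S\in \cT_{\LMod_B}(S)$ respectively, exploiting that the horizontal contravariant $\hom$-functors and the vertical recollement functors are all exact.

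For Part (1), the main input is that the Koszul duality map $B \xrightarrow{\sim} \hom_A(S,S)$ is compatible with the actions of $S$ on both sides. Sandwiching between $e^c$ on left and right, it restricts to an equivalence $e^c B e^c \simeq \hom_A(e^c S, e^c S) = \hom_A(T,T),$ where $T = e^c S$ carries its right $A$-module structure via $A \to S$. Since $A \to S$ factors through $A/^L(AeA) \to S/SeS = T,$ this right $A$-module is precisely $i_* T,$ so full-faithfulness of $i_*$ (from \Cref{proposition:ff-recollement}) yields
\[
e^c B e^c \simeq \hom_A(i_* T, i_* T) \simeq \hom_{A/^L(AeA)}(T, T),
\]
which is the claimed Koszul duality of $e^c Be^c$ and $A/^L(AeA).$ For diagram \Cref{eq:kdidcomm1}, by exactness of all functors it suffices to check commutativity on the generator $T.$ The top-left path gives $\hom_A(i_*T, S) = \hom_A(T,S) \simeq Be^c$ (where the last isomorphism identifies maps $T \to S$ with elements of $B$ of the form $b \cdot e^c$). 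The bottom-right path gives $(j^c)_!\hom_{A/^L(AeA)}(T,T) \simeq (j^c)_!(e^c B e^c) = Be^c \otimes_{e^c B e^c} e^c B e^c = Be^c,$ and one checks the identifications agree.

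For Part (2), evaluating diagram \Cref{eq:kdidcomm2} at $M = S$: the top-left composition gives $i^* \hom_B(S,S) = i^* A = A/^L(AeA),$ while the bottom-right gives $\hom_{e^c B e^c}((j^c)^! S, T) = \hom_{e^c B e^c}(e^c S, T) = \hom_{e^c B e^c}(T, T).$ Hence the diagram commutes at $S$ if and only if the natural map $A/^L(AeA) \to \hom_{e^c B e^c}(T,T)$ (obtained by restricting the augmentation $A \to \hom_B(S,S)$ via $(j^c)^*$ and then applying $i^*$) is an equivalence — that is, precisely the Koszul duality condition. Since both paths through the diagram are exact contravariant functors, commutativity on the generator $S$ extends automatically to all of $\cT_{\LMod_B}(S),$ giving the ``if'' direction; conversely, diagram commutativity in particular implies commutativity at $S,$ giving the Koszul duality.

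The asymmetry with Part (1) — and the main conceptual subtlety — is that in the Part (2) setup, writing $A/^L(AeA)$ as an endomorphism algebra of $T$ over $e^c B e^c$ is not forced by the hypothesis: as explained in \Cref{rem:doubledual}, Koszul duality can fail to be involutive, so computing the corner $e^c B e^c$ and re-dualizing may yield a completion of $A/^L(AeA)$ rather than $A/^L(AeA)$ itself. The diagram commutativity is precisely the condition that no such completion is introduced. In practice, verifying commutativity in Part (2) will require either explicit finiteness hypotheses (as in \Cref{prop:booth-involution}) or a direct check that the natural map $A/^L(AeA) \to \hom_{e^c B e^c}(T,T)$ is an equivalence, which is the only step requiring real work beyond the formal manipulations above.
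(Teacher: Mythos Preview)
Your proof is correct and follows essentially the same route as the paper. Both arguments reduce to checking commutativity on the generator $T$ (respectively $S$), identify $i_*T$ with $e^cS$, use full faithfulness of $i_*$ from the recollement, and compute the two paths as $Be^c$ and $(j^c)_!(e^cBe^c)$ (respectively $A/^L(AeA)$ and $\hom_{e^cBe^c}(T,T)$); the paper's Part~(1) phrases the logic as ``commutativity $\Leftrightarrow$ Koszul duality, then prove Koszul duality,'' while you first establish Koszul duality and then check the diagram, but the underlying computations are identical.
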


\begin{proof}
\begin{enumerate}
\item It is sufficient to check commutativity on generators. First, notice that
\[
\hom_A(i_*(T), S) \simeq \hom_A(e^cS, S) \simeq \hom_A(S,S) e^c \simeq B e^c \simeq (j^c)_!(e^c B e^c).
\]
Thus, \Cref{eq:kdidcomm1} commutes if and only if $e^c B e^c$ is Koszul dual to $A/^L(AeA)$. But since $i_*$ is fully faithful, we have
\[
\hom_{A/^L(A e A)}(T,T) \simeq \hom_A(i_*(T), i_*(T)) \simeq \hom_B(B e^c, B e^c) \simeq e^c B e^c.
\]

\item Diagram \Cref{eq:kdidcomm2} commutes if and only if
\[
A/^L(AeA) \simeq i^*(A) \simeq i^*\hom_B(S,S) \simeq \hom_{e^c B e^c}((j^c)^!(S), T) \simeq \hom_{e^c B e^c}(T,T).\qedhere
\]
\end{enumerate}
\end{proof}

\begin{corollary} \label{cor:kdid}
Suppose $B$ is Koszul dual to $A$ and $e \in S$ is an idempotent. Assume $A /^L( A e A)$ is connective, locally finite, and that the augmentation ideal of $H^0(A/^L (AeA)) \to T$ is nilpotent. In this case, $A /^L( A e A)$ and $e^c B e^c$ are Koszul bidual.
\end{corollary}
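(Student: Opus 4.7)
The plan is to combine the two main ingredients developed just above: the one-sided Koszul duality for derived quotients established in \Cref{prop:derivedKDID}(1), and Booth's involutivity criterion in \Cref{prop:booth-involution}.

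First, I would apply \Cref{prop:derivedKDID}(1) directly: since $B$ is Koszul dual to $A$ by hypothesis, that proposition tells us that $e^c B e^c$ is Koszul dual to $A/^L(AeA)$, meaning the natural map
\[
e^c B e^c \longrightarrow \hom_{A/^L(AeA)}(T,T)
\]
is an equivalence. This is half of Koszul biduality, and it comes essentially for free from the setup.

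For the other direction, the idea is to use involutivity to upgrade one-sided Koszul duality to bi-duality. Concretely, our connectivity, local finiteness, and nilpotence hypotheses on $A/^L(AeA)$ are exactly the assumptions of \Cref{prop:booth-involution}. Applying that proposition to the augmented dg $T$-ring $A/^L(AeA)$ yields an equivalence
\[
A/^L(AeA) \;\simeq\; \hom_{\hom_{A/^L(AeA)}(T,T)}(T,T).
\]
Substituting the identification $\hom_{A/^L(AeA)}(T,T) \simeq e^c B e^c$ from the previous step, the right-hand side becomes $\hom_{e^c B e^c}(T,T)$. Chasing through the definitions, this exhibits the canonical map $A/^L(AeA) \to \hom_{e^c B e^c}(T,T)$ as an equivalence, which is precisely the statement that $A/^L(AeA)$ is Koszul dual to $e^c B e^c$.

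Combining the two directions gives the desired Koszul biduality. There is no real obstacle here — the work has already been done in \Cref{prop:derivedKDID} and \Cref{prop:booth-involution}; the only mild subtlety is checking that the identification in the first step is compatible with the canonical augmentation map appearing in the statement of \Cref{prop:booth-involution}, so that substituting it in the involutivity equivalence genuinely produces the natural comparison map. This compatibility is automatic from the construction of the Koszul duality functors as represented by the $(A \otimes_{\CC} B^{\op}, S)$-bimodule structure on $S$ (which descends to an $(A/^L(AeA) \otimes_\CC (e^c B e^c)^{\op}, T)$-bimodule structure on $T$), so no additional argument is needed.
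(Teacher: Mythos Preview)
Your proposal is correct and matches the paper's intended argument: the corollary is stated without proof in the paper, and the implicit justification is precisely the combination of \Cref{prop:derivedKDID}(1) (giving one direction of duality) with \Cref{prop:booth-involution} (upgrading to biduality under the stated finiteness hypotheses on $A/^L(AeA)$). Your remark on the compatibility of augmentations is also accurate and appropriately light.
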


\begin{corollary}\label{cor:kdidcomm}
Suppose $A$ and $B$ are Koszul bidual and $e \in S$ is an idempotent. If the diagram 
\begin{equation}\label{eq:kdidcomm3}
\begin{tikzcd}[column sep = 4cm]
\RMod_A \arrow[r, "{\hom_A(-,S)}"] & (\LMod_B)^\op \\
\RMod_{A/^L(A e A)} \arrow[r, "{\hom_{A/^L(A e A)}(-,T)}"] \arrow[u, "i_*"] & (\LMod_{e^c B e^c})^\op \arrow[u, swap, "((j^c)_!)^\op"]
\end{tikzcd}
\end{equation}
commutes, then the diferential graded algebras $A/^L(AeA)$ and $e^c B e^c$ are Koszul bidual.
\end{corollary}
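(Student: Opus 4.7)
The plan is to combine both parts of Proposition \ref{prop:derivedKDID} with a mate-correspondence argument. Since $A$ and $B$ are Koszul bidual, in particular $B$ is Koszul dual to $A$, and so part (1) of Proposition \ref{prop:derivedKDID} already gives, with no further hypotheses, that $e^c B e^c$ is Koszul dual to $A /^L (AeA)$. So one half of the desired biduality is automatic; the remaining task is to produce an equivalence $A/^L(AeA) \simeq \hom_{e^c B e^c}(T, T)$. By part (2) of Proposition \ref{prop:derivedKDID}, this is equivalent to the commutativity of diagram \eqref{eq:kdidcomm2}, so the entire content of the corollary reduces to deducing commutativity of \eqref{eq:kdidcomm2} from commutativity of \eqref{eq:kdidcomm3}.

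The comparison between the two diagrams will be obtained from the mate correspondence, using three adjoint pairs: the vertical adjunction $i^* \dashv i_*$ from the recollement, the opposite of $(j^c)_! \dashv (j^c)^!$, which reads $((j^c)^!)^{\op} \dashv ((j^c)_!)^{\op}$, and the pair of horizontal Koszul duality functors in \eqref{eq:kdadj}, which, viewed as covariant functors between $\RMod_A$ and $(\LMod_B)^{\op}$ (and likewise between the quotient categories), form an adjunction $\hom_A(-, S) \dashv \hom_B(-, S)$ by the natural equivalence stated just above \Cref{eq:kdadj}. With these three adjoint pairs in hand, commutativity of diagram \eqref{eq:kdidcomm3}---an iso between the two ``right-adjoint-first, then horizontal'' composites $\hom_A(-,S) \circ i_*$ and $((j^c)_!)^{\op} \circ \hom_{A/^L(AeA)}(-,T)$---passes to a canonical iso between their left adjoints $i^* \circ \hom_B(-,S)$ and $\hom_{e^c B e^c}(-,T) \circ ((j^c)^!)^{\op}$, which is precisely commutativity of \eqref{eq:kdidcomm2}. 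Restricting to the appropriate perfect subcategories, on which the Koszul duality functors give equivalences by \cite[Theorem 2.43]{Campbell-Koszul} (as recalled above), gives the statement of \eqref{eq:kdidcomm2} in its original form, and applying part (2) of Proposition \ref{prop:derivedKDID} completes the argument.

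The main obstacle is verifying the adjunction structures cleanly in the contravariant/opposite-category setting: in particular, one must make sure that the augmentation $A/^L(AeA) \otimes_T (e^c B e^c)^{\op} \to T$ of the quotient data (noted just before Proposition \ref{prop:derivedKDID}) yields a genuine mate-theoretic adjunction between $\hom_{A/^L(AeA)}(-,T)$ and $\hom_{e^c B e^c}(-,T)$, parallel to the original one for $A$ and $B$; and one must check that the mate of a natural isomorphism is itself a natural isomorphism, rather than merely a natural transformation, which is the standard property of mates for adjoint pairs. After these are established, the argument is essentially formal.
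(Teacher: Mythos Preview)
Your proof takes essentially the same approach as the paper's. Both invoke Proposition~\ref{prop:derivedKDID}: part~(1) gives automatically that $e^c B e^c$ is Koszul dual to $A/^L(AeA)$ (since $B$ is Koszul dual to $A$), and part~(2) reduces the remaining direction to commutativity of diagram~\eqref{eq:kdidcomm2}, which is then deduced from \eqref{eq:kdidcomm3} by passing to adjoints. The paper's entire proof is the one-liner: ``If \eqref{eq:kdidcomm3} commutes then so does the diagram obtained by passing to left adjoints. The resulting diagram restricts to \eqref{eq:kdidcomm2}. Note that $(-)^{\op}$ swaps left and right adjoints.''

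One small slip in your bookkeeping: having (correctly) written $\hom_A(-,S) \dashv \hom_B(-,S)$, you then call $i^*\circ\hom_B(-,S)$ the \emph{left} adjoint of $\hom_A(-,S)\circ i_*$. But $\hom_B(-,S)$ is the \emph{right} adjoint of $\hom_A(-,S)$ while $i^*$ is the left adjoint of $i_*$, so this composite does not arise from uniformly taking left adjoints of the factors. The paper's remark that ``$(-)^{\op}$ swaps left and right adjoints'' is precisely the hint that resolves this: after an appropriate passage to opposites, all four arrows become left adjoints simultaneously, and then uniqueness of adjoints of the two isomorphic composites applies. Relatedly, your parenthetical that the mate of an isomorphism being an isomorphism is ``the standard property of mates'' is not true for the mate taken with respect to a single pair of adjunctions; it holds here because one is taking adjoints of \emph{all} arrows (equivalently, of two isomorphic composite functors), which is what both you and the paper are really doing.
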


\begin{proof}
If \Cref{eq:kdidcomm3} commutes then so does the diagram obtained by passing to left adjoints. The resulting diagram restricts to \Cref{eq:kdidcomm2}. Note that $(-)^\op$ swaps left and right adjoints.
\end{proof}

\begin{remark}
Under certain finiteness conditions, the converse to \Cref{cor:kdidcomm} is true in the mixed setting \cite[Theorem 8.23]{BLPW12}. In the derived setting it is sufficient to assume $\RPerf_A \simeq \cT_{\RMod_A}(S)$ and $\RPerf_{A/^L(AeA)} \simeq \cT_{\RMod_{A/^L(AeA)}}(T)$. Is this necessary?
\end{remark}

Now assume that $A = A^0$ is concentrated in cohomological degree $0$ and that it is equipped with an additional grading $A_{\bullet}$ (conventionally referred to as the {\em mixed grading} to distinguish it from the cohomological grading) such that the augmentation induces an isomorphism $A_0 \simeq S$. We will also assume that $A$ is \emph{mixed co-connective} and \emph{mixed locally finite}, i.e., that $A_i=0$ for $i<0$ and $A_i$ is finite dimensional for all $i \geq 0$. Priddy discovered a class of such graded algebras whose Koszul duals are especially easy to describe \cite{Priddy}. These were later studied by Beilinson, Ginzburg, and Soergel \cite{BGS}.

\begin{definition}
An $S$-augmented graded algebra $A$ is \emph{Koszul} if it satisfies the graded Tor-vanishing condition $H^i(S \otimes_A S)_j = 0$ for $i \neq -j$.
\end{definition}

\begin{definition}
The $S$-augmented graded algebra $A$ is \emph{quadratic} if there exists a finite dimensional $S$-bi-module $V$ and a subspace $R \subseteq V \otimes V$ such that $A \simeq T_S(V/\langle R \rangle$. The \emph{quadratic dual} of $A$ is the graded algebra $Q(A) = T_S(V^\vee)/ \langle R^\perp \rangle$. Here both $V$ and $V^\vee$ are in mixed degree $1$.
\end{definition}

\begin{remark}
Note that $Q(A)$ is also mixed locally finite and mixed co-connective.
\end{remark}

\begin{proposition}[\protect{\cite[Proposition 1.2.3]{BGS}}]
Every Koszul algebra is quadratic.    
\end{proposition}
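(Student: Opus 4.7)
The plan is to extract the generators and relations of $A$ from a minimal graded projective resolution of $S$ as a right $A$-module, and to use the Koszul Tor-vanishing condition to force these to sit in mixed degrees $1$ and $2$ respectively.

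First I would construct a minimal graded free resolution $P_\bullet \to S$ of $S$ as a graded right $A$-module. The hypotheses that $A$ is mixed co-connective with $A_0 = S$ and mixed locally finite guarantee that such a minimal resolution exists and is unique up to isomorphism: one builds it by iteratively lifting a minimal set of homogeneous generators of each syzygy module, which is possible by a graded Nakayama argument. Minimality gives the identifications
\[
P_i \;\simeq\; A \otimes_S \mathrm{Tor}^A_i(S,S),
\]
where the $S$-bimodule $\mathrm{Tor}^A_i(S,S)$ inherits the mixed grading, and the differential has image in $P_{i-1} \otimes_S A_{>0}$.

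Next I would invoke Koszulity. By hypothesis, $\mathrm{Tor}^A_i(S,S)$ is concentrated in mixed degree $i$; in particular, $V := \mathrm{Tor}^A_1(S,S)$ lives entirely in mixed degree $1$ and $R := \mathrm{Tor}^A_2(S,S)$ lives entirely in mixed degree $2$. The differential $P_1 \to P_0 = A$ is a map of graded right $A$-modules $A \otimes_S V \to A$ whose image is the augmentation ideal $A_{>0}$. Since $V$ sits in mixed degree $1$, this exhibits $A$ as generated over $S$ by a finite dimensional bi-module in degree $1$, so the induced map $T_S(V) \twoheadrightarrow A$ of $S$-augmented graded algebras is surjective; let $I$ denote its kernel.

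To conclude that $A \simeq T_S(V)/\langle R'\rangle$ for some $R' \subseteq V \otimes_S V$, I would identify the space of minimal generators of $I$ with a subquotient of $\mathrm{Tor}^A_2(S,S)$ and use that the latter is concentrated in mixed degree $2$. Concretely, applying $(-)\otimes_A S$ to the truncated resolution $P_2 \to P_1 \to P_0 \to S \to 0$ computes $\mathrm{Tor}^A_\bullet(S,S)$, and a standard diagram chase (using minimality, so that $P_\bullet \otimes_A S$ has zero differential) identifies the space of minimal homogeneous generators of $I$ with $R = \mathrm{Tor}^A_2(S,S)$. Because $R$ is concentrated in mixed degree $2$ and the natural map $R \to I \subseteq T_S(V)$ factors through $V \otimes_S V$, one obtains a subspace $R' \subseteq V\otimes_S V$ generating $I$, so that $A \simeq T_S(V)/\langle R'\rangle$ is quadratic. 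The main obstacle I anticipate is justifying the last diagram chase cleanly, in particular verifying that "Koszulity in internal degree $2$" really is equivalent to the ideal $I$ being generated in degree $2$; all other steps are essentially formal consequences of minimality of the resolution and the mixed-locally-finite, mixed-co-connective setup.
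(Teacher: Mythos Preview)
The paper does not give its own proof of this proposition; it simply cites \cite[Proposition 1.2.3]{BGS} and moves on. Your argument is correct and is essentially the standard one found in that reference: build a minimal graded projective resolution of $S$, identify $\mathrm{Tor}^A_1(S,S)$ and $\mathrm{Tor}^A_2(S,S)$ with the generators and relations of $A$, and use the Koszul Tor-vanishing to force these into mixed degrees $1$ and $2$ respectively, yielding a quadratic presentation.
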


\begin{proposition}[\protect{\cite[Theorem 1.2.5]{BGS}}]
Quadratic duality is involutive, i.e., $A \simeq Q(Q(A))$. Moreover, $Q(A)$ is Koszul if and only $A$ is.
\end{proposition}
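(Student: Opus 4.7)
The plan is to split into two parts: first proving involutivity of $Q$, which is essentially formal, and then establishing the Koszul-duality equivalence via a Koszul complex argument.

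For involutivity, the finite-dimensionality and mixed-degree-$1$ placement of $V$ are the key inputs. Writing $A = T_S(V)/\langle R \rangle$ with $R \subseteq V \otimes_S V$, the canonical $S$-bimodule map $V \to (V^\vee)^\vee$ is an isomorphism since $V$ is finite-dimensional, and under the induced isomorphism $V \otimes_S V \cong (V^\vee)^\vee \otimes_S (V^\vee)^\vee$ the double-annihilator $(R^\perp)^\perp$ is identified with $R$. Thus $Q(Q(A)) = T_S((V^\vee)^\vee)/\langle (R^\perp)^\perp\rangle \cong A$, as $S$-augmented mixed-graded algebras.

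For the Koszul equivalence, I would introduce the \emph{Koszul complex} $K^\bullet(A)$, a complex of graded free left $A$-modules with $K^{-i}(A) = A \otimes_S Q(A)_i^\vee$, with differential induced by the canonical element $\mathrm{id} \in V \otimes_S V^\vee \subseteq A_1 \otimes_S Q(A)_1^\vee$. The relations $R \subseteq V \otimes V$ and $R^\perp \subseteq V^\vee \otimes V^\vee$ are orthogonal complements by definition, so $d^2 = 0$ modulo the quadratic relations in $A$. Equipped with its natural augmentation to $S$, this $K^\bullet(A)$ is always a complex of free modules mapping to a free resolution of $S$, and the diagonal placement $K^{-i}(A)_j$ in internal degree $j$ with nonvanishing contribution only when $i=j$ ensures that $K^\bullet(A) \to S$ is a quasi-isomorphism if and only if $\mathrm{Tor}^A_i(S,S)_j$ vanishes for $i \neq j$, i.e., if and only if $A$ is Koszul.

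To deduce that $A$ Koszul implies $Q(A)$ Koszul, I would promote $K^\bullet(A)$ to the \emph{two-sided Koszul complex} $\tilde K^\bullet = A \otimes_S Q(A)^\vee \otimes_S A$, equipped with a differential built from the same identity element inserted on either side. By the degree-counting argument above applied with respect to either the left or right $A$-module structure, acyclicity of $\tilde K^\bullet$ (i.e., its being a resolution of $S$ as an $A$-bimodule) is again equivalent to Koszulity of $A$. When $A$ is Koszul, taking the $\mathrm{Hom}_A(-, S)$ of this two-sided resolution computes $\mathrm{Ext}^\bullet_A(S, S) \simeq Q(A)$ with its natural algebra structure, and the diagonal nature of this identification forces the analogous Tor-vanishing condition on $Q(A)$: that is, $Q(A)$ is Koszul. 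By involutivity already established in the first paragraph, the converse direction is the same statement applied to $Q(A)$.

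The main technical hurdle is verifying that the differential on the (one-sided or two-sided) Koszul complex is well-defined modulo the quadratic relations, which is exactly the orthogonality condition $R \cdot R^\perp = 0$ used in dualizing, and tracking the sign and grading conventions so that the diagonal-degree argument identifies $H^{-i}(K^\bullet(A)) = \mathrm{Tor}^A_i(S,S)_i$ correctly. Both of these are standard but require careful bookkeeping.
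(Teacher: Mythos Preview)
The paper does not prove this proposition; it is quoted from \cite[Theorem 1.2.5]{BGS} and used as a black box, so there is no argument in the paper to compare against. Your outline follows the standard proof one finds in \cite{BGS}.

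Two small points of imprecision in your sketch. First, in the equivalence ``$K^\bullet(A)$ is exact iff $A$ is Koszul'': the forward direction is immediate from the grading (tensor with $S$ and the differential dies), but the converse requires identifying $Q(A)_i^\vee$ with $\mathrm{Tor}^A_i(S,S)_i$, typically via the description of both as the intersection $\bigcap_{j=0}^{i-2} V^{\otimes j} \otimes_S R \otimes_S V^{\otimes(i-2-j)} \subset V^{\otimes i}$. Second, the two-sided complex $A \otimes_S Q(A)^\vee \otimes_S A$ with the differential you describe resolves $A$ as an $A$-bimodule, not $S$; the cleaner route to Koszulity of $Q(A)$ is to note that dualizing the one-sided Koszul complex for $A$ in each internal degree yields precisely the Koszul complex for $Q(A)$ (using $Q(Q(A))\cong A$ from your first paragraph), and exactness is preserved because each graded piece is finite-dimensional. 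Neither point is a genuine gap --- the standard argument runs exactly along your lines once these details are filled in.
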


The following proposition compares mixed Koszul duality of graded algebras as studied in \cite{BGS} with Koszul duality of differential graded algebras.

\begin{proposition}[\protect{\cite[Theorem 7.12]{Brantner}}]\label{prop:KDQD} \label{prop:QDKD}
If $A$ is Koszul then its Koszul dual is the dg-algebra $A^! = (Q(A)^\op, 0)$, where the mixed grading on $Q(A)^\op$ becomes the homological grading on $A^!$. In particular, we see that $\hom_A(S,S)$ is formal.
\end{proposition}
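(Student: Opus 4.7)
The plan is to compute $\hom_A(S,S)$ explicitly via a concrete free resolution of $S$ as a right $A$-module, namely the \emph{Koszul complex}, and then to identify the resulting endomorphism dg-algebra with $Q(A)^{\op}$, equipped with the zero differential. First I would fix a presentation $A \simeq T_S(V)/\langle R\rangle$ with $R \subseteq V\otimes_S V$, and recall that $Q(A)^\vee$ carries a natural embedding into $T_S(V)$ (dual to the projection $T_S(V^\vee)\twoheadrightarrow Q(A)$), compatible with mixed grading. Using this, I would build the Koszul complex $K^\bullet(A) := A\otimes_S Q(A)^\vee$, regarded as a cohomologically-graded complex of right $A$-modules whose internal differential $d$ is induced by the coproduct on $Q(A)^\vee$ followed by the multiplication on $A$, and where the cohomological degree is (minus) the mixed degree of $Q(A)^\vee$. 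There is a canonical augmentation $K^\bullet(A)\to S$.

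The second step is to observe that by definition the graded Tor-vanishing $H^i(S\otimes_A S)_j = 0$ for $i\neq -j$ that characterizes Koszulity of $A$ is precisely equivalent to the statement that the Koszul complex $K^\bullet(A)$ is acyclic in positive cohomological degrees, i.e.\ that it provides a free resolution $K^\bullet(A)\xrightarrow{\sim} S$ of $S$ as a right $A$-module. (One direction is automatic from the construction; the other uses the fact that the Koszul condition forces the higher syzygies of $S$ to be generated in the expected mixed degrees, after which exactness propagates inductively up the complex.)

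Having such a resolution, I can compute
\[
\hom_A(S,S) \simeq \hom_A(K^\bullet(A),S) \simeq \hom_S(Q(A)^\vee, S) \simeq Q(A),
\]
where the last identification uses mixed-local-finiteness to dualize back, and the dg-structure is transported from the composition product on $\hom_A(K^\bullet(A),K^\bullet(A))$. One checks that under this identification the composition product on the right becomes the opposite of the multiplication on $Q(A)$ (because composition of maps reverses the order of factors that appear in the tensor algebra), yielding an identification of dg-algebras
\[
\hom_A(S,S) \simeq \bigl(Q(A)^{\op},\, 0\bigr),
\]
with the mixed grading on $Q(A)^{\op}$ promoted to the cohomological grading of $\hom_A(S,S)$. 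Crucially, the differential on the right is zero: the differential on $K^\bullet(A)$ lands in the augmentation ideal of $A$, which vanishes under any $A$-linear map to $S$.

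The formality claim is now immediate: we have exhibited $\hom_A(S,S)$ as a dg-algebra quasi-isomorphic to one with vanishing differential, namely $Q(A)^{\op}$ with its internal (mixed) grading viewed as cohomological. The main obstacle I anticipate is the careful bookkeeping in the second step, verifying that the bar-style Tor-vanishing condition really is equivalent to exactness of the Koszul complex itself (rather than merely of an a priori larger complex); this is the technical heart of Priddy's theorem and requires comparing the Koszul complex to the bar resolution $B(S,A,S)$ via the natural inclusion and doing a spectral-sequence argument with respect to the mixed grading. Once this is in place, the remaining identifications are purely formal and the formality statement drops out for free.
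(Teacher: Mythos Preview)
The paper provides no proof of its own for this proposition; it simply cites \cite[Theorem 7.12]{Brantner}. Your approach via the Koszul resolution is the standard one (going back to Priddy and BGS) and is essentially correct.

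Two small points of care. First, for $K^\bullet(A)$ to be a complex of free \emph{right} $A$-modules, you want $Q(A)^\vee\otimes_S A$ rather than $A\otimes_S Q(A)^\vee$; this is just a notational slip.

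Second, your identification of the multiplicative structure is a bit loose: the complex $\hom_A(K^\bullet(A),S)$ carries no evident algebra structure, so the phrase ``the composition product becomes the opposite multiplication'' needs an intermediary. The clean fix is to first exhibit $K^\bullet(A)=Q(A)^\vee\otimes_S A$ as a dg-$(Q(A)^{\op},A)$-bimodule: the left $Q(A)^{\op}$-action arises from the natural right $Q(A)$-action on $Q(A)^\vee$, and one checks it commutes with the Koszul differential. This immediately supplies a dg-algebra map $Q(A)^{\op}\to\hom_A(K^\bullet(A),K^\bullet(A))$. Composing with the quasi-isomorphism $\hom_A(K^\bullet(A),K^\bullet(A))\to\hom_A(K^\bullet(A),S)\cong Q(A)$ induced by the augmentation yields the identity on underlying graded $S$-bimodules; two-out-of-three then shows $Q(A)^{\op}\to\hom_A(K^\bullet(A),K^\bullet(A))$ is a quasi-isomorphism of dg-algebras, and formality follows.
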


\begin{remark}
The Koszul dual of the algebra $Q(A)^\op$ is the dga $(A,0)$ but the Koszul dual of the dga $A^!=(Q(A)^\op, 0)$ is only the algebra $A$ when $A$ is finite dimensional. In general the Koszul dual of $A^!$ is the derived completion as described in \Cref{rem:doubledual}.
\end{remark}

Now we will use Koszul duality to give a criterion for proving that $A/AeA \cong A/^L(AeA)$. The key observation, \Cref{prop:KDId}, is a slight variant of \cite[Proposition 8.23]{BLPW12}.

\begin{lemma}[\protect{\cite[Lemma 8.22]{BLPW12}}] \label{lem:QDID}
Suppose that $A$ is a quadratic algebra and $e \in A$ is an idempotent such that $e^c Q(A) e^c \subseteq Q(A)$ is quadratic as a $T = e^c S e^c$-augmented algebra. Then $Q(A/(AeA)) \simeq e^c Q(A) e^c$. In particular, if $A/AeA$ is Koszul as a $T$-augmented algebra its Koszul dual is $e^c A^! e^c$.
\end{lemma}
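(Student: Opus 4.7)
The plan is to compute $Q(A/AeA)$ directly from a quadratic presentation of $A/AeA$ and then identify the result with $e^c Q(A) e^c$ by dualizing relations against the perfect pairing between $V\otimes_S V$ and $V^\vee \otimes_S V^\vee$.

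First, I would present $A/AeA$ quadratically. Write $A = T_S(V)/\langle R\rangle$ with $R \subseteq V \otimes_S V$. Since $e$ lies in degree $0$, the degree-$0$ part of $A/AeA$ is $T = e^c S e^c$, and its degree-$1$ part is naturally identified with $e^c V e^c$, because any element with an outer $e$ is killed. Letting $\bar R \subseteq (e^c V e^c) \otimes_T (e^c V e^c)$ be the image of $R$ under the quotient map $V \otimes_S V \twoheadrightarrow (e^c V e^c) \otimes_T (e^c V e^c)$, a routine inspection of words in the tensor algebra shows
\[
A/AeA \;\simeq\; T_T(e^c V e^c)/\langle \bar R\rangle,
\]
so $A/AeA$ is itself quadratic as a $T$-augmented algebra.

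Next I would compute $Q(A/AeA) = T_T((e^c V e^c)^\vee)/\langle \bar R^\perp\rangle$. The canonical $T$-bimodule isomorphism $(e^c V e^c)^\vee \cong e^c V^\vee e^c$ embeds $(e^c V e^c)^\vee \otimes_T (e^c V e^c)^\vee$ into $V^\vee \otimes_S V^\vee$ as the summand $e^c V^\vee e^c \otimes_T e^c V^\vee e^c$. Since the kernel of $V \otimes_S V \twoheadrightarrow (e^c V e^c) \otimes_T (e^c V e^c)$ is precisely the annihilator of this summand under the standard pairing, an unwinding of that pairing gives
\[
\bar R^\perp \;=\; R^\perp \cap \bigl(e^c V^\vee e^c \otimes_T e^c V^\vee e^c\bigr).
\]
The hypothesis that $e^c Q(A) e^c$ is quadratic as a $T$-augmented algebra means exactly that the right-hand side generates the relation ideal of $e^c Q(A) e^c$, and we conclude $Q(A/AeA) \simeq e^c Q(A) e^c$. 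For the ``in particular'' statement, if $A/AeA$ is Koszul then \Cref{prop:KDQD} identifies its Koszul dual with $(Q(A/AeA)^{\op},0)$, which combined with the first part yields $(e^c Q(A)^{\op} e^c,0) = e^c A^! e^c$.

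The main obstacle is the appeal to the quadraticity hypothesis in the last step: without it, the subalgebra $e^c Q(A) e^c$ could acquire additional relations coming from words in $Q(A)$ where an internal $e$ forces a longer bracketing, and such ``ghost relations'' would not be detected by the degree-two intersection $R^\perp \cap (e^c V^\vee e^c \otimes_T e^c V^\vee e^c)$. The hypothesis is exactly the condition which rules these out, ensuring that $e^c Q(A)e^c$ admits the quadratic presentation matched by $Q(A/AeA)$.
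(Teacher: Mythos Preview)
Your argument is correct and is essentially the standard proof one would give (and the one implicit in the cited reference \cite{BLPW12}); the paper itself does not supply a proof but simply cites \cite[Lemma 8.22]{BLPW12}. Your identification of $\bar R^\perp$ with $R^\perp \cap (e^c V^\vee e^c \otimes_T e^c V^\vee e^c)$ via the pairing, and your explanation of exactly how the quadraticity hypothesis on $e^c Q(A) e^c$ enters to rule out higher ``ghost relations,'' are both on point.
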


\begin{proposition}\label{prop:KDId}
Suppose that $A$ is Koszul and $e \in A$ is an idempotent such that $e^c Q(A) e^c$ is quadratic and $A/(AeA)$ is Koszul. Then there is a commutative diagram
\begin{equation} \label{eq:kdidcomm4}
\begin{tikzcd}[column sep = 4cm]
\cT_{\RMod_A}(S) \arrow[r, "{\hom_A(-,S)}"] & (\LPerf_{A^!})^\op \\
\cT_{\RMod_{A/(A e A)}}(T) \arrow[r, "{\hom_{A/(A e A)}(-,T)}"] \arrow[u, "i_*"] & (\LPerf_{e^c A^! e^c})^\op \arrow[u, swap, "((j^c)_!)^\op"].
\end{tikzcd} 
\end{equation}
In particular, $i_*$ is fully faithful since $(j^c)_!$ is.
\end{proposition}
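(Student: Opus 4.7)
The plan is to prove commutativity of the diagram by checking agreement on the generator $T$, after identifying the Koszul dual of $A/(AeA)$; the full-faithfulness of $i_*$ then follows formally from full-faithfulness of $((j^c)_!)^{\op}$ together with the fact that both horizontal arrows are equivalences.

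The first step is to identify the Koszul dual of $A/(AeA)$. Koszulity of $A$ together with \Cref{prop:KDQD} yields $A^! = (Q(A)^{\op},0)$. The quadraticity hypothesis on $e^c Q(A) e^c$ feeds into \Cref{lem:QDID} to give $Q(A/(AeA)) \simeq e^c Q(A) e^c$. Applying \Cref{prop:KDQD} a second time, now using Koszulity of $A/(AeA)$, identifies the Koszul dual of $A/(AeA)$ with $((e^c Q(A) e^c)^{\op},0) = e^c A^! e^c$, where we use that the cornering operation commutes with taking opposites. In particular, Koszul duality restricts to an equivalence $\cT_{\RMod_{A/(AeA)}}(T) \simeq \LPerf_{e^c A^! e^c}^{\op}$ sending $T$ to $e^c A^! e^c$, so the bottom horizontal arrow of the diagram is an equivalence; the top arrow is an equivalence by Koszul duality for $A$.

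The second step is the generator check. Since the two functors $\hom_A(i_*(-),S)$ and $((j^c)_!)^{\op} \circ \hom_{A/(AeA)}(-,T)$ are both exact, commutativity on all of $\cT_{\RMod_{A/(AeA)}}(T)$ reduces to commutativity on $T$. Going along the top, $i_*(T) = e^c S$ as an $A$-module, and the same calculation as in the proof of \Cref{prop:derivedKDID}(1) identifies $\hom_A(e^c S, S) \simeq A^! e^c$. Going along the bottom, the first step gives $\hom_{A/(AeA)}(T,T) \simeq e^c A^! e^c$, and the recollement formula from \Cref{proposition:ff-recollement} identifies $(j^c)_!(e^c A^! e^c) \simeq A^! e^c$. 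These match as $A^!$-modules, so the diagram commutes. Full-faithfulness of $i_*$ is then immediate, and by \Cref{defn:ff-recollement} is equivalent to the assertion that $AeA$ is stratifying, i.e., $A/(AeA) \simeq A/^L(AeA)$.

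The main obstacle is the careful bookkeeping in the generator check, particularly the derived computation of $\hom_A(e^c S, S)$, which relies on a linear projective resolution of $e^c S$ supplied by Koszulity of $A$; one must also track left/right module conventions and idempotent placement consistently across the square. This is a direct analogue of the analysis used in the proof of \Cref{prop:derivedKDID}(1), but carried out with the naive quotient $A/(AeA)$ in place of the derived one — and that substitution is justified here precisely by the Koszulity hypothesis on the naive quotient together with \Cref{lem:QDID}.
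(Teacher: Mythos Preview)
Your proof is correct and follows essentially the same approach as the paper's: reduce to the generator $T$, compute both composites as $A^! e^c$ using $i_*(T)=e^cS$ and the identification $(A/(AeA))^! \simeq e^c A^! e^c$ from \Cref{lem:QDID}, then deduce full-faithfulness of $i_*$ from that of $(j^c)_!$ via the horizontal equivalences. One small overreach: your final sentence equates this with $AeA$ being stratifying, but the proposition only gives full-faithfulness of $i_*$ on $\cT_{\RMod_{A/(AeA)}}(T)$, not on all of $\RMod_{A/(AeA)}$; extending to the full module category requires the extra hypotheses of \Cref{proposition:koszul-recollement}.
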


\begin{proof}
The same argument that proved commutativity of \Cref{eq:kdidcomm1} (with $B=A^!$ and $A/(AeA)$ in place of $A/^L(AeA)$) proves commutativity of \Cref{eq:kdidcomm4}. In particular, we know that $i_*(T)= e^c S$ and we know that $\hom_{A/(AeA)}(T,T) \simeq e^c A^! e^c$ by \Cref{lem:QDID}.
\end{proof}

\begin{corollary}\label{proposition:koszul-recollement}
Suppose that $A$ is Koszul and $e \in A$ is an idempotent such that $e^c Q(A) e^c$ is quadratic and $A/(AeA)$ is Koszul. Moreover, assume that $A/(AeA) \in \cT_{\RMod_{A/(AeA)}}(T)$ and $S \in \RPerf_A$. Then $A/(AeA) \simeq A/^L(AeA)$.
\end{corollary}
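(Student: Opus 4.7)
The goal is to prove $A/(AeA) \simeq A/^L(AeA)$, which by \Cref{defn:ff-recollement} is equivalent to the canonical quotient map $\phi\colon A/^L(AeA) \to A/(AeA)$ being a quasi-isomorphism of dg-algebras. My plan is to establish this by showing that the restriction-of-scalars functor $\phi^*\colon \Mod_{A/(AeA)} \to \Mod_{A/^L(AeA)}$ is fully faithful, since for a map of dg-algebras the restriction functor is fully faithful if and only if the map is a quasi-isomorphism.

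The first step applies \Cref{prop:KDId}, whose hypotheses are exactly those assumed in the Corollary. This yields the commutative diagram \eqref{eq:kdidcomm4}, in which the two horizontal Koszul duality functors are equivalences onto their images and the right vertical functor $(j^c)_!$ is fully faithful; by commutativity the naive $i_*\colon \cT_{\RMod_{A/(AeA)}}(T) \to \cT_{\RMod_A}(S)$ is fully faithful. Applying this full faithfulness to the generator, which lies in $\cT_{\RMod_{A/(AeA)}}(T)$ by the remaining hypothesis, gives the key computation
\[
\hom_A(A/(AeA), A/(AeA)) \simeq \hom_{A/(AeA)}(A/(AeA), A/(AeA)) \simeq A/(AeA),
\]
with derived $\hom$ concentrated in cohomological degree zero.

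The second step transfers this to the derived quotient: since $i_*^{\mathrm{der}}\colon \Mod_{A/^L(AeA)} \to \Mod_A$ from \Cref{proposition:ff-recollement} is always fully faithful and $A/(AeA)$ lies in its essential image via $\phi$, we obtain $\hom_{A/^L(AeA)}(A/(AeA), A/(AeA)) \simeq A/(AeA)$. The natural comparison map induced by $\phi^*$ is a map of dg-algebras $A/(AeA) \to A/(AeA)$ sending $1$ to $1$, hence an isomorphism, so $\phi^*$ is fully faithful on the compact generator $A/(AeA) \in \Mod_{A/(AeA)}$. Because $\phi^*$ is cocontinuous, full faithfulness on a compact generator propagates to all of $\Mod_{A/(AeA)}$ by a standard colimit argument, which completes the proof. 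I expect the main subtlety is verifying that the comparison map in the second step is genuinely the identity rather than merely an abstract isomorphism; this amounts to checking that all of the identifications---the thick-subcategory equivalence from \Cref{prop:KDId}, the full faithfulness of the derived $i_*$, and the natural action of $A/(AeA)$ on itself---are compatible with the unit, after which the result follows essentially formally from the interplay between the naive and derived recollements.
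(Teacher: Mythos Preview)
Your computation is correct and is essentially the paper's argument, but the logical framing contains a genuine error. The claim ``for a map of dg-algebras the restriction functor is fully faithful if and only if the map is a quasi-isomorphism'' is false: restriction of scalars along any homological epimorphism (e.g.\ $\ZZ \to \QQ$, or any localization) is fully faithful without the map being a quasi-isomorphism. What is true is that \emph{extension} of scalars $\phi_!$ is fully faithful iff $\phi$ is a quasi-isomorphism; you have the wrong adjoint.

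Fortunately this is only a framing issue, and the fix is immediate. Your Step~1 computes $\hom_A(A/(AeA),A/(AeA)) \simeq A/(AeA)$, which (after your propagation argument) says exactly that the \emph{naive} restriction $\tilde i_*\colon \Mod_{A/(AeA)} \to \Mod_A$ is fully faithful. That is precisely condition~(2) of \Cref{defn:ff-recollement}, so you are done --- no need to pass through $\phi^*$ at all. (If you prefer to keep the $\phi^*$ language: since $\tilde i_* = i_*^{\mathrm{der}}\circ\phi^*$ and $i_*^{\mathrm{der}}$ is always fully faithful, $\phi^*$ fully faithful is \emph{equivalent} to $\tilde i_*$ fully faithful, and then cite condition~(2).) This is exactly how the paper proceeds: it exhibits $\tilde i_*$ on $\RPerf_{A/(AeA)}$ as the composite
\[
\RPerf_{A/(AeA)} \subseteq \cT_{\RMod_{A/(AeA)}}(T) \xhookrightarrow{i_*} \cT_{\RMod_A}(S) \subseteq \RPerf_A
\]
(the inclusions coming from the two finiteness hypotheses, the middle arrow from \Cref{prop:KDId}) and then ind-completes.

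One small omission: your ``standard colimit argument'' requires $\phi^*(A/(AeA))$ to be compact in $\Mod_{A/^L(AeA)}$, equivalently $A/(AeA)$ to be compact in $\Mod_A$. This is where the hypothesis $S \in \RPerf_A$ enters, via $A/(AeA) \in \cT_{\RMod_{A/(AeA)}}(T)$ and $i_*(T)=e^cS$; you should make that explicit.
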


\begin{proof}
By \Cref{defn:ff-recollement} it suffices to show that $i_*: \RMod_{A/(AeA)} \to \RMod_A$ is fully faithful. By our assumptions $\RPerf_{A/(AeA)} \subseteq \cT_{\RMod_{A/(A e A)}}(T)$ and $\cT_{\RMod_A}(S) \subseteq \RPerf(A)$. Thus we see that $i_*$ is the ind-completion of the fully faithful embedding
\[
\RPerf_{A/(AeA)} \subseteq \cT_{\RMod_{A/(A e A)}}(T) \xhookrightarrow{i_*} \cT_{\RMod_A}(S) \subseteq \RPerf(A).\qedhere
\]
\end{proof}

%\begin{remark}
%As is well known, derived Koszul duality can be recovered from mixed Koszul duality by degrading: the category $\Mod^{\sf gr}_A$ of graded modules over a graded algebra $A$ is naturally a module over $(\Rep(\CC^\times), \otimes),$ and one can recover the category of ungraded modules as the tensor product
%tensoring with the trivial module, i.e.,
%\[
%\Mod_A \simeq \Vect \otimes_{\Rep(\CC^\times)} \Mod^{\sf gr}_A
%\]
%with the trivial module $\Vect.$
%However, Koszul duality intertwines the canonical $\Rep(\CC^\times)$-module structure on $\Mod^{\sf gr}_A$ with a ``sheared'' $\Rep(\CC^\times)$-module structure on $\Mod^{\sf gr}_{(A^!)^\op}$ (where $(A^!)^\op$ is considered as an ordinary graded algebra). The sheared degrading of $\Mod^{\sf gr}_{(A^!)^\op}$ is the category $\Mod_{(A^!)^\op}$ of dg-modules over the dg-algebra $(A^!)^\op$.
%\end{remark}

\subsection{Highest weight categories and semi-orthogonal decompositions}\label{subsection:highest-weight}

Let $\cC$ be an abelian category. An object is \emph{simple} if it has no subobjects. We say that $\cC$ is Artinian (resp.\ Noetherian) if any descending (resp.\ increasing) sequence of subobjects stabilizes. Given an object $X \in \cC$, a \emph{composition series} is a finite sequence of subobjects $0 \to Y_1 \hookrightarrow \dots \hookrightarrow Y_k \hookrightarrow X$ such that each subquotient is simple and nonzero. 

It is straightforward to verify that an abelian category is Artinian and Noetherian iff every object admits a composition series. In that case, the Jordan--Hölder lemma implies that the composition series is essentially unique. %We let $[X: L]$ be the number of times that a simple object $L$ occurs in the composition series of $X$.  
% \cite[0FCJ]
%If $\cC$ is abelian, Artinian and Noetherian, and has only finitely many isomorphism classes of simple objects, then the composition series is essentially unique. We let 

We now recall the notion of a highest weight category,
% This was introduced by Cline--Parshall--Scott \cite{CPS} but we follow \cite{BLPW16} (see \Cref{remark:blpw-vs-cps}).
following \cite{BLPW16}. For more details, we refer to \cite{CPS} (noting that the definition there is slightly more general than that used in \cite{BLPW16}).
%For the relationship of recollement and highest weight categories to exceptional collections, see \cite{krause2017highest}.

%Let $\cI$ be a poset indexing the simples $\{S_\alpha\}$. Let $\{S_\alpha\}_{\alpha \in \cI}$ be an enumeration of the simples, and fix a partial order on $\cI$. 
\begin{definition}[\cite{BLPW16}*{Definition 4.11}] Let $\cC$ be a $\mathbb{C}$-linear category which is abelian and artinian. Let $\{S_\alpha\}_{\alpha \in \cI}$ and $\{P_\alpha\}_{\alpha \in \cI}$ be an enumeration of the simples and their projective covers and fix a partial order on $\cI$. We say that $\cC$ is \emph{heighest weight} with respect to the poset $\cI$ if there is a collection of objects $\{V_\alpha\}_{\alpha \in \cI}$ and epimorphisms $\{ P_\alpha \xrightarrow{\Pi_\alpha} V_\alpha \xrightarrow{\pi_\alpha} S_\alpha\}$
%\begin{equation}
%     P_\alpha \xrightarrow{\Pi_\alpha} V_\alpha \xrightarrow{\pi_\alpha} S_\alpha
%\end{equation}
so that for each $\alpha,$
\begin{itemize}
    \item  $\ker \pi_\alpha$ has a finite filtration whose subquotients are isomorphic to $S_\beta$ for some $\beta <\alpha,$ and
    \item $\ker \Pi_\alpha$ has a finite filtration whose subquotients are isomorphic to $V_\gamma$ for some $\gamma>\alpha.$
\end{itemize}
The $V_\alpha$ are called \emph{standard objects}.% and the $P_\alpha$ are called \emph{indecomposable projectives}. 
\end{definition}
%{\color{red} Simples?}

\begin{definition}
   An algebra is \emph{quasi-hereditary} if its category of finitely generated right modules is highest weight, for some ordering on the simple modules. 
\end{definition}

%\begin{remark}\label{remark:blpw-vs-cps}
%    Our definition of a highest weight category follows \cite{BLPW16} but is less general that the more commonly used definition from \cite{CPS}. Fortunately, these definitions coincide in the cases of interest in this paper.  %More precisely, Cline--Parshall--Scott proved that if $\cC$ is an abelian category such that every object has a finite length composition series, having finitely many (isomorphisms classes of) simple objects, then $\cC$ is highest weight (in the sense of \cite{CPS}) iff there is a quasi-hereditary ring $R$ such that $\cC$ is equivalent to the category of finitely presented $R$-modules (see \cite[Thm.\ 4.3]{krause2017highest}; note that since we always work over Noetherian rings there is no difference between modules being finitely presented and finitely generated). 
%\end{remark}

%Highest weight categories have some remarkable properties including:
%The following are standard facts about highest-weight categories:
\begin{remark}
    Let $\cC$ be a highest-weight category. Then 
    the $\{S_\alpha\}, \{V_\alpha\}, \{P_\alpha\}$ each form a basis for the Grothendieck group $K_0(\mathcal{C})$; dually, so do the collections of indecomposable injectives $\{I_\alpha\}$, costandards $\{\Lambda_\alpha\}$, and tilting objects $\{T_\alpha\}.$ 
    Moreover, these objects satisfy {\em BGG reciprocity}: the multiplicity of $V_\alpha$ in $P_\beta$ coincides with the multiplicity of $S_\beta$ in $V_\alpha.$
\end{remark}

%\begin{proposition}
%    Let $\cC$ be a highest weight category.
%\begin{itemize}
%    \item %\emph{Bases for the Grothedieck group:}
%    The $\{S_\alpha\}, \{V_\alpha\}, \{P_\alpha\}$ each form a basis for the Grothendieck group $K_0(\mathcal{C}).$ (Dually, so do the collections of indecomposable injectives $\{I_\alpha\}$, costandards $\{\Lambda_\alpha\}$, and tilting objects $\{T_\alpha\}.$) 
%    \item %\emph{Full exceptional collection:} 
%
%    The $\{V_\alpha\}$ form a full exceptional collection:  $\operatorname{Ext}^*(V_\alpha, V_\alpha) = \mathbb{C}\cdot \id_{V_\alpha}$ for all $\alpha$, and $\operatorname{Ext}^i(V_\alpha, V_\beta) \neq 0$ for some $i>0$ only if $\alpha < \beta$. 
%    \item %\emph{BGG reciprocity:}  
%    The multiplicity of $V_\alpha$ in $P_\beta$ coincides with the multiplicity of $S_\beta$ in $V_\alpha$. 
    %https://msp.org/pjm/2002/203-2/pjm-v203-n2-p08-p.pdf
%\end{itemize}
%\end{proposition}

The feature of a highest-weight category $\cC$ most salient to us here is the presence of a full exceptional collection. As shown in \cite{krause2017highest}, this is essentially equivalent to the highest weight structure.

\begin{theorem}[\cites{Dlab-Ringel-moduletheoretic,krause2017highest}]
    Let $\cC$ be a highest weight category. Then the standard objects $V_\alpha$ form a full exceptional collection: $\operatorname{Ext}^*(V_\alpha)=\CC\cdot \id_{V_\alpha}$ for all $\alpha,$ and for $i>0,$ $\operatorname{Ext}^i(V_\alpha,V_\beta) =0$ unless $\alpha<\beta.$
\end{theorem}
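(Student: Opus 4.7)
The plan is to verify the defining properties of an exceptional collection: (a) $\operatorname{End}(V_\alpha) = \CC$ for each $\alpha$; (b) $\operatorname{Hom}(V_\alpha, V_\beta) = 0$ whenever $\alpha \not\leq \beta$; (c) $\operatorname{Ext}^i(V_\alpha, V_\beta) = 0$ for all $i \geq 1$ whenever $\alpha \not< \beta$; and (d) fullness of the resulting collection. Note that (a) in particular captures the diagonal $\alpha = \beta$ case of (c), since $\alpha \not< \alpha$.

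For (a) and (b), the idea is a composition factor argument. Since $V_\alpha$ has simple top $S_\alpha$ and its remaining composition factors are $S_\gamma$ with $\gamma < \alpha$, any nonzero map $V_\alpha \to V_\beta$ has image whose top is necessarily $S_\alpha$ (a nonzero quotient of the simple top $S_\alpha$ of $V_\alpha$). Thus $S_\alpha$ appears as a composition factor of $V_\beta$, forcing $\alpha \leq \beta$; this proves (b). For (a), the same reasoning combined with Schur's lemma and the fact that $S_\alpha$ appears with multiplicity one in $V_\alpha$ gives $\operatorname{End}(V_\alpha) = \CC$.

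For (c), the plan is to proceed by downward induction on $\alpha$ in the poset. If $\alpha$ is maximal, the defining condition forces $\ker \Pi_\alpha = 0$, so $V_\alpha = P_\alpha$ is projective and the statement is trivial. In the inductive step, I would apply the long exact sequence attached to
\[
0 \to K_\alpha \to P_\alpha \xrightarrow{\Pi_\alpha} V_\alpha \to 0,
\]
together with $\operatorname{Ext}^{\geq 1}(P_\alpha, -) = 0$, to reduce the computation of $\operatorname{Ext}^i(V_\alpha, V_\beta)$ for $i \geq 1$ to the vanishing of $\operatorname{Ext}^*(K_\alpha, V_\beta)$ in all degrees (the $\operatorname{Hom}$ term handles $i=1$, higher Ext's handle $i \geq 2$). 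Since $K_\alpha$ is filtered by standards $V_\gamma$ with $\gamma > \alpha$, this in turn reduces to showing $\operatorname{Ext}^*(V_\gamma, V_\beta) = 0$ for each such $\gamma$. The key combinatorial observation is that $\alpha \not< \beta$ and $\gamma > \alpha$ together imply $\gamma \not\leq \beta$; otherwise the chain $\alpha < \gamma \leq \beta$ would contradict $\alpha \not< \beta$. The Hom part of this vanishing is then (b), and the higher-Ext part is the inductive hypothesis applied to $V_\gamma$.

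Fullness (d) follows by a parallel downward induction: the surjection $V_\alpha \twoheadrightarrow S_\alpha$ with kernel filtered by $S_\gamma$ for $\gamma < \alpha$ shows inductively that every simple $S_\alpha$ lies in the thick subcategory generated by the $V_\gamma$, and the simples generate $D^b(\cC)$. I do not foresee a substantive obstacle: the entire argument amounts to translating the defining filtration properties of a highest weight category into homological vanishing statements via standard long exact sequences, with the small poset chase above being the only combinatorial ingredient that requires any attention.
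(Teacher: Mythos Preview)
The paper does not supply a proof of this statement; it is cited from \cite{Dlab-Ringel-moduletheoretic} and \cite{krause2017highest}. Your argument is correct and is the standard one found there: composition-factor analysis for (a) and (b), then downward induction along the poset via the sequence $0 \to K_\alpha \to P_\alpha \to V_\alpha \to 0$ for (c). One small slip: the induction you describe in (d), using the surjection $V_\alpha \twoheadrightarrow S_\alpha$ with kernel built from $S_\gamma$ for $\gamma < \alpha$, is \emph{upward} on $\alpha$ (base case $\alpha$ minimal, where $V_\alpha = S_\alpha$), not downward as stated; alternatively one can run a genuine downward induction on the same short exact sequence to show each $P_\alpha$ lies in the thick subcategory generated by standards, and then invoke that the projectives generate.
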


\begin{remark}
    To a symplectic geometer, the above should indicate that quasi-hereditary algebras naturally arise as the endomorphism algebras of cocores in the Fukaya--Seidel category of a Lefschetz fibration, and that the structural properties their module categories enjoy admit geometric explanations, with Lefschetz thimbles playing the role of standard objects. It is indeed possible to understand category $\cO$ in these terms; we will explain a heuristic dictionary in \Cref{sec:heuristic}.
\end{remark}

\subsection{Betti and de Rham $G$-actions}
In geometric representation theory it is common to consider several different notions of an action of a reductive group $G$ on a category: any covariantly functorial sheaf theory ${\sf SHV}$ satisfying the Ku\"nneth formula gives rise to a convolution monoidal structure
\begin{align*}
&{\sf SHV}(G) \otimes {\sf SHV}(G) \simeq {\sf SHV}(G \times G) \xrightarrow[]{m_*} {\sf SHV}(G), \\
&\Vect \simeq {\sf SHV}(1) \xrightarrow{u_*} {\sf SHV}(G),
\end{align*}
and one can consider module categories for ${\sf SHV}(G)$.\footnote{For nice enough sheaf theories, there is a duality ${\sf SHV}(G)^{\vee} \simeq {\sf SHV}(G)$ identifying $(m_*)^{\vee}$ with a contravariant functoriality $m^!$, giving an identification between $({\sf SHV}(G), m_*)$-modules and $({\sf SHV}(G), m^!)$-comodules.} In this section we will collect some well-known facts about $G$-actions surveyed in \cites{D22-informal} and facts about the Mellin transform from \cites{Tel-ICM, Gannon}.

\begin{definition}\label{def:Gcats}
    A {\em Betti} (resp., {\em de Rham}, {\em weak}) $G$-category is a module category in $\PrL$ for the monoidal category $(\Loc(G), m_*)$ (resp., $(\Dmod(G), m_*),$ $(\QCoh(G),m_*)$).
    %A \emph{weak} $G$-category is a module category in $\PrL$ for the monoidal category $(\QCoh(G), m_*)$.
\end{definition}

\begin{remark}
There is a monoidal functor $\QCoh(G) \to \Dmod(G)$ given by pushforward along the homomorphism $G \to G_{dR}$. As a result, every de Rham $G$-category (sometimes also called a {\em strong} $G$-category) is a weak $G$-category in a canonical way.
\end{remark}

\begin{definition}
Let $\cC$ be a Betti (resp. de Rham, weak) G-category. Then the 
%{\em Betti} (resp. {\em de Rham}, weak) 
\emph{$G$-invariants} $\cC^{G_\bullet},$ and \emph{$G$-coinvariants} $\cC_{G_\bullet},$
for $\bullet\in \{\Betti,\dR,w\},$
%$\cC^{G_{\Betti}}$ (resp. $\cC^{G_{\dR}},$ $\cC^{G_w}$) 
are the respective categories
\[
\cC^{G_\bullet}:=\Hom_{\Mod_{\Shv(G)}}(\Vect,\cC), \qquad \cC_{G_\bullet}:=\Vect\otimes_{\Shv(G)}\cC,
\]
where $\Vect$ is the trivial representation and $\Shv\in\{\Loc, \Dmod, \QCoh\}$ is the appropriate sheaf theory.
%and \emph{G-coinvariants} of $\cC$ are the respective categories
%\[
%\cC^{G_{\Bet}} := \Hom_{\Loc(G)}(\Vect, \cC), \qquad \cC_{G_{\Bet}} := \Vect \otimes_{\Loc(G)} \cC,
%\]
%where $\Vect$ has the trivial $\Loc(G)$-action. The G-invariants and G-coinvariants of a de Rham or weak $G$-category are defined analogously.
\end{definition}

\begin{proposition}
%The trivial representations of $\Loc(G)$, $\Dmod(G)$, and $\QCoh(G)$ are self dual. In particular, invariants and coinvariants agree.
Let $\Shv\in \{\Loc,\Dmod,\QCoh\}.$ Then the trivial $\Shv(G)$-module category $\Vect$ is self-dual. As a result, for $\bullet\in\{\Bet,\dR,w\}$ and $\cC$ a $\bullet$ $G$-category, there is an equivalence $\cC^{G_\bullet}\simeq \cC_{G_\bullet}$ between $G$-invariants and coinvariants.
\end{proposition}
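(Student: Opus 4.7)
The plan is to reduce the invariants--coinvariants equivalence to the self-duality of $\Vect$, which I would then prove in each of the three sheaf theories using a uniform strategy. For the reduction, note that if $\Vect \simeq \Vect^\vee$ as $\Shv(G)$-modules, then for any $\Shv(G)$-module $\cC$ the standard formula $\Fun_{\Shv(G)}(\Vect,\cC) \simeq \Vect^\vee \otimes_{\Shv(G)} \cC$ (valid because $\Vect$ is dualizable over $\Shv(G)$) combines with self-duality to yield
\[
\cC^{G_\bullet} = \Fun_{\Shv(G)}(\Vect,\cC) \simeq \Vect^\vee \otimes_{\Shv(G)} \cC \simeq \Vect \otimes_{\Shv(G)} \cC = \cC_{G_\bullet}.
\]

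To establish self-duality, I would first observe that in each of the three settings the $\Shv(G)$-module structure on $\Vect$ is induced by the symmetric monoidal pullback functor $u^{*} : \Shv(G) \to \Vect$ along the inclusion of the identity $u:\pt\hookrightarrow G$. Self-duality of $\Vect$ as a $\Shv(G)$-module then reduces to an ambidexterity statement: the functor $u^{*}$ admits a left adjoint $u_{!}$, and this left adjoint agrees, up to an appropriate shift (and, in certain cases, a Tate twist which becomes trivial after base change to $\CC$), with the right adjoint $u_{*}$. This ambidexterity reflects the fact that the relative dualizing sheaf of $u$ is trivializable because $G$ is parallelizable: concretely, it follows from Grothendieck duality in the $\QCoh$ case, from the trivialization of $\omega_{G}$ in the $\Dmod$ case, and from Poincar\'e duality on $G$ (using the trivialization of its tangent bundle) in the $\Loc$ case.

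The main obstacle is to upgrade this ambidexterity from a $\Vect$-linear statement to a genuinely $\Shv(G)$-linear equivalence of module categories. This compatibility reduces to a Beck--Chevalley identity for the multiplication map $G\times G \to G$, and is implicit in standard treatments of categorical representation theory of reductive groups (e.g.\ Ben-Zvi--Nadler for $\Dmod$, the Mellin-transform results of Teleman and Gannon for $\QCoh$, and the corresponding topological arguments for $\Loc$). Rather than reproduce these verifications, I would cite them as known, remarking that reductivity of $G$ (specifically its unimodularity) is what ensures the shifts involved can be absorbed so that $\Vect^\vee \simeq \Vect$ on the nose, rather than up to a twist by a non-trivial modular character.
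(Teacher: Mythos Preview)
The paper gives no proof of this proposition; it is stated as a known fact with a reference to \cite{D22-informal}. Your overall plan—reduce the equivalence to self-duality of $\Vect$ and then invoke the literature—matches what the paper does, and the reduction step is correct.

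Your sketch of self-duality, however, contains a genuine error. You claim the $\Shv(G)$-module structure on $\Vect$ comes from the pullback $u^*:\Shv(G)\to\Vect$ along the identity inclusion $u:\pt\hookrightarrow G$, but the trivial module actually arises from the projection $p:G\to 1$ (as the paper states just after the proposition), so the action is through $p_*$. The functor $u^*$ is not even monoidal for the convolution product $m_*$: the stalk at $e$ of $\cF * \cG$ is an integral over the fiber $m^{-1}(e)$, not $\cF_e\otimes\cG_e$. This misidentification undermines your ambidexterity step—since $u$ is a closed point, $u_!=u_*$ holds automatically and carries no content. The substantive input concerns $p$, and for noncompact $G$ it is not simply Poincar\'e duality or triviality of $\omega_G$. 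In the weak and de Rham cases the standard argument goes through rigidity of $\Shv(G)$ as a monoidal category; in the Betti case for the tori the paper actually uses, everything is immediate after Mellin transform. Your instinct that reductivity (unimodularity) is essential is right, but it enters via $p$, not $u$.
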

%{\color{blue}(BG: Possibly it might be better to state the above definitions and proposition uniformly, by "Let $SHV$ be one of Loc, QCoh, dR,...)"? \color{red} JH: It gets more difficult to maintain the parallelism as the section goes on so I didn't think it was worth it.}

We now specialize to the abelian setting:
\begin{notation}
    For the remainder of this section, we will restrict to the case that $G=(\CC^\times)^k$ is a torus. We will write $G^L:=\Spec(\CC[\pi_1 G])$ for the (Langlands) dual torus.
\end{notation}
In this case, we can identify $G$-categories in terms of linear structure on the dual torus. In the de Rham setting, this equivalence is known as the geometric Mellin transform: see \cite{BZN-Betti}*{\S 2.1}, or \cite{Gannon}*{Appendix A} for a derived enhancement. In the Betti setting, the statement is simpler: see \cite{Tel-ICM}*{Proposition 4.1}.
%the Mellin transform allows us to identify $G$-categories in terms of lienar sturucture involving the dual torus:
%We will only be interested in the case where $G = (\CC^\times)^k$ is a torus. Then one can describe Betti and de Rham $G$-categories in terms of the Langlands dual group $G^L := \Spec(\CC[\pi_1(G)])$.
\begin{proposition}
%\color{blue}LC: does this statement need a reference? or is it contained in \cites{Tel-ICM, Gannon}? or is it "known to experts"?  \color{black}
There are monoidal equivalences
\begin{align*}
(\Loc(G), m_*) &\simeq  (\IndCoh(G^L), \overset{!}{\otimes}) \simeq (\Qcoh(G^L), \otimes), \\
(\Dmod(G), m_*) &\simeq (\IndCoh(\fg^L / \pi_1(G^L)), \overset{!}{\otimes}) \simeq (\Qcoh(\fg^L / \pi_1(G^L)), \otimes)
\end{align*}
that are functorial for surjective homomorphisms $f: G \to H,$ in the sense that the following diagrams commute:
\[
\begin{tikzcd}[row sep = .5cm]
\Loc(G) \arrow[r, "f_*"] \isoarrow{d} & \Loc(H) \isoarrow{d} & \Dmod(G) \arrow[r, "f_*"] \isoarrow{d} & \Dmod(H) \isoarrow{d} \\
\IndCoh(G^L) \arrow[r, "(f^L)^!"] & \IndCoh(H^L) & \IndCoh(\fg^L/\pi_1(G)) \arrow[r, "(Df^L)^!"] & \IndCoh(\mathfrak{h}^L/\pi_1(H^L)).
\end{tikzcd}.
\]
\end{proposition}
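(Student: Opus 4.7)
The plan is to establish each of the two horizontal equivalences (Betti and de Rham) separately, verify monoidality, and then check functoriality under surjections. Since $G$ is a torus, the main engine in both cases is a version of the Fourier--Mellin transform that diagonalizes convolution.

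I will begin with the Betti statement. Because $G \simeq (\CC^\times)^k$ is a $K(\pi_1(G),1),$ the stalk-at-identity functor gives an equivalence $\Loc(G)\simeq \Mod_{\CC[\pi_1(G)]},$ and by the very definition $G^L = \Spec \CC[\pi_1(G)]$ this is the same as $\QCoh(G^L).$ Smoothness of $G^L$ (with trivial dualizing complex) identifies $\QCoh(G^L)$ with $\IndCoh(G^L)$ as plain presentable stable categories, under which $\overset{!}{\otimes}$ agrees with $\otimes_{\cO_{G^L}}.$ To promote this to a monoidal equivalence, I would argue that convolution $m_*$ on $\Loc(G)$ transports under monodromy to the multiplication of the group algebra $\CC[\pi_1(G)]$ (since $m_*$ for a local system on a product of $K(\pi,1)$'s is computed by tensoring monodromy representations and restricting along the group-algebra multiplication). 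The resulting monoidal structure on $\Mod_{\CC[\pi_1(G)]}$ is visibly the pointwise tensor product of modules over $\cO(G^L).$

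For the de Rham case I would use the classical (algebraic) Mellin transform. On $G \simeq (\CC^\times)^k,$ the global sections functor identifies $\Dmod(G)$ with modules over the ring
\[
D(G) \;\simeq\; \cO(\fg^L)\rtimes \pi_1(G^L),
\]
where $\pi_1(G^L)$ (canonically identified with the character lattice) acts on $\cO(\fg^L)\simeq \CC[x_1,\dots,x_k]$ by integer translations, encoding the commutation relations $[\partial_i,t_i^n]=n\,t_i^n.$ Consequently $\Dmod(G)\simeq \QCoh(\fg^L/\pi_1(G^L)),$ and smoothness of the quotient stack again matches this with $\IndCoh$ and the $\overset{!}{\otimes}$-structure. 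The monoidal compatibility is again a calculation: translation-equivariance of $m:G\times G\to G$ means that $m_*$ is intertwined by Mellin transform with the multiplication of the Hopf algebra $D(G),$ which on the spectral side is precisely pointwise tensor product of quasi-coherent sheaves on the stack $\fg^L/\pi_1(G^L).$

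Finally, I would address functoriality. Given a surjection $f:G\twoheadrightarrow H$ of tori, the induced map $f^L:H^L\hookrightarrow G^L$ on Langlands duals is a closed immersion (dually, the cocharacter quotient $\pi_1(G)\twoheadrightarrow \pi_1(H)$ yields a surjection $\CC[\pi_1(G)]\twoheadrightarrow \CC[\pi_1(H)]$), and likewise $Df^L$ is a closed immersion of smooth stacks. The claim that $f_*$ on sheaves corresponds to $(-)^!$ on the dual side is then an instance of the general fact that for closed immersions into smooth targets, the Mellin/Fourier transform swaps pushforward with $!$-pullback; concretely, on modules it sends the restriction-of-scalars functor along the surjection of group algebras (resp.\ rings of differential operators) to upper-shriek restriction along the closed immersion of spectra. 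Assembling these pieces yields the commutativity of the two displayed squares. The step I expect to demand the most care is the verification that the Mellin transform is symmetric monoidal on the nose (rather than up to non-canonical isomorphism), which ultimately follows from recognizing both sides as $\Spec$ of the Hopf algebra structures on $\CC[\pi_1(G)]$ and $D(G)$ respectively; this is the content referenced in \cite{Gannon}*{Appendix A} and \cite{Tel-ICM}*{Proposition 4.1}, on which I would rely for the derived enhancement.
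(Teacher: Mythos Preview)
Your proposal is correct and follows the standard Mellin transform argument. The paper itself provides no proof of this proposition; it merely states the result, referring in the preceding paragraph to the same sources you cite (\cite{Tel-ICM}*{Proposition 4.1} for the Betti case, and \cite{BZN-Betti}*{\S 2.1} together with \cite{Gannon}*{Appendix A} for the de Rham case). Your sketch is therefore more detailed than the paper's own treatment.

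One minor imprecision in your functoriality step: you describe the correspondence $f_*\leftrightarrow (f^L)^!$ as arising because Mellin ``sends the restriction-of-scalars functor along the surjection of group algebras\ldots to upper-shriek restriction.'' But restriction of scalars along $\CC[\pi_1(G)]\twoheadrightarrow\CC[\pi_1(H)]$ is the functor $\Loc(H)\to\Loc(G)$, i.e.\ $f^*$, not $f_*$. The clean way to obtain the commuting square is to observe that, under the monodromy equivalence, $f^*$ on local systems is identified with $(f^L)_*$ on quasi-coherent sheaves (both being the same restriction of scalars, read from the topological versus affine side), and then to pass to right adjoints to get $f_*\leftrightarrow (f^L)^!$. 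This is the adjunction argument implicit in your appeal to the cited references.
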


\begin{remark}
The exponential map gives an isomorphism between the analytification of the stack $\fg^L/\pi_1(G^L)$ and $G^L$. This is expected to be compatible with the Riemann--Hilbert correspondence.
\end{remark}

\begin{corollary}
A Betti G-action on $\cC$ is equivalent to a $\CC[G^L]$-linear structure on $\cC$.
\end{corollary}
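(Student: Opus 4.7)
The plan is to deduce this corollary as a direct consequence of the monoidal equivalence
\[
(\Loc(G), m_*) \simeq (\QCoh(G^L), \otimes)
\]
from the preceding proposition, by passing to module categories on both sides.

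First, I unpack the definitions. By \Cref{def:Gcats}, a Betti $G$-action on $\cC$ is precisely the data of a $(\Loc(G), m_*)$-module structure on $\cC$ in $\PrL$. Applying the monoidal equivalence above, this is equivalent to endowing $\cC$ with the structure of a module category for $(\QCoh(G^L), \otimes)$.

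Next, I identify $\QCoh(G^L)$-module structures with $\CC[G^L]$-linear structures. Since $G^L = \Spec(\CC[\pi_1 G])$ is an affine scheme, there is a symmetric monoidal equivalence $\QCoh(G^L) \simeq \Mod_{\CC[G^L]}$, where the latter is given its standard tensor product over $\CC[G^L]$. A module category in $\PrL$ for $\Mod_{\CC[G^L]}$ is, by definition, a $\CC[G^L]$-linear presentable category (this is just the standard fact that $\Mod_R$-module categories are equivalent to $R$-linear categories, which holds for any commutative ring $R$ in the appropriate derived sense).

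Composing these two equivalences yields the desired identification. There is no substantive obstacle here: the content has already been absorbed into the preceding proposition, and the corollary is essentially the observation that module categories are invariants of monoidal equivalence. The only point warranting explicit mention is that the monoidal equivalence must be taken in $\PrL$ (which it is, by the statement of the preceding proposition), so that passing to module categories is well-defined and preserves equivalences.
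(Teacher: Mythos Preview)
Your proof is correct and follows exactly the approach the paper intends: the corollary is stated without proof because it is immediate from the preceding monoidal equivalence $(\Loc(G), m_*)\simeq(\QCoh(G^L),\otimes)$ together with the affineness of $G^L$, which is precisely the reasoning you give.
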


As homomorphism $p: G \to 1$ gives rise to the trivial $G$-representation, the operation of 
taking Betti (resp., de Rham) $G$-invariants coincides with pulling back along $p^L: 1 \to G^L$ (resp., $Dp^L: 0 = \pi_1(G^L)/\pi_1(G^L)  \to \fg^L/\pi_1(G^L)$):
\begin{corollary}
For $\cC$ a Betti (resp. de Rham) $G$-category, there are equivalences
\[
\begin{tikzcd}
\cC^{G_{\Betti}} \simeq \IndCoh(\{1\}) \otimes_{\IndCoh(G^L)} \cC & \cC^{G_{\dR}} \simeq \IndCoh(\{0\}) \otimes_{\IndCoh(\fg^L/\pi_1(G^L))} \cC.
\end{tikzcd}
\]
\end{corollary}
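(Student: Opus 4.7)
The plan is to derive the corollary directly from the monoidal Mellin equivalences in the preceding proposition, combined with the identification $\cC^{G_\bullet}\simeq \cC_{G_\bullet}$ for the trivial representation already established above.

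First, I would identify the trivial $(\Loc(G), m_*)$-module structure on $\Vect$ under the Mellin transform. The trivial action is, by definition, induced by pushforward along the projection $p: G \to 1$: the symmetric monoidal functor $p_*: \Loc(G) \to \Loc(1) = \Vect$ equips $\Vect$ with the trivial $\Loc(G)$-module structure. Applying the functoriality square in the preceding proposition to $p$, the Mellin transform identifies this $\Loc(G)$-module $\Vect$ with the $\IndCoh(G^L)$-module $\IndCoh(\{1\})$, whose module structure is induced by $!$-pullback along the dual inclusion $p^L: \{1\} \hookrightarrow G^L$. An identical argument in the de Rham setting, using the functoriality for $Dp^L: \{0\} \hookrightarrow \fg^L/\pi_1(G^L)$, identifies the trivial $\Dmod(G)$-module $\Vect$ with $\IndCoh(\{0\})$ over $\IndCoh(\fg^L/\pi_1(G^L))$.

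Next, by the preceding proposition the trivial representation is self-dual as an $\Shv(G)$-module, so $\cC^{G_\bullet}\simeq \cC_{G_\bullet}$ and it suffices to compute the coinvariants. By definition $\cC_{G_\bullet} = \Vect \otimes_{\Shv(G)} \cC$; transporting through the monoidal Mellin equivalence and substituting the identification of the trivial representation above yields
\[
\cC^{G_{\Bet}} \simeq \IndCoh(\{1\}) \otimes_{\IndCoh(G^L)} \cC, \qquad \cC^{G_{\dR}} \simeq \IndCoh(\{0\}) \otimes_{\IndCoh(\fg^L/\pi_1(G^L))} \cC,
\]
which is the desired identification.

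The only substantive step is the first: correctly identifying the trivial representation under Mellin. This is essentially forced by the functoriality square for the unit homomorphism $p: G \to 1$, so no serious obstacle arises; the remaining content is formal manipulation of the relative tensor product. (A minor bookkeeping issue to verify is the compatibility of the two versions of Mellin, with $\IndCoh$ under $\overset{!}{\otimes}$ and with $\QCoh$ under $\otimes$; this is immediate from the smoothness of $G^L$ and of $\fg^L/\pi_1(G^L)$.)
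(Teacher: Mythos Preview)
Your proof is correct and follows essentially the same approach as the paper: the paper's argument is the single sentence preceding the corollary, which observes that the trivial representation arises from $p: G \to 1$ and hence, under the Mellin functoriality, corresponds to pulling back along $p^L: \{1\} \to G^L$ (resp.\ $Dp^L: \{0\} \to \fg^L/\pi_1(G^L)$). You have simply unpacked this, together with the already-established identification $\cC^{G_\bullet}\simeq \cC_{G_\bullet}$, in more detail.
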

We can similarly use the linear structure to describe weak invariants as the pullback
%if $\cC$ is a de Rham $G$-category the weak invariants are given by pulling back 
along ${q:\fg^L \to \fg^L/\pi_1(G^L)}$:
\begin{corollary}
For $\cC$ a weak $G$-category, there is an equivalence
\[
\cC^{G_w} \simeq \IndCoh(\fg^L) \otimes_{\IndCoh(\fg^L/\pi_1(G^L))} \cC.
\]
\end{corollary}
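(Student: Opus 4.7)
The plan is to mimic the proofs of the preceding Betti and de Rham corollaries, using a Mellin-type transform together with the self-duality of the trivial representation. First, I would verify that the weak $G$-action on $\cC$ induces a natural $\IndCoh(\fg^L/\pi_1(G^L))$-module structure on $\cC$ via the composition of the monoidal functor $\QCoh(G) \to \Dmod(G)$ (coming from $G \to G_{dR}$) with the de Rham Mellin transform of the previous proposition. This makes the right-hand side of the desired equivalence well-defined.

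Second, I would identify the trivial weak $G$-representation on $\Vect$. Recall that the trivial de Rham representation corresponds under Mellin to the skyscraper $\IndCoh(\{0\})$ at the origin of $\fg^L/\pi_1(G^L)$. In the weak case, the projection $p : G \to 1$ instead corresponds under Mellin to the pushforward along the quotient $q : \fg^L \to \fg^L/\pi_1(G^L)$, because we are not imposing infinitesimal triviality; heuristically, the three cases (Betti, weak, de Rham) correspond to pulling back to the identity of $G^L$ along the exact point, along the universal cover, and along the formal neighborhood, respectively. This is the step where the distinction between weak and strong actions enters geometrically.

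With the trivial representation identified, the corollary follows by the same self-duality argument as in the previous two corollaries:
\[
\cC^{G_w} \simeq \Vect \otimes_{\QCoh(G)} \cC \simeq \IndCoh(\fg^L) \otimes_{\IndCoh(\fg^L/\pi_1(G^L))} \cC.
\]
The main obstacle is the identification of the trivial weak representation with pushforward along $q$: a Mellin transform for $(\QCoh(G), m_*)$ analogous to those recorded for $(\Loc(G), m_*)$ and $(\Dmod(G), m_*)$ needs to be set up, or at least the required functoriality along $G \to G_{dR}$ extracted, in order to make this precise. Once that is in hand the verification reduces to tracking the homomorphism $p : G \to 1$ through the relevant diagrams.
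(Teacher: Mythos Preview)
Your overall strategy (identify the trivial representation under Mellin, then invoke self-duality and tensor) is the same one-line argument the paper gives: ``use the linear structure to describe weak invariants as the pullback along $q:\fg^L \to \fg^L/\pi_1(G^L)$.'' Steps 2 and 3 are fine.

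The problem is Step 1. The monoidal functor $\QCoh(G) \to \Dmod(G)$ goes in the direction you say, but that means a $\Dmod(G)$-module restricts to a $\QCoh(G)$-module, not the other way around: a general weak $G$-category does \emph{not} acquire an $\IndCoh(\fg^L/\pi_1(G^L))$-module structure by composing with this functor. So your proposed mechanism for making the right-hand side well-defined does not work.

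In fact the paper's hypothesis ``$\cC$ a weak $G$-category'' is imprecise: the right-hand side of the displayed equivalence is only defined if $\cC$ already carries an $\IndCoh(\fg^L/\pi_1(G^L))$-module structure, i.e., if $\cC$ is a de Rham $G$-category. That this is the intended reading is clear from the immediately following remark and proposition, which apply the corollary only to de Rham $\cC$ and use the residual $\IndCoh(\fg^L)$-action on $\cC^{G_w}$. With that hypothesis in place there is nothing to verify in Step 1: the required module structure is just the de Rham Mellin transform. Your Step 2 then amounts to the change-of-rings computation
\[
\Vect \otimes_{\QCoh(G)} \cC \simeq \bigl(\Dmod(G) \otimes_{\QCoh(G)} \Vect\bigr) \otimes_{\Dmod(G)} \cC,
\]
together with the identification of the induced trivial module with $q_*$, which is what the paper is asserting.
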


\begin{remark}\label{rem:HC-bimod}
The category $\cC^{G_w}$ is equipped with compatible actions of $(\IndCoh(\fg^L), \overset{!}{\otimes})$ and $(\IndCoh(\pi_1(G^L)), m_*)$. These combine into an action of the monoidal category of Harish-Chandra bimodules
\[
({\sf HC}(G), \star) = (\Dmod(G)^{(G \times G)_w}, m_*).
\]
An ${\sf HC}(G)$-action on $\cC^w$ is equivalent data to a de Rham $G$-action on $\cC.$
\end{remark}
In other words, passing from a de Rham $G$-category $\cC$ to its weak invariants does not lose any data, so long as we remember both the $\IndCoh(\fg^L)$- and $\IndCoh(\pi_1(G^L))$-actions. We will sometimes find it useful to forget part of this data:

\begin{definition}
A {\em $\fg$-category} is a module category for $(\IndCoh(\fg^L), \overset{!}{\otimes})$, or equivalently a category equipped with $(\text{Sym}(\fg) = \CC[\fg^L])$-linear structure.
The {\em $\fg$-invariants} of a $\fg$-category $\cC$ are defined by the pullback
\[
\cC^{\fg}:=\IndCoh(\{0\})\otimes_{\IndCoh(\fg^L)}\cC.
\]
\end{definition}
%{\color{blue}(BG: Note to self: define $\fg$-invariants.)}

We will also be interested in imposing a condition slightly weaker than $G$-invariance, namely $G$-monodromicity; as above, this is most easily defined in terms of linear structure for the dual group. Let $(G^L)^\wedge_1$ (resp., $(\fg/\pi_1(G^L))^\wedge_0$) denote the formal completion of $G^L$ (resp., $\fg/\pi_1(G^L)$) along $1$ (resp., $0$). %{\color{blue}(BG: Should we rather say ``$0=\pi_1(G^L)/\pi_1(G^L)$''?)}
\begin{definition}
Let $\cC$ be a Betti (resp. de Rham) $G$-category. The Betti (resp. de Rham) {\em $G$-monodromic category} is
%\[
\begin{align*}
\cC^{G_{\Betti}\mon} &:= \IndCoh((G^L)^\wedge_1) \otimes_{\IndCoh(G^L)} \cC,\\
%\]
%Analagously, for a de Rham $G$-category we have
\cC^{G_{\dR}\mon} &:= 
\IndCoh((\fg^L/\pi_1(G^L))^\wedge_0) \otimes_{\IndCoh(\fg/\pi_1(G^L))} \cC.
\end{align*}
\end{definition}

\begin{proposition}
Consider $0 \to \fg^L \to \fg^L/\pi_1(G^L)$. There is an isomorphism $(\fg^L)^\wedge_0 \simeq (\fg / \pi_1(G))^\wedge_0$. Thus, for a de Rham $G$-category $\cC,$ there are equivalences 
\begin{align*}
\cC^{G_{\dR}} &\simeq (\cC^{G, w})^{\fg} = \IndCoh(\{0\}) \otimes_{\IndCoh(\fg)}\cC^{G,w}  \\
\cC^{G_{\dR}\mon} &\simeq (\cC^{G, w})^{\fg\mon} = \IndCoh((\fg^L)^\wedge_0) \otimes_{\IndCoh(\fg)}\cC^{G,w}.
\end{align*}
\end{proposition}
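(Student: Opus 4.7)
The plan is to prove the geometric statement first and then derive the categorical equivalences from associativity of the relative tensor product.

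For the isomorphism of formal completions, observe that $\pi_1(G^L)$ acts freely on the vector space $\fg^L$ by translations (along the cocharacter lattice, embedded via the exponential sequence), so the quotient map $q:\fg^L\to \fg^L/\pi_1(G^L)$ is an étale covering. In particular, its restriction to any sufficiently small neighborhood of $0\in\fg^L$ is an open immersion, and so passing to formal neighborhoods gives an isomorphism $(\fg^L)^\wedge_0 \simeq (\fg^L/\pi_1(G^L))^\wedge_0$. Applying $\IndCoh$ yields a canonical equivalence $\IndCoh((\fg^L)^\wedge_0) \simeq \IndCoh((\fg^L/\pi_1(G^L))^\wedge_0)$ of module categories over $\IndCoh(\fg^L/\pi_1(G^L))$.

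For the categorical equivalences, I would invoke the previous corollary identifying $\cC^{G,w} \simeq \IndCoh(\fg^L) \otimes_{\IndCoh(\fg^L/\pi_1(G^L))} \cC$ and the identification $\cC^{G_{\dR}} \simeq \IndCoh(\{0\}) \otimes_{\IndCoh(\fg^L/\pi_1(G^L))} \cC$. Since the composition $\{0\}\hookrightarrow \fg^L \xrightarrow{q} \fg^L/\pi_1(G^L)$ agrees with the inclusion $\{0\}\hookrightarrow \fg^L/\pi_1(G^L)$, associativity of the relative tensor product gives
\begin{align*}
(\cC^{G,w})^{\fg} &= \IndCoh(\{0\}) \otimes_{\IndCoh(\fg^L)} \cC^{G,w} \\
&\simeq \IndCoh(\{0\}) \otimes_{\IndCoh(\fg^L)} \bigl(\IndCoh(\fg^L) \otimes_{\IndCoh(\fg^L/\pi_1(G^L))} \cC\bigr) \\
&\simeq \IndCoh(\{0\}) \otimes_{\IndCoh(\fg^L/\pi_1(G^L))} \cC \;\simeq\; \cC^{G_{\dR}}.
\end{align*}
The monodromic variant then follows by the same chain of base changes, replacing $\IndCoh(\{0\})$ by $\IndCoh((\fg^L)^\wedge_0)$ and invoking the geometric step to rewrite the latter as $\IndCoh((\fg^L/\pi_1(G^L))^\wedge_0)$ over $\IndCoh(\fg^L/\pi_1(G^L))$.

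The principal (though minor) obstacle will be to verify that base-change along the étale map $q$ and its formal completion behaves well in the ind-coherent setting, so that the manipulations above are legitimate; this amounts to standard properties of $\IndCoh$ for étale morphisms and formal completions (as developed by Gaitsgory--Rozenblyum), applied to the concrete case of a translation action by a discrete cocharacter lattice. Beyond that, the proof is a purely formal consequence of associativity of the tensor product of presentable stable $\infty$-categories.
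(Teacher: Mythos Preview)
Your argument is correct and is precisely the natural proof: the paper states this proposition without proof, leaving the reader to supply exactly the \'etale-covering and associativity argument you give. The only point worth noting is that in the monodromic case you need the isomorphism $(\fg^L)^\wedge_0 \simeq (\fg^L/\pi_1(G^L))^\wedge_0$ to be an isomorphism of formal schemes \emph{over} $\fg^L/\pi_1(G^L)$ (so that the induced equivalence on $\IndCoh$ is one of $\IndCoh(\fg^L/\pi_1(G^L))$-modules), but this is immediate from the construction.
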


The following proposition was proved for formal completions of schemes in \cite[\S 7.4]{GR-ind}. The case of $\fg^L/\pi_1(G^L)$ follows by examining the cover $q: \fg^L \to \fg^L/\pi_1(G^L)$.
\begin{proposition}\label{prop:completion}
Let $Y \to X$ be one of $1 \to G^L$, $0 \to \fg$, or $0 \to \fg/\pi_1(G^L)$ and let $U = X - Y$. There is an exact sequence
\[
\IndCoh(X^\wedge_Y) \to \IndCoh(X) \to \IndCoh(U)
\]
of $\IndCoh(X)$-module categories. Moreover, we have that
\[
\IndCoh(X^\wedge_Y) = \IndCoh(\varinjlim_k N_k(Y,X)) \simeq \varinjlim_{k} \IndCoh(N_k(Y,X)),
\]
where $N_k(Y,X)$ is the $k$th order neighborhood of $Y$ in $X$.
\end{proposition}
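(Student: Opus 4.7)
The plan is to reduce to the scheme case (for which the result is cited from \cite{GR-ind}*{\S 7.4}) and then descend along an étale cover to handle the stacky case.

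For $Y \to X \in \{1 \to G^L,\, 0 \to \fg^L\}$, $X$ is an affine scheme, so both statements follow directly from Gaitsgory--Rozenblyum: the colimit presentation $\IndCoh(X^\wedge_Y) \simeq \varinjlim_k \IndCoh(N_k(Y,X))$ is essentially the definition of $\IndCoh$ on a formal scheme in that framework, and the localization exact sequence $\IndCoh(X^\wedge_Y) \to \IndCoh(X) \to \IndCoh(U)$ is established in \emph{op.\ cit.} (with the first functor being $!$-pushforward along the ind-closed formal inclusion and the second $!$-pullback along the open complement).

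For the remaining case $Y \to X = 0 \to \fg^L/\pi_1(G^L)$, I would use that the quotient map $q: \fg^L \to \fg^L/\pi_1(G^L)$ is an étale cover, since $\pi_1(G^L)$ is a discrete free-abelian group acting on $\fg^L$ by translation (via the inclusion $\pi_1(G^L)\hookrightarrow \fg^L$ coming from the exponential). The preimage $q^{-1}(0) = \pi_1(G^L) \subset \fg^L$ is a disjoint collection of closed points, and étaleness identifies $q^{-1}((\fg^L/\pi_1(G^L))^\wedge_0) \simeq \coprod_{\lambda \in \pi_1(G^L)} (\fg^L)^\wedge_\lambda$, with $\pi_1(G^L)$ acting by permuting the components freely and transitively. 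Consequently the $\pi_1(G^L)$-quotient of this disjoint union identifies with $(\fg^L)^\wedge_0$ via any one component. Analogously, the open complement $U$ pulls back to $\fg^L \setminus \pi_1(G^L)$, of which it is the (free) $\pi_1(G^L)$-quotient.

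Finally, I would invoke étale descent for $\IndCoh$ along $q$ (a special case of the smooth descent of $\IndCoh$ established in the Gaitsgory--Rozenblyum framework) to identify $\IndCoh$ of each of $\fg^L/\pi_1(G^L)$, its formal completion at $0$, and $U$ with $\pi_1(G^L)$-equivariant objects in $\IndCoh$ of the corresponding cover. Taking $\pi_1(G^L)$-equivariants is exact and commutes with filtered colimits, so both the localization exact sequence and the colimit description from the scheme case descend verbatim. The main obstacle I would expect is the bookkeeping: verifying that the $!$-pushforward and $!$-pullback functors entering the localization sequence are the ones compatible with descent along $q$, and that the colimit $\varinjlim_k \IndCoh(N_k)$ on the quotient stack matches, on the cover, with the $\pi_1(G^L)$-equivariant colimit over neighborhoods of the individual lattice points.
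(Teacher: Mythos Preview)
Your proposal is correct and follows exactly the paper's approach: the paper simply remarks that the scheme cases are \cite[\S 7.4]{GR-ind} and that the case of $\fg^L/\pi_1(G^L)$ ``follows by examining the cover $q:\fg^L\to \fg^L/\pi_1(G^L)$.'' You have just spelled out what ``examining the cover'' means, via \'etale descent for $\IndCoh$ along the free $\pi_1(G^L)$-action, which is indeed the intended argument.
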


\begin{remark}
The image of the embedding $\IndCoh(X^\wedge_Y) \to \IndCoh(X)$ is the category $\IndCoh(X)_Y$ of ind-coherent sheaves on $X$ which are set-theoretically supported on $Y$.  
\end{remark}

\begin{corollary} \label{cor:monfact}
Let $\cC$ be a Betti G-category. Then
\[
\cC^{G_{\Bet}\mon} \simeq \varinjlim_k \IndCoh(N_k(1,G^L)) \otimes_{\IndCoh(G)} \cC.
\]
Moreover, if $\cC$ is dualizable as an $\IndCoh(G^L)$-module then one has an exact sequence
\[
\cC^{G_{\Bet}\mon} \to \cC \to \IndCoh(G^L-\{1\}) \otimes_{\IndCoh(G)} \cC
\]
in $\PrL$. In particular, the functor $\cC^{G_{\Bet}\mon} \to \cC$ is fully faithful. 

The analogous statements hold for de Rham $G$-categories and $\fg$-categories.
\end{corollary}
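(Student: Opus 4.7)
The plan is to deduce Corollary \ref{cor:monfact} from Proposition \ref{prop:completion} by base change along the relative tensor product with $\cC$.

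First, for the colimit presentation, recall that by definition $\cC^{G_{\Bet}\mon} = \IndCoh((G^L)^\wedge_1) \otimes_{\IndCoh(G^L)} \cC$. Proposition \ref{prop:completion} identifies $\IndCoh((G^L)^\wedge_1) \simeq \varinjlim_k \IndCoh(N_k(1,G^L))$ as a filtered colimit of $\IndCoh(G^L)$-module categories in $\PrL$. Since the relative tensor product $(-) \otimes_{\IndCoh(G^L)} \cC$ is computed as a geometric realization of a bar construction in $\PrL$ and is itself a left adjoint, it commutes with arbitrary colimits in its first argument. Applying this to the filtered colimit presentation yields
\[
\cC^{G_{\Bet}\mon} \;\simeq\; \varinjlim_k \bigl(\IndCoh(N_k(1,G^L)) \otimes_{\IndCoh(G^L)} \cC\bigr),
\]
which is the first claim.

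For the exact sequence, Proposition \ref{prop:completion} provides a fiber/cofiber sequence of $\IndCoh(G^L)$-module categories
\[
\IndCoh((G^L)^\wedge_1) \longrightarrow \IndCoh(G^L) \longrightarrow \IndCoh(G^L-\{1\})
\]
in $\PrL$. Here "exact" means that the first functor is fully faithful and identifies its source with the kernel of the second. To transport this through $(-)\otimes_{\IndCoh(G^L)} \cC$, we need the tensor functor to preserve both colimits (automatic in $\PrL$) and the relevant limits (not automatic). The key input is that $\cC$ is assumed dualizable as an $\IndCoh(G^L)$-module: under dualizability, the endofunctor of $\PrL_{\IndCoh(G^L)}$ given by $(-)\otimes_{\IndCoh(G^L)} \cC$ admits a right adjoint internal to $\IndCoh(G^L)$-module categories, and hence preserves limits as well as colimits. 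Thus the tensored sequence
\[
\cC^{G_{\Bet}\mon} \longrightarrow \cC \longrightarrow \IndCoh(G^L-\{1\}) \otimes_{\IndCoh(G^L)} \cC
\]
remains a fiber/cofiber sequence in $\PrL$, as desired. The fully faithfulness of $\cC^{G_{\Bet}\mon}\hookrightarrow \cC$ is then part of the meaning of exactness in $\PrL$ (the first functor identifies its source with the full subcategory of objects sent to zero by the second).

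Finally, the de Rham and $\fg$-module cases are established by exactly the same argument, applied to the closed embeddings $0 \hookrightarrow \fg^L/\pi_1(G^L)$ and $0 \hookrightarrow \fg^L$ respectively; both are among the cases explicitly covered by Proposition \ref{prop:completion}, so no new input is required. The main technical point throughout is the preservation of fiber sequences under tensoring by a dualizable object; we expect this to be the only nontrivial step, as it relies on the (standard but subtle) interplay between dualizability in $\PrL$ and exactness of module-theoretic constructions.
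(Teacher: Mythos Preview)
Your proposal is correct and follows exactly the approach the paper intends: the corollary is stated without proof, as an immediate consequence of Proposition~\ref{prop:completion} obtained by applying $(-)\otimes_{\IndCoh(G^L)}\cC$. Your write-up correctly isolates the one nontrivial point the paper leaves implicit, namely that dualizability of $\cC$ is what guarantees that tensoring preserves the fiber (not just cofiber) part of the exact sequence.
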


To apply the previous corollary, we will need the following fact.
\begin{proposition}[\protect{\cite[Corollary 8.6.3]{GR-study}}]
Let $(\cM, \otimes)$ be a monoidal category in $\PrL$ and let $A$ be an algebra object in $\cM$. The dual of the $\cM$-module category $\RMod_A$ of right $A$-modules is the category $\LMod_{A}$ of left $A$-modules.
\end{proposition}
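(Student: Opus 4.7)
The plan is to exhibit $\LMod_A$ as the dual of $\RMod_A$ in $\Mod_{\cM}(\PrL)$ by producing $\cM$-linear evaluation and coevaluation functors and verifying the triangle identities. The evaluation $\ev \colon \RMod_A \otimes_{\cM} \LMod_A \to \cM$ is the relative tensor product $(M, N) \mapsto M \otimes_A N,$ while the coevaluation ${\sf coev} \colon \cM \to \LMod_A \otimes_{\cM} \RMod_A$ is determined by $\cM$-linearity together with the assignment $\uno_{\cM} \mapsto A$ of the regular $(A,A)$-bimodule.

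The first key input is the identification $\LMod_A \otimes_{\cM} \RMod_A \simeq \BiMod_{A,A}(\cM),$ which realizes bimodules as the relative tensor product of left and right module categories via the two-sided bar construction. Under this identification, the regular bimodule $A$ supplies the image of ${\sf coev}(\uno_{\cM}),$ and the ambient monoidal structure on $\BiMod_{A,A}(\cM)$ is the relative tensor product over $A.$

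With these maps in hand, the triangle identities reduce to the unit axioms for the relative tensor product of bimodules. For $M \in \RMod_A,$ the composite $M \xrightarrow{\id \otimes {\sf coev}} M \otimes_{\cM} \LMod_A \otimes_{\cM} \RMod_A \xrightarrow{\ev} M \otimes_A A \simeq M$ recovers the identity, and symmetrically $N \mapsto A \otimes_A N \simeq N$ on the left-module side. Both identifications are instances of the right (respectively left) unit isomorphism for tensoring with the regular bimodule.

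The main obstacle is coherence. One must arrange these constructions so that they assemble into $\cM$-linear functors of stable presentable $\infty$-categories and so that the triangle identities hold as coherent equivalences of such functors, not merely as pointwise isomorphisms. Since $M \otimes_A N$ is itself defined via a geometric realization of a bar simplicial object, verifying the unit identities requires the full coherent statement that $A$ is a monoidal unit in $\BiMod_{A,A}(\cM)$ under relative tensor product. The necessary coherence is handled by the general machinery of module $\infty$-categories and relative tensor products developed in \cite{GR-study}*{\S 8.6}.
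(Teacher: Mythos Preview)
Your sketch is correct in spirit and follows the standard approach, but note that the paper does not actually supply a proof of this proposition: it is stated with a direct citation to \cite[Corollary 8.6.3]{GR-study} and no argument is given. Your outline --- exhibiting evaluation via the relative tensor product $M\otimes_A N$, coevaluation via the regular bimodule, and reducing the triangle identities to the unit isomorphisms $M\otimes_A A\simeq M$ and $A\otimes_A N\simeq N$ --- is precisely the strategy underlying the cited result, and you rightly flag that the nontrivial content is packaging these into coherent $\cM$-linear functors, which is what the machinery of \cite{GR-study} provides.
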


\section{The de Rham and Betti algebras}\label{sec:betti-and-derham}
In this section, we define a pair of algebras $A^\dR_G(\FI,\mass)$ and $A^\Betti_G(\FI,\mass)$
which play a central role in the remainder of this paper. 
We define these purely algebraically, but the former was shown in \cites{BLPW10,BLPW12} to control the category $\cO_G^{\dR}(\FI,\mass).$ The latter algebra is defined in a very similar way, and we will show in \Cref{ssec:algRH} that these algebras are in fact isomorphic. In \Cref{sec:geometric-categories} and \Cref{section:kirwan}, we will see that the Betti algebra actually controls the category $\cO_G^\Betti(\FI,\mass).$
%We will define these algebras purely algebraically, but in \Cref{sec:geometric-categories} and \Cref{section:kirwan} we will see that they control the categories $\cO_G^{\dR}(t,m)$ and $\cO_G^{\Bet}(t,m)$ respectively. {\color{blue}(BG: This isn't really true of the de Rham algebra, unless we add a lot more to the Riemann--Hilbert subsection of \Cref{sec:geometric-categories}. I'll edit this -- BLPW show that $\cO$ controls de Rham category $\cO$)} The de Rham algebra has been extensively studied in \cite{BLPW10, BLPW12}. The Betti algebra we study here is defined in a similar way, and we will show in \Cref{ssec:algRH} that these algebras are isomorphic.

\subsection{Recollections on the de Rham algebra}
Consider the quiver
\begin{equation}\label{equation:quiver-Q1}
    Q_1:= \begin{tikzcd}
 - \arrow[r, shift left,  "v"
 ] \arrow[from=r, shift left, "u"] & +.
 \end{tikzcd}
\end{equation}
Define $A^{\dR}_1 := P_{\CC}(Q_1)$ to be the path algebra of $Q_1$ over $\CC$. It is graded by path length. Let $e_\pm \in A^{\dR}_1$ be the idempotents corresponding to the vertices. 
If we set $D=\CC^\times$ to be 1-dimensional, then
the inclusion
\[
\begin{tikzcd}[row sep = 0cm]
\CC[\fd^L]\simeq \CC[d] \arrow[r] & Z(A^{\dR}_1) \\
d \arrow[r, mapsto] & uv+vu
\end{tikzcd}
\]
gives a $\fd$-action on the category $\Mod_{A^{\dR}_1}$.

%{\color{blue}(BG: Note to self: change $D_n$ to $D$.)}

More generally, we define $A^{\dR}_n := (A^{\dR}_1)^{\otimes n}.$ This algebra is also graded by path length. There is a an isomorphism $\CC[\fd^L] \cong \CC[d_1, \ldots, d_n]$ which gives rise to a $\fd$-action on $\Mod_{A^{\dR}_n}$. The idempotents in $A^{\dR}_n$ are labeled by $\alpha = i_1\dots i_n \in 2^{[n]}$: we write $e_{\alpha} := e_{i_1} \otimes \dots \otimes e_{i_n} \in A^{\dR}_n$. 
Using the bijection of $2^{[n]}$ with chambers in the coordinate hyperplane arrangement on $\RR^n,$ we can think elements of $A_n^{\dR}$ as corresponding to paths between chambers in this arrangement. This perspective, which is very useful for understanding the algebra, is taken in \cite{BLPW10}.
%{\color{blue}(BG: Add something here about thinking of algebra elements as paths in the standard hyperplane arrangement.)}
Between any two idempotents $\alpha$ and $\beta$ there is a minimal path 
\[
p^{dR}(\alpha, \beta) = \prod^n_{i=1} \begin{cases} 
1 & \alpha_i = \beta_i, \\ 
u_i & \text{$\alpha_i=+$ and $\beta_i=-$}, \\
v_i & \text{$\alpha_i=-$ and $\beta_i=+$}.
\end{cases}
\]

Now $(G, t, m)$ be a category $\cO$ datum, determining an inclusion $G \to D$ which we use to impose $G$-invariants. The papers \cite{BLPW10, BLPW12} study the rings
\begin{align}
A^{\dR}_G &= A^{\dR}_n \otimes_{\CC[\fg^L]} \CC, \label{eq:adrg}\\
A^{\dR}_G(\FI,-) &= e_{\cF} A^{\dR}_G( -, -) e_{\cF}, \\
A^{\dR}_G(\FI, \mass) &= A^{\dR}_G(t,-)/(A^{\dR}_G(\FI,-) e_{\cF \cap \cB^c} A^{\dR}_G(\FI,-)).
\end{align}
%{\color{blue}(BG: The notation $\CC[0]$ is not good; it doesn't make sense unless you already know what it means. Better to announce before the definition that $\CC$ is the augmentation module.)}
where in \Cref{eq:adrg}, $\CC$ is the natural augmentation module for $\CC[\fg^L].$
Note that $\Mod_{A^{\dR}_G} \simeq (\Mod_{A^{\dR}_n})^{\fg}$ has a residual $\mathfrak{f}$-action which is is inherited by $\Mod_{A^{\dR}_G(\FI,-)}$ and $\Mod_{A^{\dR}_G(\FI, \mass)}$, corresponding to the obvious $\CC[\mathfrak{f}^L] \cong \CC[\mathfrak{d}^L] \otimes_{\CC[\fg^L]} \CC$-algebra structures.

The next proposition summarizes results from \cite{BLPW10, BLPW12} that will be used later.
\begin{proposition}[{\cite{BLPW10}*{Theorem B}, \cite{BLPW12}*{Lemma 8.25}}]
\label{proposition:koszul-facts}
\text{ }
\begin{itemize}
\item $A^{\dR}_G(\FI,-)$, and $A^{\dR}_G(\FI,\mass)$ are Koszul. Moreover, $A^{\dR}_G(\FI,\mass)^! = e_{\cB \cap \cF} A^{\dR}_G(\FI,-)^! e_{\cB \cap \cF}$. 
\item $A^{\dR}_G$ and $A^{\dR}_G(\FI,-)$ are free finitely-generated $\CC[\mathfrak{f}^L]$-modules. 
\item $A^{\dR}_G(\FI,-)$ and $A^{\dR}_G(\FI,\mass)$ have finite homological dimension. 
\item $A^{\dR}_G(\FI,\mass)$ is finite-dimensional and quasi-hereditary.
\end{itemize}
\end{proposition}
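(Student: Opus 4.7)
The plan is to import the bulk of these statements from \cite{BLPW10,BLPW12} after matching conventions. The key combinatorial bridge is the description of $A^{\dR}_G(\FI,-)$ as a ``linked'' path algebra associated to the hyperplane arrangement $\cH(\FI)$: idempotents are indexed by feasible sign vectors and generators by adjacent transpositions of chambers, with the central action of $\CC[\ffv^L]$ detecting which hyperplane was crossed. Once this dictionary is in place, each claim reduces to a citation or to an explicit combinatorial calculation in this model. The main conceptual obstacle, which I would tackle first, is verifying that the $\fg$-invariance construction in \Cref{eq:adrg} coincides with the ``hypertoric enveloping'' algebra built in \cite{BLPW10}*{\S4}; this is essentially the observation that $\CC[\fgv^L] \to Z(A^{\dR}_n)$ sends the generators $g_i$ to the relations $u_i v_i + v_i u_i$ imposed on a vertex-by-vertex basis.

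For Koszulity of $A^{\dR}_G(\FI,-)$, I would invoke the distributive-lattice-of-ideals argument of \cite{BLPW10}*{\S5}, which verifies the $\operatorname{Tor}$-vanishing criterion directly from the combinatorics of the arrangement. The quadratic dual identity $A^{\dR}_G(\FI,\mass)^! = e_{\cB\cap\cF}\,A^{\dR}_G(\FI,-)^!\,e_{\cB\cap\cF}$ is then an immediate instance of \Cref{lem:QDID} applied to the idempotent $e = e_{\cF\cap \cB^c}$; the hypothesis that $e^c Q(A^{\dR}_G(\FI,-)) e^c$ is quadratic is the content of \cite{BLPW12}*{Lemma 8.25}, and once this is granted, Koszulity of the quotient $A^{\dR}_G(\FI,\mass)$ follows from Koszulity of the quadratic dual together with the involutivity of quadratic duality for Koszul algebras.

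For the free finite generation over $\CC[\ffv^L]$, the strategy is to exhibit an explicit $\CC[\ffv^L]$-basis of each morphism space $e_\alpha A^{\dR}_G(\FI,-) e_\beta$ consisting of minimal paths $p^{\dR}(\alpha,\beta)$ in the chamber graph of $\cH(\FI)$; unimodularity ensures this basis is finite. Finite homological dimension of both $A^{\dR}_G(\FI,-)$ and $A^{\dR}_G(\FI,\mass)$ then follows from Koszulity combined with finiteness of the underlying quiver (via \cite{BGS}*{Theorem 2.10.1} or the linear dual projective resolutions of simples by standards). Finite-dimensionality of $A^{\dR}_G(\FI,\mass)$ comes from specializing the $\CC[\ffv^L]$-basis at the augmentation while simultaneously cutting down by the two-sided ideal generated by $e_{\cF\cap \cB^c}$: boundedness of the surviving chambers forces only finitely many minimal paths to remain nonzero in the quotient.

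The final item, quasi-heredity, is where I anticipate having to do the most careful work, though the essential construction is already in \cite{BLPW12}*{Theorem 5.24}. The standard modules $V_\alpha$, indexed by $\alpha \in \cF\cap \cB$ partially ordered as in \Cref{defn:partial-order}, are built as ``polytope modules'' whose $\CC$-basis is given by lattice points in $\Delta_\alpha \cap \hpspace$ compatible with the mass functional. I would verify the highest weight axioms by computing the filtrations on $\ker(P_\alpha \twoheadrightarrow V_\alpha)$ and $\ker(V_\alpha \twoheadrightarrow S_\alpha)$ directly from the combinatorics of these polytope modules, using the defining partial order to control subquotients. Once this is in place, quasi-heredity subsumes the finite homological dimension of $A^{\dR}_G(\FI,\mass)$, completing the proposition.
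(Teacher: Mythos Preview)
Your proposal is correct and is in fact considerably more detailed than what the paper does: the paper offers no proof at all, simply recording the proposition as a summary of results from \cite{BLPW10,BLPW12} via the citations in the header. Your roadmap of matching the algebra $A^{\dR}_G(\FI,-)$ to the combinatorial path-algebra model of \cite{BLPW10}*{\S3--4}, then importing Koszulity, the minimal-path basis, and the highest-weight structure from \cite{BLPW10}*{\S5}, is exactly the dictionary one needs to justify the citations; the quadratic-dual identity via \Cref{lem:QDID} and \cite{BLPW12}*{Lemma 8.25} is likewise the intended mechanism.

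One small caution: your claim that finite global dimension of $A^{\dR}_G(\FI,-)$ ``follows from Koszulity combined with finiteness of the underlying quiver'' is not quite right as stated, since Koszul algebras over semisimple $A_0$ can have infinite global dimension (e.g.\ $\CC[x]/(x^2)$). What actually controls the global dimension is the top nonvanishing degree of the Koszul dual, and for $A^{\dR}_G(\FI,-)$ this finiteness is a genuine combinatorial fact about the arrangement (proved in \cite{BLPW12}). For $A^{\dR}_G(\FI,\mass)$ you correctly observe that quasi-heredity already gives finite global dimension, so no separate argument is needed there. Also, the quasi-hereditary structure is established in \cite{BLPW10}*{\S5} rather than \cite{BLPW12}.
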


%the fact that $A^{\dR}_G(\FI,-)$ has finite global dimension is indeed in \cite{BLPW12}*{Lemma 8.25}
% it seems to be a general fact that quasi-hereditary algebras have finite global dimension
%that $A^{\dR}_G(\FI,\mass)$ is finite-dimensional and quasi-hereditary is mentioned in e.g. remark 1.4 of blpw12

%\begin{remark}
%In \cite{BLPW10} they identify the idempotents $\alpha \in \cF$ with chambers $\Delta_{\alpha}$ of the hyperplane arrangement corresponding to the category $\cO$ datum. Elements of the algebras $A^{dR}_G(\FI,-)$ and $A^{dR}_G(\FI, \mass)$ can be identified with paths between chambers.
%\end{remark}

Now recall the quasi-hereditary structure on $A^{\dR}_G(\FI,\mass)$ from \cite[Section 5.5]{BLPW10}. The simple modules and their projective covers are in bijection with $\cB \cap \cF \subseteq 2^{[n]}$, i.e., for each $\alpha \in \cB \cap \cF$ one has a surjection
\begin{equation} \label{eq:projcover}
P^{\dR}_{\alpha} 
= e_{\alpha} A^{\dR}_G(\FI,\mass)  \to  
A^{\dR}_G(\FI,\mass)/\langle e_{\beta} \,|\, \beta \neq \alpha \rangle = S^{\dR}_{\alpha}.
\end{equation}
%There is a partial order on $\cB \cap \cF$ defined as follows: 
%{\color{blue}(BG:This is a fact about hyperplane arrangements, so it should go into Section 2.)} Each chamber $\Delta_{\alpha}$ has an $m$-maximal point $p_{\alpha}$. Then $\alpha \leq \beta$ if and only if $m(p_{\alpha}) \leq m(p_{\beta})$. 
Using the partial order defined in \Cref{defn:partial-order}, one can define an intermediate standard module 
\begin{equation} \label{eq:standard}
V^{\dR}_{\alpha} = P^{\dR}_{\alpha} / (P^{\dR}_{\alpha})^{> \alpha},
\end{equation}
where $ (P^{\dR}_{\alpha})^{> \alpha}$ is the submodule generated by all paths $a$ that pass through $\beta$ with $\beta > \alpha$. Let $b_{\alpha} = \{ i \in \{1, \ldots, n\} \,|\, p_{\alpha} \in H_i \}$. It is easy to see that
\begin{equation} \label{eq:standardideal}
(P^{\dR}_{\alpha})^{> \alpha} = \{ p^{\dR}(\alpha, \alpha(i)) \,|\, i \in b_{\alpha} \} \cdot A^{\dR}_G(\FI,\mass)
\end{equation}
where $\alpha(i)$ differs from $\alpha$ only in position $i$. Define $\cB_{\alpha} = \{\beta \,|\, \text{$\alpha_i = \beta_i$ for all $i \in b_{\alpha}$} \}$. Note that $\cB_{\alpha} \subseteq \cB$ and for any $\beta \in \cB_{\alpha} \cap \cF$ we have $\beta \leq \alpha$.

\begin{proposition}[\protect{\cite[Lemma 5.21, Proposition 5.23]{BLPW10}}] \text{}
\begin{itemize}
    \item The standard module $V^{\dR}_{\alpha}$ has a filtration whose associated graded is $\oplus_{\beta \in \cB_{\alpha} \cap \cF} S^{\dR}_{\beta}$.
    \item The projective module $P^{\dR}_{\alpha}$ has a filtration whose associated graded is $\oplus_{\alpha \in \cB_{\beta}\cap \cF} V^{\dR}_{\beta}$.
\end{itemize}  
\end{proposition}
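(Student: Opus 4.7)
For the first bullet, I would first exhibit a $\CC$-basis of $V^{\dR}_\alpha$ indexed by $\cB_\alpha\cap\cF$. The module $V^{\dR}_\alpha = P^{\dR}_\alpha/(P^{\dR}_\alpha)^{>\alpha}$ is spanned by residues of the minimal paths $p^{\dR}(\alpha,\beta)$ as $\beta$ ranges over $\cF$, and by \Cref{eq:standardideal} the submodule $(P^{\dR}_\alpha)^{>\alpha}$ kills precisely those paths factoring through some $\alpha(i)$ with $i\in b_\alpha$. Any $\beta\notin\cB_\alpha$ has $\alpha_j\neq\beta_j$ for some $j\in b_\alpha$, so the minimal path $p^{\dR}(\alpha,\beta)$ must cross $H_j$ and therefore factors through $\alpha(j)$, giving vanishing in $V^{\dR}_\alpha$. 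The surviving classes $\{[p^{\dR}(\alpha,\beta)]\mid \beta\in\cB_\alpha\cap\cF\}$ must then be shown to be linearly independent modulo the $\fg$-invariance relations $\sum_{i=1}^n\xi_i(u_iv_i+v_iu_i)=0$ for $\xi\in\fg$. Once this basis is in place, with each $e_\beta V^{\dR}_\alpha$ one-dimensional, the path-length grading induces a filtration of $V^{\dR}_\alpha$ whose associated graded is a direct sum of one-dimensional modules on which all paths of positive length act by zero, i.e.\ copies of $S^{\dR}_\beta$.

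For the second bullet, I would argue by induction on the partial order of \Cref{defn:partial-order}. Pick $\beta\in\cB\cap\cF$ maximal subject to $\alpha\in\cB_\beta\cap\cF$; the minimal path $p^{\dR}(\alpha,\beta)$ lies in $e_\alpha A^{\dR}_G(\FI,\mass) e_\beta$ and descends, by the first bullet, to a generator of $V^{\dR}_\beta$, so right-multiplication by it gives a surjection $P^{\dR}_\alpha\twoheadrightarrow V^{\dR}_\beta$. I would then identify the kernel as the submodule of $P^{\dR}_\alpha$ generated by residues of paths that factor through some $\beta'>\beta$, and apply the inductive hypothesis (after quotienting $A^{\dR}_G(\FI,\mass)$ by the ideal generated by idempotents strictly above $\beta$) to equip this kernel with a standard filtration whose subquotients are the $V^{\dR}_\gamma$ for $\alpha\in\cB_\gamma\cap\cF$ and $\gamma<\beta$. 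The multiplicities match BGG reciprocity, available via the Koszulity asserted in \Cref{proposition:koszul-facts}, giving an independent consistency check.

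The main obstacle is the linear independence claim in the first bullet. Ruling out cancellations among the classes $[p^{\dR}(\alpha,\beta)]$ for $\beta\in\cB_\alpha\cap\cF$ amounts to analyzing the restriction of the inclusion $\fg\hookrightarrow\fd$ to the coordinates indexed by $b_\alpha$. This is where unimodularity of the polarized arrangement enters: it guarantees that the image of $\fg$ in the $b_\alpha$-coordinate subspace is a direct summand, so that the $\fg$-relations only mix paths whose endpoints share the $b_\alpha$-coordinates of $\alpha$, preventing undesired identifications among the surviving classes. The same polytope-combinatorial input also ensures that the down-set $\cB_\alpha\cap\cF$ matches the set of feasible chambers of the localized arrangement near $p_\alpha$, which is the geometric source of the filtration.
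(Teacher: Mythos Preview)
Your approach to the first bullet is essentially the one taken in \cite{BLPW10} (and mirrored in the paper's Betti analogue, \Cref{lemma:v-alpha}): show that the classes $[p^{\dR}(\alpha,\beta)]$ for $\beta\in\cB_\alpha\cap\cF$ form a $\CC$-basis of $V^{\dR}_\alpha$, then read off the composition series. One correction: the linear independence does not hinge on unimodularity. It follows directly from the structure result, recorded in \Cref{proposition:koszul-facts}, that $A^{\dR}_G$ is free over $\CC[\mathfrak{f}^L]$ with basis the minimal paths. Unimodularity is a smoothness hypothesis on the hypertoric variety and plays no role in this purely algebraic computation.

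The second bullet has a genuine gap. You propose a surjection $P^{\dR}_\alpha\twoheadrightarrow V^{\dR}_\beta$ for $\beta$ maximal with $\alpha\in\cB_\beta\cap\cF$, obtained by multiplication by $p^{\dR}(\alpha,\beta)$. But $p^{\dR}(\alpha,\beta)\in e_\alpha A e_\beta$, so \emph{left} multiplication (which is what gives a map of right modules) sends $P^{\dR}_\beta\to P^{\dR}_\alpha$, not the reverse. If instead you use $p^{\dR}(\beta,\alpha)$ to get a map $P^{\dR}_\alpha\to V^{\dR}_\beta$, this map is not surjective: its image is $p^{\dR}(\beta,\alpha)\cdot A$, and $p^{\dR}(\beta,\alpha)p^{\dR}(\alpha,\beta)=\prod_{i:\alpha_i\neq\beta_i}d_i\cdot e_\beta$ is a proper multiple of the generator $e_\beta$ whenever $\alpha\neq\beta$.

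The argument in \cite{BLPW10} (reproduced in the paper for the Betti case as \Cref{lemma:kernel-v-alpha}) goes the other way. One defines the decreasing filtration $(P^{\dR}_\alpha)^{\geq\beta}=\sum_{\gamma\geq\beta}(P^{\dR}_\alpha)^\gamma$, where $(P^{\dR}_\alpha)^\gamma$ is the submodule generated by paths through $\gamma$. The map $a\mapsto p^{\dR}(\alpha,\beta)\cdot a$ then gives $P^{\dR}_\beta\to(P^{\dR}_\alpha)^{\geq\beta}/(P^{\dR}_\alpha)^{>\beta}$, which kills $(P^{\dR}_\beta)^{>\beta}$ and hence factors through a surjection $V^{\dR}_\beta\twoheadrightarrow M^\beta_\alpha$. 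This immediately yields the filtration, but only with subquotients that are \emph{quotients} of standards. To upgrade these surjections to isomorphisms, \cite{BLPW10} appeals to a dimension count coming from the identification of $A^{\dR}_G(\FI,\mass)$ with a convolution algebra in the homology of toric varieties (see the Remark following \Cref{lemma:kernel-v-alpha}). Your BGG reciprocity check would detect the multiplicities but does not by itself rule out that some $M^\beta_\alpha$ is a proper quotient of $V^{\dR}_\beta$; the geometric dimension input is genuinely needed.
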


\begin{corollary}\label{cor:dRmon}
There is a natural equivalence
$
\Mod_{A^{\dR}_G(\FI,\mass)} \simeq \Mod_{A^{\dR}_G(\FI,\mass)}^{\mathfrak{f}\sf{-mon}}.
$  
\end{corollary}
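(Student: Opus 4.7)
The plan is to verify that the natural embedding $\Mod_{A^{\dR}_G(\FI,\mass)}^{\mathfrak{f}\mon} \hookrightarrow \Mod_{A^{\dR}_G(\FI,\mass)}$ supplied by \Cref{cor:monfact} is an equivalence. Since $\Mod_{A^{\dR}_G(\FI,\mass)}$ is the category of right modules over an algebra, it is dualizable as an $\IndCoh(\mathfrak{f}^L)$-module category, and the $\mathfrak{f}$-version of \Cref{cor:monfact} yields an exact sequence
\[
\Mod_{A^{\dR}_G(\FI,\mass)}^{\mathfrak{f}\mon} \hookrightarrow \Mod_{A^{\dR}_G(\FI,\mass)} \to \IndCoh(\mathfrak{f}^L \setminus \{0\}) \otimes_{\IndCoh(\mathfrak{f}^L)} \Mod_{A^{\dR}_G(\FI,\mass)}
\]
in $\PrL$. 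It therefore suffices to show that the $\CC[\mathfrak{f}^L]$-module $A^{\dR}_G(\FI,\mass)$ is set-theoretically supported at $0 \in \mathfrak{f}^L$.

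By \Cref{proposition:koszul-facts}, the algebra $A^{\dR}_G(\FI,\mass)$ is finite-dimensional over $\CC$ and quasi-hereditary; in particular it has finite length as a module over itself, hence also as a module over $\CC[\mathfrak{f}^L]$. The support of a finite-length module is the union of the supports of its composition factors, so it is enough to show that each simple $S^{\dR}_\alpha$ is supported at $0$, i.e., that each of the generators $d_i \in \CC[\mathfrak{f}^L]$ acts as zero on $S^{\dR}_\alpha$.

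This last statement is a direct calculation in the path algebra description. Under the central embedding $\CC[\mathfrak{f}^L] \subseteq Z(A^{\dR}_G(\FI,\mass))$ inherited from the $\fd$-action on $A^{\dR}_n$, the generator $d_i$ is the image of $u_iv_i + v_iu_i$ placed in the $i$-th tensor factor. For $\alpha \in \cB \cap \cF$, the element $e_\alpha \cdot (u_iv_i + v_iu_i)$ is the minimal loop from $\alpha$ to itself factoring through the distinct idempotent $e_{\alpha(i)}$, where $\alpha(i)$ differs from $\alpha$ only in the $i$-th entry. Since $S^{\dR}_\alpha = A^{\dR}_G(\FI,\mass)/\langle e_\beta \mid \beta \neq \alpha\rangle$, every such path vanishes, and hence $d_i$ acts by zero. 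No serious obstacle is anticipated in this argument: the essential content is the finite-dimensionality and finite length from \Cref{proposition:koszul-facts}, and the only subtle step is correctly invoking dualizability to justify the application of \Cref{cor:monfact}.
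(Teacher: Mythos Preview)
Your proposal is correct and follows essentially the same line as the paper's proof: both arguments reduce to showing that $A^{\dR}_G(\FI,\mass)$, as a module over itself, is a finite extension of its simple modules $S^{\dR}_\alpha$, and that each simple is $\mathfrak{f}$-monodromic (equivalently, supported at $0 \in \mathfrak{f}^L$). The paper obtains the finite-extension property from the filtrations of projectives by standards and of standards by simples stated just before the corollary, whereas you obtain it directly from the finite-dimensionality recorded in \Cref{proposition:koszul-facts}; and the paper asserts that simples are ``clearly'' monodromic, whereas you spell out the path-algebra calculation showing $d_i$ acts through a path hitting $e_{\alpha(i)}$ and hence vanishes on $S^{\dR}_\alpha$. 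Your more explicit invocation of dualizability to justify \Cref{cor:monfact} is a welcome addition.
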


\begin{proof}
The simple modules are clearly $\mathfrak{f}$-monodromic. But since $A^{\dR}_G(\FI,\mass) \cong \oplus_{\alpha} P^{\dR}_{\alpha}$, it is a finite extension of simple modules and hence must also be $\mathfrak{f}$-monodromic.
\end{proof}

\begin{proposition}
Let $i^{\dR}: A^{\dR}_G(\FI,-) \to A^{\dR}_G(\FI,\mass)$. There is a recollement
\[
\begin{tikzcd}
\Mod_{A^{\dR}_G(\FI,\mass)} & 
\Mod_{A^{\dR}_G(\FI,-)}
\arrow[bend right]{l}{}
\arrow[from=l, hook, "i^{\dR}_*"]
\arrow[bend left]{l}{} & 
\Mod_{e_{\cF \cap \cB^c} A^{\dR}_G(\FI,-) e_{\cF \cap \cB^c}}
\arrow[bend right, hook', swap]{l}{}
\arrow[from=l, ""]
\arrow[bend left, hook']{l}{}
.
\end{tikzcd}
\]
In particular, $i^{\dR}_*$ is fully faithful. 
\end{proposition}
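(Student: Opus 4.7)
The plan is to deduce the recollement by applying Proposition~\ref{proposition:ff-recollement} to the algebra $A = A^{\dR}_G(\FI,-)$ with the idempotent $e := e_{\cF\cap\cB^c}$. Under this choice, the right-hand term of the recollement in Proposition~\ref{proposition:ff-recollement} is exactly $\Mod_{e_{\cF\cap\cB^c}A^{\dR}_G(\FI,-)e_{\cF\cap\cB^c}}$. The left-hand term is $\Mod_{A/^L AeA}$, which differs a priori from the desired $\Mod_{A^{\dR}_G(\FI,\mass)}$. By the definition of $A^{\dR}_G(\FI,\mass)$ as the naive quotient together with \Cref{defn:ff-recollement}, the proof reduces to showing that the ideal $A^{\dR}_G(\FI,-)\,e_{\cF\cap\cB^c}\,A^{\dR}_G(\FI,-)$ is stratifying, i.e.\ that the derived quotient coincides with the naive one.

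For this, I would invoke \Cref{proposition:koszul-recollement}, whose hypotheses I need to verify: (a) $A^{\dR}_G(\FI,-)$ is Koszul; (b) the cornering $e^c\,Q(A^{\dR}_G(\FI,-))\,e^c$ (where $e^c = e_{\cF\cap\cB}$) is quadratic; (c) $A^{\dR}_G(\FI,\mass)$ is Koszul; (d) $A^{\dR}_G(\FI,\mass) \in \cT_{\RMod_{A^{\dR}_G(\FI,\mass)}}(T)$, where $T = e^c S e^c$; and (e) the semisimple augmentation module $S$ lies in $\RPerf_{A^{\dR}_G(\FI,-)}$.

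Items (a) and (c) are supplied directly by \Cref{proposition:koszul-facts}. Item (e) follows from the finite homological dimension of $A^{\dR}_G(\FI,-)$ recorded in \Cref{proposition:koszul-facts}, since over an algebra of finite global dimension every simple module admits a finite projective resolution by finitely generated projectives and is therefore perfect. For (d), the finite-dimensionality of $A^{\dR}_G(\FI,\mass)$ (again from \Cref{proposition:koszul-facts}) implies that every finitely generated module, and in particular $A^{\dR}_G(\FI,\mass)$ itself, has a finite composition series by simples, placing it in the thick subcategory they generate. For (b), the key input is the identity $A^{\dR}_G(\FI,\mass)^! = e_{\cB\cap\cF}\,A^{\dR}_G(\FI,-)^!\,e_{\cB\cap\cF}$ from \Cref{proposition:koszul-facts}: combined with \Cref{prop:KDQD}, which identifies the Koszul dual of a Koszul algebra with (the opposite of) its quadratic dual, this forces $e^c\,Q(A^{\dR}_G(\FI,-))\,e^c$ to be quadratic as a $T$-augmented algebra (in fact it is exactly $Q(A^{\dR}_G(\FI,\mass))^{\op}$ up to taking opposites).

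I expect the main point requiring care to be hypothesis (b), where one must correctly match up the two distinct constructions: on the one hand, the quadratic dual $Q(-)$ of an $S$-augmented graded algebra, and on the other, the Koszul dual $(-)^!$ as a differential graded algebra. Once the translation via \Cref{prop:KDQD} is applied consistently (keeping track of the $\op$ and the conversion between mixed and cohomological gradings), the identity from \Cref{proposition:koszul-facts} yields the quadratic cornering property, and \Cref{proposition:koszul-recollement} then delivers $A^{\dR}_G(\FI,\mass) \simeq A^{\dR}_G(\FI,-)/^L(A^{\dR}_G(\FI,-)\,e\,A^{\dR}_G(\FI,-))$. The full faithfulness of $i^{\dR}_*$ is then built into Proposition~\ref{proposition:ff-recollement}(ii).
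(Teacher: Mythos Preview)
Your proposal is correct and follows essentially the same route as the paper. The paper's proof is the one-line ``Combine \Cref{proposition:koszul-facts} and \Cref{proposition:koszul-recollement},'' together with the remark that the proof of \Cref{cor:dRmon} gives condition~(d); you have simply unpacked this, verifying each hypothesis of \Cref{proposition:koszul-recollement} explicitly. Your argument for (d) via finite-dimensionality is exactly the content of the proof of \Cref{cor:dRmon}, and your deduction of (b) from the identity $A^{\dR}_G(\FI,\mass)^! = e_{\cB\cap\cF}\,A^{\dR}_G(\FI,-)^!\,e_{\cB\cap\cF}$ together with \Cref{prop:KDQD} is a valid way to extract the quadraticity of the cornered quadratic dual from the facts recorded in \Cref{proposition:koszul-facts}.
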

\begin{proof}
    Combine \Cref{proposition:koszul-facts} and \Cref{proposition:koszul-recollement}. Note that the proof of \Cref{cor:dRmon} shows that $A^{dR}_G(\FI,\mass)$ is in ${\sf T}_{A^{dR}_G(\FI,\mass)}(\oplus_{\alpha \in \cB \cap \cF} S^{\dR}_{\alpha})$.
\end{proof}

\begin{corollary} \label{cor:dRff}
Consider the factorization
\[
\begin{tikzcd}
 \Mod_{A^{\dR}_G(\FI,-)}^{\mathfrak{f}\sf{-mon}} \arrow[r, hook] & \Mod_{A^{\dR}_G(\FI,-)} \\
 \Mod_{A^{\dR}_G(\FI,\mass)}^{\mathfrak{f}\sf{-mon}} \arrow[r , "\sim"] \arrow[u, swap, "(i^{\dR}_*)^{\frf{\sf -mon}}"] & \Mod_{A^{\dR}_G(\FI,\mass)} \arrow[u, swap, "i^{\dR}_*"]
\end{tikzcd}
\]
coming from \Cref{cor:monfact}. The map $(i^{\dR}_*)^{\frf{\sf -mon}}$ is fully faithful. 
\end{corollary}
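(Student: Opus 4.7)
The plan is to chase through the given commutative diagram and exploit the fact that all of the other arrows in it are fully faithful. Specifically, the bottom horizontal equivalence is provided by \Cref{cor:dRmon}, the right vertical arrow $i^{\dR}_*$ is fully faithful by the preceding proposition, and the top horizontal inclusion is fully faithful as an instance of \Cref{cor:monfact} applied to the $\frf$-category $\Mod_{A^{\dR}_G(\FI,-)}$ (once we observe that this category is dualizable as an $\IndCoh(\frf^L)$-module, which holds since it is the category of modules over an algebra object, so that the hypothesis of \Cref{cor:monfact} is satisfied).

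Granted these three observations, the composition of $(i^{\dR}_*)^{\frf\mon}$ with the top inclusion equals the composition of the bottom equivalence with the right vertical $i^{\dR}_*$, hence is fully faithful. The plan is then to invoke the elementary categorical fact that if $F \circ G$ is fully faithful and $F$ is fully faithful, then $G$ is fully faithful (which is immediate from the two-out-of-three property applied to the induced maps on Hom-spaces). Applying this with $G = (i^{\dR}_*)^{\frf\mon}$ and $F$ the top inclusion yields the desired conclusion.

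I do not anticipate any real obstacles: the main content is already packaged in \Cref{cor:dRmon,cor:monfact} and the preceding proposition, and the remaining step is formal. The only minor subtlety is verifying the dualizability hypothesis needed to apply \Cref{cor:monfact}, but this is automatic for module categories over ring objects in $\PrL$, which is exactly the setting here.
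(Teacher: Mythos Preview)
Your argument is correct and is precisely the intended one: the paper states the corollary without proof because it follows formally from the surrounding statements exactly as you describe. The three ingredients you identify (the bottom equivalence from \Cref{cor:dRmon}, the full faithfulness of $i^{\dR}_*$ from the preceding recollement, and the full faithfulness of the top inclusion via \Cref{cor:monfact}) together with the two-out-of-three observation on Hom-spaces give the result, and your verification of the dualizability hypothesis via the proposition on module categories over algebra objects is what the paper has in mind.
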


\subsection{The Betti algebra}
Consider the quiver
\begin{equation}
    Q'_1:= \begin{tikzcd}
 - \arrow[r, shift left,  "\ell"
 ] \arrow[from=r, shift left, "r"] & +.
 \end{tikzcd}
\end{equation}
There is an inclusion
\begin{equation}\label{eq:betti-action-on-algebra}
\begin{tikzcd}[row sep = 0cm]
\CC[m] \arrow[r] & Z(P_{\CC}(Q'_1)) \\
m \arrow[r, mapsto] & 1-(\ell r + r \ell)
\end{tikzcd}
\end{equation}
Define $A^{\Bet}_1 = P_{\CC}(Q_1')[m^{-1}]$ and let $e_\pm \in A^{\Bet}_1$ be the idempotents corresponding to the vertices. If we take $D=\CC^\times$ to be 1-dimensional, then identifying $\CC[D^L]\cong\CC[m^{\pm 1}]$ gives a Betti $D$-action on $\Mod_{A^{\Bet}_1}$.

More generally, define $A^{\Bet}_n = (A^{\Bet}_1)^{\otimes n}$. This is naturally an algebra over $\CC[D^L] \cong \CC[m_1^{\pm 1}, \ldots, m_n^{\pm 1}]$. Note that for each $\alpha = i_1\dots i_n \in 2^{[n]}$, there is an idempotent $e_\alpha:= e_{i_1} \otimes \dots \otimes e_{i_n} \in A^{\Bet}_n$.  Between any two idempotents $\alpha$ and $\beta$ there is a minimal path 
\[
p^{\Bet}(\alpha, \beta) = \prod^n_{i=1} \begin{cases} 
1 & \alpha_i = \beta_i, \\ 
l_i & \text{$\alpha_i=+$ and $\beta_i=-$}, \\
r_i & \text{$\alpha_i=-$ and $\beta_i=+$}.
\end{cases}
\]

Let $(G, t, m)$ be a category $\cO$ datum. %In particular, we have an inclusion $G \to D$. 
Define the rings
\begin{align}
A^{\Bet}_G &= A^{\Bet}_n \otimes_{\CC[G^L]} \CC, \label{eq:abetg}\\
A^{\Bet}_G(t,-) &= e_{\cF} A^{\Bet}_G(-, -) e_{\cF}, \\
A^{\Bet}_G(\FI, \mass) &= A^{\Bet}_G(\FI,-)/(A^{\Bet}_G(\FI,-) e_{\cF \cap \cB^c} A^{\Bet}_G(\FI,-)).
\end{align}
where in \Cref{eq:abetg}, $\CC$ is the $\CC[G^L]$-module corresponding to the inclusion $\{1\}\hookrightarrow G^L.$
Note that $\Mod_{A^{\Bet}_G} \simeq (\Mod_{A^{\Bet}_n})^{G_{\Bet}}$ has a residual Betti $F$-action, inherited by $\Mod_{A^{\Bet}_G(\FI,-)}$ and $\Mod_{A^{\Bet}_G(\FI, \mass)}$, corresponding to the obvious $\CC[F^L] \cong \CC[D^L] \otimes_{\CC[G^L]} \CC$-algebra structures.

The following proposition is the key technical result in this section.
\begin{proposition}\label{prop:bettimon} There is a natural equivalence
$\Mod_{A^{\Bet}_G(\FI,\mass)} \simeq \Mod_{A^{\Bet}_G(\FI,\mass)}^{F_{\Bet}\sf{-mon}}$
\end{proposition}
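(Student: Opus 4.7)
The plan is to imitate the argument of \Cref{cor:dRmon}. The simple modules $S^{\Bet}_\alpha$ for $\alpha \in \cF\cap\cB$ are one-dimensional, and on any such module the off-diagonal generators $\ell_i, r_i$ must act as zero (being paths between distinct idempotents); hence $m_i = 1 - (\ell_i r_i + r_i\ell_i)$ acts as the identity, so each $S^{\Bet}_\alpha$ is $F_\Betti$-equivariant and \emph{a fortiori} $F_\Betti$-monodromic.

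The main step is to show that $A^{\Bet}_G(\FI,\mass) \cong \bigoplus_{\alpha \in \cF\cap\cB} P^{\Bet}_\alpha$ (with $P^{\Bet}_\alpha := e_\alpha A^{\Bet}_G(\FI,\mass)$) is a finite iterated extension of these simple modules. I plan to establish the Betti analog of the quasi-hereditary structure available in the de Rham setting (\Cref{proposition:koszul-facts} and the discussion around \Cref{eq:standard}): defining standard modules $V^{\Bet}_\alpha := P^{\Bet}_\alpha/(P^{\Bet}_\alpha)^{>\alpha}$ for the partial order of \Cref{defn:partial-order}, I would mimic the arguments of \cite{BLPW10} under the substitution $d_i \leftrightarrow 1 - m_i$ to show that each $V^{\Bet}_\alpha$ has a finite simple filtration with factors $\{S^{\Bet}_\beta : \beta \in \cB_\alpha\cap \cF\}$ and each $P^{\Bet}_\alpha$ has a finite standard filtration with factors $\{V^{\Bet}_\beta : \alpha \in \cB_\beta\cap \cF\}$. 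The key combinatorial input is that the elementary loop $(1-m_i)e_\alpha$ factors through the idempotent $e_{\alpha(i)}$, and hence vanishes in $A^{\Bet}_G(\FI,\mass)$ whenever $\alpha(i) \notin \cF\cap\cB$; the remaining loops at vertices $\alpha$ with $\alpha(i) \in \cF\cap\cB$ are controlled by the multiplicative $G$-invariance relations $\prod_j m_j^{M_{ji}} = 1$ together with regularity of $(\FI,\mass)$.

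The conclusion is then immediate: since $A^{\Bet}_G(\FI,\mass)$ is a finite iterated extension of $F_\Betti$-monodromic simples, it is itself $F_\Betti$-monodromic. By \Cref{cor:monfact} (applied to the Betti $F$-action, which exists because $\Mod_{A^{\Bet}_G(\FI,\mass)}$ is dualizable as an $\IndCoh(F^L)$-module), the fully faithful inclusion $\Mod_{A^{\Bet}_G(\FI,\mass)}^{F_\Betti\mon} \hookrightarrow \Mod_{A^{\Bet}_G(\FI,\mass)}$ then contains a generator of its target and is therefore an equivalence.

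The hard part will be proving the Betti quasi-hereditarity. Unlike the de Rham case, where finite-dimensionality and the standard-filtration structure are imported from \cite{BLPW10,BLPW12}, here the analogous statements have not yet been established and must be proved directly, taking care with the multiplicative $m_i$-relations and the formal invertibility of $m_i \in A^{\Bet}_n$ --- which a priori could introduce denominators that the bounded-chamber quotient must suffice to collapse. This is the Betti shadow of the fact that in the de Rham setting the polynomial algebra in $d_i$'s becomes nilpotent modulo the ideal of unbounded sign vectors.
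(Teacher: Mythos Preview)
Your approach is the paper's: show the simples $S^{\Bet}_\alpha$ are $F_\Betti$-monodromic, then exhibit $A^{\Bet}_G(\FI,\mass)$ as a finite iterated extension of them via Betti analogues of the standard/projective structure of \cite{BLPW10}.

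One point to tighten. You propose proving that each $P^{\Bet}_\alpha$ has a standard filtration with factors exactly $\{V^{\Bet}_\beta : \alpha \in \cB_\beta \cap \cF\}$, i.e.\ full quasi-hereditarity. This is stronger than needed and harder than the purely combinatorial part of \cite[Prop.~5.23]{BLPW10} delivers: in the de Rham case the upgrade from ``surjection $V^{\dR}_\beta \twoheadrightarrow M^\beta_\alpha$'' to ``isomorphism'' uses a dimension count coming from the convolution-algebra description via homology of toric varieties, and the Betti analogue (which would go through rational K-theory) is not carried out in the paper. Instead, the paper proves only the weaker \Cref{lemma:kernel-v-alpha}: $P^{\Bet}_\alpha$ has a filtration whose associated graded is a \emph{quotient} of $\bigoplus_{\alpha\in\cB_\beta\cap\cF} V^{\Bet}_\beta$. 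Since a quotient of a finite extension of simples is again one, this already yields monodromicity and sidesteps the dimension estimate you flag as the hard part.
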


Before proving \Cref{prop:bettimon}, we note that it can be combined with the algebraic Riemann--Hilbert correspondence of \Cref{prop:monodromic-Riemann-Hilbert} to prove the following theorem.
\begin{theorem}\label{thm:Betti-recollement}
Let $i^{\Bet}: A^{\Bet}_G(\FI,-) \to A^{\Bet}_G(\FI,\mass)$. There is a recollement
\[
\begin{tikzcd}
\Mod_{A^{\Bet}_G(\FI,\mass)} & 
\Mod_{A^{\Bet}_G(\FI,-)}
\arrow[bend right]{l}{}
\arrow[from=l, hook, "i^{\Bet}_*"]
\arrow[bend left]{l}{} & 
\Mod_{e_{\cF \cap \cB^c} A^{\Bet}_G(\FI,-) e_{\cF \cap \cB^c}}
\arrow[bend right, hook', swap]{l}{}
\arrow[from=l, ""]
\arrow[bend left, hook']{l}{}
.
\end{tikzcd}
\]
\end{theorem}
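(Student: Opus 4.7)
The plan is to deduce the theorem by transporting the fully faithfulness statement of \Cref{cor:dRff} from the de Rham side to the Betti side via the algebraic Riemann--Hilbert correspondence, using \Cref{prop:bettimon} to pass between the full module category and its $F$-monodromic subcategory. By \Cref{proposition:ff-recollement} and \Cref{defn:ff-recollement}, it suffices to establish that the two-sided ideal $A^\Bet_G(\FI,-)\,e_{\cF\cap\cB^c}\,A^\Bet_G(\FI,-)$ is stratifying, or equivalently that the naive-quotient pushforward $i^\Bet_*:\Mod_{A^\Bet_G(\FI,\mass)}\to\Mod_{A^\Bet_G(\FI,-)}$ is fully faithful. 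The rest of the recollement is then automatic from the general machinery of \Cref{proposition:ff-recollement} applied to the idempotent $e_{\cF\cap\cB^c}\in A^\Bet_G(\FI,-)$.

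First, I would observe that the residual Betti $F$-action on $\Mod_{A^\Bet_G(\FI,-)}$ restricts to the one on $\Mod_{A^\Bet_G(\FI,\mass)}$, so $i^\Bet_*$ factors through the inclusion of the $F_\Bet$-monodromic subcategory $\Mod_{A^\Bet_G(\FI,-)}^{F_\Bet\mon}\hookrightarrow\Mod_{A^\Bet_G(\FI,-)}$, which is itself fully faithful by \Cref{cor:monfact}. Combined with \Cref{prop:bettimon}, which identifies $\Mod_{A^\Bet_G(\FI,\mass)}\simeq\Mod_{A^\Bet_G(\FI,\mass)}^{F_\Bet\mon}$, this reduces the problem to showing that the restricted functor
\[
(i^\Bet_*)^{F\mon}:\Mod_{A^\Bet_G(\FI,\mass)}^{F_\Bet\mon}\longrightarrow\Mod_{A^\Bet_G(\FI,-)}^{F_\Bet\mon}
\]
is fully faithful. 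Next, I would invoke the algebraic Riemann--Hilbert correspondence of \Cref{prop:monodromic-Riemann-Hilbert} to identify the $F_\Bet$-monodromic Betti module categories on both sides with the $\frf$-monodromic de Rham categories $\Mod_{A^\dR_G(\FI,\mass)}^{\frf\mon}$ and $\Mod_{A^\dR_G(\FI,-)}^{\frf\mon}$ respectively, in a way compatible with the quotient maps $A^\Bet_G(\FI,-)\twoheadrightarrow A^\Bet_G(\FI,\mass)$ and $A^\dR_G(\FI,-)\twoheadrightarrow A^\dR_G(\FI,\mass)$, since these are defined by the same idempotent $e_{\cF\cap\cB^c}$ on each side. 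Under this identification $(i^\Bet_*)^{F\mon}$ is intertwined with $(i^\dR_*)^{\frf\mon}$, which is fully faithful by \Cref{cor:dRff}, completing the argument.

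The main obstacle is the second step: checking that the algebraic Riemann--Hilbert correspondence is genuinely compatible with the quotient maps on both sides, so that the two pushforward functors really are intertwined. This is a naturality assertion about \Cref{prop:monodromic-Riemann-Hilbert}, and while both the Betti and de Rham algebras are built in parallel (using the same combinatorial idempotent structure and only differing in the central parameters being inverted versus not), one must verify that the comparison isomorphism intertwines the structure maps from $A^{\Bet/\dR}_G(\FI,-)$ onto its naive quotient by $e_{\cF\cap\cB^c}$. Once this compatibility is in hand, the de Rham recollement transports across to give the desired Betti recollement, and in particular identifies the naive quotient $A^\Bet_G(\FI,\mass)$ with the derived quotient of $A^\Bet_G(\FI,-)$ by the ideal generated by $e_{\cF\cap\cB^c}$.
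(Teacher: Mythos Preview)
Your proposal is correct and follows essentially the same route as the paper: reduce to full faithfulness of $i^\Bet_*$ via \Cref{proposition:ff-recollement}, factor through the $F_\Bet$-monodromic subcategory using \Cref{prop:bettimon} and \Cref{cor:monfact}, transport along the Riemann--Hilbert equivalence of \Cref{prop:monodromic-Riemann-Hilbert}, and apply \Cref{cor:dRff}. The compatibility you flag as the ``main obstacle'' is exactly the commutative square asserted in \Cref{prop:monodromic-Riemann-Hilbert}, so it is already in hand and need not be singled out as a gap.
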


\begin{proof}
By \Cref{proposition:ff-recollement}, it suffices to show that $i^{\Bet}_*$ is fully faithful. However, this follows from \Cref{prop:monodromic-Riemann-Hilbert} and \Cref{cor:dRff} since we have a factorization
\[
\begin{tikzcd}
\Mod_{A^{\dR}_G(\FI,-)}^{\mathfrak{f}\mon} \arrow[r, "\RH","\sim"'] & \Mod_{A^{\Bet}_G(\FI,-)}^{F_{\Bet}\mon} \arrow[r, hook] & \Mod_{A^{\Bet}_G(\FI,-)} \\
\Mod_{A^{\dR}_G(\FI,\mass)}^{\mathfrak{f}\mon} \arrow[u, swap, "(i^{\dR}_*)^{\frf{\sf -mon}}"]  \arrow[r, "\RH","\sim"'] & \Mod_{A^{\Bet}_G(\FI,\mass)}^{F_{\Bet}\mon} \arrow[r , "\sim"'] \arrow[u, swap, "(i^{\Bet}_*)^{\frf{\sf -mon}}"] & \Mod_{A^{\Bet}_G(\FI,\mass)}. \arrow[u, swap, "i^{\Bet}_*"]
\end{tikzcd}
\]
\end{proof}

The rest of this section is devoted to the proof of \Cref{prop:bettimon}. As in \Cref{cor:dRmon}, it would suffice to show that $A^{\Bet}_G(\FI,\mass)$ is a finite extension of simple modules.

\begin{remark}
In \Cref{prop:monodromic-Riemann-Hilbert}, we will establish a Riemann--Hilbert isomorphism $A^{\Bet}_G(\FI,\mass) \cong A^{\dR}_G(\FI,\mass),$ from which we will deduce that the former is hence a finite dimensional quasi-hereditary algebra. Unfortunately, to prove \Cref{prop:monodromic-Riemann-Hilbert}, one needs certain power series in $(1-m_i)$ to converge. This is the content of \Cref{prop:bettimon}. 
\end{remark}

Simple modules $S^{\Bet}_{\alpha}$, projective modules  $P^{\Bet}_{\alpha}$, and standard modules $V^{\Bet}_{\alpha}$ can be defined by replacing $\dR$ with $\Bet$ in equations \Cref{eq:projcover,,eq:standard,,eq:standardideal}. To verify that they satisfy the expected properties, we need to first establish a few facts about $A^{\Bet}_G(\FI,\mass)$. The following are the Betti analogues of \cite[Proposition 3.8, Proposition 3.9, Corollary 3.10]{BLPW10}.        
\begin{proposition}
The algebra $A^{\Bet}_G$ is a free finitely-generated $\CC[F^L]$-module, i.e.,
\[
A^{\Bet}_G = \bigoplus_{\alpha, \beta \in 2^{[n]}} e_{\alpha} A^{\Bet}_G e_{\beta} = \bigoplus_{\alpha, \beta \in 2^{[n]}} \CC[F^L] \cdot p^{\Bet}(\alpha,\beta)
\]
Moreover, one has
\[
p^{\Bet}(\alpha, \beta) p^{\Bet}(\beta, \gamma) = \prod^n_{i=1} (1-m_i)^{\theta_i(\alpha, \beta, \gamma)} \cdot p^{\Bet}(\alpha, \gamma)
\]
where
\[
\theta_i(\alpha, \beta, \gamma) = \begin{cases} 1 & \alpha_i = \gamma_i \neq \beta_i, \\
0. & \text{else}
\end{cases}
\]
\end{proposition}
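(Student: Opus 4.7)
The strategy is to reduce the proposition to the single-node case $A^{\Bet}_1$, where both assertions follow from a short direct calculation. Since $A^{\Bet}_n = (A^{\Bet}_1)^{\otimes n}$ as $\CC$-algebras, each path $p^{\Bet}(\alpha,\beta)$ factors as a tensor product $\bigotimes_{i=1}^n p^{\Bet}(\alpha_i,\beta_i)$ of length-$\leq 1$ paths, and $\CC[D^L] \simeq \bigotimes_i \CC[m_i^{\pm 1}]$. So any $\CC[m^{\pm 1}]$-basis of $A^{\Bet}_1$ tensors up to a $\CC[D^L]$-basis of $A^{\Bet}_n$. Because the cocharacter sequence splits, $\CC[D^L] \simeq \CC[G^L] \otimes_\CC \CC[F^L]$, so the base change $A^{\Bet}_G = A^{\Bet}_n \otimes_{\CC[G^L]} \CC$ preserves $\CC[F^L]$-freeness with the same basis of paths. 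The composition formula is a multiplicative identity that factors tensor-factor-by-tensor-factor, so once verified on each $n=1$ tensor factor it extends and descends trivially to $A^{\Bet}_G$ (the $m_i$ remain central).

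For the $n=1$ case, the quiver conventions $\ell = e_+ \ell e_-$ and $r = e_- r e_+$ force $\ell r \in e_+ A^{\Bet}_1 e_+$ and $r\ell \in e_- A^{\Bet}_1 e_-$. Multiplying the defining relation $\ell r + r\ell = 1 - m$ by the appropriate idempotent then gives $\ell r = (1-m) e_+$ and $r\ell = (1-m) e_-$. Iterating these identities rewrites any path of length $\geq 2$ as a $\CC[m^{\pm 1}]$-scalar multiple of a path of length $\leq 1$, so $\{e_+, e_-, \ell, r\}$ spans $A^{\Bet}_1$ over $\CC[m^{\pm 1}]$. The main obstacle is proving $\CC[m^{\pm 1}]$-linear independence; I expect the cleanest route is an explicit faithful representation on $\CC[m^{\pm 1}]^{\oplus 2} = \CC[m^{\pm 1}] v_+ \oplus \CC[m^{\pm 1}] v_-$ with $e_\pm$ the projections, $\ell(v_-) = v_+$, and $r(v_+) = (1-m) v_-$. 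One verifies immediately that this assignment preserves $\ell r + r\ell = 1 - m$ (indeed both sides act by multiplication by $1-m$ on each summand) and that the four generators act as $\CC[m^{\pm 1}]$-independent endomorphisms of $\CC[m^{\pm 1}]^{\oplus 2}$.

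Finally, the composition formula in $A^{\Bet}_1$ reduces to two kinds of cases. When $\beta \in \{\alpha, \gamma\}$, the idempotent identities $e_\alpha\, p(\alpha,\gamma) = p(\alpha,\gamma) = p(\alpha,\gamma)\, e_\gamma$ recover the formula immediately, with $\theta = 0$. In the two ``bounce'' cases one has $p(+,-)\, p(-,+) = \ell r = (1-m) e_+$ and $p(-,+)\, p(+,-) = r\ell = (1-m) e_-$, matching the $\theta = 1$ prescription exactly. Feeding this through the tensor decomposition of the first paragraph yields the stated formula for general $n$ in $A^{\Bet}_n$, and then in $A^{\Bet}_G$, completing the argument.
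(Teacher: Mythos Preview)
Your argument is correct. The paper itself does not supply a proof of this proposition; it simply introduces it as ``the Betti analogue of \cite[Proposition 3.8, Proposition 3.9, Corollary 3.10]{BLPW10}'' and relies on the reader to transport the de Rham argument. Your reduction to $n=1$ via the tensor decomposition, the explicit $2\times 2$ representation over $\CC[m^{\pm 1}]$ to establish linear independence of $\{e_+,e_-,\ell,r\}$, and the base-change step using a splitting of the short exact sequence of cocharacter lattices (so that $\CC[D^L]\simeq\CC[G^L]\otimes_\CC\CC[F^L]$ as $\CC[G^L]$-algebras) together constitute exactly the kind of direct verification the paper leaves implicit.
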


\begin{corollary}
Suppose that 
\[
a = \prod^n_{i=1} (1-m_i)^{n_i} \cdot p^{\Bet}(\alpha, \gamma)
\]
and $n_i \geq \theta_i(\alpha, \beta, \gamma)$ for all $1 \leq 1 \leq n$. Then $a$ can be represented by a $\CC$-linear combination of paths that pass through $\beta$. In particular, if $\beta \in \cB^c \cap \cF$ then the image of $a$ in $A^{\Bet}_G(\FI,\mass)$ is $0$.
\end{corollary}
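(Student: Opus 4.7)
The plan is to derive the corollary essentially immediately from the path-composition identity displayed in the preceding proposition.

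The first step is to use the hypothesis $n_i\geq \theta_i(\alpha,\beta,\gamma)$ to peel off the factors needed to refactor the path $p^\Bet(\alpha,\gamma)$ through $\beta$:
\[
a=\prod_{i=1}^n(1-m_i)^{n_i-\theta_i(\alpha,\beta,\gamma)}\cdot \prod_{i=1}^n(1-m_i)^{\theta_i(\alpha,\beta,\gamma)}\cdot p^\Bet(\alpha,\gamma).
\]
Applying the previous proposition in reverse rewrites the second product times $p^\Bet(\alpha,\gamma)$ as the composite $p^\Bet(\alpha,\beta)\,p^\Bet(\beta,\gamma)$, so
\[
a=\prod_{i=1}^n(1-m_i)^{n_i-\theta_i(\alpha,\beta,\gamma)}\cdot p^\Bet(\alpha,\beta)\,p^\Bet(\beta,\gamma),
\]
which now manifestly factors through the idempotent $e_\beta$.

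The second step is to interpret the surviving coefficient as an honest $\CC$-linear combination of paths. Using $1-m_i=\ell_ir_i+r_i\ell_i$ from \eqref{eq:betti-action-on-algebra} and the centrality of each $m_i$ in $A^\Bet_n$, I would expand $\prod_i(1-m_i)^{n_i-\theta_i(\alpha,\beta,\gamma)}$ as a $\CC$-linear combination of closed paths at a single idempotent in each coordinate. Slid to either end of the expression, these tack on extra $+\!\to\!-\!\to\!+$ or $-\!\to\!+\!\to\!-$ detours at the endpoints; crucially, these insertions do not disturb passage through $\beta$, which sits in the interior of the composite. The final assertion is then immediate: by construction, $A^\Bet_G(\FI,\mass)$ is the quotient of $A^\Bet_G(\FI,-)$ by the two-sided ideal generated by $e_{\cF\cap\cB^c}=\sum_{\beta\in\cF\cap\cB^c}e_\beta$, and any path factoring through such a $\beta$ lies in this ideal.

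I do not anticipate any serious obstacle: the argument is purely combinatorial and reduces to the path-composition identity of the preceding proposition together with the (essentially tautological) observation that concatenating an extra loop at an endpoint preserves the property of visiting a given interior idempotent. The real interest of this corollary lies in its downstream use: it is the mechanism by which high powers of $(1-m_i)$ annihilate classes in $A^\Bet_G(\FI,\mass)$, which is exactly what will be needed to make power series in the $(1-m_i)$ converge in the proof of \Cref{prop:bettimon} and the eventual Riemann--Hilbert comparison with $A^\dR_G(\FI,\mass)$.
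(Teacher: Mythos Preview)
Your argument is correct and is exactly the intended one. The paper states this corollary without proof, treating it as immediate from the preceding product formula $p^{\Bet}(\alpha,\beta)\,p^{\Bet}(\beta,\gamma)=\prod_i(1-m_i)^{\theta_i(\alpha,\beta,\gamma)}\,p^{\Bet}(\alpha,\gamma)$, and your first step is precisely the reverse application of that identity.

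One small simplification for the second step: rather than sliding the leftover factor to an endpoint, use centrality to insert it at $\beta$, writing
\[
a = p^{\Bet}(\alpha,\beta)\cdot\Bigl(e_\beta\prod_i(1-m_i)^{n_i-\theta_i}e_\beta\Bigr)\cdot p^{\Bet}(\beta,\gamma).
\]
Since $\ell_i r_i$ and $r_i\ell_i$ live at orthogonal idempotents in the $i$th coordinate, exactly one of them survives when sandwiched by $e_\beta$, so the bracketed factor is in fact a \emph{single} closed path at $\beta$, not merely a $\CC$-linear combination. Either way, the conclusion and the ``in particular'' clause follow, and your remarks about the downstream role in \Cref{prop:bettimon} are on point.
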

%The filtration arises as follows: $P_\alpha$ is a direct sum of terms of the form $e_\beta A e_\alpha$; after passing to the quotient, we kill all those $e_\beta$ such that $\beta>\alpha$. We order $\beta<\alpha$ by the number of hyperplanes a taut path must cross to get from $\beta$ to $\alpha$. Finally, $e_\beta A e_\alpha$ is isomorphic to $e_\beta A e_\beta$ with the isomorphism given by multiplying by a short path.

The following is the Betti analogue \cite[Lemma 5.21]{BLPW10} and is proved in the same way.

\begin{lemma}\label{lemma:v-alpha}
The module $V^{\Bet}_{\alpha}$ has a basis consisting of all $p^{\Bet}(\alpha, \beta)$ with $\beta \in \cF \cap \cB_{\alpha}$. In particular, $V^{\Bet}_{\alpha}$ has a filtration whose associated graded is $\oplus_{\beta \in \cB_{\alpha} \cap \cF} S^{\Bet}_{\beta}$.
\end{lemma}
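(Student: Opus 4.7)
The plan is to mirror the argument of \cite[Lemma 5.21]{BLPW10}, using the structural description of $A^{\Bet}_G$ as a free $\CC[F^L]$-module and the explicit generators of $(P^{\Bet}_\alpha)^{>\alpha}$ given in the Betti analogue of \Cref{eq:standardideal}. The key is that, modulo replacing $u_i, v_i$ with $\ell_i, r_i,$ the combinatorics of paths in the Betti quiver is identical to that of the de Rham quiver; the only new wrinkle is the appearance of the factors $(1-m_i)^{\theta_i(\alpha,\beta,\gamma)}$ in the product formula for paths, but these are invertible after localization and so do not affect the spanning/independence counts modulo the relations that define $A^{\Bet}_G(\FI,\mass).$

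First I would argue spanning. By the freeness statement for $A^{\Bet}_G$ over $\CC[F^L],$ the module $e_\alpha A^{\Bet}_G$ is spanned over $\CC[F^L]$ by the paths $p^{\Bet}(\alpha,\gamma).$ After imposing $t$-feasibility and $m$-boundedness (i.e.\ passing to $A^{\Bet}_G(\FI,\mass)$), the paths with $\gamma\notin\cF$ are killed, and I need to show that for $\gamma\in\cF\setminus\cB_\alpha$ the path $p^{\Bet}(\alpha,\gamma)$ lies in $(P^{\Bet}_\alpha)^{>\alpha}.$ If $\gamma\notin\cB_\alpha$ then by definition there is some $i\in b_\alpha$ with $\gamma_i\neq \alpha_i,$ so any minimal path from $\alpha$ to $\gamma$ must cross hyperplane $H_i,$ i.e.\ pass through $\alpha(i).$ Using the product formula
\[
p^{\Bet}(\alpha,\alpha(i))\,p^{\Bet}(\alpha(i),\gamma) = \prod_j (1-m_j)^{\theta_j(\alpha,\alpha(i),\gamma)}\,p^{\Bet}(\alpha,\gamma)
\]
and invertibility of $(1-m_j)$ in $A^{\Bet}_G,$ one rewrites $p^{\Bet}(\alpha,\gamma)$ as an element of the right ideal generated by $p^{\Bet}(\alpha,\alpha(i)),$ which sits in $(P^{\Bet}_\alpha)^{>\alpha}$ by definition. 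After further reduction using the $\cF\cap\cB^c$-vanishing in $A^{\Bet}_G(\FI,\mass),$ one concludes $V^{\Bet}_\alpha$ is spanned by $\{p^{\Bet}(\alpha,\beta):\beta\in\cF\cap\cB_\alpha\}.$

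Next I would establish linear independence, which is the main obstacle in the argument: I need to rule out nontrivial relations modulo $(P^{\Bet}_\alpha)^{>\alpha}.$ The strategy is the one used in \cite{BLPW10}: observe that any relation in $A^{\Bet}_G$ between the $p^{\Bet}(\alpha,\beta)$ must arise from the $G$-invariance relation $A^{\Bet}_G = A^{\Bet}_n \otimes_{\CC[G^L]}\CC,$ and a $\CC[G^L]$-linear combination of distinct paths from $\alpha$ can only become a relation if it passes through a sign vector $\beta'$ with $\beta'\notin \cB_\alpha,$ hence through $\alpha(i)$ for some $i\in b_\alpha.$ Such terms already lie in $(P^{\Bet}_\alpha)^{>\alpha},$ so there is no nontrivial relation among the desired basis elements in $V^{\Bet}_\alpha.$ This step is where one must carefully translate the grading arguments of \cite{BLPW10}*{Lemma 5.21}; since $A^{\Bet}_G$ is no longer graded by path length, one should use instead the filtration by path length modulo the relations, which is preserved by the argument.

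Finally, the filtration statement follows from the spanning/independence result exactly as in the de Rham case: order the basis elements $p^{\Bet}(\alpha,\beta)$ by the poset structure on $\cB_\alpha\cap\cF$ coming from \Cref{defn:partial-order}, and observe that the subquotients of the induced filtration on $V^{\Bet}_\alpha$ are one-dimensional and annihilated by all idempotents $e_\gamma$ with $\gamma\neq\beta,$ hence isomorphic to $S^{\Bet}_\beta.$ Summing over $\beta\in\cB_\alpha\cap\cF$ gives the claimed associated graded.
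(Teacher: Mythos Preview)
Your approach is the same as the paper's, which simply says the lemma ``is proved in the same way'' as \cite[Lemma~5.21]{BLPW10}. However, there is a genuine error in your sketch: the elements $(1-m_j)$ are \emph{not} invertible in $A^{\Bet}_G$; only $m_j$ is inverted in the definition $A^{\Bet}_1 = P_{\CC}(Q'_1)[m^{-1}]$. Indeed, the entire content of \Cref{prop:bettimon} (which this lemma is being used to prove) is that the $(1-m_i)$ act nilpotently on $A^{\Bet}_G(\FI,\mass)$. Fortunately this error is harmless in your spanning step: when $i\in b_\alpha$ and $\gamma_i\neq\alpha_i$, one checks directly that $\theta_j(\alpha,\alpha(i),\gamma)=0$ for all $j$, so the product formula gives $p^{\Bet}(\alpha,\alpha(i))\,p^{\Bet}(\alpha(i),\gamma)=p^{\Bet}(\alpha,\gamma)$ with no $(1-m_j)$ factors at all.

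The more substantive gap is the passage from a $\CC[F^L]$-spanning set to a $\CC$-basis, which you wave through as ``further reduction.'' Here is the missing step: since $p_\alpha$ is a vertex of the simple arrangement $\cH(\FI)$, the indices $b_\alpha$ have $|b_\alpha|=\dim F$, and by regularity and unimodularity the images of $\{m_i:i\in b_\alpha\}$ in $\CC[F^L]$ are coordinates on $F^L$. Hence $\{(1-m_i):i\in b_\alpha\}$ generates the augmentation ideal of $\CC[F^L]$. Since $(1-m_i)e_\alpha = p^{\Bet}(\alpha,\alpha(i))\,p^{\Bet}(\alpha(i),\alpha)\in(P^{\Bet}_\alpha)^{>\alpha}$, the whole augmentation ideal acts by zero on $V^{\Bet}_\alpha$. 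Computing directly in the free $\CC[F^L]$-module $e_\alpha A^{\Bet}_G(\FI,-)=\bigoplus_{\gamma\in\cF}\CC[F^L]\cdot p^{\Bet}(\alpha,\gamma)$, the submodule generated by $\{p^{\Bet}(\alpha,\alpha(i)):i\in b_\alpha\}$ decomposes as $\bigoplus_{\gamma\in\cF} I_\gamma\cdot p^{\Bet}(\alpha,\gamma)$, where $I_\gamma$ is the full ring if $\gamma\notin\cB_\alpha$ and the augmentation ideal if $\gamma\in\cB_\alpha$. Since $\cB_\alpha\subset\cB$, the $\cF\cap\cB^c$-ideal is already contained in this submodule, and the quotient is exactly $\bigoplus_{\beta\in\cF\cap\cB_\alpha}\CC\cdot p^{\Bet}(\alpha,\beta)$. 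This gives spanning and linear independence simultaneously; your separate linear-independence paragraph (about relations ``arising from $G$-invariance'') is muddled and unnecessary.
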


The following is the first part of \cite[Proposition 5.23]{BLPW10} and is proved in the same way. It is sufficient to complete the proof of \Cref{prop:bettimon}.
\begin{lemma}\label{lemma:kernel-v-alpha}
The projective module $P^{\Bet}_{\alpha}$ has filtration such that the associated graded is a quotient of $\oplus_{\alpha \in \cB_{\beta}\cap \cF} V^{\Bet}_{\beta}$.
\end{lemma}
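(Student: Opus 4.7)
My plan is to transplant the proof of the first part of \cite[Proposition 5.23]{BLPW10} essentially verbatim: the combinatorial structure of $A^\Bet_G(\FI,\mass)$ mirrors that of the de Rham algebra, and the only genuine novelty, the appearance of $(1-m_j)$ factors in the path multiplication formula, does not intrude on the key computation. Concretely, I would enumerate the set $\{\beta \in \cB \cap \cF : \alpha \in \cB_\beta\}$ as $\beta_1, \ldots, \beta_N$ via a linear refinement of the partial order of \Cref{defn:partial-order} (so that $\beta_i < \beta_j$ implies $i<j$), and define a descending filtration $P^\Bet_\alpha = F_1 \supseteq F_2 \supseteq \cdots \supseteq F_{N+1}=0$ by letting $F_i$ be the right $A^\Bet_G(\FI,\mass)$-submodule of $P^\Bet_\alpha$ generated by $\{p^\Bet(\alpha,\beta_j) : j \geq i\}$. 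Since $\alpha \in \cB_\alpha \cap \cF$, the idempotent $e_\alpha = p^\Bet(\alpha,\alpha)$ appears among the generators, so $F_1 = P^\Bet_\alpha$.

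For each $i$, the assignment $e_{\beta_i} \mapsto [p^\Bet(\alpha,\beta_i)]$ defines a surjection $\phi_i : P^\Bet_{\beta_i} \twoheadrightarrow F_i/F_{i+1}$, and the task is to show that $\phi_i$ factors through $V^\Bet_{\beta_i}$. By the Betti analogue of \eqref{eq:standardideal}, the ideal $(P^\Bet_{\beta_i})^{>\beta_i}$ is generated by the paths $p^\Bet(\beta_i, \beta_i(k))$ for $k \in b_{\beta_i}$, so it suffices to show that each product $p^\Bet(\alpha, \beta_i)\cdot p^\Bet(\beta_i, \beta_i(k))$ lies in $F_{i+1}$. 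The critical observation is that the exponents $\theta_j(\alpha,\beta_i,\beta_i(k))$ in the multiplication formula all vanish: for $k \in b_{\beta_i}$ the condition $\alpha \in \cB_{\beta_i}$ forces $\alpha_k = (\beta_i)_k$, which precisely kills the $(1-m_k)$ factor that would distinguish the Betti case from the de Rham one. Thus the product collapses to $p^\Bet(\alpha, \beta_i(k))$ on the nose.

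I would then finish by casework on $\beta_i(k)$, copying the de Rham argument: if $\beta_i(k) \notin \cF\cap \cB$, the idempotent $e_{\beta_i(k)}$ is zero in $A^\Bet_G(\FI,\mass)$ and $p^\Bet(\alpha,\beta_i(k))$ vanishes; otherwise, since $k\in b_{\beta_i}$ forces $\beta_i(k) > \beta_i$, if additionally $\alpha \in \cB_{\beta_i(k)}$ then $\beta_i(k) = \beta_j$ for some $j>i$ and $p^\Bet(\alpha,\beta_i(k)) \in F_j \subseteq F_{i+1}$; the remaining case $\alpha \notin \cB_{\beta_i(k)}$ is handled by the same rewriting argument as in \cite{BLPW10}, expressing the path through a higher sign vector. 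The main obstacle in this plan is the one already identified — verifying that the $(1-m_j)$ coefficients do not disrupt the filtration, which is dispatched by the compatibility between the indexing sets $\cB_{\beta_i}$ and $b_{\beta_i}$ — so once this is confirmed the proof is essentially combinatorial bookkeeping identical to the de Rham case.
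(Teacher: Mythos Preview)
Your proposal is correct and takes essentially the same approach as the paper: both transplant the filtration argument of \cite[Proposition 5.23]{BLPW10}, and your observation that the exponents $\theta_j(\alpha,\beta_i,\beta_i(k))$ all vanish (forced by $\alpha\in\cB_{\beta_i}$) is exactly the verification that the Betti case goes through unchanged. The only difference is organizational --- the paper indexes its filtration over the full poset $(\cB\cap\cF)_{\geq\alpha}$ via submodules $(P^\Bet_\alpha)^{\geq\beta}$ generated by \emph{all} paths passing through some $\gamma\geq\beta$ (so no explicit coefficient computation is needed, and your subcase 2b becomes the statement that $M^\beta_\alpha=0$ when $\alpha\notin\cB_\beta$), whereas you linearize and restrict the index set to $\{\beta:\alpha\in\cB_\beta\}$ from the outset.
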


\begin{proof}
For each $\gamma$ define $(P^{\Bet}_{\alpha})^{\gamma}$  to be the submodule of $P^{\Bet}_{\alpha}$ generated by all paths passing through $\gamma$. Moreover we can define 
\begin{align*}
(P^{\Bet}_{\alpha})^{\geq \beta} &= \sum_{\gamma \geq \beta} (P^{\Bet}_{\alpha})^{\gamma} & (P^{\Bet}_{\alpha})^{> \beta} &= \sum_{\gamma > \beta} (P^{\Bet}_{\alpha})^{\gamma}.
\end{align*}
Note that $(P^{\Bet}_{\alpha})^{\geq \alpha} = P^{\Bet}_{\alpha}$ and $V^{\Bet}_{\alpha} = P^{\Bet}_{\alpha}/ (P^{\Bet}_{\alpha})^{> \alpha}$. These give a decreasing filtration $(P^{\Bet}_{\alpha})^{\geq \bullet}$ indexed by the partially ordered set $(\cB \cap \cF)_{\geq \alpha}$. The associated graded pieces are the quotients
\[
M^{\beta}_{\alpha}= (P^{\Bet}_{\alpha})^{\geq \beta} / (P^{\Bet}_{\alpha})^{> \beta}
\]
for $\beta \in (\cB \cap \cF)_{\geq \alpha}$. When $\alpha \not\in \cB_{\beta} \cap \cF$ we have that $M^{\beta}_{\alpha} = 0$. Otherwise there is a surjective map
\[
\begin{tikzcd}[row sep = 0cm]
P^{\Bet}_{\beta} \arrow[r] & M^{\beta}_{\alpha}  \\
a \arrow[r,mapsto] & p^{\Bet}(\alpha, \beta) \cdot a
\end{tikzcd}
\]
with kernel containing $(P^{\Bet}_{\beta})^{> \beta}$. In particular we have a surjection $V^{\Bet}_{\beta} \to M^{\beta}_{\alpha}$.
\end{proof}

\begin{remark}
The dimension estimate used to finish the proof of \cite[Proposition 5.23]{BLPW10} follows from identifying $A^{\dR}_G(\FI,\mass)$ with a convolution algebra built from the homology of toric varieties. It is possible to get the same dimension estimate for $A^{\Bet}_G(\FI,\mass)$ by identifying it with a convolution algebra defined using the rational K-theory of toric varieties, as is implicitly done in \cite{GH-2O}. From this perspective the algebraic Riemann--Hilbert correspondence discussed below is just the Chern character.
\end{remark} 

\subsection{An algebraic Riemann--Hilbert correspondence} \label{ssec:algRH}
The most primitive form of the algebraic Riemann--Hilbert correspondence is the following proposition.
\begin{proposition}\label{prop:RH-primitive}
There is an equivalence
\begin{equation}\label{eq:RH}
RH: \Mod_{A^{\Bet}_n}^{D_{\Bet}\mon} \simeq \Mod_{A^{\dR}_n}^{\fd \mon}
\end{equation}
compatible with the residual actions of
$
(\IndCoh((D^L)^\wedge_1), \otimes) \simeq (\IndCoh((\fd^L)^\wedge_0), \otimes).$
\end{proposition}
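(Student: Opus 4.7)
The strategy is to reduce to the case $n=1$ and then exhibit an explicit ring map that becomes an isomorphism after completion. Since $A^{\dR}_n \cong (A^{\dR}_1)^{\otimes n}$ and $A^{\Bet}_n \cong (A^{\Bet}_1)^{\otimes n}$ with the $\fd$- and $D$-actions decomposing as external products, and since the monodromic categorical constructions in \Cref{cor:monfact} are compatible with external tensor products (the formal neighborhood of the identity in $(\CC^\times)^n$ being the $n$-fold product of formal neighborhoods), it suffices to construct the equivalence for $n=1$ and then tensor $n$ times.

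For $n=1$, I would define a ring homomorphism $\Phi: A^{\dR}_1 \to A^{\Bet}_1$ by $e_\pm \mapsto e_\pm$, $u \mapsto \ell$, and $v \mapsto r$. This is well defined because both algebras are path algebras of the same underlying quiver and $\Phi$ respects the source--target structure of arrows. Under $\Phi$ the central element $d = uv + vu$ maps to $\ell r + r\ell = 1-m$, so $\Phi$ restricts to a $\CC$-algebra map $\CC[d] \to \CC[m^{\pm 1}]$ given by $d \mapsto 1-m$. While not an isomorphism at the nose (since $m$ is invertible only on the Betti side), after completing the centers at $(d)$ and $(1-m)$ respectively, it yields an isomorphism $\CC[[d]] \xrightarrow{\sim} \CC[[1-m]]$: the element $m = 1 - (1-m)$ is a unit in $\CC[[1-m]]$ with inverse $\sum_{k\geq 0}(1-m)^k$.

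Both $A^{\dR}_1$ and $A^{\Bet}_1$ are free of rank $4$ over their respective centers, with bases $\{e_+, e_-, u, v\}$ and $\{e_+, e_-, \ell, r\}$: indeed $u^2 = v^2 = 0$ (resp.\ $\ell^2 = r^2 = 0$) for path-algebraic reasons, and any longer path reduces via $uv = d e_-$, $vu = d e_+$ on the de Rham side, or $\ell r = (1-m) e_-$, $r\ell = (1-m) e_+$ on the Betti side. Since $\Phi$ sends basis to basis, base change along $\CC[d] \to \CC[[d]] \xrightarrow{\sim} \CC[[1-m]]$ yields an isomorphism of completed algebras
\[
\widehat{\Phi}: A^{\dR}_1 \otimes_{\CC[d]} \CC[[d]] \xrightarrow{\sim} A^{\Bet}_1 \otimes_{\CC[m^{\pm 1}]} \CC[[1-m]].
\]

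Finally, combining \Cref{prop:completion} and \Cref{cor:monfact} presents each of $\Mod_{A^{\dR}_1}^{\fd\mon}$ and $\Mod_{A^{\Bet}_1}^{D_{\Bet}\mon}$ as a filtered colimit of module categories over truncations of the completed algebras, and $\widehat{\Phi}$ induces compatible isomorphisms at every finite stage; passing to the colimit yields $RH$. Compatibility with the residual actions of $\IndCoh((\fd^L)^\wedge_0) \simeq \IndCoh((D^L)^\wedge_1)$ is immediate from the construction, with the identification of formal neighborhoods implemented by the coordinate change $d \mapsto 1-m$. The main point to verify is the freeness of the algebras over their centers; in contrast to the stronger \Cref{prop:monodromic-Riemann-Hilbert}, no analytic convergence is required at this primitive level, precisely because the completion has already absorbed the relevant power series.
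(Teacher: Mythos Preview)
Your argument is correct and follows the same overall architecture as the paper: present both monodromic categories as filtered colimits over finite truncations (via \Cref{cor:monfact}), exhibit compatible isomorphisms of the truncated algebras, and pass to the colimit. The freeness claims over the centers are right, and your observation that the localization at $m$ is absorbed by the completion is the key point.

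The difference is in the choice of isomorphism. You use the naive map $u\mapsto\ell$, $v\mapsto r$, so that $d\mapsto 1-m$ on centers. The paper instead invokes Malgrange's formula $\ell_i\mapsto u_i$, $r_i\mapsto \frac{1-\exp(2\pi i d_i)}{d_i}v_i$, so that $m_i\mapsto\exp(2\pi i d_i)$. Both are perfectly good isomorphisms of completed algebras, and both prove the proposition as stated. What Malgrange's formula buys is that the induced identification $(\fd^L)^\wedge_0\simeq(D^L)^\wedge_1$ is the exponential map, which is the one underlying the genuine Riemann--Hilbert correspondence between $D$-modules and constructible sheaves; this becomes relevant in \Cref{ssec:RH}, where the equivalence is matched with the geometric one. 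Your map would work just as well for the algebraic consequences in \Cref{prop:monodromic-Riemann-Hilbert}, but the resulting equivalence would not literally be ``the'' Riemann--Hilbert correspondence.

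One small point: your reduction to $n=1$ via external tensor products is morally correct but requires a word about why the colimit over powers of the ideal $(d_1,\dots,d_n)$ agrees with the $n$-fold product of the one-variable colimits (a cofinality argument). The paper sidesteps this by writing the Malgrange formula directly for all $n$ coordinates at once.
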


\begin{proof}
By \Cref{cor:monfact}, we have equivalences
\begin{align*}
\Mod_{A^{\Bet}_n}^{D_{\Bet}\mon} &\simeq
    \varinjlim_k \IndCoh(N_k(1, D^L)) \otimes_{\IndCoh(D^L)} \Mod_{A^{\Bet}_n} \simeq \varinjlim_k \Mod_{A^{\Bet}_n/\cI^{k+1}}, \\
   \Mod_{A^{\dR}_n}^{\fd \mon} &\simeq \varinjlim_k \IndCoh(N_k(0, \fd_n^L)) \otimes_{\IndCoh(\fd^L)} \Mod_{A^{dR}_n} \simeq \varinjlim_k \Mod_{A^{\dR}_n/\cJ^{k+1}}.
\end{align*}
where $\cI \subset A^{\Bet}_n$ (resp., $\cJ \subset A^{\dR}_n$) is the two-sided ideal generated by $(1-m_i)$ (resp., $d_i$) for $1 \leq i \leq n$.

Thus we have reduced constructing $RH$ to the observation that in \cite[II.3.2]{Malgrange-diffeqs} Malgrange constructed isomorphisms 
\begin{equation}\label{eq:RH}
\begin{tikzcd}[row sep = 0cm]
A^{\Bet}_n/\cI^{k+1} \arrow[r, "rh_k"] & A^{\dR}_n/\cJ^{k+1} \\
\ell_i \arrow[r, mapsto] & u_i \\
r_i \arrow[r, mapsto] & \frac{1-\exp(2\pi i d_i)}{d_i}v_i
\end{tikzcd}.
\end{equation}
Compatibility with the residual actions follows from the fact that $m_i \mapsto \exp(2\pi i d_i)$.
\end{proof}

By taking $G$-invariants of \Cref{eq:RH}, we obtain an equivalence
\begin{equation}\label{eq:RH-Ginvt}
(\Mod_{A^{\Bet}_n}^{D_{\Bet}\mon})^{G_{\Bet}} \simeq 
(\Mod_{A^{\dR}_n}^{\fd \mon})^{\fg}.
\end{equation}

\begin{corollary}
%The Riemann--Hilbert correspondence is compatible with taking $G$-invariants, i.e.,
There is an equivalence of categories
\begin{equation}\label{eq:RH-monodromic}
\Mod_{A^{\Bet}_G}^{F_{\Bet}\mon} \simeq 
%(\Mod_{A^{\Bet}_n}^{D_{\Bet}\mon})^{G_{\Bet}} \cong 
%(\Mod_{A^{\dR}_n}^{\fd \mon})^{\fg} \cong 
\Mod_{A^{\dR}_G}^{\mathfrak{f}\mon}.
\end{equation}
\end{corollary}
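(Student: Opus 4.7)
The plan is to deduce this corollary by taking $G$-invariants of the equivalence \Cref{eq:RH} established in \Cref{prop:RH-primitive}, and then carefully identifying the resulting categories with the monodromic module categories in the statement. The equivalence \Cref{eq:RH-Ginvt} obtained by taking $G_{\Bet}$-invariants on the left and $\fg$-invariants on the right (which are identified under the compatibility asserted in \Cref{prop:RH-primitive}) gives us
\[
(\Mod_{A^{\Bet}_n}^{D_{\Bet}\mon})^{G_{\Bet}} \simeq (\Mod_{A^{\dR}_n}^{\fd \mon})^{\fg},
\]
so it remains to identify both sides with the respective monodromic module categories in the statement.

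I would handle the Betti side first. The short exact sequence $1 \to G \to D \to F \to 1$ dualizes to $1 \to F^L \to D^L \to G^L \to 1$, so that the residual $F$-action on $(\Mod_{A^{\Bet}_n})^{G_{\Bet}}$ coming from the $D$-action is encoded by the inclusion of the fiber $F^L \hookrightarrow D^L$ over $1 \in G^L$. Since $\Mod_{A^{\Bet}_G} \simeq (\Mod_{A^{\Bet}_n})^{G_{\Bet}}$ by definition \Cref{eq:abetg} (using that taking Betti $G$-invariants corresponds to base change along $\{1\} \hookrightarrow G^L$), the key step is to show that the $D_{\Bet}$-monodromic condition becomes precisely the $F_{\Bet}$-monodromic condition after $G$-invariants. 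Using \Cref{cor:monfact} on each side, this reduces to the scheme-theoretic identification $(D^L)^\wedge_1 \times_{G^L} \{1\} \simeq (F^L)^\wedge_1$, i.e., that the formal neighborhood of $1$ in $D^L$ restricts to the formal neighborhood of $1$ in the fiber $F^L$. This is a standard fact about formal completions along closed subschemes.

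The de Rham side is completely analogous: the exact sequence of Lie algebras $0 \to \fg \to \fd \to \frf \to 0$ and its dual give $(\Mod_{A^{\dR}_n})^{\fg} \simeq \Mod_{A^{\dR}_G}$ with residual $\frf$-action, and the $\fd$-monodromic condition restricts to the $\frf$-monodromic one via the identification $(\fd^L)^\wedge_0 \times_{\fg^L} \{0\} \simeq (\ffv)^\wedge_0$. The main potential obstacle is bookkeeping: one must verify that the Malgrange isomorphisms $rh_k$ are equivariant not only for the residual torus actions on each level of the filtration but also compatibly as $k$ varies, so that the colimit presentations of the two monodromic categories match. This compatibility is immediate from the explicit formulas in \Cref{eq:RH}, since $rh_k$ is the reduction of $rh_{k+1}$ modulo $\cI^{k+1}$ (respectively $\cJ^{k+1}$), and passing to $G$-invariants commutes with the colimit over $k$ because taking invariants with respect to a reductive group preserves filtered colimits.

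Assembling these identifications, we obtain
\[
\Mod_{A^{\Bet}_G}^{F_{\Bet}\mon} \simeq (\Mod_{A^{\Bet}_n}^{D_{\Bet}\mon})^{G_{\Bet}} \simeq (\Mod_{A^{\dR}_n}^{\fd \mon})^{\fg} \simeq \Mod_{A^{\dR}_G}^{\frf\mon},
\]
which is the desired equivalence.
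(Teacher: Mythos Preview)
Your proposal is correct and follows essentially the same route as the paper: both arguments identify each side of \Cref{eq:RH-monodromic} with the corresponding side of \Cref{eq:RH-Ginvt} via the pullback squares expressing $(F^L)^\wedge_1$ (resp.\ $(\frf^L)^\wedge_0$) as the fiber of $(D^L)^\wedge_1$ (resp.\ $(\fd^L)^\wedge_0$) over $1\in G^L$ (resp.\ $0\in\fg^L$), so that imposing $D$-monodromicity and then taking $G$-invariants agrees with taking $G$-invariants and then imposing $F$-monodromicity. The extra bookkeeping you add about compatibility of the $rh_k$ across levels is fine but not strictly needed once one works directly with the formal-completion tensor products; your justification that ``reductive group invariants preserve filtered colimits'' is slightly off in this categorical setting---the relevant fact is that Betti $G$-invariants agree with coinvariants (ambidexterity), and the latter, being a tensor product, commutes with colimits.
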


\begin{proof}
Consider the pullback squares
\[
\begin{tikzcd}
(F^L)^\wedge_1 \arrow[r] \arrow[d] &  F^L \arrow[r] \arrow[d] &  1 \arrow[d] & 
(\mathfrak{f}^L)^\wedge_0 \arrow[r] \arrow[d] &  \mathfrak{f}^L \arrow[r] \arrow[d] &  0 \arrow[d] \\
(D^L)^\wedge_1 \arrow[r] & D^L_n \arrow[r] & G^L, & (\fd^L_n)^\wedge_0 \arrow[r] & \fd^L \arrow[r] & \fg^L.
\end{tikzcd}
\]
The left-hand side of \Cref{eq:RH-monodromic} may be obtained by pulling back from $G^L$ to $(F^L)_1^\wedge.$ On the other hand, this pullback may be accomplished in two steps, first pulling back to $(D^L)^\wedge_1$ and then to $(F^L)_1^\wedge,$ giving an equivalence between the left-hand sides of \Cref{eq:RH-monodromic} and \Cref{eq:RH-Ginvt}. An equivalence between the right-hand categories may be produced in the same way by considering the right-hand pullback diagram.
\end{proof}

By examining equation \Cref{eq:RH} we can easily see that the Riemann--Hilbert correspondence is compatible with idempotents.
\begin{corollary}\label{prop:monodromic-Riemann-Hilbert}
There is a commutative diagram
\[
\begin{tikzcd}
\Mod_{A^{\dR}_G(\FI,-)}^{\mathfrak{f}\mon} \arrow[r, "\RH", "\sim"'] & \Mod_{A^{\Bet}_G(\FI,-)}^{F_{\Bet}\mon} \\
\Mod_{A^{\dR}_G(\FI,\mass)}^{\mathfrak{f}\mon} \arrow[u, swap, "({i}^{\dR}_*)^{\frf{\sf -mon}}"]  \arrow[r, "\RH", "\sim"'] & \Mod_{A^{\Bet}_G(\FI,\mass)}^{F_{\Bet}\mon}. \arrow[u, swap, "({i}^{\Bet}_*)^{F_\Bet{\sf -mon}}"]
\end{tikzcd}
\]
Moreover, by combining this with \Cref{cor:dRmon} and \Cref{prop:bettimon}, we produce an equivalence
\[
\Mod_{A^{\dR}_G(\FI,\mass)} \simeq \Mod_{A^{\Bet}_G(\FI,\mass)}.
\]
\end{corollary}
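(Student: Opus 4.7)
The plan is to bootstrap the already-constructed Riemann--Hilbert equivalence \Cref{eq:RH-monodromic} at the level of $(G, F)$-monodromic module categories to the stability-parameter and mass-quotient levels, by checking that each of the algebraic operations used to pass from $A^{\bullet}_G$ to $A^{\bullet}_G(\FI,-)$ and then to $A^{\bullet}_G(\FI,\mass)$ is compatible with Malgrange's formulas $rh_k$. The crucial point is that Malgrange's comparison isomorphisms fix the primitive idempotents: indeed, $e_i \in A^{\Bet}_1/\cI^{k+1}$ and $e_i \in A^{\dR}_1/\cJ^{k+1}$ are the identity-morphisms at the vertices of the quivers $Q_1$ and $Q_1'$, and neither appears in any of Malgrange's formulas. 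Tensoring over $[n]$, we conclude that $rh_k(e_\alpha) = e_\alpha$ for every sign vector $\alpha \in 2^{[n]}$, and hence that $rh_k$ restricts to an isomorphism
\[
e_\cF \bigl(A^{\Bet}_G / \cI^{k+1}\bigr) e_\cF \;\xrightarrow{\sim}\; e_\cF \bigl(A^{\dR}_G / \cJ^{k+1}\bigr) e_\cF
\]
and intertwines the two-sided ideals generated by $e_{\cF \cap \cB^c}$ on either side.

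From there, I would take the inverse limit over $k$ and invoke \Cref{cor:monfact} to realize both sides of the claimed diagram as colimits of module categories for the finite-dimensional algebras above, yielding the equivalences on the top and bottom rows. Commutativity of the square amounts to the statement that, after passing to modules, the functors $(i_*^{\dR})^{\mathfrak{f}\mon}$ and $(i_*^{\Bet})^{F_{\Bet}\mon}$ are simply restriction of scalars along the two parallel algebra quotient maps
\[
e_\cF A^{\dR}_G(\FI,-) e_\cF \twoheadrightarrow A^{\dR}_G(\FI,\mass),
\qquad
e_\cF A^{\Bet}_G(\FI,-) e_\cF \twoheadrightarrow A^{\Bet}_G(\FI,\mass),
\]
and these maps are themselves identified under the inverse limit of $rh_k$. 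So the diagram commutes essentially by construction, once one has verified idempotent-compatibility.

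For the concluding equivalence $\Mod_{A^{\dR}_G(\FI,\mass)} \simeq \Mod_{A^{\Bet}_G(\FI,\mass)}$, I would simply combine the bottom equivalence of the diagram with \Cref{cor:dRmon}, which identifies $\Mod_{A^{\dR}_G(\FI,\mass)}^{\mathfrak{f}\mon}$ with $\Mod_{A^{\dR}_G(\FI,\mass)}$, and \Cref{prop:bettimon}, which identifies $\Mod_{A^{\Bet}_G(\FI,\mass)}^{F_{\Bet}\mon}$ with $\Mod_{A^{\Bet}_G(\FI,\mass)}$.

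The main obstacle lies not in any single step above but in ensuring that Malgrange's isomorphisms, a priori constructed on the level of order-$k$ truncations, genuinely assemble into an equivalence of monodromic categories compatible with the idempotent cornering; this is where the careful bookkeeping with \Cref{cor:monfact} (identifying monodromic categories with colimits over truncations of the algebra) becomes essential, and where one must invoke \Cref{prop:bettimon} to circumvent the fact that the convergence of the power series $\frac{1-\exp(2\pi i d_i)}{d_i}$ is only guaranteed formally, never globally in~$m_i$.
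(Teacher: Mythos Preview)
Your approach is correct and coincides with the paper's: the key observation in both is that Malgrange's explicit formulas \eqref{eq:RH} fix the primitive idempotents $e_\alpha$, so the equivalence \eqref{eq:RH-monodromic} is compatible with cornering by $e_\cF$ and quotienting by the two-sided ideal generated by $e_{\cF\cap\cB^c}$, which is exactly what is needed for the square to commute. The paper states this in a single sentence (``the Riemann--Hilbert correspondence is compatible with idempotents''), whereas you spell out the mechanism via the truncations $A^\bullet/\cI^{k+1}$ and the colimit description from \Cref{cor:monfact}; your version is more explicit but not different in substance.
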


\section{The geometric categories}\label{sec:geometric-categories}
%In this section, we briefly summarize known material about (microlocal) sheaves on manifolds and on stacks. 
We now begin to relate the algebraic constructions of previous sections to geometry. In this section, we recall results about the category of sheaves on $\CC^n,$ stratified by its toric orbits, together with its toric-equivariant structure.
\subsection{Categories of sheaves}
\label{section:sheaves}
Let $M$ be a manifold. For consistency with parts of the literature (e.g. \cites{Kashiwara-Schapira, GPS3}, we will always assume when discussing sheaves that $M$ is real-analytic. Let $\sh(M) \in \bCat_\infty$ be the (very large) category of sheaves on $M$ valued in the (large) category  $\Mod_\mathbb{C}$. Given $\mathcal{F} \in \sh(M)$, its \emph{singular support} (or \emph{microsupport}) $SS(F) \subset T^*M$ is defined as in \cite[Sec.\ 5.1]{Kashiwara-Schapira}. For $\Lambda \subset T^*M$, let $\sh_\Lambda(M) \subset \sh(M)$ be the full subcategory on objects whose microsupport is contained in $\Lambda$.
%
%We let $sh^{\fin}_\Lambda(M) \subset sh_\Lambda(M)$ be the full subcategory on objects having perfect stalks. If $\Lambda$ is closed, subanalytic and isotropic, then \cite[Prop.\ 4.19]{GPS3} shows that $sh^{\fin}_\Lambda(M)= \sProp sh_\Lambda(M)$.\footnote{Strictly speaking \cite{GPS3} only considers singular support valued in $S^*M$, but the argument carries over \emph{mutatis mutandis}.\textcolor{red}{actually I didn't check}}

Given a manifold $M$, one can define a presheaf of stable categories on $T^*M$ by the prescription
\begin{equation}
    U \mapsto \mush_{T^*M}^{pre}(U):= \sh(M)/ \{ F \in \sh(M) \mid SS(F) \cap U = \emptyset \}.
\end{equation}
The sheafification of this presheaf shall be denoted by $\mush_{T^*M}(-)$, and is called the sheaf (of categories) of \emph{microlocal sheaves} (or {\em microsheaves} for short) on $T^*M$. 
%\cite{Kashiwara-Schapira, nadler2020sheaf}. 

Given $\Lambda \subset T^*M$ conic, there is a sheaf of full subcategories $\mush_\Lambda(-) \subset \mush_{T^*M}(-)$ which is defined by the prescription 
\begin{equation}
    \mush_\Lambda(U \cap \Lambda):= \{ \mathcal{F} \in \mush_{T^*M}(U) \mid SS(\mathcal{F}) \subset \Lambda \cap U \}.
\end{equation}
Under the assumption (which will always be satisfied in this paper) that $\Lambda$ is nice stratified isotropic, $\mush_\Lambda(-)$ takes values in the full subcategory $\PrLRomega \subset \bCat_\infty$ (objects compactly generated, morphisms have left and right adjoints). 

Given an exact symplectic manifold $(X,\lambda)$ equipped with a stable polarization $\pol$ and a conic (for the Liouville vector field) Lagrangian $\Lambda\subset X,$ \cite{Shende-weinstein,nadler2020sheaf} use the theory of microlocal sheaves to define a sheaf of categories $\mush_{\Lambda,\pol}$ on $X.$ It has the property \cite[Remark 9.25]{nadler2020sheaf} that if $X=T^*M$ is a cotangent bundle and $\pol^{fib}$ is the cotangent fiber polarization, then there is an equivalence $\mush_{\Lambda}$ and $\mush_{\Lambda,\tau}$ between the two sheaves (on $T^*M$) of categories just discussed. We will henceforth drop the polarization $\tau$ from our notation where it is understood.
%(This equivalence is not canonical but we hereby fix one; see \cite{GS23}*{Remark 4-32}.)

%The sheaf of categories $\mush_{\Lambda,\tau}$ is first defined for $\Lambda$ a Legendrian in a contact manifold, and then extended to the case where $\Lambda$ is a conic Lagrangian in an exact symplectic manifold $(X,\lambda)$ by embedding $\Lambda$ as the Legendrian 
%$\{0\}\times\Lambda\hookrightarrow \{0\}\times X\to \RR\times X$
%inside the contact manifold $(\RR\times X,dt+\lambda).$ 
A priori, the definition of $\mush_{\Lambda}$ on a Lagrangian $\Lambda$ polarized symplectic manifold is sensitive to the choice of primitive $\lambda$ for the symplectic form on $X$.
%, since two different choices will result in two different contact structures on $\RR\times X.$ 
In this paper, we will be interested in studying the Lagrangian $\LL_n\subset T^*\CC^n,$ which is conic for two different Liouville vector fields: the canonical Liouville structure on $T^*\CC^n$, which dilates the cotangent fibers; and the conical Liouville structure on $\CC^{2n}$, which dilates both base and fiber coordinates equally. The sheaf of categories $\mush_{\LL_n}$ will not depend on the choice of Liouville field, thanks to the following fact. In the following lemma and its corollary, we will write $\mush_{\Lambda,\lambda}$ for the sheaf of categories defined using Liouville form $\lambda$ (where the fixed choice of cotangent fiber polarization is assumed).
\begin{lemma}\label{lem:liouville-independence}
Let $\lambda_s:= (1-s)p_idq^i- s q_idp^i= \lambda_0- d(s p_iq^i)$ by Liouville forms on $T^*\mathbb{R}^n$ and let $\LL:= 0_{\mathbb{R}^n} \cup T^*_0 \subset T^*\mathbb{R}^n$. Then the sheaf of categories $\mush_{\LL,\lambda_s}$ is independent of $s$.
\end{lemma}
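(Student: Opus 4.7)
The plan is first to check that $\LL$ really is conic for every $\lambda_s$, and then to reduce the independence claim to a standard invariance property of microsheaves under Liouville deformations sharing a common symplectic form and conic Lagrangian. Let me compute the Liouville vector field $V_s$ dual to $\lambda_s$ via $\iota_{V_s}\omega=\lambda_s$ with $\omega = dp_i\wedge dq^i$. A direct calculation gives
\[
V_s = (1-s)\,p_i\,\partial_{p_i} + s\,q^i\,\partial_{q^i}.
\]
On the zero section $0_{\RR^n} = \{p=0\}$ the first summand vanishes, and on the cotangent fiber $T^*_0 = \{q=0\}$ the second summand vanishes; in each case the remaining summand is tangent to the component. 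Hence $V_s$ is tangent to $\LL$ for every $s\in[0,1]$, so the Nadler--Shende definition of $\mush_{\LL,\lambda_s}$ makes sense throughout the family.

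Next, I would use the identity $\lambda_s-\lambda_0 = -d(s\,p_iq^i)$ of the hypothesis. Since the primitive $H_s := s\,p_iq^i$ vanishes identically along $\LL$ (either $p_i=0$ or $q^i=0$ on each component), the Hamiltonian vector field $X_{H_s}$, which equals $V_s - V_0$, is tangent to $\LL$. Equivalently, the whole family of Liouville structures $\lambda_s$ is a Liouville homotopy in the sense of Cieliebak--Eliashberg that fixes both the symplectic form $\omega$ and the conic Lagrangian $\LL$ setwise. The cotangent fiber polarization $\pol^{\mathrm{fib}}$ (which is the stable polarization input to the construction) is also $s$-independent.

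Finally, I would invoke the deformation invariance of the microsheaf sheaf of categories: by the construction of $\mush_{\Lambda,\pol,\lambda}$ recalled in \cite{Shende-weinstein,nadler2020sheaf}, the sheaf of categories depends only on $(X,\omega,\Lambda,\pol)$ together with the conicity condition needed to compare Darboux charts; since all of this data is constant along the family $\lambda_s$, we obtain a canonical identification $\mush_{\LL,\lambda_s} \simeq \mush_{\LL,\lambda_0}$ as sheaves of stable categories on $T^*\RR^n$. The main technical obstacle I expect is the local model near the crossing point $0 \in T^*_0 \cap 0_{\RR^n}$, where $V_s$ degenerates; this is handled by noting that on a neighborhood of this point $\mush_{\LL,\lambda_s}$ is computed by the same Kashiwara--Schapira microlocalization, which sees only the symplectic form and the Lagrangian, not the chosen primitive $\lambda_s$.
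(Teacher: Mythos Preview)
Your first two steps are correct and in fact isolate the key geometric observation: the function $H_s=sp_iq^i$ whose differential relates $\lambda_s$ to $\lambda_0$ vanishes identically on $\LL$. The problem is your third step. You assert that the Nadler--Shende construction ``depends only on $(X,\omega,\Lambda,\pol)$ together with the conicity condition,'' but this is precisely what is at issue. The paper itself emphasizes, just before this lemma, that the definition of $\mush_\Lambda$ is \emph{a priori} sensitive to the choice of primitive $\lambda$: the construction passes through the contactization $(\mathbb{R}\times X, dt+\lambda)$ and the Legendrian $\{0\}\times\Lambda$ there, so $\lambda$ genuinely enters. You are invoking the conclusion of the lemma rather than proving it, and the references you cite do not contain a blanket statement of the form you claim.

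The paper's proof closes this gap by making your observation explicit at the level of contactizations. Since $\lambda_s=\lambda_0-d(sp_iq^i)$, the map
\[
(\mathbb{R}\times T^*\mathbb{R}^n,\,dt+\lambda_s)\longrightarrow(\mathbb{R}\times T^*\mathbb{R}^n,\,dt+\lambda_0),\qquad (t,q,p)\longmapsto(t-sp_iq^i,\,q,\,p)
\]
is a contactomorphism, and because $p_iq^i|_\LL=0$ it fixes the Legendrian $\{0\}\times\LL$ pointwise. One then invokes the known invariance of microsheaves under contactomorphisms preserving the Legendrian. Your observation that $H_s|_\LL=0$ is exactly what makes this contactomorphism fix $\{0\}\times\LL$; you were one concrete sentence away from a complete argument.
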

\begin{proof}
Recall \cite[p.\ 2]{nadler2020sheaf} that if $(X, \lambda)$ is Liouville and $L \subset X$ is conic, then $\mush_L$ is defined by viewing $\{0\} \times L$ as a Legendrian in the contact manifold $(\mathbb{R} \times X, dt + \lambda)$. 

For any contactomorphism $(V, \xi) \to (U, \eta)$ and Legendrian $L \subset V$, we have $\mush_{L, \xi}(-)\simeq \mush_{\phi(L), \eta}$, where it is understood the Maslov data on the source is pulled back from the target. But the contactomorphism $(\mathbb{R} \times T^*\mathbb{R}^n, dt+ \lambda_s) \to (\mathbb{R} \times T^*\mathbb{R}^n, dt+ \lambda_0)$ taking $(t, q,p) \mapsto(t- sp_iq^i, q, p)$ fixes $\{0\} \times \LL$ pointwise, giving the desired claim.
\end{proof}
\begin{corollary}
    Consider the family (in $s$) of Liouville forms $\lambda^\CC_s = (1-s)wdz-szdw$ on the holomorphic symplectic manifold $T^*\CC.$ Then $\mush_{\LL_1,\Re(\lambda^\CC_s)}$ is independent of $s.$
\end{corollary}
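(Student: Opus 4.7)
The plan is to adapt the contactomorphism argument from \Cref{lem:liouville-independence} to the holomorphic Liouville family at hand. The starting point is the identity
\[
\lambda^\CC_s - \lambda^\CC_0 = -s\,d(zw),
\]
which follows at once from $d(zw) = w\,dz + z\,dw$. Taking real parts gives $\Re(\lambda^\CC_s) = \Re(\lambda^\CC_0) - d(s\Re(zw))$, so the family $\{\Re(\lambda^\CC_s)\}$ differs from $\Re(\lambda^\CC_0)$ by exact one-forms, in precisely the manner of the family $\lambda_s$ appearing in \Cref{lem:liouville-independence}.

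From this observation, I would define a contactomorphism
\[
\Phi_s : (\RR\times T^*\CC,\, dt + \Re(\lambda^\CC_s)) \longrightarrow (\RR\times T^*\CC,\, dt + \Re(\lambda^\CC_0)), \qquad (t,z,w)\mapsto (t - s\Re(zw),\,z,\,w).
\]
As in the proof of \Cref{lem:liouville-independence}, the invariance of microsheaf categories under contactomorphisms (with Maslov data pulled back from the target) reduces the corollary to verifying that $\Phi_s$ restricts to the identity on the Legendrian $\{0\}\times \LL_1$.

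This last verification is immediate: since $\LL_1 = 0_\CC \cup T^*_0\CC = \{w=0\}\cup\{z=0\}$, the function $\Re(zw)$ vanishes identically on $\LL_1$, and therefore $\Phi_s$ fixes $\{0\}\times\LL_1$ pointwise. I do not anticipate any serious obstacle: the proof is essentially a direct transcription of the real case, with the only substantive input being the convenient fact that the primitive $\Re(zw)$ vanishes on $\LL_1$ because each component of $\LL_1$ is cut out by one of $z$ or $w$.
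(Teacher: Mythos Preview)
Your proof is correct and uses the same contactomorphism idea as the paper. The only difference is packaging: the paper identifies $T^*\CC\simeq T^*\RR^2$ via $(z,w)\mapsto(\Re z,\Im z,\Re w,-\Im w)$, checks that $\Re(\lambda^\CC_s)$ becomes exactly the $\lambda_s$ of \Cref{lem:liouville-independence} and that $\LL_1$ becomes $0_{\RR^2}\cup T^*_0$, and then invokes the lemma; you instead rerun the lemma's argument directly in complex coordinates using the primitive $\Re(zw)$.
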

\begin{proof}
 Identify $T^*\mathbb{C} \simeq T^*\mathbb{R}^2$ as usual by the map $(z,w)\mapsto (re(z), im(z), re(w),-im(w))$. Let $\lambda^{\mathbb{C}}_s= (1-s) wdz-s zdw$. Then $re(\lambda^{\mathbb{C}}_s)$ is identified with $\lambda_s$, and $s$-independence of the category follows from \Cref{lem:liouville-independence}.
\end{proof}

If $M$ is a complex manifold  and $\Lambda \subset T^*M$ is a conic complex Lagrangian, then $\mush_\Lambda(-)$ 
pushes forwards to a sheaf of categories on the topology of $\mathbb{C}^\times$-invariant conic complex open subsets. As explained in \cite{perverse-microsheaves}, for a holomorphic polarization $\pol,$ the resulting sheaf of categories $\mush_{\Lambda,\tau}(-)$ %carries canonical orientation and grading data, along with a 
is equipped with a t-structures. The heart $\mush_\Lambda^{\heartsuit}(-)$
is a sheaf of abelian categories; its objects are the {\em perverse microsheaves}.
%be the heart; this is a sheaf of abelian categories. 
%Under the assumption (which will always be satisfied in this paper) that $\Lambda$ is nice stratified isotropic, $\mush_\Lambda^{\heartsuit}(-)$ takes valued in ${\sf Groth}^{\sf LR}_{\omega}$ \textcolor{red}{we'd better clarify all this with Germán \dots}

%If $V$ is a complex vector space with an action of a complex reductive group $G$, then we can further restrict our attention to $\Lambda$ conic and $G$-invariant, and open subsets which are also so. The $G$-action automatically sends co-representatives to co-representatives; hence there is a $G$-action on the heart $\mush_\Lambda^{\heartsuit}(-)$. We omit the proof of the following lemma since it is essentially a matter of chasing definitions in \cite{perverse-microsheaves, NS-higgs}.
%\begin{lemma}\label{lemma:t-pullback}
%    Under the above assumptions:
%    \begin{itemize}
%        \item The pullback of the microlocal perverse t-structure under the canonical map \textcolor{red}{of presheaves} $\mush_\Lambda^G(U) \to \mush_\Lambda(U)$ defines a t-structure. 
%        \item The t-structures are compatible with equivalence of \Cref{proposition:ns4.12}, for the canonical grading and orientation data of \cite{perverse-microsheaves}.
%\qed
%    \end{itemize}
%\end{lemma}

\subsection{Group actions}\label{sec:stacks}
%Suppose that $G$ is a linear reductive group acting on $M$. Then $sh(M)$ is a module over $sh(G)$ under convolution and hence also a module over $loc(G)$. We let $(-)^G: \Mod_{loc(G)} \to \Mod_\mathbb{C}$ be the right adjoint to the pullback $\pi^*: \Mod_\mathbb{C} \to \Mod_{loc(G)}$ induced by the projection $\pi: G \to *$.  Similar considerations apply to microlocal sheaves. If $U \subset T^*M$ is conic $G$-invariant, then the $loc(G)$-action preserves the full subcategory of $sh(M)$ on objects whose singular support is not contained in $U$.  Hence have an an induced action on microlocal presheaves
%\begin{equation}
%    \mush^{pre}_{T^*M}(U):= sh(M)/ \{F \mid ss(F) \cap U = \emptyset\}
%\end{equation}
%and hence on $\mush$. 
%
%More generally, if $\Lambda \subset T^*M$ is invariant under the induced $G$-action on $T^*M$, then $loc(G)$ acts on $sh_\Lambda(M)$ by convolution and we let $(-)^G: \Mod_{loc(G)} \to \Mod_\mathbb{C}$ be defined similarly. We similarly have a $loc(G)$-action on $\mush_\Lambda^{pre}(-)$ and hence on $\mush_{\Lambda}(-)$. We let $\mush(-)^G$ be the sheaf of invariants (this is a sheaf since $(-)^G$ commutes with limits). This is evidently a presheaf of categories on the topology generated by $G$-invariant subsets of $M$, i.e.\ on $M/G$. It is a sheaf due to the fact that taking $G$-invariants commutes with limits. More generally, if $\Lambda \subset T^*M$ is a $G$-invariant conic, then $\mush_\Lambda(-)$ admits a $G$-action and can similarly define $\mush^G_\Lambda(-)$.

We will also want to consider Betti equivariance for microsheaf categories. Let $G$ be a complex algebraic group acting on a complex manifold $M$ and $\Lambda\subset T^*M$ a $G$-invariant conic Lagrangian. Then the convolution action of $\Loc(G)$ on $\Sh_\Lambda(M)$ makes $\Sh_\Lambda(M)$ into a Betti G-category. 
\begin{definition}
    We write $\Sh_\Lambda^G(M):=\Sh(M)^{G_\Betti}$ for the Betti $G$-invariants of the Betti $G$-category $\Sh_\Lambda(M).$
\end{definition}
\begin{remark}
    Without imposing the Lagrangian singular support condition, the category $\Sh(M)$ more naturally carries an action of the monoidal category $\Sh(G)$ of all sheaves on $G$, and the invariants for this action give a definition of sheaves on the stack $G/M.$ This is the setting of the calculations in \cite{NS-higgs}, to which we shall refer below. However, after imposing the microsupport condition $\Lambda$, we ensure that this $\Sh(G)$-action factors through $\Loc(G),$ so that we do not lose anything by studying the Betti action in this case.
\end{remark}

Moreover, if $U\subset T^*M$ is conic and $G$-invariant, then the action of $\Loc(G)$ will preserve the condition that singular support is contained outside of $U$, thus inducing a Betti $G$-action on the category $\mush^{pre}_{\Lambda}(U).$
\begin{definition}
    Let $\Lambda\subset T^*M$ be a $G$-invariant conic Lagrangian. We write $\mush^G_\Lambda$ for the sheaf of categories on $\Lambda/G$ obtained by sheafifying $U\mapsto \mush^{pre}_{\Lambda}(U)^{G_\Betti}$ on $G$-invariant open subsets of $\Lambda.$
\end{definition}
We could equivalently have defined $\mush^G_\Lambda(U) = \mush_\Lambda(U)^{G_\Betti}$; see \cite{NS-higgs}*{Lemma 4.2}.

Now suppose that $\tilU\subset \mu^{-1}(0)$ is an open subset on which the $G$-action is free, such that $U:=\tilU/G$ is a Liouville submanifold of the stack $\mu^{-1}(0)/G=T^*(M/G).$ As a cotangent bundle, $T^*M$ comes equipped with a canonical polarization by cotangent fibers.
%, and it is proved in \cite{NS-higgs} that this polarization descends along the $G$-reduction:
\begin{lemma}[\cite{NS-higgs}*{Corollary 4.5}]\label{lem:descend-polarization}
    The cotangent fiber polarization on $T^*M$ descends to a polarization on the manifold $U.$
\end{lemma}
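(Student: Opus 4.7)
The plan is to construct $\bar V$ by symplectic reduction of the vertical cotangent fiber distribution $V \subset T(T^*M)$, defined by $V_{(q,\eta)} := T^*_qM$. The strategy mirrors the usual construction of a reduced Lagrangian distribution in a Marsden--Weinstein quotient, but with additional bookkeeping needed to descend the stable framing data that constitutes a polarization in the sense of \cite{nadler2020sheaf}*{\S 9}.

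First, I would carry out dimension counts at an arbitrary point $p = (q,\eta) \in \tilde U$. Because $\mu \colon T^*M \to \mathfrak{g}^\vee$ is fiberwise linear (its restriction to $V_p$ being dual to the evaluation map $\mathfrak{g} \to T_qM$), freeness of the $G$-action forces $d\mu_p|_{V_p}$ to be surjective, so $V_p \cap T_p\mu^{-1}(0)$ has dimension $\dim M - \dim G$; in fact, this intersection is canonically the annihilator $(T_q(G\cdot q))^\circ \subset T^*_qM$. Moreover, orbit tangent vectors project nontrivially to $T_qM$ by freeness, so $V_p \cap T_p(G\cdot p) = 0$. By the standard reduction procedure for Lagrangian subspaces through a coisotropic subspace, the image $\bar V_{[p]}$ of $V_p \cap T_p\mu^{-1}(0)$ in $T_{[p]}U = T_p\mu^{-1}(0)/T_p(G\cdot p)$ is therefore Lagrangian (of half-dimension $\dim M - \dim G$) in the reduced symplectic form, and $G$-equivariance assembles these fibers into a Lagrangian subbundle $\bar V \subset TU$.

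Next, I would upgrade $\bar V$ to a full polarization by descending the framing data from $V$. The canonical isomorphism $V \simeq \pi^*T^*M$ on $T^*M$, combined with the identification $\bar V_{[p]} \simeq (T_q(G\cdot q))^\circ$ established above, yields a natural isomorphism $\bar V \simeq \bar\pi^* T^*(M/G)$, where $\bar\pi \colon U \to M/G$ is the map induced by the projection $\pi \colon T^*M \to M$. This identification exhibits $\bar V$ as (the restriction to $U$ of) the cotangent fiber polarization of $T^*(M/G)$, inheriting all the stable framing and Maslov data required by the definition of a polarization.

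The main obstacle I expect is the careful bookkeeping around the descent of the stable framing --- not merely the underlying Lagrangian distribution --- in a way compatible with the definition used in \cite{nadler2020sheaf}. However, the identification $\bar V \simeq \bar\pi^*T^*(M/G)$ above should make this comparison essentially automatic, reducing it to the statement that the cotangent fiber polarization of $T^*(M/G)$ restricts to the Liouville submanifold $U$. Any remaining subtlety, such as comparing conventions between the ambient and reduced framings, should be resolved by a deformation argument within the contractible space of polarizations compatible with a fixed compatible almost complex structure.
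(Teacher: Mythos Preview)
Your strategy---symplectically reduce the vertical Lagrangian subbundle $V \subset T(T^*M)$---is the right idea, but two of your intermediate claims conflate freeness of the $G$-action on $\tilU \subset T^*M$ (which is assumed) with freeness of the $G$-action on the base $M$ (which is not). Both ``$d\mu_p|_{V_p}$ is surjective'' and ``$V_p \cap T_p(G\cdot p) = 0$'' are equivalent to injectivity of the infinitesimal action $a_q\colon \mathfrak{g} \to T_qM$, and this does not follow from freeness upstairs. In the paper's own setting it genuinely fails: take $M = \CC^2$ with $G = \CC^\times$ acting diagonally, so that $\htvar_G(\FI) = T^*\PP^1$. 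For one sign of $\FI$ the cotangent fiber $T^*_0\CC^2$ meets the stable locus, and at any such point $p = (0, y)$ one has $a_0 = 0$, whence $d\mu_p|_{V_p} = 0$ and $T_p(G\cdot p) \subset V_p$. For the same reason your identification $\bar V \simeq \bar\pi^*T^*(M/G)$ and the map $\bar\pi\colon U \to M/G$ to a manifold make no sense here.

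The conclusion does survive a corrected count: with $\mathfrak{h}_q := \ker a_q$, one finds $\dim(V_p \cap T_p\mu^{-1}(0)) = \dim M - \dim G + \dim\mathfrak{h}_q$ while (now genuinely using freeness on $\tilU$) $\dim(V_p \cap T_p(G\cdot p)) = \dim\mathfrak{h}_q$, so the image in $T_{[p]}U$ is always Lagrangian of dimension $\dim M - \dim G$. You would then still owe an argument that these fibers vary smoothly, which is not immediate since $V \cap T\mu^{-1}(0)$ itself has jumping rank.

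The paper gives no proof of its own, deferring to \cite{NS-higgs}*{Corollary 4.5}. The argument there works at the stable level and sidesteps these transversality issues: from the $G$-equivariant short exact sequences relating $T(T^*M)|_{\tilU}$, $T\mu^{-1}(0)|_{\tilU}$, and $\pi^*TU$ one gets a stable isomorphism of symplectic vector bundles $T(T^*M)|_{\tilU} \simeq \pi^*TU \oplus \underline{\mathfrak{g}\oplus\mathfrak{g}^\vee}$, so the cotangent-fiber polarization on the left directly yields a stable polarization of $\pi^*TU$, which descends to $U$ by equivariance.
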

Let $\pol_G$ denote the polarization on $U$ defined in \Cref{lem:descend-polarization}. We may learn from \cite{NS-higgs} a recipe for computing the microsheaf category on $U$ in terms of microsheaves in $T^*M$.
\begin{proposition}\label{prop:musheaves-from-stack}
    Let $\tilU$ as above, and let $\widetilde{\Lambda}\subset \tilU\subset T^*M$ be a conic Lagrangian. Then $\mush_{\widetilde{\Lambda}}(\widetilde{\Lambda})$ has a Betti $G$-action, and there is an equivalence
    \[
    \mush_{\widetilde{\Lambda}}^G(\widetilde{\Lambda})\simeq \mush_{\Lambda,\pol_G}(\Lambda).
    \]
\end{proposition}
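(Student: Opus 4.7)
The plan is to produce the Betti $G$-action first, then construct the equivalence by descent.

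First, since $\widetilde\Lambda\subset T^*M$ is $G$-invariant and conic, the convolution action of $\Sh(G)$ on $\Sh(M)$ preserves the full subcategory $\Sh_{\widetilde\Lambda}(M)$ (because convolution with an element of $\Sh(G)$ only moves singular support along the $G$-action, which preserves $\widetilde\Lambda$). The same argument works locally: for any $G$-invariant open $U\subset T^*M$, convolution by $\Sh(G)$ preserves the subcategory $\{F \in \Sh(M)\mid SS(F)\cap U = \varnothing\}$, hence descends to an action on the presheaf $\mush^{pre}_{\widetilde\Lambda}(-)$ restricted to $G$-invariant opens. Because $\widetilde\Lambda$ is Lagrangian, this $\Sh(G)$-action factors through $\Loc(G)$ (as in the remark preceding the statement), producing the desired Betti $G$-structure on $\mush_{\widetilde\Lambda}(\widetilde\Lambda)$ and, more generally, on the $\Loc(G)$-module sheaf of categories $\mush_{\widetilde\Lambda}$ over $G$-invariant opens in $\widetilde\Lambda$.

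Second, I would construct the equivalence by identifying both sides with global sections of the same sheaf of categories on $\Lambda = \widetilde\Lambda/G$. By construction, $\mush^G_{\widetilde\Lambda}$ is the sheafification on $\Lambda$ of $U\mapsto \mush^{pre}_{\widetilde\Lambda}(\widetilde U)^{G_\Betti}$ where $\widetilde U$ is the $G$-preimage of $U$. On the other hand, $\mush_{\Lambda,\pol_G}$ is by definition a sheaf of categories on $\Lambda\subset U\subset T^*(M/G)$, equipped with the polarization $\pol_G$ obtained from the cotangent fiber polarization via \Cref{lem:descend-polarization}. The content of \cite{NS-higgs}*{Corollary 4.5} (of which this proposition is essentially a Lagrangian-supported variant) is exactly that these two sheaves of categories agree locally: for a sufficiently small $G$-invariant open $\widetilde U$ over which the quotient $\widetilde U \to U$ is a trivial $G$-bundle, one has an equivalence $\mush^{pre}_{\widetilde\Lambda}(\widetilde U)^{G_\Betti}\simeq \mush_{\Lambda,\pol_G}(\Lambda\cap U)$, and these local equivalences are compatible with restriction. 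Taking global sections yields the claim.

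The main obstacle is verifying that the equivalence of sheaves respects the Maslov/polarization data on both sides, i.e., that the polarization $\pol_G$ constructed by descent in \Cref{lem:descend-polarization} is the same polarization under which the local identification from \cite{NS-higgs} is stated. This is what allows us to compare microlocal (and in particular perverse) t-structures on the two sides — a point that will matter in later applications, where we need $\mush_{\Lambda,\pol_G}$ to carry its natural microlocal perverse t-structure. I expect this compatibility to be handled directly by unwinding the construction in \Cref{lem:descend-polarization} and the corresponding argument of \cite{NS-higgs}, since in both cases the descent of the polarization is performed by the same quotient procedure applied to the cotangent fiber polarization. Once this is confirmed, the rest of the argument is formal: the sheaf property on $\Lambda$ reduces the statement to the local trivial-bundle case, and there the identification is immediate from the definition of Betti $G$-invariants for a trivial bundle.
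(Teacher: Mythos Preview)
Your proposal is correct and follows essentially the same strategy as the paper: establish that the $\Sh(G)$-action on $\mush_{\widetilde\Lambda}(\widetilde\Lambda)$ factors through $\Loc(G)$ (because of the Lagrangian microsupport condition), and then identify the quotient category with the Betti $G$-invariants. The only difference in execution is that where you sketch a local-to-global descent argument by hand, the paper simply cites \cite{NS-higgs}*{Corollary 4.9} and \cite{NS-higgs}*{Lemma 4.11} directly for the global identification $\mush_{\Lambda,\pol_G}(\Lambda)\simeq \mush_{\widetilde\Lambda}(\widetilde\Lambda)^{\Sh(G)}$, and then reduces $\Sh(G)$-invariants to $\Loc(G)$-invariants via the factorization you describe. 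Your polarization-compatibility concern is therefore absorbed into those cited results rather than addressed separately; note that the relevant NS-higgs input for the equivalence is not Corollary~4.5 (which only gives the descended polarization) but rather Lemma~4.11.
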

\begin{proof}
    \cite{NS-higgs}*{Corollary 4.9} describes a convolution action of $\Sh(G)$ on $\mush_{T^*M},$ and \cite{NS-higgs}*{Lemma 4.11} shows that $\mush_\Lambda(\Lambda)$ is equivalent to the $\Sh(G)$-invariant category $\mush_{\tilLambda}(\tilLambda)^{\Sh(G)}.$ However, since the $\Sh(G)$-action on $\mush_{\tilLambda}(\tilLambda)$ factors through the (oplax monoidal) projection map $\Sh(G)\to \Loc(G)$ 
    which is left adjoint to the inclusion $\Loc(G)\to \Sh(G),$ we deduce that the $\Sh(G)$-action factors through a Betti $G$-action, and the $\Sh(G)$-invariants are the Betti $G$-invariants.
\end{proof}

\subsection{Perverse (micro)sheaves in $T^*\mathbb{C}$} 
\label{sec:sheafcalc}
%Let $\Perv(\CC,0)$ denote the category of perverse sheaves on $\mathbb{C}$ with singular support in $0_\mathbb{C} \cup T^*_0\mathbb{C}$. Note that, consistent with our previously stated conventions, we do not assume unless otherwise specified that perverse sheaves have perfect stalks. %are constructible with respect to the stratification $\mathbb{C}= \mathbb{C}^* \cup \{0\}$. 

We now begin to study the geometry of the Lagrangians 
\[\LL_1 := \CC\cup T^*_0\CC\subset T^*\CC, \qquad \LL_n:=(\LL_1)^n\subset (T^*\CC)^n=T^*\CC^n.\]
The following fundamental calculation was first observed by Deligne, with proofs by Verdier \cite{Verdier}, Galligo-Granger-Maisonobe \cite{GGM}, and MacPherson-Vilonen \cite{MV-elementary}.
\begin{theorem}\label{thm:perverse-description-1d}
    There is an equivalence of categories
%    \begin{equation}\label{eq:diagram-dim1}
%    \begin{tikzcd}
%        \Phi\ar[r, shift left, "x"] & \ar[l, shift left, "y"] \Psi
%    \end{tikzcd}
%    \end{equation}
%    satisfying the property that $t_\Phi:=(\id_\Phi-yx)\in \End(\Phi)$ and $t_\Psi:=(\id_\Psi-xy)\in \End(\Psi)$ are invertible linear maps.
\begin{equation}\label{eq:betti-is-sheaves-basic}
\Sh_{\LL_1}(\CC)\simeq \Mod_{A^\Betti_1},
\end{equation}
exchanging the perverse t-structure on the left-hand side with the usual t-structure on the right-hand side, and exchanging the Betti $\CC^\times$-action induced on the left-hand side by the action $\CC^\times\curvearrowright\CC$ with the Betti $\CC^\times$-action on the right-hand side described by \Cref{eq:betti-action-on-algebra}.
\end{theorem}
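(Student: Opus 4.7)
The plan is to invoke the classical theorem of Deligne--Verdier--Galligo-Granger-Maisonobe--MacPherson-Vilonen describing constructible sheaves on $\CC$ stratified by $\{0\}\sqcup\CC^\times$. First I would observe that the singular support condition $SS(\cF)\subset 0_\CC\cup T^*_0\CC$ is equivalent to $\cF$ being constructible for the stratification $\{0\}\sqcup\CC^\times,$ reducing the problem to this purely combinatorial-stratified setting.

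The core computation is to find a pair of compact generators $\cP_+, \cP_-\in \Sh_{\LL_1}(\CC)$ corresponding to the idempotents $e_\pm,$ and then to directly compute their endomorphism algebra. A convenient choice is to take $\cP_\pm$ to be the projective covers in the perverse heart of the two simple objects $i_*\CC_{\{0\}}$ and $\CC_\CC[1]$ (the IC sheaves of the two strata), where $i:\{0\}\hookrightarrow\CC$ and $j:\CC^\times\hookrightarrow\CC$. The off-diagonal blocks of $\End(\cP_+\oplus\cP_-)$ yield generators $\ell\in\Hom(\cP_+,\cP_-)$ and $r\in\Hom(\cP_-,\cP_+)$ extracted from the adjunctions $(j_!,j^*)$ and $(i^*,i_*),$ and the central element $m:=1-(\ell r+r\ell)$ corresponds to the monodromy of the local system $j^*\cP_+$ around the origin. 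The invertibility of this monodromy on the sheaf side is precisely what is encoded on the algebra side by the localization $A^\Bet_1 = P_\CC(Q_1')[m^{-1}].$ With the identification $\End(\cP_+\oplus\cP_-)\simeq A^\Bet_1$ in hand, \Cref{eq:betti-is-sheaves-basic} follows formally from the standard recognition theorem identifying a compactly generated stable category with modules over the endomorphism algebra of a compact generator.

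The t-structure comparison is a consequence of the choice of generators: since $\cP_\pm$ lie in the perverse heart and generate it, the heart of $\Sh_{\LL_1}(\CC)$ transports to $\Mod_{A^\Bet_1}^\heartsuit$ under the equivalence. For equivariance, the scaling action of $\CC^\times$ on $\CC$ preserves the stratification and acts on the loop around $0$ by the standard character, which translates directly into the $\CC^\times$-action on $m$ given by \Cref{eq:betti-action-on-algebra}; in particular, the weight of $m$ under this action matches that of the monodromy. The main technical obstacle I anticipate is the sign and orientation bookkeeping required to match the precise formula $m=1-(\ell r+r\ell)$ with the monodromy on the nose, which amounts to fixing a basis for the rank-one link local system on the circle around $0$ and tracking the adjunction units and counits defining $\ell$ and $r$ with that choice.
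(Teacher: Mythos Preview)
Your strategy matches the paper's in broad outline: identify a pair of compact generators corresponding to the idempotents $e_\pm$, compute their endomorphism algebra, and recognize it as $A_1^\Bet$. However, there are two points where your argument needs tightening.

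\textbf{Choice of generators.} You take $\cP_\pm$ to be the projective covers of the simples \emph{in the perverse heart}. But to apply the recognition theorem you cite, you need $\cP_+\oplus\cP_-$ to be a compact generator of the \emph{stable} category $\Sh_{\LL_1}(\CC)$, and this does not follow formally from being a projective generator of the heart: one must know that $\Sh_{\LL_1}(\CC)$ is the derived category of its perverse heart, which is exactly the nontrivial content here. The paper handles this by going in the other direction: it takes the objects $P_\Psi\simeq j_!(\exp_*\underline{\CC})$ and $P_\Phi$ (its Fourier transform) corepresenting the nearby and vanishing cycle functors, which are \emph{a priori} compact generators of the stable category, and then checks that they are perverse. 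This is a cleaner logical order; your projective covers turn out to be these same objects, but you should name them concretely and argue from the microstalk side.

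\textbf{Equivariance.} Your description of the Betti $\CC^\times$-action is confused. The action of $\CC^\times$ on $\CC$ is homotopically trivial on loops, so there is no ``weight of $m$'' and the action does not act on the loop ``by the standard character.'' A Betti $\CC^\times$-action is, by definition, a $\CC[\pi_1(\CC^\times)]\simeq\CC[m^\pm]$-linear structure (via the Mellin transform), not a grading. The content of the compatibility statement is that the central element $m\in\CC[m^\pm]$ acts on $\Sh_{\LL_1}(\CC)$ as the monodromy operator, and the paper's one-line justification is exactly that $\End(P_\Psi)\simeq\CC[m^\pm]$ with $m$ acting by monodromy. Your intuition that ``$m$ corresponds to monodromy'' is correct, but the surrounding language about characters and weights should be dropped.
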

\begin{proof}
    The equivalence of the abelian hearts of these categories is a direct corollary of \cite{MV-elementary}*{Theorem 3.3} (described explicitly in the example following the proof there), except that the theorem there is stated with the assumption of finite-dimensionality. However, the assumption of finite-dimensional microstalks is never used in the proof, which continues to hold in the infinite-dimensional case.

    In other words, \cite{MV-elementary}*{Theorem 3.3} proves that $\Perv_{\LL_1}(\CC)\simeq \Mod_{A_1^\Betti}^\heart.$ It remains to show that $\Sh_{\LL_1}(\CC)$ is in fact the derived category of its perverse heart, for which it is sufficient to check that the objects $P_\Psi,P_\Phi$ of $\Sh_{\LL_1}$ corepresenting the ``generic stalk'' and ``generic microstalk over 0'' (i.e., nearby and vanishing cycles) functors are in fact perverse. 
    This is the case, since $P_\Psi\simeq j_!(\exp_*{\ul \CC}),$ 
    where $\exp:\CC\to \CC^\times$ is the universal covering map and $j:\CC^\times\to \CC$ is the inclusion, and $P_\Phi$ can be obtained from $P_\Psi$ by Fourier transform.

    Finally, the statement about Betti $\CC^\times$-actions follows from the identification of the monodromy on $\End(P_\Psi)\sim \CC[m^\pm]$ as multiplication by $m.$
\end{proof}

We also want to (micro)localize the above equivalence. Let $\LL_+ = \CC \setminus \{0\}, \LL_- = T^*_0\CC\setminus \{0\}\subset \LL_1$ be the open subsets obtained by deleting one or the other component of $\LL_1.$
\begin{corollary}
    The following vertical maps are an equivalence of diagrams of categories, respecting Betti $\CC^\times$-actions and t-structures:
    \[
    \begin{tikzcd}
        \mush_{\LL_1}(\LL_-)\isoarrow{d}&
        \ar[l]\ar[r] \mush_{\LL_1}(\LL_1)\isoarrow{d} &
        \mush_{\LL_1}(\LL_+)\isoarrow{d}\\
        \Mod_{e_-A_1^\Betti e_-} & \ar[r]\ar[l] \Mod_{A_1^\Betti} & \Mod_{e_+A_1^\Betti e_+}
    \end{tikzcd}
    \]
\end{corollary}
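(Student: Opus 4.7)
The plan is to reduce each vertical equivalence to a purely algebraic localization of $\Mod_{A_1^\Betti}$, exploiting \Cref{thm:perverse-description-1d}. The middle vertical arrow is essentially immediate: since the singular support condition $\LL_1$ already accounts for every nonzero sheaf appearing in $\Sh_{\LL_1}(\CC)$, the presheaf formula $\mush^{pre}_{\LL_1}(\LL_1) = \Sh_{\LL_1}(\CC)/\{F : SS(F)\cap \LL_1 = \emptyset\}$ only localizes objects with empty singular support, i.e.\ the zero object, so the quotient map is an equivalence and composes with the theorem to give $\mush_{\LL_1}(\LL_1) \simeq \Mod_{A_1^\Betti}$.

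For the side columns, I would use the general principle that for an open $U \subset \LL_1$, the category $\mush_{\LL_1}(U)$ is a Verdier localization of $\mush_{\LL_1}(\LL_1)$ at the full subcategory of objects microsupported in $\LL_1\setminus U$. Applied to $U = \LL_+$, this kills objects whose singular support lies in $T^*_0\CC$. Under the equivalence of \Cref{thm:perverse-description-1d}, such objects correspond exactly to $A_1^\Betti$-modules $M$ with $e_+M = 0$: indeed, the proof of \Cref{thm:perverse-description-1d} identifies the corepresenter $P_\Psi$ of nearby cycles with the projective $A_1^\Betti e_+$, so microsupport disjoint from the generic locus of the zero section is equivalent to vanishing of $\hom(A_1^\Betti e_+, M) = e_+M$. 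By \Cref{proposition:ff-recollement} (which at this level is classical), the Verdier quotient of $\Mod_{A_1^\Betti}$ by $\{M : e_+M = 0\}$ is precisely $\Mod_{e_+A_1^\Betti e_+}$ via the localization $M \mapsto e_+M$, which is the bottom horizontal map. Swapping $+ \leftrightarrow -$ handles $\LL_-$, and commutativity of the squares is built into the construction since both rows are the respective localizations.

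It remains to check compatibility with the Betti $\CC^\times$-action and with the perverse t-structure. The $\CC^\times$-equivariance passes to the quotients because both the subcategory being localized and the localization functor are $\CC^\times$-equivariant (the idempotents $e_\pm$ are central for the $\CC^\times$-action described by \Cref{eq:betti-action-on-algebra}). For the t-structures, I would invoke \cite{perverse-microsheaves}: the microlocal perverse t-structure on $\mush_{\LL_1}(U)$ is characterized by t-exactness of restriction from $\mush_{\LL_1}(\LL_1)$, which matches the fact that the standard t-structure on $\Mod_{e_\pm A_1^\Betti e_\pm}$ is inherited via the t-exact localization $M \mapsto e_\pm M$ from $\Mod_{A_1^\Betti}$.

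The main (minor) subtlety is pinning down the microsupport-to-idempotent dictionary in the correct direction, i.e.\ confirming that microsupport disjoint from the generic zero section corresponds to $e_+M = 0$ rather than the opposite; but this follows from the explicit identification of $P_\Psi$ as corepresenting nearby cycles in the proof of \Cref{thm:perverse-description-1d}, together with the observation that nearby cycles detect exactly those objects whose restriction to $\LL_+$ is nonzero as a microsheaf.
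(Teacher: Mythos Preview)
Your proposal is correct and follows essentially the same approach the paper has in mind: the corollary is stated without proof as the evident microlocalization of \Cref{thm:perverse-description-1d}, and your argument via Verdier localization at the objects microsupported in the complement, together with the recollement identifying this quotient with $\Mod_{e_\pm A_1^\Betti e_\pm}$, is precisely the natural way to fill in the details. Your identification of the idempotent--microstalk dictionary is also correct, matching the paper's convention that $\LL^+$ is the zero section and $\LL^-$ the cotangent fiber.
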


By taking tensor products, we obtain a version of the above in $n$ dimensions, which we may state as follows. 
\begin{definition}\label{def:pm1-poset}
    Equip $\{1,+,-\}$ with the poset structure $-\gets 1\to +,$ and write $\bP:=\{1,+,-\}^n$ for the product poset.
    For $\alpha\in \{1, +, -\}^n,$ let $\LL_\alpha := \prod_{i=1}^n\LL_{\alpha_i}\subset (T^*\CC)^n\simeq T^*\CC^n$ be the product Lagrangian.
    \end{definition}
Observe that $\LL_{1^n} = \LL_n,$ and that there is an open embedding $\LL_{\alpha'}\subset \LL_\alpha$ whenever $\alpha'$ is obtained from $\alpha$ by changing some $1$'s to $\pm$'s. As $\alpha$ varies, we thus obtain a $\bP$-diagram of categories $\alpha\mapsto\mush_{\LL_n}(\LL_\alpha),$ with the evident restriction maps.
Similarly, let $e_1=1, e_+,e_-$ be the idempotents in $A_1^\Betti,$ and for $\alpha\in \{1,+,-\}^n,$ let $e_\alpha = \otimes_i e_{\alpha_i}\in (A_1^\Betti)^{\otimes n}\simeq A_n^\Betti.$ Then we get another $\bP$-diagram of categories $\alpha\mapsto \Mod_{e_\alpha A_n^\Betti e_\alpha}.$
 %   
%    Observe that for $\alpha\to \alpha'$ in $\bP,$ we have restriction maps $\mush_{\LL_n}(\LL_\alpha)\to \mush_{\LL_n}(\LL_{\alpha'})$ and $\Mod_{e_\alpha A_n^\Betti e_\alpha}\to \Mod_{e_\alpha' A_n^\Betti e_\alpha'},$ giving a pair of $\bP$-shaped diagrams.
\begin{corollary}\label{cor:musheaves-ndimensions}
    %For $\alpha\in \{1,+,-\}^n,$ there are equivalences
    There are equivalences of $\bP$-shaped diagrams of Betti $(\CC^\times)^n$-categories with t-structures, given by equivalences
    \begin{equation}\label{eq:betti-is-sheaves-n}
    \mush_{\LL_n}(\LL_\alpha)\simeq \Mod_{e_\alpha A_n^\Betti e_\alpha}.
    \end{equation}
    commuting with the restriction maps.
    %commuting with the evident restriction maps, Betti $(\CC^\times)^n$ actions, and t-structures.
\end{corollary}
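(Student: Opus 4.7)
The plan is to deduce \Cref{cor:musheaves-ndimensions} from \Cref{thm:perverse-description-1d} and its localization by iterated application of a K\"unneth formula for microsheaves on products. First, observe that the Lagrangian $\LL_n \subset T^*\CC^n$ is by definition the product $\prod_i \LL_1$, and likewise each $\LL_\alpha = \prod_i \LL_{\alpha_i}$ is a product of the open subsets $\LL_{\alpha_i}\subset \LL_1$. Under the product decomposition $T^*\CC^n \simeq \prod_i T^*\CC$, the external tensor product of sheaves induces an equivalence
\begin{equation*}
\Sh_{\LL_n}(\CC^n)\simeq \bigotimes_{i=1}^n \Sh_{\LL_1}(\CC),
\end{equation*}
and, passing to quotients by objects with singular support disjoint from $\LL_\alpha$, an analogous equivalence for microsheaves on each open subset. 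This is a standard consequence of the K\"unneth formula for (micro)sheaves on products; in $\PrLRomega$ the relevant tensor product is the Lurie tensor product, which commutes with the localizations defining microsheaves.

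Next, I would combine this K\"unneth equivalence with the localization corollary of \Cref{thm:perverse-description-1d}, which identifies each factor $\mush_{\LL_1}(\LL_{\alpha_i})$ with $\Mod_{e_{\alpha_i}A_1^\Betti e_{\alpha_i}}$. Since the tensor product (in $\PrL$) of module categories over algebras is given by modules over the tensor product algebra, we compute
\begin{equation*}
\bigotimes_{i=1}^n \Mod_{e_{\alpha_i}A_1^\Betti e_{\alpha_i}} \simeq \Mod_{\bigotimes_i e_{\alpha_i}A_1^\Betti e_{\alpha_i}} \simeq \Mod_{e_\alpha A_n^\Betti e_\alpha},
\end{equation*}
where the last equality uses the definition $A_n^\Betti = (A_1^\Betti)^{\otimes n}$ and $e_\alpha=\otimes_i e_{\alpha_i}$. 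The restriction maps in the $\bP$-diagram are the open restriction maps for microsheaves on one side, and on the other side the natural maps induced by the inclusions of idempotents $e_{\alpha'}\leq e_\alpha$ when $\alpha'$ refines $\alpha$; both arise from the respective symmetric monoidal structures, so the equivalence is automatically compatible with the $\bP$-shaped diagram structure.

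Finally, I would verify compatibility with the Betti $(\CC^\times)^n$-actions and t-structures. The $i$-th factor $\CC^\times$ acts on the $i$-th tensor factor via the action described in \Cref{thm:perverse-description-1d}; since the Betti $\CC^\times$-action is encoded by a $\Loc(\CC^\times)$-module structure, compatibility is preserved under $\otimes$. For t-structures, the perverse t-structure on $\Sh_{\LL_n}(\CC^n)$ is known (and can be characterized microlocally, see \cite{perverse-microsheaves}) to be the tensor product of the perverse t-structures on the factors, once one accounts for the shifts built into the perversity convention; this matches the standard tensor product t-structure on $\Mod_{A_n^\Betti}$ under the factorwise identifications. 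The main subtlety worth checking is this last point: that the ``product'' t-structure arising from the K\"unneth decomposition really is the perverse t-structure on $\mush_{\LL_n}(\LL_\alpha)$. This reduces to the fact that the external product of perverse sheaves (with appropriate shifts) is perverse, which is classical for the constructible setting along toric strata; combined with \Cref{thm:perverse-description-1d}, which identifies the one-dimensional perverse heart with $\Mod^\heart_{A_1^\Betti}$, the general case follows.
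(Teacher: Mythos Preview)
Your proposal is correct and follows essentially the same approach as the paper: the paper simply states ``By taking tensor products, we obtain a version of the above in $n$ dimensions'' and records the corollary without further proof, relying on the K\"unneth decomposition $\LL_\alpha = \prod_i \LL_{\alpha_i}$ together with the one-dimensional case. Your write-up spells out this K\"unneth argument in more detail than the paper does, including the compatibility checks for the $\bP$-diagram, Betti actions, and t-structures, but the underlying idea is the same.
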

%\begin{proof}
%    This follows from \Cref{thm:perverse-description-1d} by taking tensor products.
%\end{proof}

%The action of $G\subset D=(\CC^\times)^n$ on $\CC^n$ induces an action of the monoidal category $\Loc(G)\simeq \Coh(G^\vee)$ on $\Sh_{\LL_n}(\CC^n),$ giving a $\CC[G^\vee]$-linear structure on the category $\Sh_{\LL_n}(\CC^n).$ Since the equivalence \Cref{eq:betti-is-sheaves-basic} is constructed by realizing $A_1^\Betti$ as the endomorphism algebra of an object of $\Sh_{\LL_n}(\CC^n)$ which admits a $G$-equivariant structure, the $\CC[G^\vee]$-linear structure on $\Sh_{\LL_n}(\CC^n)$ induces a Betti $G$-algebra structure on $A_n^\Betti.$

%\begin{lemma}
%    The Betti $G$-action on $A_n^\Betti$ induced from \Cref{eq:betti-is-sheaves-n} and the $G$-action on $\CC^n$ agrees with the Betti $G$-algebra structure of \Cref{eq:betti-action-on-algebra}.
%\end{lemma}
%\begin{proof}
%    
%\end{proof}
%
\begin{corollary}\label{cor:eqvt-musheaves-ndimensions}
    Let $G\subset (\CC^\times)^n$ be a subtorus. Then 
    %for $\alpha$ as above,
    there are equivalences of $\bP$-shaped diagrams of categories with t-structures
    %there are equivalences 
    $\mush_{\LL_n}^G(\LL_\alpha)\simeq \Mod_{e_\alpha A_n^\Betti(G,-,-) e_\alpha}.$
    %commuting with the restriction maps and t-structures.
\end{corollary}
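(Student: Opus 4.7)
The plan is to deduce this from \Cref{cor:musheaves-ndimensions} by applying Betti $G$-invariants throughout. First, the inclusion $G\hookrightarrow (\CC^\times)^n$ allows us to restrict the Betti $(\CC^\times)^n$-actions in \Cref{cor:musheaves-ndimensions} to Betti $G$-actions; since the equivalences in that corollary are equivariant, we obtain equivalences $\mush_{\LL_n}(\LL_\alpha)\simeq \Mod_{e_\alpha A_n^\Betti e_\alpha}$ of Betti $G$-categories, and these are compatible with restrictions along the open embeddings $\LL_{\alpha'}\subset \LL_\alpha$ (which are $G$-equivariant, so they define morphisms of Betti $G$-categories).

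Next, I will identify the $G$-invariants on each side. On the sheaf side, since the $G$-action on $\LL_n$ preserves each open subset $\LL_\alpha$, taking Betti $G$-invariants yields by definition the category $\mush_{\LL_n}^G(\LL_\alpha)$. On the algebra side, the Mellin-transform description of Betti $G$-categories (specialized to the setting of a module category over a $\CC[G^L]$-algebra) identifies Betti $G$-invariants with the base change functor: for any algebra $R$ equipped with a Betti $G$-action via a map $\CC[G^L]\to Z(R)$, we have $(\Mod_R)^{G_\Bet}\simeq \Mod_{R\otimes_{\CC[G^L]}\CC}$. Applied to $R=e_\alpha A_n^\Betti e_\alpha$, and using that $e_\alpha$ lies in the subalgebra $\CC\subset A_n^\Betti$ (hence is a central element over $\CC[G^L]$ commuting with base change), we obtain
\[
(e_\alpha A_n^\Betti e_\alpha)\otimes_{\CC[G^L]}\CC \;\simeq\; e_\alpha\bigl(A_n^\Betti\otimes_{\CC[G^L]}\CC\bigr)e_\alpha \;=\; e_\alpha A_G^\Betti e_\alpha,
\]
which is the claimed algebra.

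For the t-structure: Betti $G$-invariants for a torus $G$ is computed by tensoring with $\CC$ over $\CC[G^L]$, and since the equivalences of \Cref{cor:musheaves-ndimensions} are t-exact before taking invariants, the t-structures on either side descend and remain identified. The $\bP$-diagram structure is likewise preserved, since the restriction maps $\mush_{\LL_n}(\LL_\alpha)\to\mush_{\LL_n}(\LL_{\alpha'})$ and $\Mod_{e_\alpha A_n^\Betti e_\alpha}\to \Mod_{e_{\alpha'} A_n^\Betti e_{\alpha'}}$ (the latter induced by $e_{\alpha'}\leq e_\alpha$) commute with the $G$-action and are matched by the equivalence at each $\alpha$.

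The main point to verify carefully is that the Betti $G$-action on $\mush_{\LL_n}(\LL_\alpha)$ really is the restriction of the ambient Betti $(\CC^\times)^n$-action, and that \Cref{prop:musheaves-from-stack} (which interprets such invariants as microsheaves on the quotient) agrees with the definition $\mush_{\LL_n}^G(-)$ given before that proposition; both facts are recorded in \Cref{sec:stacks}, so no new geometric input is required.
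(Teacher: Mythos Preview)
Your proof is correct and follows exactly the paper's approach: the paper's proof is the single sentence ``This follows from \Cref{cor:musheaves-ndimensions} by taking $G$-invariants,'' and you have simply unpacked what that entails. One small slip in wording: the idempotents $e_\alpha$ do not lie in the subalgebra $\CC\subset A_n^\Betti$ (they are not scalars); what you need, and what is true, is that they commute with the image of $\CC[G^L]$ in $A_n^\Betti$ (since that image is central), so the corner operation $R\mapsto e_\alpha R e_\alpha$ commutes with the base change $(-)\otimes_{\CC[G^L]}\CC$.
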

\begin{proof}
    This follows from \Cref{cor:musheaves-ndimensions} by taking $G$-invariants.
\end{proof}

\subsection{The de Rham category} \label{ssec:RH}
We will now describe a portion of the Riemann--Hilbert correspondence associated to the complex manifold $\CC$ with singular-support condition $\LL_1\subset T^*\CC.$
Consider the category $\Dmod^{reg}_{\LL_1}(\CC)$ of regular D-modules on $\CC$ with singular support in $\LL_1.$
\begin{theorem}[\cite{Malgrange-diffeqs}*{Th\'eor\`eme (2.1)}]\label{thm:derhamcat-basic}
    The compact objects in the category $\Dmod^{reg}_{\LL_1}(\CC)$
    %of regular D-modules on $\CC$ with singularity at 0 
    are equivalent to the category of finite-dimensional $A_1^\dR$-modules on which all eigenvalues of $vu$ have real part in the interval $[0,1).$
%    There is a fully faithful embedding
%    \[
%    \Dmod^{reg}_{\LL_1}(\CC) \hookrightarrow \Mod_{A_1^{\dR}}
%    \]
\end{theorem}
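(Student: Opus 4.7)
The plan is to deduce the theorem from the algebraic Riemann--Hilbert map established in \Cref{prop:RH-primitive}. First, I would apply the (usual, analytic) Riemann--Hilbert correspondence to identify $\Dmod^{reg}_{\LL_1}(\CC)$ with the corresponding algebraically constructible subcategory of $\Sh_{\LL_1}(\CC)$. By \Cref{thm:perverse-description-1d}, the latter is equivalent to $\Mod_{A^{\Bet}_1}$ (exchanging the natural and perverse t-structures). Passing to compact objects on both sides reduces the claim to identifying finite-dimensional $A^{\Bet}_1$-modules with finite-dimensional $A^{\dR}_1$-modules on which $vu$ has eigenvalues of real part in $[0,1)$.

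The functor from $A^{\dR}_1$-modules to $A^{\Bet}_1$-modules is the easier direction: given $(V_+, V_-, u, v)$, I set $\ell := u$ and $r := \frac{1 - \exp(2\pi i\, vu)}{vu}\cdot v$. The power series $\frac{1-\exp(2\pi i z)}{z}$ is entire, hence terminates on any finite-dimensional module, and a direct computation yields $m = 1 - (\ell r + r\ell) = \exp(-2\pi i\, vu)$, which is automatically invertible. This direction does not require the eigenvalue condition.

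The eigenvalue condition enters in the inverse functor. Given a finite-dimensional $A^{\Bet}_1$-module $(V_+, V_-, \ell, r)$, I would decompose $V_\pm$ by generalized eigenspaces for the invertible central element $m$, and use the unique branch of logarithm with eigenvalues in the strip $\Re \in [0,1)$ to define a canonical endomorphism $\theta$ of $V_\pm$; then set $u := \ell$ and $v := \theta\,(1-m)^{-1} r$, which is well-defined because $z/(1-\exp(-2\pi i z))$ extends to an entire function. By construction $vu = \theta$ has eigenvalues in the prescribed strip, and this choice of branch is precisely Deligne's normalization for the canonical extension of a regular local system.

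The main obstacle is verifying that these two functors are mutually inverse, which amounts to a direct computation with the two entire functions $\frac{1 - \exp(2\pi i z)}{z}$ and $z/(1-\exp(-2\pi i z))$. This is essentially a translation of Malgrange's argument from \cite{Malgrange-diffeqs}*{II.3.2} into the quiver-algebra language; the main subtlety will be keeping track of signs on $V_-$ versus $V_+$, and ensuring that the identification of the fundamental domain $[0,1)$ matches the Deligne convention on both the nearby and vanishing cycles sides.
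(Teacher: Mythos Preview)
The paper does not give a proof of this theorem; it is cited directly from Malgrange. Your route is different from Malgrange's, and it contains a genuine gap.

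The gap is in the step ``Passing to compact objects on both sides reduces the claim to identifying finite-dimensional $A^{\Bet}_1$-modules with finite-dimensional $A^{\dR}_1$-modules.'' The compact objects of $\Sh_{\LL_1}(\CC)\simeq\Mod_{A_1^\Bet}$ are the \emph{perfect} $A_1^\Bet$-modules, not the finite-dimensional ones: $A_1^\Bet$ is infinite-dimensional over $\CC$, so already its regular representation is a compact object that is not finite-dimensional. The paper makes exactly this point in the paragraph following the theorem (``a perfect module over $A_1^\Betti$ may not be finite-dimensional''). What is actually true is that coherent regular holonomic $\cD$-modules go, under analytic Riemann--Hilbert, to \emph{constructible} perverse sheaves (finite-dimensional stalks), and those correspond to finite-dimensional $A_1^\Bet$-modules under \Cref{thm:perverse-description-1d}. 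With that correction the argument can be made to run, but you are then using the full analytic Riemann--Hilbert correspondence as a black box in order to prove a statement (Malgrange's Th\'eor\`eme~(2.1)) that is logically prior to the quiver form of RH (Malgrange's Th\'eor\`eme~(3.1)) which the paper cites two sentences later.

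Malgrange's own argument is direct and never passes through the Betti side: one constructs the functor $\Dmod^{reg}_{\LL_1}(\CC)^c \to \Modfd_{A_1^\dR}$ by taking the $V$-filtration (Deligne's canonical lattice) at the origin, reading off $V_\pm$ as associated graded pieces, and letting $u,v$ be the maps induced by $z$ and $\partial_z$; the eigenvalue normalization on $vu$ is precisely the choice of canonical lattice. Your explicit formulas for $\ell$ and $r$ in terms of $u,v$ are correct and are indeed Malgrange's, but they are the content of \Cref{prop:RH-primitive} (equivalently Malgrange~(3.1)), not of the theorem you are asked to prove.
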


%Perverse sheaves on $\CC$ have a similar description:
%\begin{theorem}\label{thm:betticat-basic}
%    There is an equivalence of categories $\Perv(\CC,0)\simeq \Perf_{A_1^\Betti}$ between perverse sheaves on $\CC$ with singularity at 0 and the category of perfect modules over $A_1^\Betti.$
%\end{theorem}
%
The respective categories of perverse sheaves and D-modules on $\CC,$ even after imposing the singular support condition $\LL_1,$ are not equivalent: a perfect module over $A_1^\Betti$ may not be finite-dimensional (as indeed the regular representation of the infinite-dimensional $\CC$-algebra $A_1^\Betti$ is not). The Riemann--Hilbert correspondence, described in \cite{Malgrange-diffeqs}*{Th\'eor\`eme (3.1)}, gives an equivalence between the category described in \Cref{thm:derhamcat-basic} and the category $\Modfd_{A_1^\Betti}$ of finite-dimensional $A_1^\Betti$-modules.
%As reviewed in \cite{BK2016}*{Section 2B} the category $\mathcal{D}\text{-Mod}(\CC,0)$ is equivalent to the category of finite dimensional $Q_1$-modules where the eigenvalues for $lr$ are constrained to lie in a fixed fundamental domain $[0,1) + i \RR$. When $d$ acts nilpotently we get modules over the completion $\hat{A}^{\dR}_1$ of $A^{\dR}_1$ about $d = 0$.  

%Let $A^{\Betti}_1$ be obtained from the path algebra $P(Q_1)$ by inverting $m = 1-d$. In particular $\CC[m^{\pm}]$ is contained in the center. The category $\text{Perv}(\CC,0)$ is equivalent to the category of finite dimensional $A_{\Betti}$-modules and asking for unipotent monodromy corresponds to completing about $m = 1$. (Recall that the monodromy $m$ being unipotent means that $(m-1)^N=0$ for some $N$.)

%In fact, for our purposes it will be sufficient to write down a form of the Riemann--Hilbert correspondence for a pair of yet smaller subcategories: namely, on the Betti side we will study the category $\Perv(\CC,0)_\unip$ of perverse sheaves with unipotent monodromy, and on the de Rham side the category $\Dmod^{reg}(\CC,0)_{\nilp}$ of D-modules on which the Euler operator $vu$ acts nilpotently.

The de Rham category can be described more simply if we pass to monodromic modules. In our language, one of the main calculations of \cite{MvB} is the following:
\begin{theorem}[\cite{MvB}*{Theorem 6.3}]\label{thm:MvB}
There is an equivalence $\Mod_{A^{\dR}_n}^{\fd\sf{-mon}} \simeq \Dmod(\CC^n)^{D_{\dR} \sf{-mon}}$. 
\end{theorem}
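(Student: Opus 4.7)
The plan is to first establish the equivalence in the one-dimensional case, then extend to $n$ dimensions via a K\"unneth-type argument. For $n=1$: any $D_{\dR}$-monodromic D-module on $\CC$ is constructible for the stratification $\{0\}\subset\CC$, hence has characteristic variety in $\LL_1=\CC\cup T^*_0\CC$. For such an $M$, the nearby and vanishing cycle functors produce vector spaces $\Psi(M), \Phi(M)$ equipped with natural transformations $\can:\Psi\to\Phi$ and $\var:\Phi\to\Psi$ satisfying $\var\circ\can=\id - T$ where $T$ is the monodromy; setting $V_+=\Psi(M)$, $V_-=\Phi(M)$, $u=\can$, $v=\var$ gives an $A^{\dR}_1$-module in which $vu$ acts as (a multiple of) the logarithm of the monodromy on $V_+$.

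To verify this functor is an equivalence, I would proceed in two stages. First, by \Cref{thm:derhamcat-basic}, the category of compact objects in $\Dmod^{reg}_{\LL_1}(\CC)$ is equivalent to the subcategory of $\Modfd_{A^{\dR}_1}$ on which the eigenvalues of $vu$ have real part in $[0,1)$. Under this equivalence, the geometric $D_{\dR}$-monodromic condition corresponds to the monodromy $T=\exp(2\pi i\, vu)$ being unipotent, i.e.\ to $vu$ (and hence $d=uv+vu$) acting locally nilpotently --- which is the algebraic $\fd$-monodromic condition. Second, passing to ind-completions on both sides and using that the formula \Cref{eq:RH} defines a compatible family of isomorphisms $A^{\Bet}_1/\cI^{k+1}\simeq A^{\dR}_1/\cJ^{k+1}$, one extends the equivalence from compacts to the full monodromic categories.

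For general $n$, both sides factor as tensor products: by construction $A^{\dR}_n=(A^{\dR}_1)^{\otimes n}$ and $\fd$-monodromic modules decompose accordingly; on the geometric side, $\Dmod(\CC^n)^{D_{\dR}\mon}\simeq \bigotimes_{i=1}^{n}\Dmod(\CC)^{\CC^\times_{i,\dR}\mon}$ by an external-tensor-product argument (valid because monodromicity provides the needed constructibility/finiteness). Tensoring $n$ copies of the one-dimensional equivalence produces the claim. The main obstacle I anticipate is identifying the two notions of monodromicity at the non-compact level and checking that the Mellin/exponential map $m_i\mapsto \exp(2\pi i\, d_i)$ implementing \Cref{eq:RH} really intertwines the residual $D$-actions used to cut out monodromicity on both sides; one must verify that nearby/vanishing cycles commute with the relevant ind-completions and are exact for the perverse t-structure, so that the functor extends to a t-exact equivalence and not merely a correspondence of hearts.
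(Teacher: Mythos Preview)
The paper does not prove this statement: it is stated as \cite{MvB}*{Theorem 6.3} and simply cited as input, so there is no ``paper's own proof'' to compare against. Your sketch is a reasonable reconstruction of how such a result is established, and the $n=1$ strategy via nearby/vanishing cycles, together with the identification ``$D_{\dR}$-monodromic $\Leftrightarrow$ unipotent monodromy $\Leftrightarrow$ $d=uv+vu$ locally nilpotent $\Leftrightarrow$ $\fd$-monodromic,'' is correct in spirit and matches the way the paper uses \Cref{thm:derhamcat-basic} and \Cref{prop:RH-primitive}.

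Two places deserve more care. First, your K\"unneth step is asserted rather than argued: the equivalence $\Dmod(\CC^n)^{D_{\dR}\mon}\simeq \bigotimes_i \Dmod(\CC)^{(\CC^\times_i)_{\dR}\mon}$ is not formal, since ``monodromic'' is defined via a formal completion (\Cref{prop:completion}, \Cref{cor:monfact}) and one must check that this completion commutes with the external tensor product; the paper sidesteps this by working with the Mellin-transform description and the explicit algebras $A^{\dR}_n/\cJ^{k+1}$, where the tensor decomposition is built in by construction. Second, your passage from compact objects to the full monodromic category is not simply ``ind-completion'': the monodromic category is a colimit $\varinjlim_k \Mod_{A^{\dR}_n/\cJ^{k+1}}$ (cf.\ the proof of \Cref{prop:RH-primitive}), not the ind-completion of the finite-dimensional monodromic modules, and one must verify that the nearby/vanishing cycle functors are compatible with this filtered colimit structure, not merely with passage to $\Ind$. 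Neither of these is a fatal gap, but both require the sort of bookkeeping that the cited reference presumably carries out.
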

Using \Cref{thm:MvB} and \Cref{thm:derhamcat-basic}, we can now describe the geometric origin of the Riemann--Hilbert isomorphism of \Cref{prop:RH-primitive}.

\begin{corollary}
    There is an equivalence of categories
    \[
    \Sh_\LL(\CC^n)^{D_\Betti{\sf -mon}}\simeq\Dmod(\CC^n)^{D_\dR\sf{-mon}},
    \]
    given by the map \Cref{eq:RH}.
\end{corollary}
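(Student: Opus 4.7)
The plan is to chain together three equivalences already established in the excerpt. First, by \Cref{cor:musheaves-ndimensions} applied to $\alpha = (1,\ldots,1)$, there is an equivalence
\[
\Sh_{\LL}(\CC^n) \simeq \mush_{\LL_n}(\LL_n) \simeq \Mod_{A_n^\Betti}
\]
compatible with the Betti $D = (\CC^\times)^n$-action. Passing to $D_\Betti$-monodromic objects on both sides gives
\[
\Sh_{\LL}(\CC^n)^{D_\Betti\text{-}\sf mon} \simeq \Mod_{A_n^\Betti}^{D_\Betti\text{-}\sf mon}.
\]

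Next, I would apply the algebraic Riemann--Hilbert equivalence of \Cref{prop:RH-primitive},
\[
RH: \Mod_{A_n^\Betti}^{D_\Betti\text{-}\sf mon} \xrightarrow{\sim} \Mod_{A_n^\dR}^{\fd\text{-}\sf mon},
\]
which is defined at the level of truncated quotients $A_n^\Betti/\cI^{k+1} \to A_n^\dR/\cJ^{k+1}$ by Malgrange's formulas \Cref{eq:RH}. Finally, \Cref{thm:MvB} provides
\[
\Mod_{A_n^\dR}^{\fd\text{-}\sf mon} \simeq \Dmod(\CC^n)^{D_\dR\text{-}\sf mon}.
\]
Composing these three equivalences produces the desired identification.

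The only non-formal ingredient is to verify that this composite genuinely implements the map (RH) on the geometric categories, rather than differing from it by an outer automorphism. This is a matter of checking that the algebraic generators $\ell_i, r_i \in A_n^\Betti$ and $u_i, v_i \in A_n^\dR$ correspond, under the quiver descriptions of \Cref{thm:perverse-description-1d} and \Cref{thm:MvB}, to the same pair of cospecialization/specialization (equivalently, $\Psi$--$\Phi$) morphisms at the coordinate strata of $\CC^n$. Both identifications are by construction compatible with the residual $D$-monodromy data, and the compatibility with the Malgrange formula $r_i \mapsto \tfrac{1-\exp(2\pi i d_i)}{d_i} v_i$ is the classical Riemann--Hilbert computation for the nodal stratification of $\CC$, promoted to $n$ dimensions via the Künneth decomposition $A_n^{\Betti} = (A_1^\Betti)^{\otimes n}$ and $A_n^{\dR} = (A_1^{\dR})^{\otimes n}$.

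The main potential obstacle is not the existence of the equivalence, which is immediate from the chain of references, but rather the bookkeeping needed to ensure that the three equivalences are compatible with the $D$-actions at each stage, so that the monodromic conditions on either end correspond. This is guaranteed by the explicit functoriality of the Mellin-transform equivalence (Betti side) and its de Rham analogue, applied to the character lattice identifications $\CC[D^L] \cong \CC[m_1^\pm,\ldots,m_n^\pm]$ and $\CC[\fd^L] \cong \CC[d_1,\ldots,d_n]$ matched by $m_i = \exp(2\pi i d_i)$.
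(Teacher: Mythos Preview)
Your proposal is correct and matches the paper's approach. The paper does not spell out a proof for this corollary; it is stated immediately after the sentence ``Using \Cref{thm:MvB} and \Cref{thm:derhamcat-basic}, we can now describe the geometric origin of the Riemann--Hilbert isomorphism of \Cref{prop:RH-primitive},'' and your three-step chain --- the quiver description $\Sh_\LL(\CC^n)\simeq\Mod_{A_n^\Bet}$ from \Cref{thm:perverse-description-1d}/\Cref{cor:musheaves-ndimensions}, the algebraic $\RH$ of \Cref{prop:RH-primitive}, and the identification $\Mod_{A_n^\dR}^{\fd\text{-}\mon}\simeq\Dmod(\CC^n)^{D_\dR\text{-}\mon}$ of \Cref{thm:MvB} --- is exactly what that sentence intends.
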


\section{Betti category $\cO$}
\label{section:kirwan}

In this section, we describe the category $\mush_{\Oskel}(\Oskel)$ of microlocal sheaves on the category $\cO$ skeleton $\Oskel$ of the hypertoric variety $\htvar_G(\FI).$ We will begin by describing the passage from the Lagrangian 
%$\oLL=\LL/G$ 
%$\LL_G(-,-)$
$\LL=\LL_n$
to the extended core
%$\oLL_\FI:=(\LL- \LL')/G$
$\LL_G(\FI,-)$ of the hypertoric variety $\fX_G(\FI)$;
afterward, we will pass from $\LL_G(\FI,-)$ to the category $\cO$ skeleton $\Oskel.$
%we will see that a version of categorical Kirwan surjectivity holds in this setting.

%{\color{red} (Make sure the above notation for Lagrangians is defined in Section 2.}
For the reader's convenience, we recall the following notation from \Cref{sec:hyperplane}.
\begin{notation}\text{ }
    \begin{itemize}
        \item  $\LL_G=\LL_G(-,-)$ denotes the stacky Lagrangian $\LL_n/G\subset T^*(\CC^n/G).$
        \item $\Lstab=(\LL_n\cap\mu^{-1}(\FI,0,0))/G\subset \htvar_G(\FI)$ is the extended core of $\htvar_G(\FI)$; equivalently, $\Lstab=\htvar_G(\FI)\cap \LL_G\subset \LL_G$ is the $\FI$-stable locus in $\LL_G.$
        \item The category $\cO$ skeleton $\Oskel\subset \LL_G(\FI,-)$ is the subset which remains bounded under the action of $\CC^\times_m.$
    \end{itemize}
\end{notation}

\subsection{Stable microrestriction}

We now begin to study the category of microsheaves on the Lagrangian $\LL_G(\FI,-)\subset \htvar_G(\FI).$ Our main tool will be the embedding of $\htvar_G(\FI)$ in the stack $T^*(\CC^n/G),$
to which we will apply the theory of \Cref{sec:stacks}.
\begin{definition}\label{def:Oskel-polarization}
    We will write $\htpol$ for the polarization on $\htvar_G(\FI)$ induced from the embedding $\htvar_G(\FI)\subset T^*(\CC^n/G)$ by \Cref{lem:descend-polarization}.
\end{definition}

Our first goal is to describe the category $\mush_{\LL_G(\FI,-),\htpol}(\LL_G(\FI,-)),$ which we will accomplish by taking a limit over microsheaf categories on an open cover of the Lagrangian $\LL_G(\FI,-).$ 
%The calculation of the local microsheaf categories will be immediate from the results of the previous section, thanks to \Cref{prop:musheaves-from-stack}.

%From the calculations in the previous sections, we understand the category of $G$-equivariant microsheaves on $\LL=\LL_n.$
%\begin{lemma}
%    There is an equivalence $\mush^G_{\LL}(\LL)\simeq \Mod_{A_G^\Betti}.$
%\end{lemma}

%Now consider the Lagrangian $\Stab.$
%we describe the category $\mush_\LL(\LL),$ which is the global sections of a sheaf of categories $\mush_\LL$ on the skeleton $\LL.$
%By pushing forward along the real moment map for the compact torus $F_c,$ we obtain a sheaf of categories on $\ffv_\RR$, locally constant along strata of the real hyperplane arrangement $\cH(\FI).$
%As we have seen, the combinatorics of $\Stab$ are encoded in the hyperplane arrangement $\cH(\FI)$ on the affine-linear subspace $\ffv_\RR(\FI)\subset \fdv_\RR$: by \Cref{prop:moment-polytopes}, the chambers of $\cH(\FI)$ are the moment polytopes (for the action of the compact torus $F_c$) of the toric components of the skeleton $\Stab$ and their intersections.
Recall from \Cref{prop:moment-polytopes} that there is a bijection between irreducible components of $\Stab$ and chambers of the hyperplane arrangement $\cH(\FI),$ realized by a map $\mu^\RR:\Stab\to \hpspace$ whose restriction to each irreducible component of $\Stab$ is its toric moment map to the corresponding polytope in $\cH(\FI).$ The space $\hpspace,$ stratified by the hyperplane arrangement $\cH(\FI),$ has an obvious open cover indexed by strata, where to each stratum $S$ we associate the union of strata containing $S$ in their closure.

%We can describe explicitly the lift of this cover to $\Stab.$

\begin{definition}
    Let $\faces$ denote the stratum poset of $\hpspace,$ stratified by $\cH(\FI).$
\end{definition}
The poset $\faces$ indexes the aforementioned open cover of $\hpspace,$ and hence also an open cover of $\Stab$, which we now describe.
Observe that there is a canonical embedding of $\faces$ as a subposet of the poset $\bP$ defined in \Cref{def:pm1-poset}, using the natural identification of $\bP$ as the stratum poset for the coordinate hyperplane arrangement on $\RR^n$. Thus, to each $\alpha\in\faces,$ we can associate an open subset $\LL_\alpha\subset \LL,$ as described in \Cref{def:pm1-poset}. Write $\LL_{G,\alpha}:=(\mu^{-1}(\FI,0,0)\cap \LL_\alpha)/G\subset \htvar_G(\FI)$ for the induced Lagrangian subset of $\htvar_G(\FI).$

\begin{lemma}
    The $\faces$-shaped diagram of open subsets $\LL_{G,\alpha}\subset\Stab$ and their inclusions is an open cover of the Lagrangian $\Stab.$
\end{lemma}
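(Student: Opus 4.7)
The plan is to identify each $\LL_{G,\alpha}$, for $\alpha\in\faces$, with the preimage under the residual real moment map $\mu^\RR\colon\Stab\to\hpspace$ of the open star in $\hpspace$ of the corresponding stratum of $\cH(\FI)$. Once this identification is established, the covering assertion is immediate: every point $p\in\Stab$ has $\mu^\RR(p)\in\hpspace$ lying in some (unique minimal) stratum $\sigma\in\faces$, whose open star contains $\sigma$, so $p\in\LL_{G,\sigma}$. Note that surjectivity of $\mu^\RR$ onto $\hpspace$ itself follows from \Cref{prop:moment-polytopes}: the images of the components $\LL_G^\beta(\FI,-)$ for $\beta\in\cF$ are the closed polytopes $\overline{\Delta}_\beta\cap\hpspace$, and these cover $\hpspace$.

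To establish the desired identification, I would first prove the analogous ambient statement: for $\alpha\in\bP$, the open subset $\LL_\alpha\subset\LL$ is exactly the preimage under the $D_c$-moment map $\mu^\RR\colon\LL\to\fdv_\RR$ of the open star of $\alpha$ in the coordinate hyperplane arrangement on $\fdv_\RR$. By the product structure $\LL_\alpha=\prod_i\LL_{\alpha_i}$, this reduces to the one-dimensional assertion that the $\CC^\times$-moment map on $T^*\CC$ sends $\LL_+=\CC\setminus\{0\}$ onto $(0,\infty)$, sends $\LL_-=T^*_0\CC\setminus\{0\}$ onto $(-\infty,0)$, and sends $\LL_1$ onto $\RR$, each of which is immediate from the explicit formula of \Cref{ex:toric-moment-maps}. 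The embedding $\faces\hookrightarrow\bP$ is engineered so that a stratum $\sigma$ of $\cH(\FI)$ records the sign of $\sigma$ against each coordinate hyperplane, which means precisely that the open star of $\alpha\in\faces$ in $\hpspace$ equals the intersection with $\hpspace$ of the ambient open star of $\alpha\in\bP$ in $\fdv_\RR$. Intersecting with $\mu^{-1}(\FI,0,0)$ and passing to the $G_c$-quotient then transfers the identification from $\LL$ to $\Stab$, using \Cref{cor:substack-embedding} to ensure compatibility between the $D_c$-moment map on $T^*\CC^n$ and the residual $F_c$-moment map on $\Stab$.

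I do not foresee any substantive obstacle: the whole argument is a combinatorial unpacking of the compatibility between the moment-map stratifications of $T^*\CC^n$ and of $\Stab$. The compatibility with the $\faces$-diagram structure, i.e., the inclusions $\LL_{G,\alpha'}\hookrightarrow\LL_{G,\alpha}$ for $\alpha\leq\alpha'$ in $\faces$, is also automatic from the identification with preimages of open stars, since specializing to a stratum of strictly higher codimension can only shrink the open star.
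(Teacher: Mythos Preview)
Your proposal is correct and follows essentially the same approach as the paper: the paper's proof is the single sentence that $\LL_{G,\alpha}$ is precisely the preimage in $\Stab$ of the open star neighborhood of the stratum $\alpha$ in $\cH(\FI)$, from which the covering statement is immediate. You give a careful justification of that identification by reducing to the one-dimensional moment map computation, which the paper leaves implicit.
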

\begin{proof}
    The Lagrangian $\LL_{G,\alpha}$ is precisely the preimage in $\Stab$ of the open neighborhood of the stratum $\alpha$ in the hyperplane arrangement $\cH(\FI).$
\end{proof}

As $\mush_{\Stab,\htpol}$ is a sheaf of categories on $\Stab,$ the above open cover determines a limit description of its global sections:
\begin{corollary}
    There is an equivalence
    \begin{equation}\label{eq:limit-description}
    \mush_{\Stab,\htpol}(\Stab)\simeq \varinjlim_{\faces}\mush_{\Stab,\htpol}(\LL_{G,\alpha})
    \end{equation}
\end{corollary}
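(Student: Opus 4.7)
My plan is to exhibit this as a direct consequence of descent for the sheaf of categories $\mush_{\Stab,\htpol}$ along the open cover $\{\LL_{G,\alpha}\}_{\alpha\in\faces}$ established by the preceding lemma. The crucial combinatorial point I would verify first is that this cover is \emph{closed under pairwise intersections}, in the sense that
\[
\LL_{G,\alpha}\cap \LL_{G,\beta} = \LL_{G,\alpha\vee\beta}
\]
whenever the join $\alpha\vee\beta$ of the corresponding strata exists in $\faces$, and is empty otherwise. This reduces via the toric moment maps of \Cref{prop:moment-polytopes} to the analogous statement about the associated open cover of the hyperplane arrangement $\hpspace$ by star-neighborhoods of strata, which is a standard fact about face posets of hyperplane arrangements (indeed it is already implicit in the construction of the cover as ``the union of strata containing $S$ in the closure'').

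Given this closure under intersection, the \v{C}ech nerve of the cover collapses: it is equivalent, as a simplicial object, to the nerve of the poset $\faces$ itself (the higher simplices contribute nothing new because iterated intersections are already in the cover). Descent for the sheaf of categories $\mush_{\Stab,\htpol}$ therefore reduces to a (co)limit indexed by $\faces$. Since $\mush$ takes values in $\PrLRomega$, all restriction functors admit both adjoints, and one may freely pass between the limit presentation (natural from the sheaf-theoretic point of view, taking values on decreasing opens) and the colimit presentation (recorded in \Cref{eq:limit-description}, with functoriality going in the direction of corestrictions along exit paths).

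The bookkeeping obstacle I anticipate is keeping track of variance: the poset $\faces$ admits two natural orderings (by closure containment of strata, or equivalently by reverse inclusion of star-neighborhoods), and the passage between limit and colimit formulations requires a consistent choice. Beyond this formal matter, no real difficulty should arise: the corollary is essentially the sheaf axiom for $\mush_{\Stab,\htpol}$ applied to a particularly nice stratified open cover, with the poset $\faces$ playing the role of (the opposite of) the exit-path category of the stratification of $\Stab$ by the torus orbits.
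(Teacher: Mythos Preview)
Your proof is correct and matches the paper's approach: the paper records this as immediate from the sheaf property of $\mush_{\Stab,\htpol}$ together with the preceding open-cover lemma, while you spell out the (correct and useful) detail of why the \v{C}ech nerve collapses to a diagram indexed by the poset $\faces$. The $\varinjlim$ in the displayed equation is almost certainly a typo for $\varprojlim$ (the surrounding text calls it a ``limit description'' and the subsequent descent lemma for $\Mod_{e_F A_G^\Betti e_F}$ is stated with $\varprojlim$), so your careful discussion of the limit/colimit passage in $\PrLRomega$, while mathematically valid, is addressing a typographical artifact rather than a substantive point.
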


By the results of the previous section, we can reduce the above to a purely algebraic computation.
\begin{proposition}
    There are equivalences of categories
    \begin{equation}\label{eq:diagrams-final}
    \mush_{\Stab,\htpol}(\LL_{G,\alpha})\simeq \Mod_{e_\alpha A_G^\Betti e_\alpha},
    \end{equation}
    commuting with restriction maps and thus determining an equivalences of $\faces$-shaped diagrams of categories.
\end{proposition}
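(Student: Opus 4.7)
The plan is to realize each equivalence in \Cref{eq:diagrams-final} as an instance of \Cref{prop:musheaves-from-stack}, using the embedding of $\htvar_G(\FI)$ into the stack $T^*(\CC^n/G)$ from \Cref{cor:substack-embedding}. Under this embedding, $\htvar_G(\FI)$ is identified with $\tilU/G$ for $\tilU \subset (\mu^\CC)^{-1}(0)$ the $\FI$-semistable locus, on which $G$ acts freely by unimodularity. The Lagrangian $\LL_n \cap \tilU$ is a $G$-invariant conic sub-Lagrangian of $T^*\CC^n$ whose quotient is $\Stab \subset \htvar_G(\FI)$, and the polarization $\htpol$ is induced from the cotangent fiber polarization on $T^*\CC^n$ via \Cref{lem:descend-polarization}. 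Thus \Cref{prop:musheaves-from-stack} yields, functorially in a $G$-saturated open $V \subset \LL_n \cap \tilU$, an equivalence $\mush^G_{\LL_n}(V) \simeq \mush_{\Stab,\htpol}(V/G)$.

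Specializing to the open $\LL_{G,\alpha} \subset \Stab$, whose preimage in $\LL_n \cap \tilU$ is $\LL_\alpha \cap \tilU$ by $G$-invariance of $\LL_\alpha$, this produces
\[
\mush_{\Stab,\htpol}(\LL_{G,\alpha}) \simeq \mush^G_{\LL_n}(\LL_\alpha \cap \tilU).
\]
I would then argue that the inclusion $\LL_\alpha \cap \tilU \hookrightarrow \LL_\alpha$ induces an equivalence on $G$-equivariant microsheaves, by establishing the cleaner statement that $\LL_\alpha \subseteq \tilU$ whenever $\alpha \in \faces$. Given this, \Cref{cor:eqvt-musheaves-ndimensions} completes the identification of $\mush^G_{\LL_n}(\LL_\alpha)$ with $\Mod_{e_\alpha A_G^\Betti e_\alpha}$.

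To verify $\LL_\alpha \subseteq \tilU$: a point $(q,p) \in \LL_\alpha$ lies in a $D$-orbit of type $\beta \in \{1,+,-\}^n$, where $\beta$ agrees with $\alpha$ on every $\pm$-coordinate and specializes each $1$-coordinate of $\alpha$ to a value in $\{1,+,-\}$ determined by the vanishing pattern of $(q_i,p_i)$. By the Kempf--Ness theorem applied to the toric $G$-action, $(q,p)$ is $\FI$-stable if and only if the face of the coordinate arrangement on $\fdv_\RR$ with sign vector $\beta$ meets $\hpspace$, i.e.\ $\beta \in \faces$. The unimodularity hypothesis ensures that the arrangement $\cH(\FI)$ is simple, so the only hyperplanes passing through $\alpha$ are those indexed by the $1$-coordinates of $\alpha$, and these are in general position; consequently every sign vector obtained from $\alpha$ by refining its $1$'s corresponds to a non-empty face of $\hpspace$. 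Thus $\beta \in \faces$ and $(q,p)$ is $\FI$-stable.

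Compatibility of these equivalences with the restriction maps of the $\faces$-shaped diagram is automatic: \Cref{prop:musheaves-from-stack} is an equivalence of sheaves of categories on $\Stab$, and \Cref{cor:eqvt-musheaves-ndimensions} is already formulated at the level of the whole $\bP$-shaped diagram, of which the $\faces$-diagram is a sub-diagram. The main obstacle is the toric-combinatorial verification that $\LL_\alpha \subseteq \tilU$; while intuitively transparent from the moment map picture and essentially forced by unimodularity, carrying it out rigorously requires the simplicity-of-arrangement argument combined with a concrete Kempf--Ness calculation along the non-compact part of the $G$-orbit.
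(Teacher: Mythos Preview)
Your approach matches the paper's: apply \Cref{prop:musheaves-from-stack} to pass from $\mush_{\Stab,\htpol}$ to $G$-equivariant microsheaves upstairs, then read off the result from \Cref{cor:eqvt-musheaves-ndimensions}. The paper's proof simply sets $\LL(\FI,-)=\bigcup_{\alpha\in\faces}\LL_\alpha$, asserts that \Cref{prop:musheaves-from-stack} gives an equivalence of sheaves $\mush_{\LL(\FI,-)}^G\simeq \mush_{\Stab,\htpol}$ on $\Stab$, and concludes.

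What you add is an explicit check of the hypothesis the paper leaves implicit: that $\LL_\alpha\subseteq\tilU$ for each $\alpha\in\faces$, so that the preimage of $\LL_{G,\alpha}$ in $\tilU$ really is all of $\LL_\alpha$ (this is needed for \Cref{prop:musheaves-from-stack}, which requires $\widetilde\Lambda\subset\tilU$, and for \Cref{cor:eqvt-musheaves-ndimensions} to apply on the nose). Your two-step argument---first reducing stability of $(q,p)$ to membership of its refined sign vector $\beta$ in $\faces$, then using simplicity to show every refinement of $\alpha$ lies in $\faces$---is correct in outline. Two small corrections: simplicity of $\cH(\FI)$ comes from \emph{regularity} of $\FI$ (part of the category~$\cO$ datum), not from unimodularity, which instead guarantees freeness of the action; and the stability criterion ``$(q,p)$ is $\FI$-stable iff $\beta\in\faces$'' is the substantive input and would benefit from either a reference to the toric GIT literature or a short Hilbert--Mumford computation, rather than an appeal to Kempf--Ness alone.
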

\begin{proof}
    Let $\LL(\FI,-)=\bigcup_{\alpha\in \faces}\LL_\alpha.$ Then \Cref{prop:musheaves-from-stack} determines an equivalence 
    \[\mush_{\LL(\FI,-)}^G\simeq \mush_{\Stab,\htpol}\] 
    of sheaves of categories on $\Stab$ (where implicitly on the left-hand side we have used the identification of $G$-invariant open subsets of $\LL(\FI,-)$ with open subsets of $\LL_G(\FI,-)$). The identification of $\faces$-shaped diagrams \Cref{eq:diagrams-final} now follows from \Cref{cor:eqvt-musheaves-ndimensions}.
\end{proof}

%Rewriting the equivalence \Cref{eq:4.22-scrq} in the notation of this paper, we have:
The following lemma, which is joint with McBreen and Webster, asserts that the assignment $F\mapsto \Mod_{e_F A_G^\Betti e_F}$ satisfies descent. 
\begin{lemma}[{\cite{gammage2019homological}*{Lemma C.2}}]
    There is an equivalence
    \begin{equation}\label{eq:4.22}
        \varprojlim_{\faces}\Mod_{e_F A_G^\Betti e_F} \simeq \Mod_{A^{\Betti}_G(\FI, -)}.
    \end{equation}
%    where $e_\cF$ is the sum of idempotents in $\oA_n$ corresponding to all faces in the hyperplane arrangement $\cH(\FI).$
\end{lemma}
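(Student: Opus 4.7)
The strategy is to realize both sides of \Cref{eq:4.22} as global sections of presheaves of categories on the face poset $\faces$, and then reduce the equivalence to a combinatorial descent claim about the path algebra $A_G^\Betti$.

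First, I would verify that for each $F \in \faces$, the idempotent $e_F \in A_G^\Betti$ decomposes as $e_F = \sum_{\alpha \geq F,\, \alpha \in \cF} e_\alpha$, since any chamber whose closure contains a feasible face is itself feasible. This yields $e_F e_\cF = e_F = e_\cF e_F$, so $e_F A_G^\Betti e_F$ naturally sits inside $A := A_G^\Betti(\FI,-)$ as the cornering by $e_F \in A$; in particular, the right-hand side of \Cref{eq:4.22} becomes $\varprojlim_{\faces} \Mod_{e_F A e_F}$. The maximal elements of $\faces$ are exactly the chambers $\cF$, so $1 = \sum_{\alpha \in \cF} e_\alpha$ is a complete orthogonal idempotent system in $A$, and Morita theory gives $\Mod_A \simeq \Fun(\cA, \Vect)$, where $\cA$ denotes the $\CC$-linear category with object set $\cF$ and $\Hom_\cA(\alpha,\beta) = e_\beta A e_\alpha$. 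Each local category $\Mod_{e_F A e_F}$ identifies analogously with $\Fun(\cA_F, \Vect)$, where $\cA_F \subset \cA$ is the full subcategory on those chambers $\alpha$ whose closure contains $F$.

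With these identifications in hand, the content of \Cref{eq:4.22} becomes the equivalence
\[
\Fun(\cA,\Vect) \simeq \varprojlim_{F \in \faces} \Fun(\cA_F, \Vect),
\]
which I would deduce from a colimit presentation of $\cA$ as the union of its subcategories $\cA_F$ in the category of $\CC$-linear categories. On objects the claim is automatic, since every chamber is already maximal in $\faces$. On morphisms, it reduces to showing that each Hom space $e_\beta A e_\alpha$ is generated by compositions of morphisms $e_{\alpha'} A e_{\alpha''}$ between adjacent chambers $\alpha', \alpha''$ sharing a codimension-one wall (so that both lie in a common $\cA_F$), modulo relations coming from codimension-two flats where two walls cross.

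The main obstacle is precisely this combinatorial descent. Two ingredients must be checked: (i) every minimal path $p^\Bet(\alpha,\beta)$ factors as a composition of wall-crossing generators along any zigzag from $\alpha$ to $\beta$ in the adjacency graph of chambers, and (ii) all relations among such paths---including those imposed by the $G$-invariance $A_G^\Betti = A_n^\Betti \otimes_{\CC[G^L]} \CC$---are generated by relations supported at codimension-two faces. Once these are in place, the colimit formula for $\cA$ follows from the fact that $\faces$ realizes the face poset of the regular CW decomposition of $\hpspace$ induced by $\cH(\FI)$, for which \v{C}ech-type descent of presheaves of categories along the cover by open stars of faces is well-behaved. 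This combinatorial core is what is carried out in \cite{gammage2019homological}*{Appendix C}.
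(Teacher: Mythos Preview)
The paper provides no proof here; the lemma is quoted from \cite{gammage2019homological}*{Appendix C}, and the surrounding text only supplies the dictionary matching $A_G^\Betti(\FI,-)$ with the algebra $\mathscr{Q}$ and $e_F A_G^\Betti e_F$ with $\mathscr{Q}^F$ of that appendix. Your first reduction, replacing $e_F A_G^\Betti e_F$ by $e_F A e_F$ for $A=A_G^\Betti(\FI,-)$, is correct (indeed $e_F e_\cF = e_F$).

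The gap is in the second reduction. You propose to compute the colimit $\colim_F \cA_F$ in the category of ordinary $\CC$-linear categories, via generators (wall-crossing paths) and relations (at codimension-two faces). That computation would establish descent for the presheaf of \emph{abelian} hearts $F\mapsto \Mod^\heartsuit_{e_F A e_F}$, but the lemma is about the presheaf of \emph{stable} categories $F\mapsto \Mod_{e_F A e_F}$, and the passage from one to the other is precisely the nontrivial step. Concretely, what has to be checked is that the Beck--Chevalley maps
\[
e_I A e_{I\cap J}\;\otimes^{L}_{\,e_{I\cap J}A e_{I\cap J}}\;e_{I\cap J}A e_J \;\longrightarrow\; e_I A e_J
\]
are equivalences (not just surjections on $H^0$) for the idempotent pairs $(e_I,e_J)$ arising in an inductive cutting of the arrangement by one hyperplane at a time; your ingredients (i)--(ii) only address the underived tensor product. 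Equivalently, the colimit you need is in dg-categories, and for a poset of full subcategories of a discrete category this does not automatically agree with the $1$-categorical colimit---the possible discrepancy is exactly measured by those higher Tor groups. This Tor-vanishing is the substantive content of the cited appendix, and it is not captured by the generators-and-relations picture you describe.
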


\begin{remark}
    The statement of \cite[Lemma C.2]{gammage2019homological} refers to an algebra $\mathscr{Q}$ associated to a hyperplane arrangement $\cH.$ When $\cH=\cH(\FI)$ is the hyperplane arrangement defined in \Cref{defn:H(t)}, the definition of this algebra in \cite{gammage2019homological} coincides with $A^{\Betti}(G, \FI, -)$. Similarly, $\mathscr{Q}^F= e_F A^{\Betti}_G(\FI, -)e_F$. 
    %\color{blue}CommentLC: so the algebra $\mathscr{Q}$ is defined as the path algebra of a quiver with relations. If I am not mistaken, this path algebra should be literally the same as our $A^{\Betti}(G, \FI, -)$.  Or is there an issue about whether hitting with idempotents commutes with taking invariants? \color{black}
\end{remark}

\begin{corollary}\label{cor:betti-allfeasible-presentation}
    There is an equivalence of categories
    \begin{equation}\label{eq:betti-allfeasible}
    \mush_{\Lstab,\htpol}(\Lstab)\simeq \Mod_{A_G^\Betti(\FI,-)}.
    \end{equation}
\end{corollary}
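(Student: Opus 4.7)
The plan is simply to chain together three equivalences already set up in the excerpt. First, since $\mush_{\Lstab,\htpol}$ is a sheaf of categories on $\Lstab$ and the $\LL_{G,\alpha}$ for $\alpha \in \faces$ form an open cover, descent gives a presentation of global sections as a limit
\[
\mush_{\Lstab,\htpol}(\Lstab) \;\simeq\; \varprojlim_{\alpha \in \faces} \mush_{\Lstab,\htpol}(\LL_{G,\alpha}),
\]
which is (the correct form of) \Cref{eq:limit-description}. Second, the preceding proposition upgrades the pointwise identifications of \Cref{eq:diagrams-final} to an equivalence of $\faces$-shaped diagrams of categories, so the limit on the right becomes $\varprojlim_{\alpha \in \faces} \Mod_{e_\alpha A_G^\Betti e_\alpha}$. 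Third, the descent lemma \Cref{eq:4.22} of \cite{gammage2019homological} identifies this algebraic limit with $\Mod_{A_G^\Betti(\FI,-)}$.

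The bookkeeping required is essentially just to match the two $\faces$-shaped diagrams. On the geometric side, for $\alpha' \leq \alpha$ in $\faces$, the inclusion $\LL_{G,\alpha'} \hookrightarrow \LL_{G,\alpha}$ induces the restriction functor $\mush_{\Lstab,\htpol}(\LL_{G,\alpha}) \to \mush_{\Lstab,\htpol}(\LL_{G,\alpha'})$; under \Cref{eq:diagrams-final} this corresponds to the canonical idempotent-truncation functor $\Mod_{e_\alpha A_G^\Betti e_\alpha} \to \Mod_{e_{\alpha'} A_G^\Betti e_{\alpha'}}$ (via the algebra map $e_{\alpha'} A_G^\Betti e_{\alpha'} \hookrightarrow e_\alpha A_G^\Betti e_\alpha$ coming from $e_{\alpha'} = e_{\alpha'} e_\alpha e_{\alpha'}$). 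This is precisely the diagram considered in \cite{gammage2019homological}, so the two limits are comparable.

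The serious content --- namely, descent for the presheaf $F \mapsto \Mod_{e_F A_G^\Betti e_F}$ on the face poset $\faces$ --- is not something I would attempt to re-prove here; it is the main technical result of \cite[Appendix C]{gammage2019homological} and is invoked as a black box. Given that input, the argument above is a routine gluing. The only step where one might expect a subtlety is the interchange between the sheaf-theoretic limit and the algebraic limit, but because the equivalences of \Cref{eq:diagrams-final} are already asserted to be compatible with restriction maps, this interchange is automatic.
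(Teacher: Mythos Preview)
Your proposal is correct and follows exactly the paper's approach: chain the limit presentation \eqref{eq:limit-description}, the diagram equivalence \eqref{eq:diagrams-final}, and the descent lemma \eqref{eq:4.22}. Your additional bookkeeping about matching restriction maps with idempotent-truncation functors is a reasonable elaboration of what the paper leaves implicit, and your parenthetical correction of the direction of the limit in \eqref{eq:limit-description} (it should indeed be $\varprojlim$, not $\varinjlim$) is a good catch.
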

\begin{proof}
The left-hand side admits a limit description given by \Cref{eq:limit-description}, which is computed by the equivalences \Cref{eq:diagrams-final} and \Cref{eq:4.22}.
%\Cref{eq:limit-description}, \eq{dia}
%obtained by combining equivalences \Cref{eq:geometric-limit} and \Cref{eq:4.22} and the equivalence of \Cref{lem:geom-to-alg-diagram}.%,
%together with the fact that $A_G^\Betti(\FI,-)$ is obtained from $A_G^\Betti$ by imposing the idempotent $e_\cF$ on both sides.
\end{proof}
\begin{remark}
    By tracing through the above equivalences, one can see that the equivalence \Cref{eq:betti-allfeasible} is induced by an equivalence between the algebra $A_G^\Betti(\FI,-)$ and the endomorphism algebra of the objects corepresenting microstalk functors at smooth strata of the Lagrangian $\Stab.$
\end{remark}

%We thus obtain categorical Kirwan surjectivity in the hypertoric setting:
We note the following corollary of our results, which ought to go by the name of ``categorical Kirwan surjectivity'' (a version of which was proved in the de Rham setting in \cite{BPW}*{Theorem 5.31}); it witnesses the essential surjectivity of restriction from stacky microsheaves to the $\FI$-stable locus.
\begin{corollary}
    The restriction functor $\mush^G_{\LL}(\LL)\to \mush_{\Stab,\htpol}(\Stab)$ has a fully faithful left adjoint.
\end{corollary}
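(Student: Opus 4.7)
The plan is to reduce the statement to an instance of the recollement theory for idempotents established in \Cref{proposition:ff-recollement}. First, I would like to produce an algebraic model of the restriction functor in question. Applying the equivariant sheaf identification \Cref{cor:eqvt-musheaves-ndimensions} to $\alpha = 1^n$ yields an equivalence $\mush^G_\LL(\LL)\simeq \Mod_{A_G^\Betti}$, while \Cref{cor:betti-allfeasible-presentation} gives $\mush_{\Stab,\htpol}(\Stab)\simeq \Mod_{A_G^\Betti(\FI,-)} = \Mod_{e_\cF A_G^\Betti e_\cF}$, where $e_\cF = \sum_{\alpha \in \cF} e_\alpha$. Under the first equivalence, each open restriction $\mush^G_\LL(\LL)\to \mush^G_\LL(\LL_\alpha)\simeq \Mod_{e_\alpha A_G^\Betti e_\alpha}$ is the functor $M \mapsto e_\alpha M$, since both sides of \Cref{cor:eqvt-musheaves-ndimensions} are compatible with the restriction maps in the $\bP$-shaped diagram.

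Next, I would take the limit of these algebraic restriction functors over the indexing poset $\faces \subset \bP$. Since the sheaf condition \eqref{eq:limit-description} computes $\mush_{\Stab,\htpol}(\Stab)$ as $\varinjlim_\faces \mush_{\Stab,\htpol}(\LL_{G,\alpha})$, and since this equivalence is implemented by the descent statement of \cite{gammage2019homological}*{Lemma C.2}, the composite restriction functor is identified with the map
\[
\Mod_{A_G^\Betti} \longrightarrow \Mod_{e_\cF A_G^\Betti e_\cF}, \qquad M \longmapsto e_\cF M \simeq \hom_{A_G^\Betti}(e_\cF A_G^\Betti, M).
\]

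Finally, I would invoke \Cref{proposition:ff-recollement} applied to the complementary idempotent $e = 1 - e_\cF \in A_G^\Betti$ (so that $e^c A_G^\Betti e^c = e_\cF A_G^\Betti e_\cF$). The recollement diagram exhibits the functor above as $j^*$ for the right-hand side of the recollement, and the proposition provides a fully faithful left adjoint $j_! = (-)\otimes_{e_\cF A_G^\Betti e_\cF} e_\cF A_G^\Betti$. This is exactly the left adjoint sought.

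The only real subtlety in this approach is the identification in the first paragraph: one must verify that the geometric open-restriction functor between microsheaf categories corresponds, under the algebraic presentations, to multiplication by the idempotent $e_\cF$. This compatibility is built into the definitions, since the equivalence \Cref{eq:diagrams-final} is constructed as an equivalence of $\bP$-diagrams in which the structure maps are precisely the $e_\alpha$-cornering functors, and the descent isomorphism of \cite{gammage2019homological}*{Lemma C.2} is compatible with these restriction maps. So this subtlety amounts to bookkeeping rather than substantive difficulty, and once done the corollary is immediate from the recollement.
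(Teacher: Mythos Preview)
Your proof is correct and follows the same approach as the paper: identify the restriction functor algebraically as the idempotent-cornering functor $\Mod_{A_G^\Betti}\to \Mod_{e_\cF A_G^\Betti e_\cF}$, then invoke the standard recollement to obtain the fully faithful left adjoint given by tensoring with $e_\cF A_G^\Betti$. One notational slip: in \Cref{proposition:ff-recollement} the corner algebra on the right is $eAe$, so you should take $e = e_\cF$ (not $e = 1 - e_\cF$); your formula for $j_!$ is already the correct one for this choice.
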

\begin{proof}
    By \Cref{cor:eqvt-musheaves-ndimensions} and \Cref{cor:betti-allfeasible-presentation}, this restriction functor is equivalent to the functor $\Mod_{A_G^\Betti}\to \Mod_{A_G^\Betti(\FI,-)},$ which does indeed admit a fully faithful left adjoint, given by tensoring with the $A_G^\Betti-A_G^\Betti(\FI,-)$-bimodule $A_G^\Betti e_{\cF}.$
\end{proof}

\subsection{Betti category $\mathcal{O}$}
We are now ready for the computation of the Betti category $\cO.$
\begin{theorem}\label{thm:betti-category-O}
    There is an equivalence of categories
    \begin{equation}\label{eq:betti-catO}
        \mush_{\Lstab,\htpol}(\Oskel)\simeq \Mod_{A_G^\Betti(t,m)},
    \end{equation}
    intertwining the microlocal perverse t-structure with the standard t-structure.
\end{theorem}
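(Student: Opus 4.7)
The plan is to realize $\mush_{\Lstab,\htpol}(\Oskel)$ as the kernel of a localization functor on the microsheaf side, and to identify this kernel algebraically using \Cref{thm:Betti-recollement}. To set this up, I first observe that $\Oskel=\bigcup_{\alpha\in\cF\cap\cB}\LL^\alpha_G(\FI,-)$ is closed in the extended core $\Lstab$, being a finite union of closed irreducible components; write $\cU:=\Lstab\setminus\Oskel$ for its open complement. Interpreting $\mush_{\Lstab,\htpol}(\Oskel)$ as the full subcategory of $\mush_{\Lstab,\htpol}(\Lstab)$ on objects microsupported in $\Oskel$, the sheaf property of $\mush_{\Lstab,\htpol}$ produces a fiber sequence of stable categories
\[
\mush_{\Lstab,\htpol}(\Oskel)\longrightarrow \mush_{\Lstab,\htpol}(\Lstab)\longrightarrow \mush_{\Lstab,\htpol}(\cU).
\]

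I next compute the two right-hand terms. The middle is $\Mod_{A_G^\Betti(\FI,-)}$ by \Cref{cor:betti-allfeasible-presentation}. For the right, the same descent argument as in the previous subsection applies: $\cU$ is covered by the subdiagram of $\faces$ whose strata avoid $\Oskel$, and restricting the equivalences of \Cref{cor:eqvt-musheaves-ndimensions} along with the descent result \Cref{eq:4.22} to this subdiagram yields
\[
\mush_{\Lstab,\htpol}(\cU)\simeq \Mod_{e_{\cF\cap\cB^c}\,A_G^\Betti(\FI,-)\,e_{\cF\cap\cB^c}},
\]
under which the restriction functor corresponds to multiplication on the right by the idempotent $e_{\cF\cap\cB^c}$, i.e., the functor $j^*$ in the recollement of \Cref{proposition:ff-recollement}. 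That proposition then identifies the kernel of this restriction with $\Mod_{A_G^\Betti(\FI,-)/^L(A_G^\Betti(\FI,-)\,e_{\cF\cap\cB^c}\,A_G^\Betti(\FI,-))}$, the category of modules over the \emph{derived} quotient algebra.

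The final step is to replace this derived quotient with the naive quotient $A_G^\Betti(\FI,\mass)$, which is exactly the content of \Cref{thm:Betti-recollement}: the fully faithful embedding $i^\Betti_*:\Mod_{A_G^\Betti(\FI,\mass)}\hookrightarrow\Mod_{A_G^\Betti(\FI,-)}$ identifies $\Mod_{A_G^\Betti(\FI,\mass)}$ with this kernel, yielding the desired equivalence \Cref{eq:betti-catO}. For the t-structure compatibility, \Cref{cor:betti-allfeasible-presentation} already matches the perverse t-structure on the middle term with the standard one; the inclusion $i^\Betti_*$ is t-exact, sending simple $A_G^\Betti(\FI,\mass)$-modules to the simple $A_G^\Betti(\FI,-)$-modules $S^\Betti_\alpha$ for $\alpha\in\cF\cap\cB$, so the perverse t-structure on $\mush_{\Lstab,\htpol}(\Oskel)$ inherited from $\mush_{\Lstab,\htpol}(\Lstab)$ matches the standard t-structure on $\Mod_{A_G^\Betti(\FI,\mass)}$. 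The hard part of this strategy is the coincidence of the derived and naive quotients in the Betti setting, which is far from formal; this is handled in \Cref{thm:Betti-recollement} by transporting the analogous (and easier) de Rham fact (\Cref{cor:dRff}) across the algebraic Riemann--Hilbert correspondence of \Cref{prop:monodromic-Riemann-Hilbert}.
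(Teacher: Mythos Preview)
Your overall strategy coincides with the paper's: both arguments set up a recollement with $\mush_{\Lstab,\htpol}(\Oskel)$ on the left and $\Mod_{e_{\cF\cap\cB^c}A_G^\Betti(\FI,-)e_{\cF\cap\cB^c}}$ on the right, then invoke \Cref{thm:Betti-recollement} to pass from the derived quotient to the naive one. The t-structure discussion is also fine.

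There is, however, a gap in your identification of the right-hand term. You claim that $\mush_{\Lstab,\htpol}(\cU)\simeq\Mod_{e_{\cF\cap\cB^c}A_G^\Betti(\FI,-)e_{\cF\cap\cB^c}}$ by ``restricting the descent result \Cref{eq:4.22} to the subdiagram of $\faces$ whose strata avoid $\Oskel$.'' But the cited descent lemma (\cite{gammage2019homological}*{Lemma C.2}) is proved for the full face poset of a hyperplane arrangement, and the sub-poset $\{F\in\faces:\text{every chamber adjacent to }F\text{ is unbounded}\}$ is in general \emph{not} the face poset of any hyperplane arrangement. So the lemma does not apply as stated, and ``the same descent argument'' is not available without additional work.

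The paper avoids this by never computing $\mush_{\Lstab,\htpol}(\cU)$ directly. Instead it uses stop removal: $\mush_{\Lstab,\htpol}(\Oskel)$ is the Verdier quotient of $\mush_{\Lstab,\htpol}(\Lstab)$ by the subcategory generated by the corepresentatives of microstalks at smooth points of the unbounded components. Under \Cref{cor:betti-allfeasible-presentation} these corepresentatives are the projectives $e_\alpha A_G^\Betti(\FI,-)$ for $\alpha\in\cF\cap\cB^c$, whose split-closure is the image of $j_!$, yielding the recollement \eqref{eq:geometric-recollement} immediately. Your argument is easily repaired along the same lines: rather than computing the target of restriction, identify its \emph{kernel} directly. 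A microsheaf restricts to zero on $\cU$ iff all its microstalks at smooth points of unbounded components vanish; under \Cref{cor:betti-allfeasible-presentation} this reads $Me_\alpha=0$ for all $\alpha\in\cF\cap\cB^c$, i.e.\ $j^*M=0$. Thus $\mush_{\Lstab,\htpol}(\Oskel)=\ker(j^*)$, and \Cref{proposition:ff-recollement} plus \Cref{thm:Betti-recollement} finish as you intended.
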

\begin{proof}
    By stop removal, the left-hand side of \Cref{eq:betti-catO} can be computed as a quotient
    \[
    \mush_{\Lstab,\htpol}(\Oskel)\simeq \mush_{\Lstab,\htpol}(\Stab)/\cB^c,
    \]
    where $\cB^c\subset \mush_{\Lstab,\htpol}(\Stab)$ is the full subcategory generated by corepresentatives of the microstalk functors at the smooth points of $\mass$-unbounded components of $\Stab.$ Under the equivalence \Cref{eq:betti-allfeasible}, $\cB^c$ corresponds to the full subcategory of $\Mod_{A_G^\Betti(\FI,-)}$ 
    split-generated by the module $e_{\cF} A_G^\Betti e_{\cF\cap \cB^c},$
    %given by (the image under the corestriction map of) $\Mod_{e_{\cF\cap \cU} \oA_n e_{\cF\cap \cU}},$ 
    where we write $e_{\cF\cap \cB^c}$ for the idempotent corresponding to those chambers which are both $\FI$-feasible and $\mass$-unbounded.

Using the equivalence $\mush_{\Stab,\htpol}(\Stab)\simeq \Mod_{A_G^\Betti(\FI,-)},$ we thus have a recollement
\begin{equation}\label{eq:geometric-recollement}
\begin{tikzcd}
\mush_{\Lstab,\htpol}(\Oskel) & 
\Mod_{A_G^\Betti(\FI,-)}
\arrow[bend right]{l}{}
\arrow[from=l, hook]
\arrow[bend left]{l}{} & 
\Mod_{e_{\cF \cap \cB^c} A_G^{\Betti}(t,-) e_{\cF \cap \cB^c}}
\arrow[bend right, hook', swap]{l}{}
\arrow[from=l, ""]
\arrow[bend left, hook']{l}{}
.
\end{tikzcd}
\end{equation}
We conclude from \Cref{thm:Betti-recollement} that the left-hand category in \Cref{eq:geometric-recollement} is equivalent to $\Mod_{A_G^\Betti(\FI,\mass)}.$
%    We are therefore reduced to the question of describing an equivalence
%    \begin{equation}\label{eq:betti-is-quotient}
    %\Mod_{A_\Betti}\simeq \Mod_{e_\cF \oA_n e_\cF}/ \Mod_{e_{\cF\cap \cU} \oA_n e_{\cF\cap \cU}}.
%    \Mod_{A_\Betti}\simeq \Mod_{e_\cF \oA_n e_\cF}/ \left\langle{e_{\cF} \oA_n e_{\cF\cap \cU}}\right\rangle.
%    \end{equation}
%    Note that this question is nontrivial: although an equivalence of the form \Cref{eq:betti-is-quotient} at the level of abelian categories would be almost tautological from our definition of $A_\Betti,$ we are claiming that the equivalence also holds at the level of derived categories. By \cite{miyachi-localization}*{Theorem 3.2}, it is necessary to show that if $M$ is an object of
%    $\Mod_{e_\cF\oA_n e_\cF}$ whose homologies lie in the abelian subcategory of $\Mod_{e_\cF\oA_n e_{\cF}}^\heart$ split-generated by $e_{\cF}\oA_n e_{\cF\cap \cU},$ then $M$ lies in the derived category of this abelian subcategory. This is proved in \Cref{lem:quotient-is-abelian} below.
\end{proof}

%{\color{red} (BG: At this point (or somewhere) we need to note that the perverse t-structure on the left-hand side matches up with the obvious one on the right-hand side, in the above theorem.)}

\begin{proof}[Proof of \Cref{theorem:main-theorem}]
    By \Cref{thm:betti-category-O} we have $\mush_{\Lstab,\htpol}(\Oskel)\simeq \Mod_{A_G^\Bet(t,m)}$. By \Cref{prop:monodromic-Riemann-Hilbert} $\Mod_{A_G^\Bet(t,m)}= \Mod_{A_G^{\dR}(t,m)}$. %Now pass to compact objects and take hearts. 
    %By \Cref{prop:bettimon}, $\Mod_{A_G^{\Betti}(t,m)} = \Mod_{A_G^{\Betti}(t,m)}^{F\sf{-mon}}$. By \Cref{lem:fully-monodromic-equivalence}
   % $\Mod_{A_G^{\Betti}(t,m)}^{F\sf{-mon}}= \Mod_{A_G^{\dR}(t,m)}^{F\sf{-mon}}$. 
    %By \Cref{cor:dRmon} 
  %$\Mod_{A^{\dR}_G(\FI,\mass)}^{\mathfrak{f}\sf{-mon}} \cong \Mod_{A^{\dR}_G(\FI,\mass)}$. \color{red}{fix references and notation}
    %$\Mod_{A_G^{\dR}(t,m)} = \Mod_{A_G^{\dR}(t,m)}^{F\sf{-mon}}$. 
    %By \color{red} reference  \color{black}, $\Perf_{A_G^{\dR}(t,m)}= \cO^{\dR}_G(\FI,\mass)$. Now pass to compact objects and take hearts.
\end{proof}

\begin{remark}
    In light of \Cref{thm:betti-category-O}, we can now reinterpret \Cref{thm:Betti-recollement} as the statement that the stop removal functor $\mush_{\LL_G(\FI,-),\pol_G}(\LL_G(\FI,-))\to \mush_{\LL_G(\FI,-),\pol_G}(\LL_G(\FI,\mass))$ becomes fully faithful when restricted to the subcategory generated by simple objects, i.e., components of $\LL_G(\FI,-).$ This is obvious for the compact components of $\LL_G(\FI,-)$ but nontrivial in general. (See the following remark for a counterexample.)
\end{remark}

\begin{remark}\label{rem:derived-recollement}
%    {\color{blue}Edit to delete some derived quotient discussion.}
%    One way to interpret the proof of \Cref{thm:betti-category-O} is as the statement that, if we take $e:=e_{\cF\cap\cU}$ be the idempotent in $A:=A_G^\Betti(\FI,-)$ corresponding to feasible unbounded sign vectors, then the ideal $AeA\subset A$
    %$A_G^\Betti(\FI,-) e A_G^\Betti(\FI,-)$ 
%    is stratifying in the sense of \Cref{defn:ff-recollement},
%    guaranteeing the existence of a recollement diagram
% \begin{equation}
%\begin{tikzcd}
%\Mod_B&
%\Mod_A
%\arrow[bend right]{l}{}
%\arrow[from=l, hook]
%\arrow[bend left]{l}{} & 
%\Mod_{eAe}
%\arrow[bend right, hook', swap]{l}{}
%\arrow[from=l, ""]
%\arrow[bend left, hook']{l}{}
%\end{tikzcd}
%\end{equation}   
%with $B$ equal to the (non-derived) quotient algebra $A/AeA.$ 
%guaranteeing that the naive quotient $A/AeA$ is equal to the derived quotient.
%derived quotient $A/^L AeA$ defined in \Cref{defn:derived-quotient} is concentrated in degree 0.
%The proof of \Cref{thm:betti-category-O} relies on the result
\Cref{thm:Betti-recollement} states that if we take $e:=e_{\cF\cap \cB^c}$ to be the idemponent in $A:=A_G^\Betti(\FI,-)$ corresponding to feasible unbounded sign vectors, then the ideal $AeA\subset A$ is stratifying in the sense of \Cref{defn:ff-recollement}.
If instead we had taken $e$ to be the idempotent corresponding to all noncompact components of $\LL(\FI,-),$ this would no longer be true, and to obtain a recollement as in \Cref{eq:geometric-recollement} it would be necessary to replace $A/AeA$ with the derived quotient defined in \Cref{defn:derived-quotient}.
%and $B$ must be replaced by a derived enhancement of this quotient; some constructions of $B$ can be found in \cite{KY-dg}*{\S 7}. 
In the case where $G=\CC^\times\xrightarrow{\Delta}(\CC^\times)^{n+1}=D$ is the diagonal copy of $\CC^\times$ inside $D,$ this construction gives an explicit presentation of the dg algebra $C_*(\Omega \PP^n)$ governing the category $\Loc(\PP^n)$ of local systems on projective space.
\end{remark}

\section{Fukaya categories}\label{sec:fukaya}

The fundamental theorem of Ganatra--Pardon--Shende relates partially wrapped Fukaya categories to microlocal sheaf categories. As a consequence:
\begin{theorem}\label{theorem:gps-equivalence}
    Given a datum $(G, \FI, \mass)$ for category $\cO$, there is an equivalence of categories 
    \begin{equation}
        \cW(\htvar_G(\FI), \partial_\infty \LL_G(\FI,\mass)) \simeq (\mush_{\LL_G(\FI,\mass)}(\LL_G(\FI,\mass))^c)^{op}
    \end{equation}
    compatible with the microlocal perverse t-structure.
\end{theorem}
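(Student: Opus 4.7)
The plan is to apply the main theorem of \cite{GPS3}, which establishes an equivalence between partially wrapped Fukaya categories of Weinstein manifolds stopped at Legendrians and microsheaf categories on their relative skeleta. To do this, I first need to verify that the hypertoric data $(G,\FI,\mass)$ fit into the GPS framework. By \Cref{prop:Weinstein-vf} and the preceding \Cref{lem:HK-Weinstein}, the manifold $\htvar_G(\FI)$ is Weinstein (possibly in a Morse--Bott sense) with Liouville vector field coming from the $\SS$-action; by \Cref{lem:skeleton-is-compacts}, its compact skeleton is the union of compact components of the extended core $\LL_G(\FI,-)$. The category $\cO$ skeleton $\Oskel$ therefore extends the compact skeleton by adjoining those components of $\LL_G(\FI,-)$ which are $\mass$-bounded but not compact; these non-compact components accumulate at infinity along a Legendrian $\partial_\infty\Oskel$ which serves as the stop.

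Second, I would check that the polarization $\htpol$ of \Cref{def:Oskel-polarization}, which we have used throughout to define $\mush_{\Oskel,\htpol}$, matches the grading/orientation data used by GPS for this Weinstein pair. Since $\htpol$ is obtained by descent from the cotangent fiber polarization on $T^*\CC^n$ through the open embedding $\htvar_G(\FI)\hookrightarrow T^*(\CC^n/G)$ of \Cref{cor:substack-embedding}, and the GPS equivalence is compatible with the inclusion of a Weinstein subdomain, this descent is compatible with the polarization GPS naturally equips with $\htvar_G(\FI)$ as a subdomain of the stacky cotangent bundle.

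Third, granting the polarization compatibility, \cite{GPS3} furnishes the contravariant equivalence
\[
\cW(\htvar_G(\FI),\partial_\infty\Oskel) \simeq \bigl(\mush_{\Oskel,\htpol}(\Oskel)^c\bigr)^{\op}.
\]
The t-structure statement then follows by transporting the microlocal perverse t-structure of \cite{perverse-microsheaves} (whose existence is what \Cref{def:Oskel-polarization} is designed to ensure) through this equivalence; this gives a well-defined t-structure on the compact-object category on the Fukaya side, which is the content of the final clause.

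The main obstacle I expect is the careful verification of the polarization/grading data. The GPS equivalence is sensitive to these choices (as already emphasized after \Cref{lem:liouville-independence} in the sheaf-theoretic setting), and the Weinstein structure on $\htvar_G(\FI)$ arising from the $\SS$-action is not itself a cotangent bundle structure but only becomes one after the embedding into $T^*(\CC^n/G)$; one must therefore confirm that the two natural sources of polarization data---from the Weinstein Liouville field on $\htvar_G(\FI)$ on the Fukaya side, and from the stacky cotangent fiber polarization on the sheaf side---agree up to the equivalence. The independence result of \Cref{lem:liouville-independence} suggests that rescaling the Liouville form does not affect the microsheaf category, which should allow one to deform the Weinstein Liouville field to one compatible with the stacky cotangent structure, closing the argument.
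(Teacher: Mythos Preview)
Your proposal is correct and follows essentially the same route as the paper: verify that $\htvar_G(\FI)$ is Weinstein (via \Cref{lem:HK-Weinstein} and \Cref{prop:Weinstein-vf}) and invoke \cite{GPS3}. The paper's proof is terser than yours; the one ingredient it names that you omit is that $\Oskel$ is an $I$-holomorphic (hence real-analytic) isotropic subvariety, which is the hypothesis \cite{GPS3}*{Theorem 1.4} actually requires for the relative skeleton. Your extended discussion of polarization compatibility is reasonable but the paper does not spell it out, treating the choice $\htpol$ as fixed once and for all.
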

\begin{proof}
    $\htvar_G(\FI)$ is $I$-holomorphic and the relative skeleton $\Oskel$ is an analytic isotropic subvariety. $\htvar_G(\FI)$ is Weinstein by \Cref{lem:HK-Weinstein}. The claim follows by \cite{GPS3}*{Theorem 1.4}. 
    %each $\LL_\alpha$ is analytic; so finite unions of $\LL_\alpha$ are semi-analytic; since semi-analyticness is closed under complements, finite unions and intersections [sec 2 bierstone milman], it follows that lagrangians of the form $\bigcup_{\alpha I} \LL_\a \setminus \bigcup_{\beta \in J \subset I} \LL_\beta$ are semi-analytic.
\end{proof}

Combining \Cref{theorem:main-theorem} and \Cref{theorem:gps-equivalence}, we deduce \Cref{corollary:main-fukaya-intro}. In particular, there is a t-exact equivalence of categories
\begin{equation}\label{equation:fuk-cat-o}
     \cW(\htvar_G(\FI), \partial_\infty \LL_G(\FI,\mass))^{op}\simeq D^b(\cO_G^{\dR}(\FI, \mass)).
\end{equation}

\begin{remark}
    For any holomorphic Weinstein manifold $X$ and conic holomorphic (possibly singular) Lagrangian $\LL \subset X$, there is always a map 
\begin{equation}\label{equation:derived-equiv-heart}
    D^b((\cW(X, \partial_\infty \LL)^c)^\heartsuit) \to \cW(X, \partial_\infty \LL)^c
\end{equation}
but it may fail to be an equivalence. For example, if $X= T^*\mathbb{P}^1$ and $\Lambda$ is the zero section, then this map is the inclusion of $\Mod_\mathbb{C}$ into $\Loc(S^2)$. This example may be understood as illustrating the failure of the heart of the t-structure to capture the derived behavior of the quotient algebra described in \Cref{rem:derived-recollement}.
%{\color{blue} (Edit or replace this remark in light of \Cref{rem:derived-recollement}.)} 
\end{remark}
\begin{remark}\label{remark:self-opposite}
    The $\SS$-action of weight-$2$ described in \Cref{subsection:liouville-geometry} furnishes an anti-symplectic involution of $\htvar_G(\FI)$ which negates the Liouville form and fixes $\LL_G(\FI,\mass)$ set-wise. It follows that $\cW(\htvar_G(\FI), \partial_\infty \LL_G(\FI,\mass))$ and $\cW(\htvar_G(\FI))$ are self opposite.
\end{remark}

%Since $\cO_G^{\dR}(\FI, \mass)$ is an extremely rich category and well understood category \cite{BLPW10, BLPW12, BLPW16, Web-gencat-O}, the equivalence \eqref{equation:fuk-cat-o} has strong consequences for the structures of the Fukaya category. 
%
%These consequences already been discussed in the introduction so we will not repeat them here. 

%Since $\cO_{\dR}(G, \FI, \mass)$ is an extremely well understood category in representation theory \cite{BLPW10, BLPW12, Web-gencat-O}, we can transfer this knowledge to deduce structural results about the Fukaya category of intrinsic symplectic interest. 
\subsection{Formality}\label{subsec:formality} Recall that an $A_\infty$ algebra is said to be \emph{formal} if it is equivalent to its cohomology algebra. The paradigmatic example of formality is a classical result of Deligne--Griffiths--Mumford--Sullivan \cite{deligne1975real}, who proved that the $A_\infty$ algebra of chains on a compact Kähler manifold $M$ is formal in characteristic zero. This algebra is equivalent to the algebra of Floer cochains on the holomorphic Lagrangian submanifold $M\subset T^*M,$ which serves as some motivation for the following
%The following is a 
well-known ``folk-conjecture'' in symplectic topology. 
\begin{conjecture*}[Formality conjecture]\label{conjecture:formality}
Let $M=(M, g, I, J, K)$ be a hyperkähler manifold. Let $L_1, \dots, L_k$ be a collection of closed $I$-complex submanifolds which are Lagrangian with respect to $\omega_{\mathbb{C}}:= \omega_J+ i\omega_K$. Then the Floer--Fukaya $A_\infty$ algebra $CF^*(\oplus_i L_i,\oplus_i L_i)$, defined with respect to $\omega_J$ for the real symplectic manifold $(M, \omega_J)$ is formal in characteristic zero.
\end{conjecture*}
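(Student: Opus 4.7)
The plan is to attack the conjecture by combining a holomorphic Floer-theoretic reformulation of $CF^*(\bigoplus L_i, \bigoplus L_i)$ with a Hodge-theoretic weight/purity argument, in the spirit of the Deligne--Griffiths--Morgan--Sullivan proof of Kähler formality. The guiding principle is that the $I$-holomorphic hypothesis on the $L_i$ should make the Floer algebra ``algebraic'' and ``pure,'' and purity then forces formality through the standard weight argument.

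The first step is to upgrade $CF^*$ to an algebraic object. Building on the Kontsevich--Soibelman program of holomorphic Floer theory, together with its sheaf-theoretic incarnation via perverse microsheaves (as used throughout this paper) and the recent work of Doan--Rezchikov on $\omega_\theta$-invariance of Floer homology in hyperkähler targets, I would aim to identify the $A_\infty$-algebra $\bigoplus_{i,j}CF^*_{\omega_J}(L_i,L_j)$ with the $\mathrm{RHom}$-algebra in a category of holonomic modules over the deformation quantization sheaf $\cW$ on $(M,\Omega_I)$, applied to the simple $\cW$-modules $\cM_i$ supported on the $I$-holomorphic Lagrangians $L_i$. In the setting of this paper, such an identification is obtained as the composition of \Cref{theorem:gps-equivalence} with the de Rham/Betti Riemann--Hilbert correspondence of \Cref{prop:monodromic-Riemann-Hilbert}, and the hope is to promote this to a general statement for arbitrary hyperkähler targets using microlocal sheaves.

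The second step is to produce a weight filtration on the cohomology of this algebraic model. Holonomic DQ-modules supported on smooth $I$-holomorphic Lagrangians should be viewed as analogues of pure simple perverse sheaves, carrying a natural mixed Hodge--module-type structure (using either Saito's mixed Hodge modules in the underlying $\cD$-module direction, or a Rees-type deformation of $\cW$ that records the $\hbar$-filtration as a mixed grading). The $\mathrm{Ext}$-groups between the $\cM_i$ then inherit a weight filtration, and the expectation is that they are \emph{pure} of weight equal to the cohomological degree, exactly as for intersection cohomology of projective varieties.

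The third step is the purity-implies-formality principle: if $H^*(A)$ of a dg (or $A_\infty$) algebra is equipped with a weight filtration making it pure, and $A$ itself admits a model compatible with that filtration, then $A$ is formal. I would apply this in the form used in \cite{BGS} and its $A_\infty$-generalizations (Hinich, Kaledin) to the model constructed in Steps 1--2, concluding formality in characteristic zero. The main obstacle, and the place where the work really lives, is Step 1: the algebraic identification of the full $A_\infty$-structure of the hyperkähler Fukaya algebra with a DQ-module Ext-algebra is known only in restricted settings (cotangent bundles, symplectic resolutions with contracting $\CC^\times$-action, and, by this paper's results, hypertorics), and a general construction requires substantial new foundational input, most plausibly via an enhancement of holomorphic Floer theory to a functorial Riemann--Hilbert correspondence relating Fukaya categories of hyperkähler manifolds to holonomic DQ-module categories. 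A secondary obstacle is the mixed Hodge theory of holonomic DQ-modules, which is less developed than in the $\cD$-module case; for the subclass of hyperkähler manifolds admitting a weight-$2$ holomorphic symplectic $\CC^\times$-action (as in \Cref{prop:Weinstein-vf}), this can be bypassed by using the induced $\CC^\times$-weight grading on $CF^*$ as a substitute for the Hodge weight, which is essentially the route taken in the hypertoric case of \Cref{corollary:formality-intro}.
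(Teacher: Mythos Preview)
The statement you are attempting to prove is presented in the paper as a \emph{conjecture} (explicitly introduced as a ``well-known `folk-conjecture' in symplectic topology''), not as a theorem. The paper does \emph{not} prove it in general, so there is no proof in the paper to compare your proposal against.

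What the paper does establish is the special case where $M = \htvar_G(\FI)$ is a toric hyperk\"ahler manifold and the $L_i$ are the irreducible components of the category $\cO$ skeleton (\Cref{corollary:formality}). That argument is short and entirely specific to the hypertoric setting: via \Cref{theorem:main-theorem} and the Ganatra--Pardon--Shende equivalence, the Floer endomorphism algebra of $\bigoplus_i L_i$ is identified with the Ext-algebra of the simple modules in $\cO^{\dR}_G(\FI,\mass)$, and formality of the latter is imported from the known Koszulity of $A^{\dR}_G(\FI,\mass)$ proved in \cite{BLPW12}. No general holomorphic Floer theory, DQ-module Riemann--Hilbert correspondence, or mixed Hodge machinery is developed; the purity input comes pre-packaged from prior algebraic work on the de Rham side.

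Your proposal is not a proof but a research program, and to your credit you are candid about this. Step~1 (identifying the Floer $A_\infty$-algebra with a DQ-module $\mathrm{RHom}$-algebra for an arbitrary hyperk\"ahler manifold) is open and, in your own words, ``requires substantial new foundational input''; Step~2 (a mixed Hodge theory for holonomic DQ-modules in this generality) is likewise unavailable. The strategy you sketch is in the same spirit as the paper's remarks following \Cref{corollary:formality} on the Hodge-theoretic origin of formality, but the paper explicitly treats this as philosophy rather than proof, and makes no claim to have executed it beyond the hypertoric case. In short: there is a genuine gap, namely that the conjecture is open, and your proposal correctly identifies where the gap lies without closing it.
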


In our situation, we prove a slightly more general result, which also incorporates the non-compact Lagrangian components of the category $\cO$ skeleton.
\begin{corollary}%[Formality for the category $\cO$ skeleton]
\label{corollary:formality}
 Fix a datum $(G, \FI, \mass)$ for category $\cO$. Let $S$ be the direct sum of the irreducible components of $\LL_G(\FI,\mass)$. Then $CF^*(S,S)$ is formal in characteristic zero. 
    %and let $\cK \subset \cF$ be the compact feasible chambers. The the endomorphism ring of $\oplus_{\alpha \in \cK} \LL_\alpha$ in $\cW(\htvar_{\FI})^c$ formal, and isomorphic to the singular cohomology ring of $\htvar_{\FI}$. More generally the endomorphism algebra of $\oplus_{\alpha \in \cF \cap \cB} \LL_\alpha$ in $\cW(\htvar_{\FI, \mass}, \partial_\infty \LL_G(\FI,\mass))^c$ is formal.
\end{corollary}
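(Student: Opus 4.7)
The plan is to deduce formality from Koszul duality by transporting the problem from the Fukaya side to the algebraic side. By Theorem 6.1 (GPS) combined with Theorem 1.2, we have an equivalence
\[
\cW(\htvar_G(\FI), \partial_\infty \LL_G(\FI,\mass))^{\op} \simeq \Perf_{A^{\dR}_G(\FI,\mass)}
\]
which, by the t-exactness statement in Theorem 1.2, intertwines the perverse t-structure on the Fukaya side with the standard t-structure on the module side. In particular, the heart matches, and irreducible Lagrangian components of $\LL_G(\FI,\mass)$ (which are precisely the perverse simples supported on a single component) correspond to the simple $A^{\dR}_G(\FI,\mass)$-modules $S^{\dR}_\alpha$ indexed by $\alpha \in \cF\cap\cB$. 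Thus
\[
CF^*(S,S) \simeq \End_{A^{\dR}_G(\FI,\mass)}\!\bigl(\textstyle\bigoplus_\alpha S^{\dR}_\alpha\bigr)^{\op}.
\]

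Now I invoke Koszul duality. By Proposition 4.8, the $\CC$-algebra $A^{\dR}_G(\FI,\mass)$ is Koszul (and concentrated in cohomological degree zero), and $S = \bigoplus_\alpha S^{\dR}_\alpha$ is the augmentation module $\CC$ over the relevant semisimple base ring $T = e_{\cF\cap\cB}(A^{\dR}_G)_0 e_{\cF\cap\cB}$. Proposition 3.20 then identifies the Koszul dual
\[
\End_{A^{\dR}_G(\FI,\mass)}(S) \;\simeq\; A^{\dR}_G(\FI,\mass)^! \;\simeq\; \bigl(Q(A^{\dR}_G(\FI,\mass))^{\op},\,0\bigr)
\]
as a dg-algebra with zero differential, where the mixed grading on the quadratic dual becomes the cohomological grading. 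This is manifestly formal. Taking the opposite algebra preserves formality, so $CF^*(S,S)$ is formal.

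The one step that requires a small verification is the matching of irreducible Lagrangian components with the simple modules $S^{\dR}_\alpha$ under the equivalence of Theorem 1.2, rather than merely a matching of abelian hearts; this is the place where the work done in Section 5, namely the explicit description of $\mush_{\LL_G(\FI,\mass),\htpol}$ in terms of $A^\Bet_G(\FI,\mass)$ (and its Riemann--Hilbert identification with $A^{\dR}_G(\FI,\mass)$ via Corollary 4.14), is being used: the simples are precisely the microsheaves corepresented by smooth points on each irreducible component of the Lagrangian. The characteristic zero hypothesis enters through both the Riemann--Hilbert correspondence (Proposition 4.13) and the application of the Koszul duality machinery of Section 3.2, which is developed over $\CC$.
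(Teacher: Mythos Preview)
Your proposal is correct and follows essentially the same approach as the paper: transport the question to $\Perf_{A^{\dR}_G(\FI,\mass)}$ via the GPS/main theorem equivalence, identify the irreducible components with the simple modules, and then invoke Koszulity of $A^{\dR}_G(\FI,\mass)$ to conclude formality of the Ext algebra of simples. The paper's proof cites the Koszulity result and the implication ``Koszul $\Rightarrow$ formal Ext algebra of simples'' directly from \cite{BLPW12} and \cite{BLPW16}, whereas you route this implication through the paper's own \Cref{prop:QDKD}; both amount to the same argument.
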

\begin{proof}
    Note that $CF^*(S,S)\simeq \operatorname{End}_{(\cW(\htvar_G(\FI), \partial_\infty \LL_G(\FI,\mass))}(S)$. Hence it is equivalent to check formality for the endomorphism algebra of the image of $S$ under \eqref{equation:fuk-cat-o}. In other words, we must verify that the direct sum of the simples in $\mathcal{O}_G^{\dR}(\FI, \mass)$ has formal Yoneda-dg algebra, which is a known fact (e.g.\    $\mathcal{O}_G^{\dR}(\FI, \mass)$ is Koszul \cite{BLPW12}*{Corollary 4.10}; this implies the desired claim e.g.\ by \cite[Rmk.\ 4.6]{BLPW16}.)
    %ve to verify that the image of $\oplus_i L_i$
   % \color{red}{give reference, e.g. to Webster}
   % $\mathcal{O}_{\dR}(G, \FI, \mass)$ is Koszul \cite[Cor.\ 4.10]{BLPW12}. As explained in e.g.\ \cite[Rmk.\ 4.6]{BLPW16}, this implies that the Yoneda-dg algebra on the simples is formal. 
  %  As explained in the proof of Theorem 5.3 in \cite{BLPW12}, the Yoneda ext-algebra on the simple objects in $\mathcal{O}_{\dR}(G, \FI, \mass)$ is isomorphic to the ring $B(G, \FI, \mass)$ introduced in \cite{BLPW10}. This latter ring is Koszul, so carries a bigrading. Therefore the ext-algebra is formal. 
    %Note that the endomorphism algebra of the compact components of the skeleton corresponds under this equivalence to center of $B(X)$.  
\end{proof}

It may be useful to contrast \Cref{corollary:formality} with previous results on the hyperkähler formality conjecture. The first work going beyond \cite{deligne1975real} is probably Seidel--Thomas \cite{seidel2001braid} which established the conjecture for skeleta of ALE spaces of type $A_n$. This was subsequently extended to type $D_n$ by Etg\"u--Lekili \cite{etgu2017koszul}.  These proofs are in some sense algebraic, since they establish that the relevant $A_\infty$ algebras are \emph{intrinsically formal}, meaning that any $A_\infty$ structure with the given cohomology ring is formal. 

An entirely different method for proving formality was introduced by Abouzaid--Smith \cite{abouzaid2016symplectic} and also used by Mak--Smith \cite{mak2021fukaya}. They construct a non-commutative degree-$1$ Hochschild cocycle (understood as a non-commutative vector field) satisfying a purity condition.  By an argument of Seidel, the existence of such a vector field implies formality of the Floer algebra of the Lagrangians, due to the presence of an extra grading induced by the cocycle.
%The method here is geometric: one constructs a degree-$1$ Hochschild cocycle (understood as a non-commutative vector field) which is used to define an extra grading on the Floer algebra of the Lagrangians.
%
%entirely different. The key player is a non-commutative vector field (by definition a degree $1$ Hochschild cocycle), which is used to construct an extra grading on the Floer algebra of the Lagrangians.
This is the approach to the hypertoric formality conjecture taken in \cite{LLM}.

On the other hand, our \Cref{corollary:formality} follows from \Cref{theorem:main-theorem}, since the image of $S$ under \eqref{equation:gps-equivalence-intro} and \eqref{equation:main-theorem-equivalence} is known to have formal endomorphism algebra. However, from our perspective, the origin of this formality is Hodge theory: the endomorphism algebra of $S$ is equivalent to the endomorphism algebra of a $D$-module which admits a lift to mixed Hodge modules. Purity of these mixed Hodge modules furnishes an extra grading which precludes the existence of any higher $A_\infty$ operations. We refer to \cite{Web-gencat-O}*{\S 2.5} for a discussion of this philosophy in de Rham category $\cO.$

\subsection{Koszul duality}
\label{ssec:Koszul}
As in the previous section, let $S$ be the direct sum of the irreducible components of $\Oskel,$ as an object of $\cW(\htvar_G(\FI),\partial_\infty\Oskel),$ and let $P$ be the direct sum of their projective covers (i.e., the direct sum of cocores to the components of $S$). Then by applying the main theorem of \cite{BLPW10}, we deduce a relationship between their endomorphism algebras.
\begin{theorem}
    The algebras $\End(S)$ and $\End(P)$ are Koszul bidual.
\end{theorem}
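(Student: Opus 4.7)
The plan is to reduce, via \Cref{theorem:gps-equivalence} combined with our main theorem \Cref{theorem:main-theorem}, the assertion to a purely algebraic biduality statement about $A^{\dR}_G(\FI,\mass)$ and its Koszul dual, and then apply the general biduality principle of \Cref{prop:booth-involution} together with the mixed-to-derived comparison \Cref{prop:KDQD}.

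First, I would combine \Cref{theorem:gps-equivalence} and \Cref{theorem:main-theorem} to produce an equivalence $\cW(\htvar_G(\FI), \partial_\infty \LL_G(\FI,\mass))^{\op} \simeq \Perf_{A^{\dR}_G(\FI,\mass)}$ which is t-exact for the (microlocal) perverse t-structure. Under this equivalence, the direct sum $P$ of cocores is sent to the progenerator $A^{\dR}_G(\FI,\mass)$ itself (they corepresent microstalks at smooth points of $\Oskel$, which are precisely the projective objects of $\cO^{\dR}$), so that $\End_{\cW}(P) \simeq A^{\dR}_G(\FI,\mass)^{\op}$. Similarly, the components $S$ correspond to the simple modules, so $\End_{\cW}(S) \simeq \hom_{A^{\dR}_G(\FI,\mass)}(S^{\dR}, S^{\dR})^{\op}$. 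Because Koszul biduality (\Cref{def:koszul-dual}) is symmetric under taking opposites and swapping left/right, it suffices to establish biduality for the pair $(A^{\dR}_G(\FI,\mass),\, \hom_{A^{\dR}_G(\FI,\mass)}(S^{\dR}, S^{\dR}))$.

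By \Cref{proposition:koszul-facts}, $A^{\dR}_G(\FI,\mass)$ is Koszul as a mixed-graded $T$-augmented algebra. Appealing to \Cref{prop:KDQD}, the derived Koszul dual of $A^{\dR}_G(\FI,\mass)$ is then the dg-algebra $(A^{\dR}_G(\FI,\mass))^{!}=(Q(A^{\dR}_G(\FI,\mass))^{\op},0)$, with cohomological grading inherited from the mixed grading; concretely this identifies with $\hom_{A^{\dR}_G(\FI,\mass)}(S^{\dR},S^{\dR})$ as an augmented dg-algebra over $T$. This establishes one half of the biduality, namely that $\hom_{A^{\dR}_G(\FI,\mass)}(S^{\dR},S^{\dR})$ is Koszul dual to $A^{\dR}_G(\FI,\mass)$.

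For the other half, I would invoke \Cref{prop:booth-involution} applied to $A=A^{\dR}_G(\FI,\mass)$. Its hypotheses are all met: by \Cref{proposition:koszul-facts}, $A^{\dR}_G(\FI,\mass)$ is concentrated in cohomological degree $0$ and finite-dimensional over $\CC$ (hence connective and locally finite), and its augmentation ideal toward the semisimple quotient $T$ coincides with the Jacobson radical, which is nilpotent in any finite-dimensional quasi-hereditary algebra. The proposition then yields $A^{\dR}_G(\FI,\mass) \simeq \hom_{\hom_A(S^{\dR},S^{\dR})}(S^{\dR},S^{\dR})$, which is the second half of biduality. The main technical point to verify carefully is that the geometric identification of $\End_{\cW}(S)$ with the Ext-algebra of simples respects the augmentation structure (i.e., the natural map to $T = e^c_{\cF\cap\cB}(\bigoplus \CC\cdot \id_{S_\alpha^{\dR}})$) and the cohomological grading used in \Cref{prop:KDQD}; this is routine but requires tracing through the t-exactness of the equivalence in \Cref{theorem:main-theorem}.
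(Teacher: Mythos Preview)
Your proof is correct and follows essentially the same approach as the paper: both reduce the statement to the Koszulity of $A^{\dR}_G(\FI,\mass)$ established in \cite{BLPW10}*{Theorem B}, via the equivalence of \Cref{theorem:main-theorem} (and \Cref{theorem:gps-equivalence}). The paper's proof is a one-liner deferring to that reference, whereas you have spelled out the passage from mixed Koszulity to derived Koszul biduality using \Cref{prop:KDQD} and \Cref{prop:booth-involution}, together with the finite-dimensionality and quasi-hereditarity from \Cref{proposition:koszul-facts}; this is exactly the machinery the paper sets up in \Cref{ssec:koszul-abstract} and leaves implicit here.
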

\begin{proof}
    This is the statement of Koszulity for the algebra $A^\Betti_G(\FI,\mass)$, which we may deduce from the same statement about $A^\dR_G(\FI,\mass)$ in \cite{BLPW10}*{Theorem B}.
\end{proof}
However, we also prove a new result. 
Recall from \Cref{lem:skeleton-is-compacts} that the skeleton of the Liouville manifold $\htvar_G(\FI)$ is equal to the union of {\em compact} components of $\Oskel.$
Let $\Core=\bigoplus_\alpha S_\alpha$ be the direct sum of those components,
%the {\em compact} components of $\Oskel$, i.e., the direct sum of components of the skeleton of the Weinstein manifold $\htvar_G(\FI)$,
as an object of the (fully) wrapped Fukaya category $\cW(\htvar_G(\FI)).$
Similarly, let $\Cocore=\bigoplus_\alpha P_\alpha$ be the direct sum of cocores to those components.
\begin{theorem}\label{thm:koszul}
    The algebras
        $\End_{\cW(\htvar_G(\FI))}(\Cocore)$ and $\End_{\cW(\htvar_G(\FI))}(\Core)$
        are Koszul bidual.
\end{theorem}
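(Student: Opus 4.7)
The plan is to deduce this statement from the Koszul biduality already established in the partially wrapped setting, passed through the abstract algebraic framework of \Cref{ssec:koszul-abstract}. The starting point will be the GPS equivalence combined with \Cref{theorem:main-theorem}: $\cW(\htvar_G(\FI), \partial_\infty \Oskel)^{\op} \simeq \Perf_A$ where $A := A^{\dR}_G(\FI,\mass)$ is the endomorphism algebra of the partially wrapped cocores, and $B := A^!$ is the Koszul bidual algebra (by \Cref{proposition:koszul-facts}), appearing as the endomorphism algebra of the direct sum of Lagrangian components of $\Oskel$. Let $e \in A$ denote the idempotent picking out noncompact components of $\Oskel$ and $e^c = 1-e$ the compact idempotent; by \Cref{lem:skeleton-is-compacts}, the latter corresponds to the components of the skeleton of $\htvar_G(\FI)$.

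My strategy is to identify both endomorphism algebras in terms of $A$, $B$, and $e$. For the core: the components comprising $\Core$ are compact Lagrangians, whose Floer cohomology is unaffected by stops at infinity, so $\End_{\cW(\htvar_G(\FI))}(\Core) \simeq \End_{\cW(\htvar_G(\FI), \partial_\infty\Oskel)}(\Core) = e^c B e^c$. For the cocore, I will invoke GPS stop removal, which presents $\cW(\htvar_G(\FI))$ as the quotient of $\cW(\htvar_G(\FI), \partial_\infty\Oskel)$ by the thick subcategory generated by linking disks at the removed stops. A linking disk at $\partial_\infty L_\alpha$ for a noncompact component $L_\alpha$ is isotopic to a partially wrapped cocore of $L_\alpha$ placed near infinity; thus the linking disks to be killed correspond under the GPS equivalence to the projective $A$-modules $\{e_\alpha A : \alpha \text{ noncompact}\}$. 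The thick subcategory they generate is the essential image of $j_!: \Perf_{eAe} \to \Perf_A$ in the recollement of \Cref{proposition:ff-recollement}, and so the Verdier quotient is $\Perf_{A /^L AeA}$. Tracking the images of compact cocores (and using that $e^c$ becomes the identity once $e$ is killed) yields $\End_{\cW(\htvar_G(\FI))}(\Cocore) \simeq A /^L AeA$.

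Having made these identifications, the theorem reduces to an application of \Cref{cor:kdid} to the Koszul bidual pair $(A,B)$ and the idempotent $e$, which yields that $A /^L AeA$ and $e^c B e^c$ are Koszul bidual. This requires verifying that $A /^L AeA$ is connective (automatic from its construction), locally finite, and has nilpotent augmentation ideal in $H^0$, all of which follow from the finite-dimensionality and quasi-heredity of $A$ established in \Cref{proposition:koszul-facts}. Alternatively, I would invoke \Cref{proposition:koszul-recollement}, using the quadratic descriptions of $A$ and $A^!$ from \cite{BLPW10, BLPW12}, to prove $A /^L AeA \simeq A / AeA$ and reduce the biduality to a direct algebraic Koszul calculation.

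The hard part will be justifying the second identification, namely that GPS stop removal corresponds to the algebraic derived quotient via the recollement of \Cref{proposition:ff-recollement}, which rests on carefully matching Fukaya-theoretic linking disks with cocores of the noncompact skeleton components. A possibly cleaner alternative is to bypass the Fukaya category entirely: using \Cref{theorem:main-theorem} to identify $\mush_\Oskel(\Oskel)^c \simeq \Perf_A$, I could realize stop removal as the microsheaf-theoretic restriction to the smaller skeleton consisting of only the compact components of $\Oskel$. This is a purely sheaf-theoretic operation that plugs directly into the recollement machinery, after which the algebraic argument above proceeds unchanged.
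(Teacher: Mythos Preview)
Your proposal is correct and follows essentially the same route as the paper: identify $\End_{\cW(\htvar_G(\FI))}(\Cocore)\simeq A/^L(Ae A)$ via stop removal and $\End_{\cW(\htvar_G(\FI))}(\Core)\simeq e^c A^! e^c$ via compactness of the core components, then invoke \Cref{cor:kdid}. Your write-up is more explicit than the paper's about justifying the stop-removal identification (matching linking disks to the projectives $e_\alpha A$) and about checking the connectivity, local finiteness, and nilpotence hypotheses of \Cref{cor:kdid}, but the argument is the same.
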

\begin{proof}
    Let $A$ denote the algebra $A^\Betti_G(\FI,\mass),$ which is the endomorphism algebra of the indecomposable projective objects in $\cW(\htvar_G(\FI,\mass), \partial_\infty\Oskel)^{op},$ and write $A^!$ for its Koszul dual, which, as we have seen above, is the endomorphism algebra of the simple components of $\Oskel.$
    
    Let $\cN\subset \cF\cap \cB$ be the set of bounded feasible sign vectors corresponding to noncompact components of $\Oskel,$ and write $e_\cN\in A$ for the corresponding idempotent (and $e_{\cN^c}$ for the complementary idempotent). By stop removal, the first algebra mentioned in the theorem is $A/^L(Ae_\cN A).$ The second algebra is $e_{\cN^c}A^! e_{\cN^c}.$ The Koszul duality between these is the statement of \Cref{cor:kdid}.
\end{proof}
%H^0(A/^LAeA) =A/AeA; now A is finite dimensional over k, and graded by path length. Since AeA is homogeneous, A/AeA is also graded. Hence the positive degree part is nilpotent. The zero degree part correponds to length zero paths, so is just C^k.

The phenomenon of Koszul duality in symplectic geometry has been studied earlier in e.g.\ \cites{etgu2017koszul,li2019exact,li2019koszul}; guided by those investigations, we highlight here several known Floer-theoretic consequences of \Cref{thm:koszul}. 

First, we deduce that the symplectic cohomology of the Weinstein manifold $\htvar_G(\FI)$ can be calculated purely in terms of compact components of the Lagrangian skeleton.

\begin{definition}
    We will write $B_G(\FI):=\End_{\cW(\htvar_G(\FI))}(\Core)$ for the endomorphism algebra of compact components of the skeleton of $\htvar_G(\FI)$.
\end{definition}
The components $S_\alpha,S_{\alpha'}$ intersect cleanly in the manifold $S_\alpha\cap S_\alpha'$, and the morphisms among these Lagrangians in the Fukaya category may be computed as cohomologies
\begin{equation}\label{eq:compact-morphisms}
\Hom_{\cW(\htvar_G(\FI))}(S_\alpha,S_{\alpha'})\simeq H^*(S_\alpha\cap S_{\alpha'}).
\end{equation}
%Let $\{S_\sigma\}$ be an enumeration of the compact components of the skeleton. We have
%\begin{equation}
%    \End_{\cW(\htvar_G(\FI))}(\Core) \simeq \bigoplus_{\sigma, \sigma'} HF^*(S_\sigma, S_\sigma) =  \bigoplus_{\sigma, \sigma'} H^*(S_\sigma \cap S_{\sigma'}).
%\end{equation}
%where $\sigma, \sigma'$ index the compact components of the skeleton.
The spaces \Cref{eq:compact-morphisms} are the matrix coefficient blocks of the matrix algebra $B_G(\FI),$ and the compositions among them (i.e., the multiplications in the algebra $B_G(\FI)$) are given by cap product in the triple intersection followed by Gysin pushforward, as in \cite{BLPW10}*{\S 4.3}. (A priori, it might have seemed necessary to work with some model of cochains, rather than cohomology, on the intersections $S_\alpha\cap S_{\alpha'},$ and to keep careful track of higher $A_\infty$ operations, but by \Cref{corollary:formality} the algebra $B_G(\FI)$ is actually formal.)

Using the algebra $B_G(\FI),$ we give a new computation of $SH^*(\htvar_G(\FI)).$

\begin{corollary}
    There is an equivalence
    \begin{equation}\label{eq:sh-equivalence}
        SH^{*+\dim_\CC(\htvar_G(\FI))}(\htvar_G(\FI))\simeq HH_{-*}(B_G(\FI))^\vee.
    \end{equation}
    between the symplectic cohomology of $\htvar_G(\FI)$ and the dual of the Hochschild homology of the algebra $B_G(\FI).$
\end{corollary}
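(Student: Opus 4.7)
The plan is to combine Ganatra's identification of symplectic cohomology with Hochschild invariants of the wrapped Fukaya category with the Koszul duality of \Cref{thm:koszul}, reducing the statement to a derived-Morita-theoretic identity for Koszul bidual smooth/proper algebras.

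First I would invoke cocore generation (Chantraine--Dimitroglou Rizell--Ghiggini--Golovko, together with \cite{GPS3}) to present the wrapped Fukaya category as $\cW(\htvar_G(\FI))\simeq \Perf(A^{\op})$, where $A:=\End_{\cW(\htvar_G(\FI))}(\Cocore)$. Ganatra's open-closed theorem for Weinstein manifolds then yields
\[
SH^{*+n}(\htvar_G(\FI))\simeq HH_*(\cW(\htvar_G(\FI)))\simeq HH_*(A),
\]
where $n=\dim_\CC(\htvar_G(\FI))$, the degree shift arising from the non-compact Calabi--Yau structure on $\cW(\htvar_G(\FI))$.

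Second, by \Cref{thm:koszul} the algebras $A$ and $B:=B_G(\FI)$ are Koszul bidual. Here $A$ is smooth (because $\Perf(A)\simeq \cW(\htvar_G(\FI))$) and $B$ is proper, its cohomology being the finite-dimensional algebra of \Cref{eq:compact-morphisms} by compactness of the components $S_\alpha$ together with \Cref{corollary:formality}. Koszul duality in the smooth/proper dichotomy, as developed by Keller, then produces a natural non-degenerate pairing
\[
HH_*(A)\otimes HH_{-*}(B)\longrightarrow \CC,
\]
identifying $HH_*(A)\simeq HH_{-*}(B)^\vee$. Stringing this together with the previous step produces the desired isomorphism.

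The main obstacle is pinning down the precise Koszul-duality statement for Hochschild homology under the smooth/proper dichotomy, and in particular checking that Ganatra's Calabi--Yau shift of $n$ is consistent with the degree conventions of the duality pairing between $HH_*(A)$ and $HH_{-*}(B)$. These shifts and dualities are essentially algebraic, and the argument parallels that of \cite{etgu2017koszul} in the $A_n$ case; adapting it to general hypertoric varieties amounts, once \Cref{thm:koszul} is in hand, to careful bookkeeping.
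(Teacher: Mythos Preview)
Your proposal is correct and follows essentially the same route as the paper: Ganatra's open-closed isomorphism identifies $SH^{*+n}$ with $HH_*$ of the wrapped category, which is $\Perf$ over the cocore endomorphism algebra $A$, and then Koszul biduality with the proper algebra $B=B_G(\FI)$ from \Cref{thm:koszul} converts this into $HH_{-*}(B)^\vee$. The only difference is that where you invoke a Keller-style pairing for smooth/proper Koszul duals, the paper makes this step concrete via \cite{Campbell-Koszul}: the Koszul equivalence identifies $\Perf_A$ with the thick subcategory of $\Mod_B$ generated by the simples, which (since $B$ is finite-dimensional) is dual to $\Perf_B$, yielding $HH_*(A)\simeq HH_{-*}(B)^\vee$.
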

\begin{proof}
    By \cite{ganatra2013symplectic}*{Theorem 1.1}, the open-closed map gives an isomorphism between the left-hand side of \Cref{eq:sh-equivalence} and the Hochschild homology $HH_*(\cW(\htvar_G(\FI)))$ of the wrapped Fukaya category $\cW(\htvar_G(\FI)),$ which is the category of perfect modules over $\End_{\cW(\htvar_G(\FI))}(\Cocore).$ (Here we have implicitly appealed to \Cref{remark:self-opposite}). From the Koszul duality equivalence of \Cref{thm:koszul}, this is equivalent to the thick subcategory of $\Mod_{B}$ generated by the simple $B$-modules. However, since $B$ is finite-dimensional, this latter category is dual to $\Perf_B$ by \cite{Campbell-Koszul}*{Proposition 4.9}. We conclude that the Hochschild homology of $\End_{\cW(\htvar_G(\FI))}(\Cocore)$ is dual to the Hochschild homology of $B_G(\FI),$ as in \cite{Campbell-Koszul}*{Theorem 4.16}.
    %Since we know (by \cite{GPS2}*{Theorem 1.13}) that this category is split-generated by the sum of cocores $\Cocore,$ we conclude that the left-hand side of \Cref{eq:sh-equivalence} is equivalent to $HH_*(\End_{\cW_G(\FI))}(\Cocore)).$
%    On the other hand, we have
%    \[
%    HH_*(B_G(\FI))\simeq HH_*(\Mod_{B_G(\FI)})
%    \]
\end{proof}
\begin{example}
    If $\htvar_G(\FI) = T^*\PP^1$ (see \Cref{example:classical-sl2}) then we have equivalences 
    \[
    HH_{-*}(B_G(\FI))^\vee \simeq HH_{-*}(H^*(\mathbb{P}^1))^\vee \simeq  H^{-*}(L\mathbb{P}^1)^\vee.
    \]
    Meanwhile, $SH^{*+\dim(\htvar_G(\FI))}(\htvar_G(\FI)) \simeq H_{-*}(L\mathbb{P}^1)$. %Both sides are isomorphic since $H_{-*}(L\mathbb{P}^1)$ is locally finite.
\end{example}

\begin{remark}
    In fact, one can show that $SH^*(\htvar_G(\FI))$ is always locally finite, so \eqref{eq:sh-equivalence} remains true without dualizing the right hand side.  Indeed, we saw in the proof of \Cref{thm:koszul} that $\cW(\htvar_G(\FI)) \simeq \Perf_{A/^L Ae_\mathcal{N}A}$ for $A=A^\Bet_G(\FI, \mass)$. We know from \cite{GPS2} that $A/^L Ae_\mathcal{N}A$ is smooth, i.e.\ the diagonal bimodule $\Delta_{A/^L Ae_\mathcal{N}A}$ is perfect.  Since $A/^L Ae_\mathcal{N}A$ and hence $(A/^L Ae_\mathcal{N}A) \otimes (A/^L Ae_\mathcal{N}A)^{\op}$ is locally finite (\Cref{cor:kdid}), any perfect $A/^L Ae_\mathcal{N}A$-bimodule has locally finite endomorphism ring. But  $SH^*(\htvar_G(\FI))= HH^*(\cW(\htvar_G(\FI)))= \End(\Delta_{A/^L Ae_\mathcal{N}A})$ by \cite{ganatra2013symplectic, GPS2}.
    %Indeed, we have by \cite{ganatra2013symplectic, GPS2} that $SH^*(\htvar_G(\FI)) = \End_{\cW(\htvar_G(\FI) \times \overline{\htvar}_G(\FI))}(\Delta)$, where $\overline{\htvar}_G(\FI))$ is $\htvar_G(\FI))$ with the negated symplectic form. Now $\cW(\htvar_G(\FI) \times \overline{\htvar}_G(\FI))= \cW(\htvar_G(\FI)) \otimes \cW(\htvar_G(\FI))^{\op}$. Moreover, we saw in the proof of \Cref{thm:koszul} that $\cW(\htvar_G(\FI)) \simeq \Perf_{A/^L Ae_\mathcal{N}A}$ for $A=A^\dR_G(\FI, \mass)$, where $\mass$ is arbitrary. Since $A/^L Ae_\mathcal{N}A$ is locally finite \Cref{cor:kdid}, any perfect $(A/^L Ae_\mathcal{N}A) \otimes (A/^L Ae_\mathcal{N}A)^{\op}$ must also be. 
\end{remark}

%\subsection{Applications of Koszul duality}
%We collect some additional known geometric consequences of Koszul duality for hypertoric Fukaya categories. We are guided by the earlier investigations into 
Koszul duality also implies the following generation result. 

\begin{corollary}\label{cor:split-gen-by-compact}
    Any compact Lagrangian brane in $\htvar_G(\FI)$ is split-generated by components of the compact core.
\end{corollary}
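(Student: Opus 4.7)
The plan is to deduce the statement from \Cref{thm:koszul} via abstract Koszul duality, paralleling the strategy used for the preceding corollaries of \Cref{ssec:Koszul}.

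First, \Cref{theorem:gps-equivalence} together with stop removal at the non-compact components of $\Oskel$ yields an equivalence $\cW(\htvar_G(\FI))^{\op} \simeq \Perf_{A'}$, where
\[
A' := A^{\Bet}_G(\FI,\mass) \,/^L\, A^{\Bet}_G(\FI,\mass)\, e_{\cN}\, A^{\Bet}_G(\FI,\mass).
\]
Under this equivalence, the cocores of compact components of $\Oskel$ go to the indecomposable projective summands of $A'$, while the components of $\Core$ go to the simple summands of the semisimple augmentation module $T$ of $A'$. A compact Lagrangian brane is a proper object of $\cW(\htvar_G(\FI))$ and thus becomes a perfect $A'$-module with finite-dimensional total cohomology. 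The statement therefore reduces to the algebraic claim that every such proper perfect $A'$-module lies in the thick subcategory $\cT_{\Perf_{A'}}(T)$ generated by the simples.

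To prove this claim I will invoke the Koszul biduality of \Cref{thm:koszul} between $A'$ and the finite-dimensional algebra $B = \End_{\cW(\htvar_G(\FI))}(\Core)$; the Koszul duality functors of \Cref{ssec:koszul-abstract} then supply a contravariant equivalence $\cT_{\Perf_{A'}}(T) \simeq \LPerf_B$ implemented by $\hom_{A'}(-,T)$, with inverse $\hom_B(-,T)$. Given a proper perfect $A'$-module $M$, the image $\hom_{A'}(M,T)$ is a $B$-module with finite-dimensional total cohomology. I will argue that it is in fact perfect over $B$, at which point Koszul biduality applied to this object recovers $M$ and places it in $\cT_{\Perf_{A'}}(T)$. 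Transferring back through GPS will then exhibit every compact Lagrangian inside the thick subcategory of $\cW(\htvar_G(\FI))$ generated by $\Core$, giving the desired split-generation.

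The main obstacle will be the verification that finite-dimensional $B$-modules are indeed perfect, equivalently that $B$ has finite global dimension. I expect this to follow from the finiteness and quasi-hereditary structure of $A^{\dR}_G(\FI,\mass)$ recorded in \Cref{proposition:koszul-facts}, combined with the description \Cref{lem:QDID} of the Koszul dual of a quotient, but the argument may require care about the passage from quadratic to derived Koszul duality. If this route proves delicate, a self-contained alternative is to appeal to Abouzaid's generation criterion: \Cref{eq:sh-equivalence} identifies $SH^*(\htvar_G(\FI))$ with a dual of the Hochschild homology of $B$, and tracing the open-closed map through this identification should exhibit the unit as coming from $HH_*(B) \simeq HH_*(\End(\Core))$, directly witnessing split-generation of the compact Fukaya category by the components of $\Core$.
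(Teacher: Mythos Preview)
Your primary route has a genuine gap: the claim that $B=\End_{\cW(\htvar_G(\FI))}(\Core)$ has finite global dimension is \emph{false}. Already for $\htvar_G(\FI)=T^*\PP^1$ (\Cref{example:classical-sl2}) the compact core is the zero section and $B\simeq H^*(\PP^1;\CC)\cong\CC[x]/(x^2)$ with $|x|=2$, whose augmentation module admits no finite free resolution. So the step ``finite-dimensional $B$-modules are perfect'' fails, and your Koszul-duality argument cannot conclude. Note also that the properness of $M$ buys you nothing here: for \emph{every} $M\in\RPerf_{A'}$, the image $\hom_{A'}(M,T)$ already lies in $\cT_{\LMod_B}(T)$, which consists entirely of finite-dimensional modules; properness does not push it into $\LPerf_B$.

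The paper avoids this obstacle by working one level up, through the recollement rather than through Koszul duality. The fully faithful embedding $i_*\colon\RMod_{A'}\hookrightarrow\RMod_A$ lands in modules over the \emph{stopped} algebra $A=A^\Bet_G(\FI,\mass)$, which \emph{is} finite-dimensional quasi-hereditary of finite global dimension (\Cref{proposition:koszul-facts}); hence $\Perf_A=\Prop_A=\cT_{\RMod_A}(S)$. A proper $A'$-module pushes forward to a proper $A$-module, hence lies in $\cT_{\RMod_A}(S)$, and one then uses $i^*(Se)=0$ together with full faithfulness of $i_*$ to see it actually lies in $i_*\bigl(\cT_{\RMod_{A'}}(T)\bigr)=\cT_{\RMod_A}(Se^c)$. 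The finite homological dimension is thus borrowed from $A$, where it genuinely holds, rather than from $A'$ or $B$, where it does not.

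Your fallback via Abouzaid's generation criterion is a plausible independent route, but it is not worked out in the proposal: the identification \eqref{eq:sh-equivalence} does not by itself exhibit the unit of $SH^*(\htvar_G(\FI))$ in the image of the open--closed map from $HH_*(\End(\Core))$.
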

%The proof will rely on the following lemma:
Since a compact Lagrangian defines a proper module over $\cW(\htvar_G(\FI)) = A/^L(A e A)$, \Cref{cor:split-gen-by-compact} follows immediately from the following lemma: 

\begin{lemma}
    In the notation of \Cref{prop:derivedKDID}, $\cT_{\RMod_{A/^L(A e A)}}(T)$ is the category of proper modules over $A/^L(A e A)$.
\end{lemma}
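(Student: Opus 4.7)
The plan is to prove both inclusions. For the easy direction, $T = e^c S e^c$ is a finite-dimensional $\CC$-algebra, hence finite-dimensional over $\CC$ as an $A/^L(AeA)$-module; in particular it is proper. Since the full subcategory of proper modules is thick (closed under cofibers, shifts, and retracts), we conclude $\cT_{\RMod_{A/^L(AeA)}}(T)$ is contained in the category of proper modules.

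For the reverse inclusion, write $C := A/^L(AeA)$ for brevity. By \Cref{cor:kdid}, $C$ is connective, locally finite, and the augmentation ideal of $H^0(C) \to T$ is nilpotent. I would then argue by induction on $\sum_i \dim_\CC H^i(M)$ that any proper $M \in \RMod_C$ lies in $\cT_{\RMod_C}(T)$. The base case is $M \simeq 0$; for the inductive step, use the standard t-structure on $\RMod_C$ (which exists because $C$ is connective), and let $i$ be the smallest integer with $H^i(M) \neq 0$. The truncation triangle
\[
H^i(M)[-i] \longrightarrow M \longrightarrow \tau^{>i} M \longrightarrow H^i(M)[-i+1]
\]
reduces the claim to showing that any finite-dimensional object $N \in \RMod_{H^0(C)}^\heart$ lies in $\cT_{\RMod_C}(T)$, since then both $H^i(M)[-i]$ and $\tau^{>i}M$ have strictly smaller total cohomological dimension in the induction. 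But the nilpotence of the augmentation ideal of $H^0(C) \to T$ means that the only simple $H^0(C)$-modules are direct summands of $T$; hence any finite-dimensional $H^0(C)$-module admits a finite composition series with subquotients summands of $T$, so lies in $\cT_{\RMod_C}(T)$.

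The main thing to be careful about is the use of the t-structure and the nilpotence of the augmentation ideal; these are exactly the finiteness hypotheses furnished by \Cref{cor:kdid}, so the argument is essentially a standard consequence of those facts. No new geometric or Fukaya-theoretic input is needed, since everything reduces to abstract properties of connective, locally finite, augmented dg algebras.
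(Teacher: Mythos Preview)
Your argument is essentially correct, but it proceeds by a genuinely different route than the paper's proof.

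The paper does not work intrinsically in $\RMod_{A/^L(AeA)}$; instead it uses the fully faithful embedding $i_*: \RMod_{A/^L(AeA)} \hookrightarrow \RMod_A$ from the recollement and exploits properties of the ambient algebra $A = A^\Bet_G(\FI,\mass)$ already established in \Cref{proposition:koszul-facts}: $A$ is finite-dimensional, quasi-hereditary, and of finite global dimension, so $\cT_{\RMod_A}(S) = \Perf_A = \Prop_A$. Since $i_*$ preserves and reflects properness, and $i_*(T) = Se^c$, the question reduces to showing $\operatorname{Im}(i_*) \cap \cT_{\RMod_A}(S) = \cT_{\RMod_A}(Se^c)$, which follows because the left inverse $i^*$ kills $Se$. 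Your approach instead performs a d\'evissage entirely inside $\RMod_C$: you use connectivity to get a $t$-structure, reduce to the heart $\RMod_{H^0(C)}^\heart$, and then use nilpotence of the augmentation ideal to conclude that every finite-dimensional $H^0(C)$-module is a finite iterated extension of summands of $T$. This is more intrinsic and would work for any connective augmented dg-algebra whose degree-zero augmentation ideal is nilpotent.

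One small point to tighten: your sentence ``By \Cref{cor:kdid}, $C$ is connective, locally finite, and the augmentation ideal \ldots\ is nilpotent'' mis-cites the corollary. Those three conditions are the \emph{hypotheses} of \Cref{cor:kdid}, not its conclusion. In the present context they do hold (connectivity is automatic from the construction of the derived quotient; $H^0(C) = A/AeA$ is a finite-dimensional quotient of $A$, so its augmentation ideal is its Jacobson radical, which is nilpotent), and they were implicitly verified in order to apply \Cref{cor:kdid} in the proof of \Cref{thm:koszul}. You should either cite those facts directly or say ``the hypotheses of \Cref{cor:kdid}, which hold here, give \ldots''. Also, your phrasing of the inductive step is slightly loose: when $M$ has cohomology concentrated in a single degree, $H^i(M)[-i]$ does not have strictly smaller total dimension; but in that case $M$ is already a shift of a heart object, handled by the base case you establish separately.
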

\begin{proof}
    Note that since $A$ is a finite-dimensional quasi-hereditary algebra and has finite homological dimension we have that $\cT_{\RMod_A}(S)= \Perf_A = \Prop_A$. Moreover the fully faithful embedding ${i_*: \RMod_{A^L/(AeA)} \hookrightarrow \RMod_{A}}$ sends proper modules to proper modules and $i_*(T) = S e^c$. Thus it suffices to show that $$\Im i_* \cap \Prop_A = \Im i_* \cap \cT_{\RMod_A}(S) = \cT_{\RMod_A}(Se^c).$$ This follows from the fact that the left inverse of $i_*$ is $i^*$ and $i^*(Se)= 0$.
      %We certainly have $\cT_{\RMod_{A/^L(A e A)}}(T) \subset \sProp_{A/^L AeA}$, so we must verify the reverse containment. We have $\cT_{\RMod_A}(S)= \Perf_A= \Prop_A$ (since $\Perf_A$ is smooth and proper and generated by simples \color{red} explain more? \color{black}). And certainly $i_*$ embeds $\Prop_{A/^L AeA}$ into $\Prop_A$, so it is enough to show that the image of $\Prop_{A/^L AeA}$ under $i_*$ is the same as the image of $\cT_{\RMod_{A/^L(A e A)}}(T) \subset \Prop_{A/^L AeA}$, i.e.\ as $i_*(\cT_{A/^L(AeA)}(T)) = T_A(T)$. So just need to show $i_* \Prop_{A/^L AeA} \subset \cT_A(T)$. But since $\cT_A(S) = \Prop_A$, by above we actually just need to show $i_*(\Mod_{A/^L(AeA)} \cap \cT_A(S)) \subset \cT_A(T)$. Now given an object $M= i_*N \in i_*(\Mod_{A/^L(AeA)} \cap \cT_A(S))$, we have $i_* i^* i_*N= i_*N=M$. So if $M$ is built from $S$ it is actually built from $T$.    %Now applying $i_*i^*$ sends $T_A(S) \to T_A(T)$
\end{proof}

Another application of Koszul duality is the existence of dilations in symplectic cohomology. Let $n=\dim_\CC(\htvar_G(\FI)),$ and 
recall that the main results of \cites{ganatra2013symplectic, ST-CY} furnish the algebras $A/^L(A e A)=\End_{\cW(\htvar_G(\FI))}(\Cocore)$ and  $B_G(\FI)=\End_{\cW(\htvar_G(\FI))}(\Core)$ with smooth (respectively, proper) $n$-Calabi--Yau structures,
%\cites{ganatra2013symplectic, ST-CY}, 
which determine BV-algebras structure on their Hochschild cohomologies by \cite[Theorem 3.4.3]{ginzburg2006calabi} and \cite{tradler2008batalin}. (As explained in \cite{Brav-Rozenblyum}, the identification of Hochschild cohomology, which is an $\EE_2$-algebra, with Hochschild homology, which carries a circle action, given by the CY structure results in a framed $\EE_2$-algebra, i.e., a homotopy BV algebra.) 
%that $HH^*(A/^L(A e A), A/^L(A e A))$ then inherits a natural BV algebra structure.
Moreover, the Koszul duality between these algebras induces a BV-algebra isomorphism of their Hochschild cohomologies by \cites{Han-hochschild}.
%Since $\End_{\cW(\htvar_G(\FI))}(\Cocore)$ and $\End_{\cW(\htvar_G(\FI))}(\Core)$ are Koszul dual, $\End_{\cW(\htvar_G(\FI))}(\Core)$ inherits a perfect pairing which in turn induces an algebraically defined BV algebra structure \cite{tradler2008batalin} on Hochschild cohomology, obtained by dualizing the Connes operator using the pairing. (In fact, this pairing comes from a proper n-Calabi--Yau structure \cite[Theorem 2]{ganatra2023cyclic}.) Happily, it is known (see \cite{chen2016batalin} \cite[Theorem 4.4, Remark 4.5]{herscovich2014hochschild}) that $$HH^*(A/^L(A e A), A/^L(A e A)) \simeq HH^*(B_G(\FI), B_G(\FI))$$ agree \emph{as BV algebras}. 
%, where the BV operator is induced by the Connes operator using the Calabi--Yau structure \cite{abbaspour2013algebraic}.
Finally, since $\htvar_G(\FI)$ is Weinstein, 
we know from \cite[\S 5]{seidel2008biased} that
the closed-open map $$SH^*(\htvar_G(\FI)) \to HH^*(A/^L(A e A), A/^L(A e A))$$ is also a BV-algebra isomorphism.
We summarize the above discussion as follows:
\begin{lemma}\label{lem:BV-iso}
    There is a BV-algebra isomorphism $SH^*(\htvar_G(\FI))\simeq HH^*(B_G(\FI)).$
\end{lemma}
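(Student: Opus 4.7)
The proof plan is essentially to concatenate the three BV-algebra isomorphisms already enumerated in the paragraph preceding the lemma statement, so my main task is to verify that each of them is indeed a BV-algebra map (not merely a ring map) and that the hypotheses of the cited results are satisfied in our setting.

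First, I would invoke the closed-open map
\[
\mathcal{CO}: SH^*(\htvar_G(\FI)) \longrightarrow HH^*\bigl(\cW(\htvar_G(\FI))\bigr) \simeq HH^*\bigl(A/^L(AeA)\bigr),
\]
where the second equivalence uses the identification $\cW(\htvar_G(\FI)) \simeq \Perf_{A/^L(AeA)}$ from the proof of \Cref{thm:koszul}. Since $\htvar_G(\FI)$ is Weinstein (by \Cref{lem:HK-Weinstein} together with \Cref{prop:Weinstein-vf}), the results of Ganatra \cite{ganatra2013symplectic} (as strengthened in \cite{GPS2}) guarantee that $\mathcal{CO}$ is an isomorphism, and by Seidel's work recalled in \cite{seidel2008biased}*{\S 5} it intertwines the standard BV operator on $SH^*$ with the BV operator on Hochschild cohomology induced by the smooth $n$-CY structure on $\cW(\htvar_G(\FI))$. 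Both sides carry their natural cup product, and $\mathcal{CO}$ is well-known to be a ring map, so this gives the first BV-algebra isomorphism.

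Second, I would apply \Cref{thm:koszul}, which asserts the Koszul biduality of $A/^L(AeA)$ and $B_G(\FI)$, to induce an isomorphism $HH^*(A/^L(AeA)) \simeq HH^*(B_G(\FI))$ of Hochschild cohomologies. It is classical (Keller) that Koszul duality induces a ring isomorphism on Hochschild cohomology, but the upgrade to a BV-algebra isomorphism requires the compatibility of the smooth and proper CY structures under Koszul duality. This compatibility is the content of Han's work \cite{Han-hochschild}: given the smooth $n$-CY structure on $A/^L(AeA)$ provided by \cite{ganatra2013symplectic} and the proper $n$-CY structure on $B_G(\FI)$ provided by \cite{ST-CY}, Koszul duality identifies them (up to sign/shift conventions that must be checked), and the resulting isomorphism of Hochschild homologies intertwines the circle actions; combined with the ring isomorphism on Hochschild cohomology, this yields the desired BV-algebra isomorphism.

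The main obstacle, and the step requiring the most care, is the verification that the two CY structures are matched under Koszul duality in the precise sense needed to transport the BV operator. This is essentially bookkeeping — checking that the smooth CY structure on $A/^L(AeA)$ coming from $\cW(\htvar_G(\FI))$ corresponds, under the Koszul duality functor, to exactly the proper CY structure on $B_G(\FI)$ produced by \cite{ST-CY} applied to the compact core — but the comparison of normalizations between the symplectic, categorical, and algebraic sources of these structures is where errors are most likely to creep in. Once this compatibility is in hand, composition of the two BV-algebra isomorphisms yields the claimed isomorphism $SH^*(\htvar_G(\FI)) \simeq HH^*(B_G(\FI))$.
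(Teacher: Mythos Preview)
Your proposal is correct and follows essentially the same approach as the paper: the lemma is simply a summary of the preceding paragraph, which concatenates the closed-open BV-algebra isomorphism $SH^*(\htvar_G(\FI)) \to HH^*(A/^L(AeA))$ (via Seidel/Ganatra, using that $\htvar_G(\FI)$ is Weinstein) with the Koszul-duality BV-algebra isomorphism $HH^*(A/^L(AeA)) \simeq HH^*(B_G(\FI))$ (via \cite{Han-hochschild}, using the smooth and proper $n$-CY structures from \cite{ganatra2013symplectic} and \cite{ST-CY}). Your extra care about matching the two CY structures under Koszul duality is the only point where you elaborate beyond the paper, which simply defers this to the cited reference.
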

Following \cite{seidel2012symplectic}, if $B$ is a BV-algebra (with BV-operator $\Delta$) and $\alpha$ is a 1-cocycle in $B$ satisfying the equation $\Delta(\alpha)=1,$ then we call $\alpha$ a {\em dilation} on $B$. We can now state our second application of Koszul duality.
\begin{corollary}\label{cor:dilation}
    $SH^*(\htvar_G(\FI))$ admits a dilation.
\end{corollary}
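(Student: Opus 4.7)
My plan is to use the BV-algebra isomorphism of \Cref{lem:BV-iso} to reduce the existence of a dilation in $SH^*(\htvar_G(\FI))$ to the existence of one in $HH^*(B_G(\FI)),$ and then to construct the dilation from the mixed grading on $B_G(\FI).$

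First, by \Cref{corollary:formality} and the proof of \Cref{thm:koszul}, the algebra $B_G(\FI)\simeq e_{\cN^c}(A_G^{\Bet}(\FI,\mass))^!e_{\cN^c}$ is formal and, from its identification with a piece of the Koszul dual of the Koszul algebra $A_G^{\Bet}(\FI,\mass),$ inherits a positive internal grading $B_G(\FI)=\bigoplus_{k\geq 0}B_G(\FI)_k$ with degree-zero part $\bigoplus_{\alpha\in(\cF\cap\cB)\setminus\cN}\CC\cdot e_\alpha.$ The derivation $\mathsf{Eu}\colon B_G(\FI)\to B_G(\FI)$ that multiplies $B_G(\FI)_k$ by $k$ is then a 1-cocycle, determining a class $\eta:=[\mathsf{Eu}]\in HH^1(B_G(\FI)).$

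Next, the proper $n$-Calabi--Yau structure on $B_G(\FI)$ coming from \cite{ST-CY} gives, via Van den Bergh duality, an identification $HH^*(B_G(\FI))\simeq HH_{n-*}(B_G(\FI))^\vee,$ intertwining the BV operator $\Delta$ with the linear dual of Connes's cyclic $B$-operator on $HH_*.$ The candidate dilation will be (a rescaling of) $\eta,$ provided that $\Delta(\eta)$ is a unit in $HH^0(B_G(\FI)).$

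The main obstacle is verifying that $\Delta(\eta)$ is in fact a nonzero multiple of $1\in HH^0(B_G(\FI)).$ The argument I have in mind follows the Koszul Calabi--Yau framework of \cite{etgu2017koszul}: under Van den Bergh duality, $\Delta(\eta)$ pairs with the fundamental class in $HH_n(B_G(\FI))$ to compute (up to a universal nonzero constant) the Euler characteristic $\sum_{\alpha\in(\cF\cap\cB)\setminus\cN}\chi(S_\alpha),$ which is strictly positive since each $S_\alpha$ is a compact toric variety. A parallel computation of the pairing of $\Delta(\eta)$ with the remaining classes of $HH_n(B_G(\FI))$ must then be shown to vanish, using that $\mathsf{Eu}$ acts trivially in mixed degree zero and that the formal Koszul structure rigidifies the higher parts of the cyclic complex. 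These two facts together will identify $\Delta(\eta)$ with a nonzero scalar multiple of the unit in $HH^0(B_G(\FI)),$ and rescaling $\eta$ by $1/\Delta(\eta)\in\CC^\times$ then produces the desired dilation.
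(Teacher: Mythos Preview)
Your overall strategy matches the paper's: transport the problem to $HH^*(B_G(\FI))$ via \Cref{lem:BV-iso} and take the Euler derivation coming from formality as the candidate dilation. (The internal grading you invoke from Koszul duality coincides with the cohomological grading by \Cref{prop:QDKD}, so your $\mathsf{Eu}$ is the paper's $\eu_{B_G(\FI)}$.)

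The gap is in the verification that $\Delta(\eta)$ is a scalar multiple of $1$. You do not actually carry this out: the claim that the pairing with a ``fundamental class'' in $HH_n$ returns $\sum_\alpha\chi(S_\alpha)$ is asserted without proof, and the vanishing on the ``remaining classes'' is only described as something that ``must then be shown,'' with a vague appeal to rigidity of the cyclic complex. Even granting the first claim, knowing that $\Delta(\eta)$ pairs nontrivially with one class does not by itself force $\Delta(\eta)\in\CC\cdot 1$ inside $HH^0(B_G(\FI))$; you would need the full computation against a basis of $HH_n$, which you have not supplied.

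The paper avoids all of this with a one-line argument from Tradler's explicit description of the BV operator on a proper $n$-Calabi--Yau algebra: for any Hochschild $1$-cocycle $\alpha$ and any $x\in B$ of degree $n$, one has $\langle\Delta(\alpha),x\rangle=\langle\alpha(x),1\rangle$. Taking $\alpha=\eu_B$ gives $\langle\Delta(\eu_B),x\rangle=\langle nx,1\rangle=n\langle 1,x\rangle$ for every such $x$, so nondegeneracy of the CY pairing forces $\Delta(\eu_B)=n\cdot 1$. Thus $\tfrac{1}{n}\eu_B$ is the dilation, with no need to analyse $HH_n$, fundamental classes, or Euler characteristics.
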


\begin{proof}
    Using the equivalence of \Cref{lem:BV-iso}, we need to produce a dilation in the Hochschild cohomology of $B_G(\FI).$ Observe that $B_G(\FI)$ is formal, it admits a canonical Hochschild 1-cocycle $\eu_{B_G(\FI)},$ which acts on a degree-$d$ element of $B_G(\FI)$ as multiplication by $d$. The result now follows from the following lemma.
\end{proof}

%To begin with, the category of perfect modules over $B_G(\FI)$ carries a proper $n$-Calabi--Yau structure \cite[Theorem 2]{ganatra2023cyclic}. The Hochschild cohomology $HH^*(\Perf_{B_G(\FI)})$ therefore admits a the structure of a BV algebra \cite{tradler2008batalin}, where the BV operator is induced by the Connes operator using the Calabi--Yau structure. Similarly, $A/^L(A e A)=\End_{\cW(\htvar_G(\FI))}(\Cocore)$ carries a smooth $n$-Calabi--Yau structure \cite{ganatra2013symplectic}, and so $HH^*(A/^L(A e A), A/^L(A e A))$ naturally carries the structure of a BV algebra \cite[Thm.\ 3.4.3]{ginzburg2006calabi}.%, where the BV operator is induced by the Connes operator using the Calabi--Yau structure \cite{abbaspour2013algebraic}.

%Meanwhile, since $B_G(\FI)$ is formal, there is a canonical Hochschild $1$-cocycle $\eu_\cC \in CC^1(\cC, \cC)$ called the \emph{Euler vector field}, which acts on an element $a$ of pure degree $d$ by $a \mapsto d a$. The following is well-known: 

%Recall that a \emph{dilation} on a BV algebra is a degree $1$ element which is mapped to the unit by the BV operator. The following is well-known:
\begin{lemma}
    Suppose that $B$ is a formal $A_\infty$-algebra equipped with a proper $n$-Calabi--Yau structure $\langle -, - \rangle$. Then $\Delta(\frac{1}{n}\eu_B)=1$. 
\end{lemma}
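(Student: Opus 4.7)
The plan is to transport the problem via the proper Calabi--Yau duality $HH^*(B)\simeq HH_{n-*}(B)$, under which the BV operator $\Delta$ on $HH^*(B)$ is by construction the Connes cyclic operator $\mathbf{B}$ on $HH_*(B)$ in the sense of \cite{tradler2008batalin}. The crucial input from formality is that the internal weight grading $B=\bigoplus_d B^d$ induces a second grading on the Hochschild complexes, and the Euler derivation $\eu_B$ is a Hochschild $1$-cocycle of internal weight $0$ whose Lie derivative on $HH_*(B)$ (equivalently, Gerstenhaber bracket on $HH^*(B)$) is exactly the internal weight operator: $L_{\eu_B}(\sigma) = w\sigma$ for any Hochschild chain $\sigma$ of internal weight $w$. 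This follows directly from the chain-level formula $L_{\eu_B}(a_0\otimes\cdots\otimes a_k) = (\sum_i |a_i|)\, a_0\otimes\cdots\otimes a_k$.

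Next I would analyze the Calabi--Yau class $[\omega]\in HH_n(B)$ corresponding to the unit $1\in HH^0(B)$. Formality of $B$ together with non-degeneracy of the Calabi--Yau pairing $B\otimes B \to \CC$ of cohomological degree $-n$ forces the trace $\tau:B\to \CC$ to vanish outside internal weight $n$; consequently the class $[\omega]$ has internal weight $n$, so $L_{\eu_B}([\omega]) = n\cdot [\omega]$. Moreover $\mathbf{B}([\omega]) = 0$, which is the first-order obstruction to lifting $[\omega]$ to negative cyclic homology and which vanishes because, by definition, a Calabi--Yau structure on $B$ is precisely such a lift.

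Combining these two facts, the Cartan identity
\[
L_{\eu_B}([\omega]) \;=\; \mathbf{B}(\eu_B \cap [\omega]) \;+\; \eu_B \cap \mathbf{B}([\omega])
\]
collapses to $\mathbf{B}(\eu_B \cap [\omega]) = L_{\eu_B}([\omega]) = n\cdot [\omega]$, using $\mathbf{B}([\omega])=0$. Translating back through Calabi--Yau duality (under which $\eu_B\in HH^1(B)$ corresponds to $\eu_B\cap [\omega]\in HH_{n-1}(B)$, and $1\in HH^0(B)$ corresponds to $[\omega]\in HH_n(B)$), this reads $\Delta(\eu_B) = n\cdot 1$, whence $\Delta(\tfrac{1}{n}\eu_B) = 1$.

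The main technical point to nail down is the precise compatibility, including signs, between the Tradler-type BV operator $\Delta$ on $HH^*(B)$ and the Connes operator $\mathbf{B}$ on $HH_*(B)$ under Calabi--Yau duality, together with the Cartan identity itself; all of this is standard for proper Calabi--Yau $A_\infty$-algebras but is worth spelling out in the sign conventions used above. Once it is in place, the computation is essentially forced by formality: the Euler derivation measures weight, the Calabi--Yau class sits in the top weight $n$, and Cartan plus cyclicity produce the factor of $n$.
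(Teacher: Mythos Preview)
Your strategy is sound, but the paper's argument is far more direct and avoids almost all of the machinery you invoke. The paper simply quotes Tradler's defining formula for $\Delta$ on a Hochschild $1$-cocycle of internal degree zero,
\[
\langle \Delta(\alpha), x\rangle \;=\; \langle \alpha(x), 1\rangle \qquad (x\in B^n),
\]
and plugs in $\alpha=\eu_B$: since $\eu_B(x)=nx$, this gives $\langle \Delta(\eu_B), x\rangle = n\langle x,1\rangle = n\langle 1,x\rangle$ for all $x\in B^n$, whence $\Delta(\eu_B)=n\cdot 1$ by non-degeneracy. That is the whole proof. It stays entirely on the $HH^*$ side and uses only the pairing on $B$; there is no passage to $HH_*$, no Connes operator, no Cartan identity, and no negative-cyclic lift.

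Your route through $HH_*$ is a genuine alternative and is morally correct, but two points deserve care. First, in Tradler's setting (proper Calabi--Yau), the canonical duality is $HH^*(B)\simeq (HH_{n-*}(B))^\vee$, with $\Delta$ transported from the \emph{dual} $\mathbf{B}^\vee$; the cap-product isomorphism $HH^*(B)\simeq HH_{n-*}(B)$ via a class $[\omega]$ is the smooth-CY picture. If you unwind your argument through the correct (dualized) identification, the verification that $\mathbf{B}^\vee\Phi(\eu_B)=n\Phi(1)$ reduces on length-zero chains to exactly the identity $\langle \eu_B(x),1\rangle=n\langle x,1\rangle$ that the paper uses directly. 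Second, even if one grants the cap-product framing, the step ``the trace vanishes outside degree $n$, hence $[\omega]$ has internal weight $n$'' is not immediate: $[\omega]$ could in principle have higher-length components of other weights, and one must argue (e.g.\ via the $\CC^\times$-equivariance of the CY pairing, which scales it by $t^n$) that it does not. This is fixable, but it is exactly the sort of bookkeeping that the paper's three-line computation sidesteps entirely.
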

\begin{proof}
     Let $x \in B$ be a n-cocycle and let $\alpha \in CC^1(B,B)$ be a Hochschild 1-cocycle in cohomological degree zero.  %Then $\Delta(\alpha) \in CC^0(\cC, \cC)$ is in cohomological degree zero. 
     By definition of $\Delta$ \cite{tradler2008batalin}, we have $\langle \Delta(\alpha), x \rangle= \langle \alpha(x), 1 \rangle$. Meanwhile $\eu (x)= n x$, so we have $\langle \Delta(\eu_B), x \rangle = \langle n x, 1 \rangle = n \langle 1, x \rangle$. Hence $\Delta(\eu_B)= n \cdot 1$.
\end{proof}

%Following \cite{seidel2012symplectic}, a $1$-cocycle in a BV algebra which is mapped to the unit by the BV operator is called a \emph{dilation}. 
%Since $\htvar_G(\FI)$ is Weinstein, the closed-open map $$SH^*(\htvar_G(\FI)) \to HH^*(A/^L(A e A), A/^L(A e A))$$ is known to be an isomorphism of BV algebras \cite[Sec.\ 5]{seidel2008biased}. We conclude: 

%elements of pure degree  defined by the rule
%\begin{align*}
%    \mathscr{eu}_\cC: CC^1(\cC, \cC) \to \cC
%\end{align*}
%$HH^*(B_G(\FI), B_G(\FI))$ admits a dilation. [EXPLAIN]

%\begin{corollary}
%    $SH^*(\htvar_G(\FI))$ admits a dilation.
%\end{corollary}
The existence of a dilation on symplectic cohomology puts strong constraints on the symplectic geometry of $\htvar_G(\FI).$ Standard consequences include: non-existence of exact Lagrangian $K(\pi,1)$ \cite{seidel2012symplectic}, improved intersection numbers for Lagrangians \cite{seidel2012symplectic}, upper bounds on the size of a collection of of disjoinable Lagrangian spheres \cite{seidel2014disjoinable}.

\subsection{The Fukaya--Seidel category}\label{sec:heuristic}
%One learns from \Cref{corollary:main-equivalence} that the partially warpped Fukaya categories $\cW(\htvar_{\FI, \mass}, \partial_\infty \LL_G(\FI,\mass))^c$ have several remarkable structural properties. 

There is a well-established heuristic for interpreting the generalized category $\cO$ of \cite{BLPW16} as the Fukaya--Seidel category of a Lefschetz fibration. 
%This heuristic was 
%put on rigorous footing for certain Lefschetz fibrations on 
In the case of a 2-block nilpotent-slice in type A, this was made precise in \cite{mak2021fukaya}.
%a 2-block nilpotent slice in type A by Mak--Smith \cite{mak2021fukaya}. 
%
%
%The purpose of this subsection is to spell out this heuristic in more detail in the hypertoric context. As we will explain, some parts can be made rigorous using \Cref{corollary:main-fukaya-intro}. 
In the present context, we learn from \Cref{lemma:lefschetz-fiber-skeleton} that
%
%In light of \Cref{lemma:lefschetz-fiber-skeleton}, 
the partially wrapped Fukaya category $\cW(\htvar_G(\FI), \partial_\infty \LL_G(\FI,\mass))$ should be interpreted as a Fukaya--Seidel category 
%of the pair $(\htvar_{\FI}, \frF)$.
of $\htvar_G(\FI)$ equipped with the Lefschetz fibration given by the $J$-holomorphic moment map for $\CC^\times_m.$
We explain here how certain representation-theoretic features of hypertoric category $\cO$ should be understood from this perspective.

We have seen that $\cO^{\dR}_G(\FI, \mass)$ is a highest-weight category. As explained in \Cref{subsection:highest-weight}, it comes equipped distinguished objects $\{L_\alpha\}$, $\{P_\alpha\}$, $\{V_\alpha\}$ (resp.\ the simples, projectives and standards) indexed by a poset $\cJ$, each of which forms a basis for the Grothendieck group. 
%The following (well-known to experts) dictionary translates these objects into symplectic language. %translating between representation theory and symplectic geometry.
%\Cref{corollary:main-fukaya-intro}
%To begin with, we have the following 
%\Cref{corollary:main-equivalence} provides a symplectic interpretation of notions from geometric representation theory, and vice versa. In particular, we have the following dictionary for relating modules with Lagrangians. 
%This dictionary is known to experts; an analogous picture was established rigorously by Mak--Smith \cite[Sec.\ 10]{mak2021fukaya} for Hilbert schemes.
%Consider the following dictionary, relating those objects to objects of study in symplectic topology:
In \Cref{table:dictionary}, we recall how these bases are expected to relate to objects of study in symplectic topology.
\begin{table}[h]
\begin{tabular}{ |c|c|c| } 
 \hline
 Representation theory & Symplectic topology  \\ 
 \hline
 simple modules & irreducible components of the skeleton \\
 indecomposable projective modules & cocores / linking disks\\
standard (or Verma) modules & Lefschetz thimbles  \\ 
 \hline
\end{tabular}
\vspace{0.3cm}
\caption{A (well-known, and partly heuristic) dictionary}\label{table:dictionary}
\end{table}

The first two lines of \Cref{table:dictionary} are strictly true as written, and follow from tracing through the equivalences which enter into the proof of \Cref{corollary:main-fukaya-intro}.  
The third line is only a heuristic. First, we have not even discussed how to makes thimbles into objects of our partially wrapped model for the Fukaya--Seidel category (although see \cite{GPS2}*{\S 8.6} for one approach.) More seriously, our method only gives an algebraic description of Verma modules as iterated cones on the irreducible components of the category $\cO$ skeleton (see \Cref{lemma:v-alpha}). Geometrically, one may think of these objects as arising as an ``adiabatic limit'' of thimbles. It would be interesting to reverse this process, and prove that these algebraic objects in the Fukaya category are represented by honest Lagrangians.

\subsection{Examples}\label{ssec:examples} We end with some extended examples.

\begin{figure}[h]
\includegraphics[width=6cm]{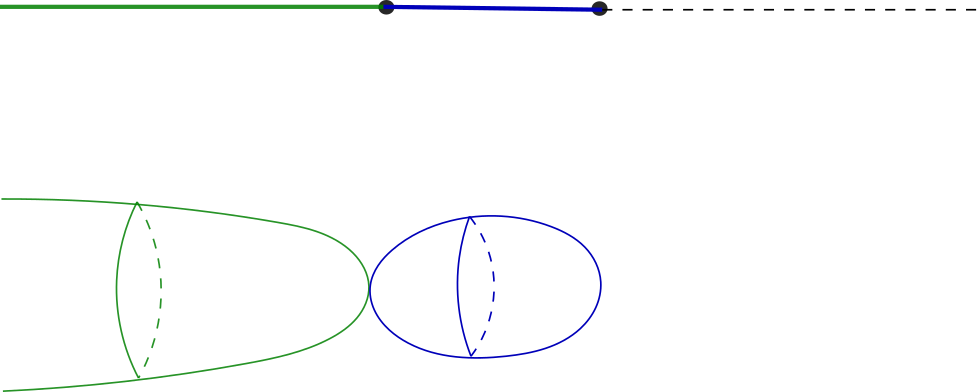}
\caption{A hyperplane arrangement and the corresponding category $\cO$ skeleton. The underlying Weinstein manifold is $T^*\mathbb{P}^1$. There are two simple objects,  namely the cotangent fiber over $\infty \in \mathbb{P}^1$ and the zero section.}\label{figure:tstarp1}
\centering
\end{figure}
\begin{example} \label{example:classical-sl2}Consider the hyperplane arrangement drawn in \Cref{figure:tstarp1}. This can be realized concretely by the exact sequence 
\[
\begin{tikzcd}
    1\ar[r]&
    G=\mathbb{C}^\times \ar[r,"\Delta"]&
    (\mathbb{C}^\times)^2 \ar[r]&
    F=(\mathbb{C}^\times)^2/\mathbb{C}^\times\ar[r]&
    1,
\end{tikzcd}
\]
where $\FI: G \to \mathbb{C}^\times$ and $\mass: F \to \mathbb{C}^*$ are nonzero characters. 
%By construction $F$ acts Hamiltonianly on $\htvar_G(\FI)$. Meanwhile $\mass$ defines an action of $F$ on $\mathbb{P}^1$ by viewing $\mathbb{C}^*$ as diagonally embedded into $\operatorname{SL}(2,\mathbb{C})$, so $F$ acts Hamiltonianly on $T^*\mathbb{P}^1$. It can be checked that $\htvar_G(\FI)= T^*\mathbb{P}^1$ compatibly with the $F$ actions and $\Oskel$ is then precisely the conormal of the Schubert stratification $\PP^1 = \{0\}\sqcup \CC$. 
In this case, $\htvar_G(\FI) = T^*\PP^1,$ and the Hamiltonian action of $F$ on $T^*\PP^1$ is induced by the standard $\CC^\times$-action on $\PP^1.$ The Lagrangian $\Oskel$ is the conormal to the Schubert stratification $\PP^1=\{0\}\sqcup \CC.$

%The dictionary between perverse sheaves and representations is well-known \cite{yi-sun} so will not be repeated. 
We have $\mush_{\Oskel}(\Oskel) \simeq \sh_{\Oskel}(\mathbb{P}^1),$ and the equivalence respects the perverse t-structure.
This is the classical category $\cO$ for $SL(2),$ which may be written as
\[
\cO_G^\Betti(\FI,\mass) \simeq \left\{\
\begin{tikzcd}
    V\ar[r, shift left, "x"]&
    W\ar[l, shift left, "y"]
\end{tikzcd} \mid xy = 0
\right\}.
\]
%The dictionary between perverse sheaves and representations is well-known \cite{yi-sun}. 
Let $L_1$ be the cotangent fiber (green) and let $L_2$ be the zero section (blue). Viewed as objects of $\cO^{\Bet}_G(\FI, \mass)$, they correspond to the respective representations $\CC\rightleftarrows 0$ and $0\rightleftarrows \CC.$
%Then $L_1$ corresponds to the highest weight module of weight $-2$ and $L_2$ corresponds to the trivial representation. 
We have $V_{1}= L_{1}$, $V_2= P_2$. These fit into nontrivial extensions $0\to V_{1}\to V_2 \to L_2 \to 0$ and $0 \to V_2 \to P_1 \to V_1 \to 0$. %= \cO^{{\sf BGG}}(\mathfrak{sl}_2)
%We have $V_{1}= L_{1}$, $V_2= P_2$, $V_2= cone(L_2[-1] \to L_1)$ and $P_1= cone(L_1[-1] \to V_2)$.  %These fit into nontrivial extensions $0\to V_{1}\to V_2 \to L_2 \to 0$
%and $0 \to V_2 \to P_1 \to V_1 \to 0$.
%Then $\operatorname{Ext}^1(V_{-2} V_0)= \operatorname{Ext}^1(V_{-2}, L_0)= Ext^1(L_{-2}, L_0)$

%The zero section (blue) corresponds to the trivial representation and the fiber (green) corresponds to the highest weight module of weight $-2$. We have $\operatorname{Ext}^1(V_{-2} V_0)= \operatorname{Ext}^1(V_{-2}, L_0)= Ext^1(L_{-2}, L_0)$
%We have $\mush_{\Oskel}(\Oskel) \simeq sh_{\Oskel}(T^*\mathbb{P}^1)$ the equivalence respects the perverse t-structure. Hence $\cO^\Bet_G(\FI, \mass)$ recovers the principal block of the BGG Category $\cO$ for $\mathfrak{sl}_2$, for the central character $\rho=-1$.
%We have two simples $L_0, L_{-2}$, projective covers $P_{-2}, P_0$ and Vermas $V_{-2}, V_0$.
%We know $V_{-2}= L_{-2}$ 
%We have a nontrivial extension $0\to V_{-2}\to V_0 \to L_0 \to 0$ and $V_0= P_0$. 
%Also $0 \to V_0 \to P_{-2} \to V_{-2} \to 0$

%$Ext^k(V_{-2}, V_{-2})=0$ so $Ext^1(V_{-2} V_0)= Ext^1(V_{-2}, L_0)= Ext^1(L_{-2}, L_0)$
\end{example}

\begin{figure}[h]\label{figure:path841}
\includegraphics[width=6cm]{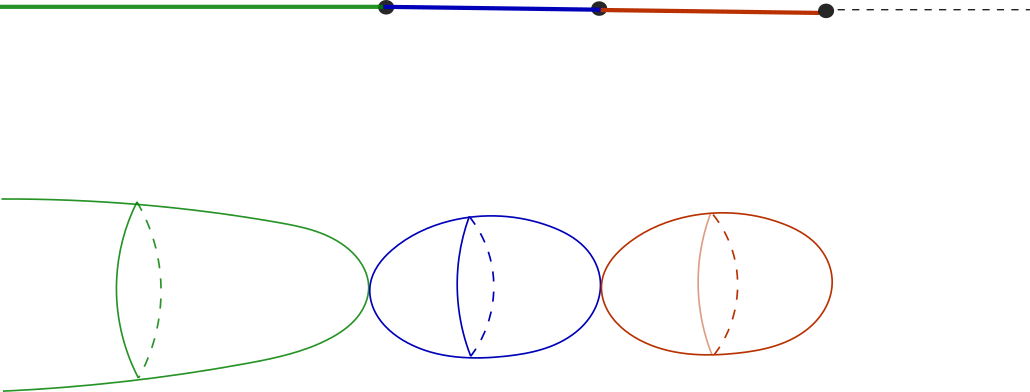}
\caption{A hyperplane arrangement and the corresponding category $\cO$ skeleton. The underlying Weinstein manifold is a plumbing of two copies of $T^*\mathbb{P}^1$. The category $\cO$ skeleton has three irreducible components (the zero zero sections and a cotangent fiber), corresponding to the three simple objects of category $\cO$.}\label{figure:plumbing}
\centering
\end{figure}

\begin{example}
    Now consider the arrangement drawn in \Cref{figure:plumbing}, for which $\htvar_G(\FI)$ is a plumbing of two copies of $T^*S^2$, and $\LL_G(\FI,\mass)$ is the union of both zero sections and a cotangent fiber. 
%In contrast to \Cref{example:classical-sl2}, this category no longer reduces to the classical BGG category $\cO$. We 
In this case, the category
\[
\cO_G^\Betti(\FI,\mass) \simeq \left\{\
\begin{tikzcd}
    U\ar[r, shift left, "x_1"] &
    V\ar[r, shift left, "x_2"]\ar[l, shift left, "y_1"]&
    W\ar[l, shift left, "y_1"]
\end{tikzcd} \mid x_1y_1=x_2y_2, x_2y_2 = 0
\right\}
\]
has three simples $L_1< L_2< L_3$ (green, blue, red). All nontrivial morphisms between the simples are in degree $1$. We have $V_1= L_1$ and nontrivial extensions $0 \to L_1 \to V_2 \to L_2 \to 0$ and $0 \to V_2 \to V_3 \to L_3[-1] \to 0$. Alternatively, we can write $V_2= \Cone(L_2[-1] \to L_1=V_1)$ and $V_3= \Cone(L_3[-2] \to V_2)$; this description may be more compelling to symplectic geometers, since it gives a recipe for representing the $V_i$ by honest Lagrangians by surgery. 
%$V_2= cone(L_2[-1] \to L_1=V_1)$, $V_3= cone(L_3[-2] \to V_2)$ One can construct an honest Lagrangian disk representing the $V_i$ by surgery:  by surgering $L_2[-1], L_1$ (resp.\ $L_3[$
%Alternatively, we can write $V_2= cone(L_2[-1] \to L_1=V_1)$ and $V_3= cone(L_3[-2] \to V_2)$; this description may be more compelling to symplectic geometers since it gives a recipie for representing the $V_i$ by honest Lagrangians by surgery. 
%we can represent the $V_i$ by an honest Lagrangian by performing surje
%Floer-theoretically minded reader may prefer to think of the 
\end{example}

\begin{example}
    Consider now the hyperplane arrangement drawn in \Cref{figure:drawing-4d}, where we have used the natural metric on $\RR^2$ to identify the codirection $\mass$ with the direction indicated by the arrow. %is represented in the picture by an arrow. 
    As a Weinstein manifold, the space $\htvar_G(\FI)$ may be obtained by plumbing $T^*\PP^2$ with the cotangent bundle to a Hirzebruch surface along a shared copy of $\PP^1.$
    With respect to the ordering induced by $\mass$, the five simples are $L_1$ (red), $L_2$ (green), $L_3$ (blue), $L_4$ (purple), and $L_5$ (orange). 
    %The five standard objects can be described geometrically as follows. To each shaded component, associate the verex which is maximal with respect to $\mass$. Now form the negative cone on hyperplanes passing through this vertex.    %These are, respectively, the conormal to $\infty \in \mathbb{P}^1 \subset \mathbb{P}^2$, the conormal to $\mathbb{P}^1 \subset \mathbb{P}^2$ and the zero section.   
%$\htvar_G(\FI)= T^*\mathbb{P}^2$
%$\LL_G(\FI,\mass)$ is the union of the zero section, the conormal to $\mathbb{P}^1 \subset \mathbb{P}^2$ and the conormal to $\infty \in \mathbb{P}^1 \subset \mathbb{P}^2$.
%There are three simples $L_1<L_2<L_3$; the unique object with microstalk $\mathbb{C}$ on these components (or, a small negative pushoff of the Lagrangian component).

\begin{figure}[h]
\includegraphics[width=6cm]{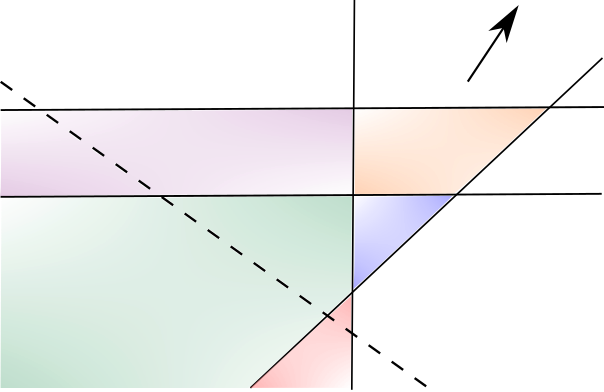}
\caption{The mass parameter $\mass$ is represented by an arrow. The compact skeleton of $\htvar_G(\FI)$ is the union of a $\mathbb{P}^2$ and of a Hirzebruch surface, meeting cleanly along a $\mathbb{P}^1$. The five simple objects in $\cO_G^{\Betti}(\FI,\mass)$ correspond to shaded components. The dotted line is the image of a fiber of the $J$-holomorphic moment map, which is symplectically a plumbing of three copies of $T^*S^3$ along circles.}\label{figure:drawing-4d}
\centering
\end{figure}

\end{example}

\bibliographystyle{plain}
\bibliography{refs}

\end{document}